\numberwithin{figure}{section}
\numberwithin{figure}{section}
\newtheorem{theorem}{Theorem}[section]
\newtheorem*{mtheorem}{Main Theorem}
\newtheorem{lemma}[theorem]{Lemma}
\newtheorem{proposition}[theorem]{Proposition}
\newtheorem{corollary}[theorem]{Corollary}
\theoremstyle{definition}
\newtheorem{definition}[theorem]{Definition}
\newtheorem{remark}[theorem]{Remark}
\numberwithin{equation}{section}
\newcommand{\curl}{\operatorname{curl}}
\newcommand{\beq}{\begin{equation}}
\newcommand{\eeq}{\end{equation}}
\newcommand{\Div}{\operatorname{div}}
\def\R{\mathbb R}
\def\N{\mathbb N}
\def\H{\mathcal H}
\def\D{\mathcal D}
\def\e{\varepsilon}
\def\pa{\partial}
\def\St{\Sigma_t}
\def\En{\mathcal{E}}
\newcommand{\la}{\langle}
\newcommand{\ra}{\rangle}
\def \Qt{Q(t)}
\def\Cap{\text{Cap}}
\def\restrict#1{\raise-.5ex\hbox{\ensuremath|}_{#1}}
\title[A priori estimates for the motion of charged liquid drop]{A priori estimates for the motion of charged liquid drop:  A dynamic approach via free boundary Euler equations}
\author[v. Julin, D. A. La Manna]{Vesa Julin, Domenico Angelo La Manna}
\address[V. Julin]{University of Jyv\"{a}skyl\"{a},
Department of Mathematics and Statistics,
P.O.Box 35 (MaD) FI-40014, Finland}
\email{vesa.julin@jyu.fi}
\address[D. A. La Manna]{Universit\`{a} di Napoli ``Federico II'',
Dipartimento di Matematica e Applicazioni,
Via Cintia, Monte Sant'Angelo, I-80126 Napoli, Italy}
\email{domenicolamanna@hotmail.it}
\date{\today}
\begin{document}

\begin{abstract} 
We study the motion of charged liquid drop in three dimensions where the equations of motions are given by the Euler equations with free boundary with an  electric field. This is a well-known problem in physics going back to the famous work by Rayleigh. Due to experiments and numerical simulations one may expect the charged drop to form conical singularities called Taylor cones, which we interpret as singularities of the flow. In this paper, we study the well-posedness of the problem and regularity of the solution. Our main theorem is a criterion which roughly states that if the flow remains $C^{1,\alpha}$-regular in shape and the velocity remains  Lipschitz-continuous, then the flow remains smooth, i.e., $C^\infty$ in time and space, assuming that the initial data is smooth. Our main focus is on the regularity of the shape of the drop. Indeed, due to the appearance of Taylor cones, which are singularities  with Lipschitz-regularity, we expect  the $C^{1,\alpha}$-regularity assumption to be optimal. We also quantify the $C^\infty$-regularity via high order energy estimates which, in particular, implies the well-posedness of the problem.    

\medskip

\noindent \textbf{  MSC classes}: 35Q35, 76B03, 76B07.

\end{abstract}

\keywords{Fluid mechanics, Euler equations, Regularity theory for incompressible  fluids, Free boundary, Non-local isoperimetric problem, Rayleigh threshold}

\maketitle
\tableofcontents

\section{Introduction and the main result}

\subsection{State-of-the-art}

In this paper we study the problem of charged liquid drop from rigorous mathematical point of view. In the model the two effecting forces are the surface tension, which prefers to keep the drop spherical, and the repulsive electrostatic force, which both act on the boundary of the drop. The problem is well-known and goes back to Rayleigh \cite{Ray} who studied the linear stability of the sphere and showed that the sphere becomes unstable when the total electric charge is above a given threshold. When the total electric charge is above this Rayleigh threshold, the drop begins to elongate and may eventually form a conical singularity at the tip with a certain opening angle. Such singularities are called \emph{Taylor cones} due to the work by Taylor \cite{Tay} and the numerical and experimental evidence suggest that the charged drop typically forms such a singularity \cite{GGS, Mik, Tay, Zel}. In this paper our goal is to study the well-posedness of the problem and the regularity of the solution. We refer to \cite{MN} and \cite{GR} for an introduction to the topic.

The static problem of charged liquid drop can be seen as a nonlocal isoperimetric problem and it
has been studied from the point of view of Calculus of Variations in recent years \cite{GNR, GNR2, La, MN}. The main issue is that the
associated minimization problem, formulated in the framework of Calculus of Variations, is not
well-posed, in the sense that the problem does not have a minimizer \cite{GNR, MN}. Even more surprising is that the results in \cite{GNR, MN} show that even if the total electric charge is below the  Rayleigh  threshold, the sphere is not a local minimizer of the associated energy. This means that the electrostatic term is not lower order with respect to the surface tension, which makes the problem mathematically challenging. In order to make the variational problem well-posed one may restrict the problem to convex sets \cite{GNR2} or regularize the functional by adding a curvature term \cite{GR} which could lead to the existence of minimizer as the result in \cite{GNRo} suggests.   

Here we  study this problem from the point of view of fluid-dynamics, which is the framework studied e.g. in  \cite{GGS}, where the authors derive the PDE system in the irrotational case (see also \cite{Bas}). Indeed, as it is observed in \cite{GGS} the problem is by nature evolutionary, where the drop deforms as a function of time given by the Euler equations with the
surface of the drop being the free boundary, which law of motions is coupled with the system which we give in \eqref{system} below. The problem can thus be seen as the Euler equations for incompressible fluids with free boundary with an additional term given by the electric field. The Euler equations with free boundary without the electric field has been studied rather extensively in recent years.  We give only a brief overview on this challenging problem below and refer to   \cite{CS3, MR} for more detailed introduction to the topic. Regarding the problem with electric field we mention the resent works by Yang \cite{Yang1, Yang2} and Wang-Yang \cite{WY}, where the authors study the case of the water-wave problem. We also mention the work \cite{FF}, where the authors study the Stokes flow associated with the charged liquid drop near the sphere and show that under smallness assumption the flow is well defined.  

We stress that in our case it is crucial to include the surface tension in the model since otherwise the problem might be ill-posed. For the problem without the electric field one may study the Euler equations also without the surface tension, when one assumes the so called Rayleigh-Taylor sign condition \cite{ebin}, which one should not confuse with the Rayleigh threshold mentioned above. For the water-wave problem the well-posedness is proven by Wu \cite{Wu1, Wu2} and the general case is due to  Lindblad \cite{Lin}, see also \cite{AM, Lan}. Concerning the problem with surface tension, which is closer to ours, the short time existence of solution in the irrotational case for starshaped sets is due to Beyer-G\"unther \cite{BG1, BG2} and the general case is proven by Coutand-Shkoller \cite{CS1}. We also mention the earlier works concerning the well-posedness of the problem in the planar case \cite{Amb, Igu, Yos}.  The works that are closest to ours are Shatah-Zheng \cite{CPAM} and Schweizer \cite{Shw}, where the authors prove regularity estimates for the free boundary Euler equations with surface tension. Our work is also inspired by  Masmoudi-Rousset \cite{MR}, where the authors prove similar estimates for the Euler equations without the surface tension. 

As usual with geometric evolution equations, the Euler equations with free boundary may develop singularities in finite time. In \cite{CS2} the authors construct an example where the equations develop singularities where the drop changes its topology. We stress that in the absence of the electric field, we do not expect the flow to develop conical singularities predicted by Taylor  \cite{Tay}, where both the curvature and the velocity become singular. Indeed, Taylor cones are special type of singularities as the evolving sets $\Omega_t$ do not change their topology, but  only lose their regularity. We also point out that the analysis in \cite{Tay} does not give a rigorous mathematical  proof for the fact  that the Taylor cone is a singularity of the associated flow. Indeed, since there is no monotonicity formula for the Euler equations, similar to the one by Huisken \cite{Hui, MantegazzaBook} for the mean curvature flow, there is little hope to have general classification of the singularities at the moment. We refer to \cite{FJ}  and  \cite{GVW} which both study critical points of energy functionals, which are very much similar to ours, with  conical or cusp-like singularities. Then again, as we interpret the Taylor cone as a singularity of the flow given by the Euler equations \eqref{system},  it is not clear why the singularity is a critical point of the potential energy.

\subsection{Statement of the Main Theorem}
We study the motion of an incompressible charged drop in vacuum in $\R^3$ and  denote the fluid domain by $\Omega_t$. We assume that we have an initial smooth and compact set $\Omega_0 \subset \R^3$ and a smooth initial velocity field $v_0: \Omega_0 \to \R^3$ which evolve to  a smooth family of sets and vector fields  $(\Omega_t, v(\cdot, t))_{t \in [0,T)}$.  The total energy is given by 
\beq
\label{energy}
J_t(\Omega_t, v) = \frac12  \int_{\Omega_t} |v(x,t)|^2 \, dx  + \H^2(\pa \Omega_t) + \frac{Q}{\text{Cap}(\Omega_t)},
\eeq 
where $Q>0$ and $\text{Cap}(\Omega)$ is the electrostatic capacity given by
\[
\text{Cap}(\Omega) : = \inf \big{\{} \int_{\R^3}\frac12|\nabla u|^2 \, dx : \,  u(x) \geq 1 \,\, \text{for all } \, x \in  \Omega \, , u \in \dot{H}^1(\R^3) \big{\}},
\]
and by $\H^2(\pa \Omega)$ we denote the two dimensional Hausdorff measure of the set $\pa \Omega$. Define the norm $\|u\|_{\dot{H}^1(\R^3)} = \|\nabla u\|_{L^2(\R^3)} + \|u\|_{L^6(\R^3)}$. We denote the capacitary potential as $U_\Omega  \in \dot{H}^1(\R^3)$ which is the function for which $\text{Cap}(\Omega)  = \int_{\R^3} \frac12|\nabla U_\Omega|^2 \, dx$. This is equivalent to say that $U_\Omega \in \dot{H}^1(\R^3)$ satisfies
\beq \label{eq:def-capapot}
\begin{cases}
-\Delta U_\Omega = 0 \qquad \text{in }  \,  \R^3 \setminus \overline{\Omega} \\
U_\Omega = 1 \qquad \text{on }  \, \overline{\Omega}.
\end{cases}
\eeq
We denote the mean curvature of $\St = \pa \Omega_t$ by $H_{\St}$, which for us is the sum of the principal curvatures given by orientation via the outer normal $\nu_{\St}$. With this convention convex sets have positive mean curvature.   As usual we denote the material derivative of a vector field $F$ by  
\[
\D_t F = \pa_t F + (v\cdot \nabla)F. 
\]
The equations of motion are given by  the Euler equations with free boundary (for the derivation see \cite{GGS})
\begin{eqnarray}\label{system}
\begin{cases}
\D_t v  + \nabla p  =0 \qquad  \text{in } \,  \Omega_t
\\
 \Div  v=0 \qquad \text{in } \, \Omega_t
\\
v_n = V_t \qquad \text{on } \, \St = \pa  \Omega_t
\\
p= H_{\St} - \frac{Q}{2 C_t^2} |\nabla U_{\Omega_t}|^2 \qquad \text{on } \St,
\end{cases}
\end{eqnarray}
where $C_t = \text{Cap}(\Omega_t) $, $V_t$ is the normal velocity, $v_n = v \cdot \nu$ and $p$ is the pressure. We say that the system \eqref{system} has a smooth solution in time-interval $(0,T)$ with initial data  $(\Omega_0, v_0)$, if there is a family of $C^\infty$-diffeomorphisms $(\Phi_t)_{t \in [0,T)}$, which depend smoothly on $t$, such that $\Phi_0 = id$ and $\Phi_t(\Omega_0) = \Omega_t$, the functions $v(t, \Phi(t,x))$ and $p(t, \Phi(t,x))$ are smooth and the  equations hold in the classical sense. Moreover we require that $v(t, \Phi(t,\cdot)) \to v_0$ as $t \to 0$, where $v_0: \Omega_0 \to \R^3$ is the initial velocity field.    
When the total electric charge is zero, i.e.   $Q= 0$, the system reduces to the more familiar Euler equations with free boundary with surface tension. We stress that formally it may seem that the term given by the electric field in the pressure is of lower order than the curvature. However, even for Lipschitz domains this naive intuition fails as we will observe in the beginning of Section 3.

The characteristic property of the solution of  \eqref{system}  is the conservation of the energy \eqref{energy}, i.e., 
\[
\frac{d}{dt} J(\Omega_t, v) = 0, 
\]
which follows from straightforward calculation. Therefore one could guess, and we will prove this in our main theorem, that  assuming that the flow given by the system  \eqref{system} does not develop singularities, then it  preserves the regularity of the initial data $(\Omega_0, v_0)$ at least  in sense of certain Sobolev norm. In particular, we point out that, unlike the mean curvature flow \cite{MantegazzaBook}, the flow given by the system  \eqref{system} is not smoothing.   

We parametrize the moving boundary $\Sigma_t = \pa \Omega_t$ by using  a fixed reference surface $\Gamma$ which we assume to be smooth and compact. We use   the  height function  parametrization which means that for every $t$ we associate  the function $h(\cdot, t) : \Gamma \to \R$  with  the moving boundary $\St$ as
\[
\Sigma_t = \{x+  h(x,t)\nu_{\Gamma}(x)  : x \in \Gamma\}.
\]
We assume that $\Gamma$ satisfies the interior and exterior ball condition with radius $\eta >0$ and note that $\eta$ is not necessarily small. For example, from application point of view a relevant case is when the initial set is star-shaped in which case it is natural to choose the  reference manifold to be a sphere  in which case $\eta$ is its radius. It is clear that the height-function parametrization is well defined as long as
\[
\sup_{t \in[0,T)} \|h(\cdot ,t)\|_{L^\infty(\Gamma) }< \eta.
\]
Therefore we define the quantity 
\begin{equation}
\label{eq:height_well}
\sigma_T := \eta - \sup_{t \in[0,T)} \|h(\cdot ,t)\|_{L^\infty(\Gamma) }
\end{equation}
and the  above condition reads as $\sigma_T >0$.

As in \cite{MR,Shw, CPAM} we note  that we do not consider the existence in this paper.  Instead, as in \cite{Shw} we assume that the following qualitative short time existence result holds. 

\emph{Throughout the paper we assume that for every smooth initial set and smooth initial velocity field the system \eqref{system} has a smooth solution which exists a short interval of time. }

Since we will prove a priori estimates, we expect the existence to follow from an argument in the spirit of  \cite{SZ2}.

In this paper we are interested in finding a priori estimates which guarantee that the system \eqref{system} does not develop singularities. To this aim we fix a small $\alpha>0$ and  define 
\begin{equation}
\label{def:apriori_est}
\Lambda_T:= \sup_{t \in[0,T)} \left( \|h(\cdot ,t)\|_{C^{1,\alpha}(\Gamma)} + \|\nabla v(\cdot,t) \|_{L^\infty(\Omega_t)} + \|v_n(\cdot,t) \|_{H^2(\St) }     \right).
\end{equation}
We note that $\alpha$ can be any positive number. We could also replace the $C^{1,\alpha}$-norm by the $C^{1,\text{Dini}}$-norm, but we choose to work with H\"older norms as the problem is already technically involved.   Our goal is to show that if the quantity $\Lambda_T$ is bounded then the flow can be extended beyond time $T$ and is smooth if the initial data is smooth. We will prove this in a quantitative way and define an energy quantity   of order $l \geq 1$ as
\begin{equation}
\label{def:Energy-tilde}
\hat E_l(t) := \sum_{k = 0}^l\|\D_t^{l+1-k} v\|_{H^{\frac32k}(\Omega_t)}^2 + \|v(\cdot, t)\|_{H^{\lfloor \frac32(l+1) \rfloor}(\Omega_t)}^2, 
\end{equation}
where $\lfloor \frac32(l+1) \rfloor$ denotes the integer part of  $\frac32(l+1)$. We define the Hilbert space for half-integers $H^{\frac32k}(\Omega_t)$ via extension in Section 2. In the last term we use a Hilbert space of integer order since it simplifies the calculations. The fact that the boundedness of $\tilde E_l(t)$  for every $l$ implies the smoothness of the flow will be clear from the results in Section 8. Indeed, we first show that the bound on $\hat E_l(t) $ implies a bound for  the pressure $p$. By the a priori estimate we know that the fluid domain remains $C^{1,\alpha}$-regular. We use this and estimates for harmonic functions to conclude that the bound on the pressure implies bound on the curvature  (see Lemma \ref{lem:press-curv}), which then  gives the regularity of the fluid domain $\Omega_t$.    

Our main result reads as follows. Recall that we assume that the reference surface $\Gamma \subset \R^3$ satisfies the interior and exterior ball condition with radius $\eta$.
\begin{mtheorem}
Assume that $\Omega_0$ is a smooth initial set which boundary satisfies $\pa \Omega_0 = \{x+  h_0(x)\nu_{\Gamma}(x)  : x \in \Gamma\}$ with 
$ \|h_0\|_{L^\infty(\Gamma)}< \eta$ and let $v_0  \in C^{\infty}(\Omega_0; \R^3)$ be the initial velocity  field. Assume that the system \eqref{system} has a smooth solution in  time-interval $[0,T)$  and the parametrization satisfies 
\beq \label{eq:apriori_est}
 \Lambda_T \leq M  \quad  \text{and} \quad  \sigma_T \geq \frac{1}{M}
\eeq
for some $M>0$, where $\sigma_T$ is defined in \eqref{eq:height_well} and $\Lambda_T$ in \eqref{def:apriori_est}. Then for every $l \in \N$ there is a constant $C_l$, which depends on $M, l, \hat E_l(0) $, and on $T$ if $T>1$, such that  the flow satisfies 
\[
\sup_{0 < t < T} \hat E_l(t) \leq C_l,
\]
where $\hat E_l(t)$ is defined in \eqref{def:Energy-tilde}. In particular, the system \eqref{system} does not develop singularity at time $T$, but remains quantitatively smooth.

Moreover, there are $T_0 >0 $ and $M$, which depend on $\sigma_0$, i.e., $\sigma_t$ at $t=0$,  $\|H_{\Sigma_0}\|_{H^2(\Sigma_0)}$, $\|v\|_{H^3(\Omega_0)}$ and the $C^{1,\alpha}$-norm of $h_0$, such that the a priori estimates \eqref{eq:apriori_est} hold for $M$  up to time $\hat T = \min\{ T, T_0\}$. 
\end{mtheorem}

Let us make a few comments on the Main Theorem. First,  from the point of view of the shape of the drop, the result says that if the parametrization of the flow remains $C^{1,\alpha}$-regular then the flow does not develop singularities. We expect this to be optimal in the sense that, we cannot  relax the $C^{1,\alpha}$-regularity to Lipschitz regularity as the flow may create conical singularities as discussed before. 

From the point of view of the velocity, the assumption on Lipschitz regularity of $v$, which is  stronger than the boundedness of the $\curl v$,   is in the spirit of  the Beale-Kato-Majda criterion and thus natural in the theory of the Euler equations \cite{MB}.  Indeed, in the case when the drop does not chance its shape, i.e., $\Omega_t = \Omega_0$ the condition \eqref{eq:apriori_est} reduces to 
\[
\sup_{t \in [0,T)} \|\nabla v (\cdot ,t)\|_{L^{\infty}(\Omega_0)}  < \infty,
\]
which guarantees that the equations do not develop singularities by standard results for the Euler-equations \cite{MB}. Whether one may remove this condition is beyond our reach at the moment as the gradient level estimates are a fundamental  problem in the theory of the Euler equations without the free boundary. The condition on the $H^2$-integrability of the normal component of the velocity $v$ on the other hand is related to the fact that the boundary $\St$ is moving. We do not expect this to be  optimal but again  this problem is too involved  for us to solve at the moment. Our main contribution to the problem is to find  the optimal sufficient condition for the shape of the drop which guarantee that the flow is well-defined and provide the regularity estimates of all order $l$. For a drop without surface tension similar type  of estimate is proven by Ginsberg \cite{Gin} with an a priori assumption on the uniform curvature bound.

Finally the last statement of the Main Theorem says that the first statement is not empty, i.e., that the a priori estimates stay bounded up to time $T_0$, which depends on the initial data by requiring that $\|H_{\Sigma_0}\|_{H^2(\Sigma_0)}$ and $\| v_0\|_{H^3(\Omega_0)}$ are bounded. We also note that since the regularity estimates in Main Theorem are quantitative, the result can be applied for non-smooth initial data by standard approximation.  We note that all quantities in the paper depend of course on the chosen reference surface $\Gamma$ even if it is not  explicitly mentioned.

\subsection{Overview of the proof and the structure of the paper}

As the paper is long we give a brief overview of the proof of the Main Theorem and of the structure of the paper. The proof is based on energy estimates and to that aim we define the energy functional of order $l \geq 1$ as 
\[
\begin{split}
\En_l(t) =\frac12 \int_{\Omega_t} &|\D_t^{l+1} v|^2 \, dx +  \frac12\int_{\Sigma_t} |\nabla_\tau (\D_t^l v \cdot \nu)|^2 \, d \H^2 \\
&-  \frac{Q}{2C_t^2} \int_{\Omega_t^c} |\nabla (\pa_t^{l+1} U_{\Omega_t}) |^2 \, dx  +  \int_{\Omega_t} |\nabla^{\lfloor \frac12(3l+1) \rfloor}( \curl v)|^2 \, dx,
\end{split}
\]
which is similar to the quantity in \cite{Shw} defined on graphs. Here $v$ is the velocity, $\D_t^l v$ is the material derivative of order $l$ and $\lfloor \frac12(3l+1) \rfloor$ denotes the largest integer smaller than $\frac12(3l+1)$. Note that we need an additional term involving the time derivative of the capacitary potential $U_{\Omega_t}$, as it  appears as a high order term in the linearization of the pressure (see Lemma \ref{formula:Dtp}). This additional term causes problems as it is not immediately clear why the energy is positive or even bounded from below. We also define  the associated energy quantity, where we include  the spatial regularity
\[
E_l(t) =  \sum_{k=0}^{l}  \|\D_t^{l+1-k}v\|_{H^{\frac32 k}(\Omega_t)}^2 + \|v\|_{H^{\lfloor \frac32(1+1)\rfloor}(\Omega_t)}^2+ 
 \|\D_t^l v \cdot \nu \|_{H^1(\Sigma_t)}^2+1.
\]
Note that this quantity takes into account the  natural scaling of the system \eqref{system}, where time scales of order $\frac32$ with respect to space as observed e.g. in \cite{CPAM}. 

We prove high order energy estimates by first showing that if $\Lambda_T, \sigma_T$ satisfy \eqref{eq:apriori_est} then for all $t<T$ it holds 
\beq \label{eq:energy1-intro}
\frac{d}{dt} \En_l(t) \leq C_l E_l(t)
\eeq
for $l \geq 2$. The novelty of \eqref{eq:energy1-intro} is that the RHS has linear and not polynomial dependence on $E_l(t)$, which is crucial in order to show that the flow remains smooth as long as \eqref{eq:apriori_est} holds. This makes the proof technically challenging as we need to estimate all nonlinear error terms in $\frac{d}{dt} \En_l(t)$ in an optimal way. We complete the argument by proving 
\beq \label{eq:energy2-intro}
 E_l(t) \leq C_l(\tilde C_l + \En_l(t))
\eeq
which holds for $l \geq 1$. The inequalities \eqref{eq:energy1-intro} and \eqref{eq:energy2-intro} then imply the  energy estimates for $l\geq 2$ and the quantitative $C^\infty$-regularity of the flow. 

We prove  \eqref{eq:energy1-intro} and \eqref{eq:energy2-intro} by an induction argument over $l$, where the constants depend on $\sup_{t<T} E_{l-1}(t)$ which is bounded by the previous step. Therefore the  first challenge is to start the argument and to bound $E_1(t)$. The issue is that \eqref{eq:energy1-intro} does not hold for $l=1$. Instead, we show  a weaker estimate  
\beq \label{eq:energy3-intro}
\frac{d}{dt} \En_1(t) \leq C_1 (1+ \| p\|_{H^2(\Omega_t)}^2) E_1(t),
\eeq
which we expect to be sharp. Therefore in order to start the induction argument we use an ad-hoc argument to show  
\beq \label{eq:energy4-intro}
\int_0^T \| p\|_{H^2(\Omega_t)}^2 \, dt \leq  C.
\eeq
The inequalities \eqref{eq:energy2-intro}, \eqref{eq:energy3-intro} and \eqref{eq:energy4-intro} then imply the first order energy estimate. 

We show \eqref{eq:energy4-intro} by studying the function
\[
\Phi(t) = -\int_{\St} p \, \Delta_{\St} v_n \, d \H^2,
\]
where $p$ is the pressure and $\Delta_{\St}$ the Laplace-Beltrami operator, and prove that it holds
\[
\frac{d}{dt}\Phi(t)  \leq - \frac13 \| p\|_{H^2(\Omega_t)}^2 + \text{lower order terms}.
\]
We show that the a priori estimates \eqref{eq:apriori_est} imply that $\Phi$ is bounded and thus we obtain \eqref{eq:energy4-intro} by integrating the above inequality over $(0,T)$. We point out that the low order energy estimate is the most challenging part of the proof as we have to work with domains with low regularity. We need rather deep results from differential geometry, boundary regularity for harmonic functions and elliptic regularity in order to overcome this problem. Let us finally outline the structure of the paper.

In \textbf{Section 2} we introduce our notation. Due to the presence of the surface tension, the problem is geometrically involved and we need notation and tools from differential geometry to overcome these issues. We also define the function spaces that we need which include the Hilbert spaces in the domain  $H^{k}(\Omega)$ and on the boundary $H^{k}(\Sigma)$ for half-integers $k =0, \frac12, 1, \dots$. We also recall functional inequalities such as interpolation inequality and Kato-Ponce inequality. 

 In \textbf{Section 3} we prove div-curl type estimates in order to transform the  high order energy estimates into regularity for the shape and the velocity.  We  first recall the result from \cite{CS} and prove its lower order version in Theorem \ref{prop:CS-k=1}.  In Theorem \ref{teo:reg-capa} we prove sharp boundary regularity estimates for harmonic functions with Dirichlet boundary data by using methods from \cite{FJM3D}. We believe  that these two results are of independent interest.

In \textbf{Section 4} we derive commutation formulas as in \cite{CPAM} and formula for the material derivatives of the pressure on the moving boundary. These formulas include four different error terms  which we bound  in \textbf{Section 5}. All the error terms have different structure and therefore we need to treat them one by one, which makes the Section 5 long. The further difficulty is due to the fact that the time and space derivatives have different scaling. 

The core of the proof of the Main Theorem is in the next three sections. In \textbf{Section 6} we prove \eqref{eq:energy4-intro}, in \textbf{Section 7} we prove \eqref{eq:energy1-intro} and \eqref{eq:energy3-intro}, and   in  \textbf{Section 8} we prove \eqref{eq:energy2-intro}. The short final section then contains the proof of the Main Theorem. 

\section{Notation and Preliminary results}

In this section we introduce our notation and recall some basic results on function spaces and geometric inequalities. Many of these results are well-known for experts but we include them since they might be difficult to find, while some results we did not find at all in the existing literature.    Throughout the paper $C$ denotes a large constant, which value may change from line to line.

We first introduce notation related to Riemannian geometry.  As an introduction to the topic we refer to \cite{Lee}. We will always deal with compact hypersurfaces $\Sigma \subset \R^3$, which then can be seen as boundaries of sets $\Omega$, i.e., $\pa \Omega = \Sigma$. We denote its outer unit normal by $\nu_{\Omega}$ and denote it sometimes by $ \nu_{\Sigma}$ or merely $\nu$ when its meaning is obvious from the context. We use the outward orientation and denote the second fundamental form by $B_\Sigma$ and the mean curvature by $H_\Sigma$, which is defined as the sum of the principal curvatures. Again we write simply $B$ and $H$ when the meaning is clear from the context. We note that we use the convention in our notation that $\Sigma = \pa \Omega$ denotes a generic surface, $\St = \pa \Omega_t$ denotes the evolving surface given by the equations \eqref{system} and $\Gamma = \pa G$ is our reference surface which we introduce later.  We note that the constants in the paper will depend on the chosen reference surface. We take this for granted and do not mention it in the statements.

Since $\Sigma$ is  embedded in $\R^{3}$  it has natural metric $g$ induced by the Euclidian metric. Then $(\Sigma, g)$ is a Riemannian manifold and we denote the inner product on each tangent space $X, Y \in T_x \Sigma$ by $\la X, Y \ra$, which we may write in local coordinates as 
\[
\la X,Y \ra = g(X,Y) = g_{ij} X^iY^j.
\]
We extend the inner product in a natural way for tensors. We denote smooth vector fields on $\Sigma$ by $\mathscr{T}(\Sigma)$ and by  a slight abuse of notation we denote smooth $k$th order tensor fields on $\Sigma$  by $\mathscr{T}^k(\Sigma)$. We write $X^i$ for vectors and $Z_i$ for covectors in local coordinates. 

We denote the Riemannian connection on  $\Sigma$ by $\bar \nabla$ and recall that  for a function $u \in C^\infty(\Sigma)$ the covariant derivative  $\bar \nabla u $ is a $1$-tensor field defined for  $X  \in \mathscr{T}( \Sigma)$  as
\[
\bar \nabla u(X)  = \bar \nabla_X u = X u,
\]
i.e., the derivative  of $u$ in the direction of $X$.  The  covariant derivative  of  a smooth $k$-tensor field $F \in \mathscr{T}^k( \Sigma)$, denoted  by  $\bar \nabla F$, is a $(k+1)$-tensor field    and we have the following recursive formula for $ Y_1, \dots, Y_k, X \in \mathscr{T}( \Sigma)$  
\[
\bar \nabla F(Y_1, \dots, Y_k, X) = (\bar \nabla_X F)(Y_1, \dots, Y_k) ,
\]
where
\[
(\bar \nabla_X F)(Y_1, \dots, Y_k) = X F(Y_1, \dots, Y_k) - \sum_{i=1}^k F(Y_1, \dots,  \bar \nabla_X Y_i ,\dots, Y_k).
\]
Here $\bar \nabla_X Y$ is  the covariant derivative of $Y$ in the direction of $X$ (see \cite{Lee}) and since $\bar \nabla$ is the Riemannian connection it holds  $\bar \nabla_X Y = \bar \nabla_Y X  + [X,Y]$ for every $X, Y \in \mathscr{T}( \Sigma)$. We denote the $k$th order covariant derivative of a function $u$ on $\Sigma$ by $\bar \nabla^k u \in \mathscr{T}^k( \Sigma)$. The notation $\bar \nabla_{i_1} \cdots \bar \nabla_{i_k} u$ means a  coefficient of $\bar \nabla^k u$ in local coordinates. We may raise the index of $\bar \nabla_i u$ by using the inverse of the metric tensor  $g^{ij}$  as $\bar \nabla^i u = g^{ij}\bar \nabla_j u$. We denote the divergence of a vector field $X \in \mathscr{T}(\Sigma)$ by  $\Div_{\Sigma} X$ and the Laplace-Beltrami operator for a function $u : \Sigma \to \R$ by $\Delta_\Sigma u$. We recall that by the divergence theorem 
\[
\int_{\Sigma} \Div_{\Sigma} X \, d \H^2 = 0.
\]

We will first fix our reference surface which we denote by $\Gamma$ which is a boundary of a smooth, compact set $G$, i.e., $\Gamma = \pa G$. Since $G$ is smooth it satisfies the interior and exterior ball condition with radius $\eta$,  and we denote the tubular neighborhood of $\Gamma$ by $\mathcal{N}_\eta(\Gamma)$ which is defined as
\[
\mathcal{N}_\eta(\Gamma) = \{ x \in \R^3: \text{dist}(x,\Gamma) < \eta\}. 
\]
Then the map $\Psi : \Gamma \times (-\eta, \eta)  \to \mathcal{N}_\eta(\Gamma)$ defined as $\Psi(x,s) = x+ s \nu_{\Gamma}(x)$ is a diffeomorphism. We say that  a hypersurface  $\Sigma$,  or a domain $\Omega$ with $\pa \Omega = \Sigma$, is $C^{1,\alpha}(\Gamma)$-regular for some small $\alpha >0$, when it can be written as 
\[
\Sigma = \{ x + h(x) \nu_{\Gamma}(x) : x \in \Gamma \},
\]
for a $C^{1,\alpha}(\Gamma)$-regular function $h : \Gamma \to \R$ with $\|h\|_{L^\infty} < \eta$. In particular, all $C^{1,\alpha}(\Gamma)$-regular sets are diffeomorphic. We say that a set $\Sigma$ is uniformly $C^{1,\alpha}(\Gamma)$-regular  if the height-function satisfies $\|h\|_{C^{1,\alpha}(\Gamma)} \leq C$ and $\|h\|_{L^\infty} \leq c \eta$ for constants $C$ and $c <1$. Finally we say that $\Sigma$ is uniformly $C^1$-regular if $\|h\|_{C^{1}(\Gamma)} \leq C$.

Let us next fix our notation in the ambient space $\R^3$. We denote the $k$th order  differential of a vector field $F : \R^3 \to \R^m$ by $\nabla^k F$,  the divergence of $F : \R^3 \to \R^3$ by $\Div F$ and the Laplace operator in $\R^3$ by $\Delta$. The notation $(\nabla F)^T$ stands for the transpose of $\nabla F$.  When we restrict $F: \R^3 \to \R^3$ on $\Sigma$, we define its normal and tangential part as 
\[
F_n := F \cdot \nu_\Sigma \qquad \text{and} \qquad F_\tau = F - F_n \, \nu_\Sigma. 
\]
We use the notation $x\cdot y$ for the inner product of two vectors in $\R^n$. 

Since  $\Sigma$ is a smooth hypersurface we may extend every function and vector field defined on $\Sigma$ to $\R^3$. We may thus define a tangential differential of a vector field $F : \Sigma \to \R^m$ by
\[
\nabla_\tau F = \nabla F - (\nabla F   \nu_\Sigma) \otimes  \nu_\Sigma 
\]
where we have extended $F$ to $\R^3$. We may then extend the definition of $\Div_\Sigma$ to fields $F : \Sigma \to \R^3$ by $\Div_\Sigma F = \text{Tr}(\nabla_\tau F)$ and the divergence theorem generalizes to
\[
\int_{\Sigma} \Div_{\Sigma} F \, d \H^2 = \int_{\Sigma} H_\Sigma (F \cdot \nu_\Sigma) \, d \H^2 .
\]
We note that the tangential gradient of $u \in C^\infty(\Sigma)$ is equivalent to its covariant derivative in the sense that for every vector field $X \in \mathscr{T}(\Sigma)$ we find a vector field  $\tilde{X} : \Sigma \to \R^{3}$ which satisfies $\tilde{X}\cdot \nu_\Sigma = 0$  and 
\[
\bar \nabla_X u = \nabla_\tau u \cdot \tilde{X}.
\]

Let us comment briefly on the notation  related to the equations \eqref{system}. We denote the derivative with respect to time by $\pa_t F$ and the material derivative as
\[
\D_t F := \pa_t F +(v \cdot \nabla) F.
\]
The material derivative does not commute with the spatial  derivative and we denote the commutation  
\[
[\D_t,\nabla] u = \D_t\nabla u - \nabla \D_t u.
\]
We denote by $U_\Omega$ the capacitary potential defined in \eqref{eq:def-capapot} and denote $U_t = U_{\Omega_t}$, $H_t = H_{\Sigma_t}$ etc... when the meaning is clear from the context. To shorten further the notation we denote
\beq \label{def:constant-q}
\Qt: = \frac{Q}{(\text{Cap}(\Omega_t))^2}.
\eeq
We may thus write the pressure in \eqref{system} as
\[
p = H_t - \frac{\Qt}{2} |\nabla U_t|^2.
\]

Let us next fix the notation for the function spaces.  We define  the Sobolev space $W^{l,p}(\Sigma)$ in  a standard way for $p \in [1,\infty]$, see e.g.  \cite{AubinBook2}, denote the Hilbert space $H^l(\Sigma) = W^{l,2}(\Sigma)$ and  define the associated norm for $u \in W^{l,p}(\Sigma)$ as
\[
\| u\|_{W^{l,p}(\Sigma)}^p = \sum_{k = 0}^l \int_\Sigma |\bar \nabla^k u|^p\, d \H^2
\]
and for $p = \infty$
\[
\| u\|_{W^{l,\infty}(\Sigma)} = \sum_{k = 0}^l \sup_{x \in \Sigma} |\bar \nabla^k u|.
\]
We often denote $\|u\|_{C^0(\Sigma)} = \|u\|_{L^\infty(\Sigma)} = \sup_{x \in \Sigma} |u(x)|$ 
for continuous function $u : \Sigma \to \R$ and $\| u\|_{C^{m}(\Sigma)} = \| u\|_{W^{m,\infty}(\Sigma)}$ . We define the H\"older norm of a continuous function $u : \Sigma \to \R$ by 
\[
\| u\|_{C^\alpha(\Sigma)} = \|u\|_{L^\infty(\Sigma)} + \sup_{\substack{x \neq y \\ x,y \in \Sigma}} \frac{|u(y) - u(x)|}{|y-x|^\alpha}.
\]
We define the H\"older norm for a  tensor field $F \in \mathscr{T}^k(\Sigma)$ as in \cite{JL}
\[
\| F\|_{C^\alpha(\Sigma)} = \sup \{ \| F(X_1, \dots, X_k) \|_{C^\alpha(\Sigma)} : X_i \in \mathscr{T}(\Sigma) \, \text{ with } \, \|X_i \|_{C^1(\Sigma)} \leq 1\}.
\]
Finally we define the $H^{-1}(\Sigma)$-norm by duality, i.e., 
\[
\|u \|_{H^{-1}(\Sigma)} := \sup \Big{\{} \int_{\Sigma} u \, g \, d\H^2 : \|g\|_{H^{1}(\Sigma)} \leq 1 \Big{\}}.  
\]

For functions defined in the domain $u:\Omega \to \R$ we define the Sobolev space  $W^{l,p}(\Omega)$ as functions which have $k$th order weak derivative in $\Omega$ and the corresponding norm is bounded
\[
\|u\|_{W^{l,p}(\Omega)}^p := \sum_{k=0}^l \int_{\Omega} |\nabla^k u|^p \, dx <\infty. 
\]
As before we denote the Hilbert space as  $H^l(\Omega) = W^{l,2}(\Omega)$ and define $H^{-1}(\Omega)$ by duality. 
Finally given an index vector $\alpha = (\alpha)_{i=1}^k \in \N^k$ we define its norm by 
\[
|\alpha| = \sum_{i =1}^k \alpha_i.
\]

Throughout the paper we use the notation $S \star T$ from \cite{Ham, Mantegazza2002} to denote a tensor formed by contraction on some indexes of  tensors $S$ and $T$, using the coefficients of the metric tensor $g_{ij}$ if $S$ and $T$ are defined on the boundary $\Sigma$. We also use the convention that $\bar \nabla^k u \star \bar \nabla^l v$ denotes contraction of some indexes of  tensors $\bar \nabla^{i} u$ and $ \bar \nabla^{j} v$  for any $i \leq k$ and $j \leq l$. In other words, we include also the lower order covariant derivatives.  

Following the notation from \cite{Triebel}, we first introduce the real interpolation method and then the interpolation spaces. Let $X$ and $Y$ be Banach spaces endowed respectively with the norm $\|\cdot\|_X$ and $\|\cdot\|_Y$.
The couple $(X,Y)$ is said to be an interpolation couple if both $X$ and $Y$ are
embedded in a Hausdorff topological vector space $V$. In this case we have that 
$X\cap Y$ endowed with the norm $\|v\|_{X\cap Y}=\max \{\|v\|_X, \|v\|_Y\}$ is a Banach space.
Moreover, 
we also have that $X+Y=\{z=x+y,\, x\in X,\,y\in Y \}$
endowed with the norm
\[
\|z\|_{X+Y}= \inf_{x\in X,\, y \in Y}\{ \|x\|_X+\|y\|_Y, \, z=x+y\}
\]
is a Banach space
and it is immediate to check that
$$
X\cap Y\subset X,Y\subset X+ Y.
$$
For $z \in X+Y$ and $t>0$ we introduce the $K$ functional
$$
K(t,z,X,Y)=\inf_{x\in X,\, y\in Y} \{\|x\|_X+t\|y\|_Y,\, x+y=z\}.
$$
For $\theta \in (0,1)$, $p\in [1,\infty)$ and $z\in X+Y$ we let
$$
\|z\|^p_{\theta, p}= \int_{0}^\infty \left(\frac{K(t,z,X,Y)}{t^{\theta }}\right)^p
\frac{dt}{t}
$$
and  define
$$
(X,Y)_{\theta, p}=\{z\in X+Y: \|z\|_{\theta,p}<\infty\}.
$$
We note that $(X,Y)_{\theta,p}$ is a Banach space.

Finally, we recall that if $(X_1,X_2)$ and $(Y_1,Y_2)$ are interpolation couples and $\mathcal{F}: X_1+X_2 \to Y_1+Y_2$ is a linear operator which is bounded $X_i \to Y_i$ by $M_i$. Then  for $\theta \in (0,1)$ and $p\in [1,\infty)$ the operator
\beq \label{ineq:inter-functional}
\mathcal{F}: (X_1,X_2)_{\theta, p} \to (Y_1,Y_2)_{\theta, p}
\eeq
is bounded  and we may estimate its norm by $M_1^{1-\theta}M_2^\theta$.

\subsection{Half-integer Sobolev spaces}

Before giving the definition of half-integer Sobolev space in a domain, we exploit the extension properties of Sobolev functions. Throughout the paper we assume that the boundary $\Sigma = \partial \Omega$ is uniformly $C^{1,\alpha}(\Gamma)$-regular and thus it is $H^1$ extension domain, i.e., there is a linear operator  $T:H^{1}(\Omega)\to H^{1} (\R^3)$ such that 
\[
 \|T(u)\|_{H^{1} (\R^3)} \leq C\|u\|_{H^{1}(\Omega)} .
\]
We refer to \cite{CWHM} for the study of Sobolev spaces under Lipschitz-regularity and the references therein. 

We need more regularity for the boundary for higher order Sobolev extension $m \geq 2$, although we do not need the  optimal condition. Instead, we assume the following for the second fundamental form     
\beq\label{eq:notationhp}
\tag{H$_{m}$}
 \|B_\Sigma\|_{L^4(\Sigma)}\leq C_m \quad \text{ if }\,  m=2,\qquad
\, \|B_\Sigma\|_{L^\infty(\Sigma)}+ \|B_\Sigma\|_{H^{m-2}(\Sigma)}\leq C_m \quad \text{ if } \,  m>2,
\eeq
which guarantees that we may extend a given function $u \in H^{m}(\Omega)$ to the whole space. Note that for $m \geq 4$ the condition \eqref{eq:notationhp} is implied by $\|B_\Sigma\|_{H^{m-2}(\Sigma)}\leq C_m $ by the Sobolev-embedding, which agrees with the assumption e.g. in \cite{CS}.  In the following we do not specify that a given quantity depends on the constant $C_m$, but take it for granted when we refer to the condition \eqref{eq:notationhp}. 
  
Even if there are many results for extensions of Sobolev functions in the literature, the condition \eqref{eq:notationhp} is too weak to apply them. To this aim we need the following result.   
\begin{proposition}
\label{prop:extension}
Let $m \in \N$, with $m\geq 2$,  and let $\Omega$ be a smooth domain which is  uniformly $C^{1,\alpha}(\Gamma)$-regular and satisfies  \eqref{eq:notationhp}. Then there is an extension operator $T:H^{m}(\Omega)\to H_0^{m} (\R^3)$ such that
\[
\|T(u)\|_{H^m(\R^3)}\leq 
C\|u\|_{H^m(\Omega)}.
\]
\end{proposition}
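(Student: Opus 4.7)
The plan is to construct the extension by the classical strategy: localize near the boundary via a partition of unity, flatten $\Sigma$ using the height-function parametrization over $\Gamma$, reflect to the other side of the flattened boundary, and finally glue everything with a smooth cutoff. The main work is to track how much regularity of the straightening diffeomorphism is needed, and to verify that hypothesis \eqref{eq:notationhp} provides exactly that amount.

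First I would choose a partition of unity $\{\zeta_i\}$ on $\R^3$ so that one piece is compactly supported inside $\Omega$ (for which no extension is needed, only multiplication by a fixed cutoff equal to $1$ on $\Omega$) and the remaining pieces are supported inside small patches of the tubular neighborhood $\mathcal{N}_\eta(\Gamma)$. On each such patch the diffeomorphism $\Psi(x,s)=x+s\nu_\Gamma(x)$, composed with the shift $(x,s)\mapsto (x,s-h(x))$, maps $\Omega$ onto a model half-space $\{s<0\}$. Denote this combined change of variables by $\Phi$. In the model coordinates I would apply the higher-order Stein reflection
\[
(Eu)(x,s)=\sum_{j=1}^{m}c_j\, u(x,-\lambda_j s), \qquad s>0,
\]
with $\lambda_j>0$ and $c_j$ chosen so that derivatives up to order $m-1$ match across $\{s=0\}$; this is a bounded operator $H^m(\{s<0\})\to H^m(\R^3)$. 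Pulling back by $\Phi^{-1}$, multiplying by a smooth cutoff inside $\mathcal{N}_\eta(\Gamma)$, and summing the local contributions against $\{\zeta_i\}$ yields an operator $T$ with image in $H_0^m(\R^3)$.

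The core of the proof is the norm estimate. Applying the chain rule to $u\circ\Phi$ up to order $m$ produces a sum of terms of the form $(\nabla^k u)\circ \Phi \star \nabla^{\alpha_1}\Phi\star\cdots\star\nabla^{\alpha_j}\Phi$ with $k\leq m$ and $\sum \alpha_i \leq m$. To bound these in $L^2$ I need (i) $L^\infty$ control of $\nabla\Phi$ and $\nabla\Phi^{-1}$, which follows from the uniform $C^{1,\alpha}(\Gamma)$-regularity of $\Sigma$ together with $\sigma=\eta-\|h\|_{L^\infty}>0$, and (ii) $H^{m-1}$ control of $\nabla\Phi$, which reduces via the explicit formula for the second fundamental form in terms of $h$ and $\nu_\Gamma$ to an $H^m(\Gamma)$ bound on $h$; this is exactly what \eqref{eq:notationhp} provides (for $m\geq 3$ directly, and for $m=2$ in the weaker form $\nabla^2 h\in L^4$). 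Combining these with the Kato-Ponce and interpolation inequalities recalled earlier in this section, each chain-rule term is dominated by $C_m\|u\|_{H^m(\Omega)}$.

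The main obstacle is the borderline case $m=2$, where \eqref{eq:notationhp} only gives $B_\Sigma\in L^4(\Sigma)$ rather than $B_\Sigma\in L^\infty\cap H^0$. Here a naive application of the chain rule on a product $\nabla^2 u\cdot(\nabla\Phi)^2+\nabla u\cdot \nabla^2\Phi$ would require $\nabla^2\Phi\in L^\infty$, which is false. The remedy is to invoke the two-dimensional Sobolev embedding $W^{2,4}(\Gamma)\hookrightarrow C^{1,1/2}(\Gamma)$, so that $\Phi\in C^{1,1/2}$; the offending term $\nabla u\cdot\nabla^2\Phi$ is then estimated via Hölder with $\nabla u\in L^4(\Omega)$ (by the Sobolev embedding $H^1\hookrightarrow L^4$ in three dimensions on $C^{1,\alpha}$ domains) and $\nabla^2\Phi\in L^4$. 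Once this delicate case is settled, the higher $m$ cases follow by induction using the product estimates, completing the proof.
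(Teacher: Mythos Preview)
Your proposal is correct and follows essentially the same strategy as the paper: localize, flatten via a diffeomorphism $\Phi$ controlled by \eqref{eq:notationhp}, apply a higher-order reflection (the paper writes out the Vandermonde system explicitly), and estimate the composition by the chain rule. In particular your treatment of the critical case $m=2$---pairing $\nabla u\in L^4(\Omega)$ (from $H^1\hookrightarrow L^4$ in three dimensions) with $\nabla^2\Phi\in L^4$ via H\"older---is exactly the paper's argument; your detour through $W^{2,4}(\Gamma)\hookrightarrow C^{1,1/2}$ is unnecessary but harmless, and the paper handles $m\geq 3$ by a direct H\"older/Sobolev estimate rather than Kato--Ponce plus induction, but the content is the same.
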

\begin{proof}
Let  $x_0\in \pa \Omega$. There exist $\delta>0$ and a diffeomorphism $\Psi: \R^3 \to \R^3 $
such that $\Psi^{-1}( \Omega\cap B_\delta (x_0))= B_1^+ = B_1 \cap \R_+^3$. We note that the $C^{1,\alpha}$ regularity of $\pa \Omega$ and \eqref{eq:notationhp} imply that we may choose the diffeomorphism such that it satisfies $\|\Psi\|_{C^{1,\alpha}(\R^3)} \leq C$ and
\begin{equation}
    \label{eq:extension1}
     \|\nabla^2 \Psi \|_{L^4(\R^3)}\leq C \quad \text{ if }\,  m=2,\qquad
 \|\Psi\|_{H^{m}(\R^3)}\leq C \quad \text{ if } \,  m>2.
\end{equation}

For $u : \Omega\to \R$ smooth we let  $u'=u\circ \Psi$.
We may  extend $u'$ to a function $T(u') \in H^m (B_1)$ such that 
\[
\|T(u')\|_{H^m(B_1)}\leq C\|u'\|_{H^m(B_1^+)}.
\]
The construction of $T(u') \in H^m (B_1)$ is classical but we recall it for the reader's convenience. 
We define 
\begin{equation*}
    T(u')(x',x_n)= 
    \begin{cases}
u'(x',x_n) \,\,\,\,\ &x_n\geq 0
\\
\sum_{j=1}^{m+1} \lambda_j u'(x', -j^{-1}x_n )\,\,\,\,\, &x_n<0
\end{cases}
\end{equation*}\
where $\lambda=(\lambda_1,\dots,\lambda_{m+1})$
solve the system
\begin{equation*}
    \begin{cases}
    \sum_{j}\lambda_j=1
    \\
    \sum _{j}(-j)^{-1}\lambda_j=1
    \\
    \vdots
    \\
    \sum_{j} (-j)^{-m}\lambda_j=1.
    \end{cases}
\end{equation*}
This system, known as Vandermonde system, has a unique solution, hence $Tu'$ is well defined. Finally we define the extension operator as 
\[
T(u) := T(u') \circ \Psi^{-1}. 
\]
Let us show that $T$ is a bounded operator.  

It is straightforward to check that
\[
\|Tu'\|_{H^m(B_1)}\leq C\|u'\|_{H^m(B_1^+)}.
\]
Let us then show that 
\begin{equation}
    \label{eq:extension2}
    \|u'\|_{H^m(B_1^+)}\leq C  \|u\|_{H^m(\Omega)}.
\end{equation}
We first note that
\[
\nabla u'= \nabla u \star \nabla \Psi 
\] 
and for $m\geq 2$
\[
\nabla^m u'= \sum_{|\alpha|\leq m-1 } \nabla^{1+\alpha_1}\Psi\star \cdots \star  \nabla^{1+\alpha_m}
\Psi \star \nabla^{1+\alpha_{m+1}} u.
\]
For $m=2$ we have then by H\"older's inequality, by \eqref{eq:extension1} and by the Sobolev embedding
\[
\|\nabla^2 u'\|_{L^2(B_1)} \leq 
C \|\Psi\|_{C^1(\R^3)}\|\nabla^2 u\|_{L^2(\Omega)} + \|\nabla^2 \Psi\|_{L^4(\R^3)}\|\nabla u\|_{L^4(\Omega)} \leq C\|u\|_{H^2(\Omega)}.
\]
To treat the case $m\geq 3$ we first observe  that by Sobolev embedding it holds 
\[
\|\nabla^{m-2}u\|_{L^\infty(B_1^+)}\leq \|u\|_{H^m(\Omega)} \quad \text{and} \quad \|\nabla^{m-2}\Psi\|_{L^\infty(B_1^+)}\leq \|\Psi\|_{H^m(\R^3)} . 
\]
Hence for $m \geq  3$ we have  by H\"older's inequality, by \eqref{eq:extension1} and by the Sobolev embedding
\[
\begin{split}
    \|\nabla^m u'\|_{L^2(B_1)} &\leq 
C\big((1+\|\Psi\|_{H^m(\R^3)}^m)\|u\|_{H^m(\Omega)}   + \|\nabla^2 \Psi\|_{L^4(\R^3)}\|\nabla^{m-1} u\|_{L^4(\Omega)}\\
&\,\,\,\,\,\,\,+\|\nabla^{m-1} \Psi\|_{L^4(\R^3)}\|\nabla^{2} u\|_{L^4(\Omega)}  \big)\\
&\leq C (1+\|\Psi\|_{H^m(\R^3)}^m)\|u\|_{H^m(\Omega)}\\
&\leq C \|u\|_{H^m(\Omega)}.
\end{split}
\]
Thus we have \eqref{eq:extension2}. 

Similarly we show that 
\[
\|T(u') \circ \Psi^{-1}\|_{H^m(\Omega\cap B_\delta (x_0))} \leq C\|Tu'\|_{H^m(B_1)}.
\]
The claim then follows from standard covering argument. 
\end{proof}

Throughout the paper we will refer to the operator $T$ as the canonical extension operator or simply as the extension operator. We define the half-integer Sobolev space in the domain $\Omega$ using the canonical extension operator.
\begin{definition} \label{dfn:fracsobobulk}
We say that a function $u\in L^2(\Omega)$ is in $H^\frac12(\Omega)$ if
\[
\|u\|^2_{H^\frac12(\Omega)}:= \|u\|^2_{L^2(\Omega)}+\int_{\R^3}\int_{\R^3}\frac{|Tu(x)-Tu(y)|^2}{|x-y|^3}\,dxdy<\infty .
\]
For $k\geq 1$ we say that $u\in H^{k+\frac12}(\Omega)$ if $u\in H^k(\Omega)$ and
\[
\|u\|_{H^{k+\frac12}(\Omega)}
:=\|u\|_{H^k(\Omega)}+ \|T(\nabla^k u)\|_{H^\frac12(\Omega)} < \infty .
\]
Finally we define  the $H^{-\frac12}(\Omega)$-norm by duality. 
\end{definition}
We define $H^\frac12_\star(\Omega)$ as the space of functions via interpolation such that $u \in H^\frac12_\star(\Omega)$ if $T(u) \in (L^2(\R^3),H^1(\R^3))_{\frac12,2}$ and endow it with the norm
\begin{equation} \label{def:fracnormbulk}
\|u\|_{H_\star^\frac12(\Omega)}^2:= \|u\|_{L^2(\Omega)}^2+ \int_0^\infty\left(\frac{ K(t,T(u),L^2(\R^3),H^1(\R^3))}{t^{1/2}}\right)^2\, \frac{dt}{t}.
\end{equation}
This gives an equivalent definition for the half-integer Sobolev space. 
\begin{proposition}
    \label{prop:sobolev2}
    Let $m \in \N$, with $m\geq 2$,  and let $\Omega$ be a smooth domain which is  uniformly $C^{1,\alpha}(\Gamma)$-regular and  satisfies  \eqref{eq:notationhp}. The  norms in Definition \ref{dfn:fracsobobulk} and \eqref{def:fracnormbulk} are equivalent, i.e.,
\[
\|u\|_{H^\frac12(\Omega)}\simeq \|u\|_{H_\star^\frac12(\Omega)}.
\]    
\end{proposition}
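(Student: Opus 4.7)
The plan is to reduce the equivalence to the classical identification of the Gagliardo--Slobodeckij space on $\R^3$ with the real interpolation space $(L^2(\R^3), H^1(\R^3))_{1/2,2}$. Both norms in the statement contain the identical term $\|u\|_{L^2(\Omega)}$ and depend on $u$ only through its canonical extension $Tu\in H^1(\R^3)$ produced by Proposition~\ref{prop:extension}. Since $T:L^2(\Omega)\to L^2(\R^3)$ is bounded and $Tu$ restricts to $u$ on $\Omega$, we have $\|Tu\|_{L^2(\R^3)}\simeq \|u\|_{L^2(\Omega)}$. It therefore suffices to prove, for any $v\in H^1(\R^3)$, the $\R^3$-level comparison
\[
\int_{\R^3}\int_{\R^3}\frac{|v(x)-v(y)|^2}{|x-y|^{3}}\,dx\,dy\ +\ \|v\|_{L^2(\R^3)}^2\ \simeq\ \int_0^\infty \left(\frac{K(t,v,L^2(\R^3),H^1(\R^3))}{t^{1/2}}\right)^2\frac{dt}{t}\ +\ \|v\|_{L^2(\R^3)}^2.
\]

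First I would compute both sides by Fourier transform. Plancherel together with the change of variables $z=x-y$ yields that the Gagliardo double integral equals $c\int_{\R^3}|\xi|^{s'}|\hat v(\xi)|^2\,d\xi$, with $s'$ determined by the paper's normalization of the double integral. On the other hand, the $K$-functional for the couple $(L^2,H^1)$ admits the Fourier characterization
\[
K\bigl(t,v,L^2(\R^3),H^1(\R^3)\bigr)^2\ \simeq\ \int_{\R^3}\min\bigl(1,\, t^2(1+|\xi|^2)\bigr)\,|\hat v(\xi)|^2\,d\xi,
\]
obtained from the nearly optimal splitting $\hat v = \chi_{\{|\xi|\le 1/t\}}\hat v + \chi_{\{|\xi|>1/t\}}\hat v$ for the upper bound and from the elementary pointwise inequality $\|v_0\|_{L^2}^2+t^2\|v_1\|_{H^1}^2\gtrsim \int_{\R^3}\min(1,t^2(1+|\xi|^2))|\hat v|^2\,d\xi$ valid for any admissible decomposition $v = v_0 + v_1$ (the latter by optimizing a scalar minimization pointwise in $\xi$ after applying Plancherel). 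The change of variables $s=t\sqrt{1+|\xi|^2}$ inside the interpolation integral then gives
\[
\int_0^\infty\frac{K(t,v)^2}{t^2}\,dt\ \simeq\ \int_{\R^3}(1+|\xi|^2)^{1/2}\,|\hat v(\xi)|^2\,d\xi,
\]
which matches the Gagliardo expression up to the $L^2$-contribution already present on both sides.

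Combining this $\R^3$-equivalence applied to $v=Tu$ with $\|Tu\|_{L^2(\R^3)}\simeq \|u\|_{L^2(\Omega)}$ yields the required two-sided bound $\|u\|_{H^{1/2}(\Omega)}\simeq \|u\|_{H^{1/2}_\star(\Omega)}$. The main obstacle is the $K$-functional identity, which is a standard but technical exercise in interpolation theory \cite{Triebel}; happily nothing in this computation depends on $\Omega$, so the boundary hypotheses \eqref{eq:notationhp} and the $C^{1,\alpha}(\Gamma)$-regularity intervene only through Proposition~\ref{prop:extension}, which produces an extension of $u$ simultaneously bounded in $L^2$ and in $H^1$ of the ambient space.
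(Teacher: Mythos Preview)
Your proof is correct and follows exactly the route the paper indicates: both norms are defined through the canonical extension $Tu$, so the equivalence reduces to the classical identification $H^{1/2}(\R^3)=(L^2(\R^3),H^1(\R^3))_{1/2,2}$, which the paper simply cites from \cite{Triebel} and \cite{CWHM} without writing out. Your Fourier/$K$-functional computation is a valid expansion of that citation and matches the paper's intended argument.
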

We do not give the details of the proof, but only refer to \cite{CWHM} and mention that it follows from the fact that $H^\frac12(\R^3)=(L^2(\R^3),H^1(\R^3))_{\frac12,2}$, see \cite{Triebel}.

\subsection{Half-integer Sobolev spaces on a surfaces}
We begin by defining the space $H^\frac12(\Sigma)$. Again there are many ways to do this. We choose the definition via harmonic extension.  
\begin{definition} \label{def:H-1/2-boundary}
Let $\Sigma = \pa \Omega$ be uniformly $C^{1,\alpha}(\Gamma)$-regular. We say  that $u\in H^\frac12(\Sigma)$ if $u\in L^2(\Sigma)$ and
\[
\|u\|_{H^\frac12(\Sigma)}=\|u\|_{L^2(\Sigma)}+\inf\{\|\nabla v\|_{L^2(\Omega)}: \,v-u \in H^{1}_0 (\Omega)\} <\infty.
\]
We define the space $H^{-\frac12}(\Sigma)$ and its norm  by duality.
\end{definition}
By standard theory the $C^{1,\alpha}(\Gamma)$-regularity of $\Sigma$ ensures that Definition \ref{def:H-1/2-boundary} is equivalent to the definition via  Gagliardo seminorm 
\[
\|u\|_{H^\frac12(\Sigma)}^2 \simeq \|u\|_{L^2(\Sigma)}^2
+\int_\Sigma\int_\Sigma \frac{|u(x)-u(y)|^2}{|x-y|^3}\, d\H^2_xd\H^2_y.
\]
Moreover,  this norm is also equivalent to the norm obtained via interpolation. Indeed, let us define the interpolation space (see beginning of Section 2)
\[
H^\frac12_\star(\Sigma)=(L^2(\Sigma),H^1(\Sigma))_{\frac12,2}. 
\]
Let us show that 
\beq 
\label{eq:equiv-frac-1/2}
\|u\|_{H^\frac12(\Sigma)} \simeq \|u\|_{H_\star^\frac12(\Sigma)}.
\eeq
Due to the non-local nature of the problem we give the proof of \eqref{eq:equiv-frac-1/2} in detail. 

We fix a small $\delta>0$ and cover 
$\Sigma$ with finitely many balls of radius $\delta$ centered at $x_i \in \Sigma$, i.e., 
\[
\Sigma \subset \bigcup_{i=1}^N B_\delta (x_i).
\]
Since $\Sigma$ is $C^{1,\alpha}(\Gamma)$, there are $C^{1,\alpha}$-regular functions $\phi_i$ such that $\Sigma \cap B_{2\delta} (x_i)$ is contained in the graph of $\phi_i$ for every $i =1, \dots, N$, when $\delta$ is small enough. Let  $\{\eta_i\}_{i=1,\dots, N}$ be a partition of unity subordinated to the open covering $B_{\delta}(x_i)$. Then it holds 
 \[
 \|u\|_{H^\frac12(\Sigma)}\leq \sum_{i=1}^N\|\eta_i u\|_{H^\frac12(\Sigma)}.
 \]
 
 Let us fix $i =1, \dots, N$ and by rotating and translating the coordinates we may assume that $x_i = 0$ and $\Sigma \cap B_{2\delta} \subset \{ (x',\phi_i(x')): x' \in \R^2\}$ with $\phi_i(0) = 0$ and $\nabla \phi_i(0) = 0$. Denote  $u_i= \eta_i u$ and $v_i(x') = u_i(x', \phi_i(x'))$. Note that then $v_i : \R^2 \to \R$, $\text{supp}\, v_i \subset B_{\delta'} \subset \R^2$ and $\text{supp}\, u_i \subset B_{\delta} \subset \R^3$ with $\delta/2 \leq \delta' \leq \delta$. Therefore we deduce by the $C^{1,\alpha}$-regularity of $\phi_i$ that 
\[
\begin{split}
    &\int_{\Sigma}\int_{\Sigma}\frac{|u_i(x)-u_i(y)|^2}{|x-y|^3}\, d\H^2_x\,d\H^2_y\\
    &= \int_{\Sigma \cap B_{2\delta} }\int_{\Sigma\cap B_{2\delta}}\frac{|u_i(x)-u_i(y)|^2}{|x-y|^3}\, d\H_x\,d\H_y + 2\int_{\Sigma \setminus B_{2\delta} }\int_{\Sigma\cap B_{2\delta}}\frac{|u_i(x)-u_i(y)|^2}{|x-y|^3}\, d\H_x\,d\H_y
    \\
    &\leq C\int_{B_{2\delta'}}\int_{B_{2\delta'}} \frac{|u_i(x',\phi_i(x'))-u_i(y',\phi_i(y'))|^2}{(|x'-y'|^2 +(\phi_i(x')-\phi_i(y')^2)^{3/2}}\, dx'\,dy' +C \|u\|_{L^2(\Sigma)}^2
    \\
    &\leq C\|v_i\|_{H^\frac12(\R^2)}^2 + C\|u\|_{L^2(\Sigma)}^2\\
    &\leq C\|v_i\|_{H_{\star}^\frac12(\R^2)}^2+ C\|u\|_{L^2(\Sigma)}^2. 
    \end{split}
\]
This implies $\|u_i\|_{H^{\frac12}(\Sigma)} \leq C\|v_i\|_{H_{\star}^\frac12(\R^2))}+ C\|u\|_{L^2(\Sigma)}$. 
Let us denote by $L_0^2(B_{2\delta'})$ and $H_0^1(B_{2\delta'})$ for functions $f \in L^2(\R^2)$, and respectively  $f \in H^1(\R^2)$, with $\text{supp} f \subset B_{2\delta'}$. Denote also $\Psi : B_{2\delta'} \to \Psi(B_{2\delta'}) \subset  \R^3$, $\Psi(x') = (x', \phi_i(x'))$. We may estimate 
\[
\begin{split}
&K(t,v_i, L^2(\R^2),H^1(\R^2))\\
&=\inf_{f+g=v_i}\{ \|f\|_{L^2(\R^2)}+ t \|g\|_{H^1(\R^2)},\, f \in L^2(\R^2),\, g\in H^1(\R^2) \}\\
&\leq \inf_{f+g=v_i}\{ \|f\|_{L^2(\R^2)}+ t \|g\|_{H^1(\R^2)},\, f \in L_0^2(B_{2\delta'}),\, g\in H_0^1(B_{2\delta'}) \}
\\
&\leq C\inf_{f+ g = v_i}\{ \|f \circ \Psi^{-1}\|_{L^2(\Sigma \cap B_{2\delta})}+ t \|g\circ \Psi^{-1}\|_{H^1(\Sigma \cap B_{2\delta})},\, f \in L_0^2(B_{2\delta'}),\, g\in H_0^1(B_{2\delta'}) \} \\
&\leq C\inf_{\tilde f+ \tilde g = u_i}\{ \|\tilde f \|_{L^2(\Sigma \cap B_{2\delta})}+ t \|\tilde g\|_{H^1(\Sigma \cap B_{2\delta})},\,\tilde f \in L_0^2(\Sigma \cap B_{2\delta}),\, \tilde g\in H_0^1(\Sigma \cap B_{2\delta}) \}
\\
&=C \, K(t,u_i, L_0^2(\Sigma \cap B_{2\delta}),H_0^1(\Sigma \cap B_{2\delta})).
\end{split}
\]
Since $\text{supp} \, u_i \subset \Sigma \cap B_{\delta}$, it is easy to see that 
\[
 K(t,u_i, L_0^2(\Sigma \cap B_{2\delta}),H_0^1(\Sigma \cap B_{2\delta})) \leq  C\, K(t,u_i, L^2(\Sigma),H^1(\Sigma)).
\]
Therefore, we deduce 
\[
\|v_i\|_{H_{\star}^\frac12(\R^2)}\leq C\|u_i\|_{H_{\star}^\frac12(\Sigma)} \leq C\|u\|_{H_{\star}^\frac12(\Sigma)} .
\]
Repeating the argument for every $i =1, \dots, N$ yields
\[
\|u\|_{H^\frac12(\Sigma)}\leq C\|u\|_{H_{\star}^\frac12(\Sigma)}.
\]
The opposite inequality can be proved in a similar way.

We also note that we may interpolate between $H^{\frac12}(\Sigma)$ and its dual and obtain by using \cite[Theorem 4.1]{CWHM}  and by a localization argument that 
\[
(H^{-\frac12}(\Sigma), H^{\frac12}(\Sigma))_{\frac12, 2} = L^2(\Sigma).
\]

In order to define higher order half-integer Sobolev spaces on the boundary we use the fact that in our setting the boundary $\pa \Omega = \Sigma$ is given by the parametrization $\Psi_{\Sigma}: \Gamma \to \Sigma$,  $\Psi_{\Sigma}(x) = x + h(x)\nu_\Gamma(x)$, where $\Gamma$ is the reference surface. In our case $\Gamma$ is the boundary of a smooth  set $G$ and the map $\Psi : \Gamma \times (-\eta, \eta) \to \mathcal{N}_\eta(\Gamma)$, $\Psi(x,s) = x + s \nu_{\Gamma}(x)$, is a diffeomorphism. Here $\mathcal{N}_\eta(\Gamma)$ is the tubular neighborhood of $\Gamma$. Therefore the projection map $\pi_\Gamma: \mathcal{N}_\eta(\Gamma) \to \Gamma$ is well defined  as 
\beq \label{def:projection}
\pi_\Gamma(y) = x  \qquad \text{where } \,   y = x + s \nu_{\Gamma}(x) \,\,\, \text{for some  } \, \, s \in (- \eta, \eta). 
\eeq
 We extend $\pi_\Gamma$ to whole $\R^3$ and thus we may extend a given function $u : \Gamma \to \R$ to  $\R^3$ as $(u \circ \pi_\Gamma) : \R^3 \to \R$.  In particular, the $k$th order derivative $\nabla^k (u \circ \pi_\Gamma)(x)$ is well defined for  all $x \in \Gamma$, and for $x \in \Gamma$ the function $s \mapsto  (u \circ \pi_\Gamma)(x+ s\nu_{\Gamma}(x))$ is constant for $|s|$ small. We use this extension to define the half-integer Sobolev norm on the reference surface.

\begin{definition} \label{defn:fractionalnorm}
For $m \geq 2$ we say that $u \in H^{m-\frac12}(\Gamma)$ if 
$u \in H^{m-1}(\Gamma)$ and the norm 
\[
\| u \|_{H^{m-\frac12}(\Gamma)}  := \|\nabla^{m-1} (u \circ \pi_\Gamma)\|_{H^{\frac12}(\Gamma)}+ \|u\|_{H^{m-1}(\Gamma)} 
\]
is bounded.
\end{definition}
We define the half-integer Sobolev spaces on $\Sigma$  by mapping a function $u \in C^\infty(\Sigma)$ back to $\Gamma$ by using the parametrization $\Psi_{\Sigma}: \Gamma \to \Sigma$, $\Psi_{\Sigma}(x) = x + h(x)\nu_{\Gamma}(x)$.  Let us fix $m \geq 2$ and recall that   
\[
\bar \nabla^m (u\circ\Psi_\Sigma)= \sum_{|\alpha|\leq m-1}\bar \nabla^{1+\alpha_1}\Psi_\Sigma \star\cdots \star \bar \nabla^{1+\alpha_{k}}\Psi_\Sigma \star\bar \nabla^{1+\alpha_{m+1}} u.
\]
If $\Sigma$ satisfies  \eqref{eq:notationhp}, then arguing as in the proof of Proposition \ref{prop:extension}  we deduce
\[
\|u\|_{H^m(\Sigma)}
\simeq \|u\circ \Psi_\Sigma\|_{H^{m}(\Gamma)}.
\]
Based on this we define the half-integer Sobolev space of order $m - 1/2$ on $\Sigma$ in the following way.

\begin{definition} \label{defn:fractionalnorm2}
Let  $m\geq 2$ be an integer and assume $\Sigma$ is $C^{1,\alpha}(\Gamma)$-regular.  We say that  $u$ is in the space    $H^{m-\frac12}(\Sigma)$  if $(u\circ \Psi_\Sigma) \in H^{m-\frac12}(\Gamma)$
and define the norm as 
\[
\|u\|_{H^{m-\frac12}(\Sigma)}:= \|u\circ \Psi_\Sigma \|_{H^{m-\frac12}(\Gamma)}, 
\]
where $\Psi_{\Sigma}: \Gamma \to \Sigma$ is the parametrization $\Psi_{\Sigma}(x) = x + h(x)\nu_{\Gamma}(x)$.
\end{definition}

We define the space $H^{m-\frac12}_\star(\Sigma)$ via interpolation as the functions $u\in H^{m-1}(\Sigma)$ such that
\begin{equation}\label{def:H_star}
\|u\|_{H_\star^{m-\frac12}(\Sigma)}^2:= \|u\|_{H^{m-1}(\Sigma)}^2+
\int_{0}^\infty \left(\frac{K(t,u,H^{m-1}(\Sigma),H^m(\Sigma))}{t^{1/2}} \right)^2\, \frac{dt}{t} <\infty.
\end{equation}

We note that if $\Sigma$  satisfies the assumption  \eqref{eq:notationhp} for $m\geq 2$  the norm $H^{m-\frac12}(\Sigma)$  in Definition \ref{defn:fractionalnorm2} is equivalent with the interpolation norm $\|u\|_{H_\star^{m-\frac12}(\Sigma)}$ in \eqref{def:H_star}. We state this in the next proposition. The proof is similar to the argument for  \eqref{eq:equiv-frac-1/2}  and we omit it. 

\begin{proposition} 
\label{prop:sobolev} Let  $m\geq 2$ and  assume that $\Sigma$ is uniformly $C^{1,\alpha}(\Gamma)$-regular and  satisfies the assumption  \eqref{eq:notationhp}. Then it holds 
\[
H^{m-\frac12}(\Sigma)=H_\star^{m-\frac12}(\Sigma) \qquad \text{and} \qquad \|u\|_{H^{m-\frac12}(\Sigma)} \simeq \|u\|_{H_\star^{m-\frac12}(\Sigma)}.
\]
\end{proposition}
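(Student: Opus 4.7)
The strategy mirrors the proof of \eqref{eq:equiv-frac-1/2}: first reduce both sides to equivalent norms on the reference surface $\Gamma$, then localize on $\Gamma$ by a partition of unity, and finally invoke the classical identification $H^{m-1/2}(\R^2) = (H^{m-1}(\R^2), H^m(\R^2))_{1/2,2}$ from \cite{Triebel} on the flattened pieces.

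For the reduction from $\Sigma$ to $\Gamma$, Definition \ref{defn:fractionalnorm2} builds in $\|u\|_{H^{m-1/2}(\Sigma)} = \|u\circ \Psi_\Sigma\|_{H^{m-1/2}(\Gamma)}$ by construction, so only the interpolation side needs work. The chain-rule expansion displayed just before Definition \ref{defn:fractionalnorm2}, combined with assumption \eqref{eq:notationhp} and a repetition of the computation in the proof of Proposition \ref{prop:extension}, shows that the pullback $u \mapsto u\circ \Psi_\Sigma$ is a bounded isomorphism $H^k(\Sigma) \to H^k(\Gamma)$ for both $k = m-1$ and $k = m$, with constants depending only on the bound in \eqref{eq:notationhp}. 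Applying the interpolation property \eqref{ineq:inter-functional} to this pullback and its inverse then yields $\|u\|_{H^{m-1/2}_\star(\Sigma)} \simeq \|u\circ \Psi_\Sigma\|_{H^{m-1/2}_\star(\Gamma)}$, reducing the claim to the equivalence $\|v\|_{H^{m-1/2}(\Gamma)} \simeq \|v\|_{H^{m-1/2}_\star(\Gamma)}$ on the smooth compact manifold $\Gamma$.

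On $\Gamma$ I would take a finite cover by balls $B_\delta(x_i)$ with a subordinate smooth partition of unity $\{\eta_i\}$. Multiplication by a smooth cutoff is bounded on $H^{m-1}(\Gamma)$ and $H^m(\Gamma)$, hence on $H^{m-1/2}_\star(\Gamma)$ by \eqref{ineq:inter-functional}, and it is bounded on $H^{m-1/2}(\Gamma)$ in the sense of Definition \ref{defn:fractionalnorm} via the Leibniz rule applied inside $\|\nabla^{m-1}(\cdot \circ \pi_\Gamma)\|_{H^{1/2}(\Gamma)}$ together with the $H^{1/2}$-multiplier property of smooth functions. Flattening $\Gamma$ near each $x_i$ by a smooth chart $\Phi_i : B_{\delta'} \subset \R^2 \to \Gamma$ and applying the same pullback argument as in the previous step reduces the problem to comparing, for compactly supported $w$ on $\R^2$, the two quantities $\|w\|_{(H^{m-1}(\R^2), H^m(\R^2))_{1/2,2}}$ and $\|w\|_{H^{m-1}(\R^2)} + \|\nabla^{m-1}(w\circ \pi_0)\|_{H^{1/2}(\R^2)}$, where $\pi_0$ is the orthogonal projection $\R^3 \to \R^2\times\{0\}$ (the local form of $\pi_\Gamma$ under the flattening, up to a smooth perturbation).

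Since $w\circ \pi_0$ is constant in the normal direction, the restriction of $\nabla^{m-1}(w\circ \pi_0)$ to $\R^2\times\{0\}$ reduces to $\nabla^{m-1}_{x'} w$, while the perturbations coming from the chart $\Phi_i$ contribute only lower-order terms controlled by $\|w\|_{H^{m-1}(\R^2)}$. The equivalence on $\R^2$ then follows by applying \eqref{eq:equiv-frac-1/2} componentwise to $\nabla^{m-1}_{x'} w$, together with the standard identification $H^{m-1/2}(\R^2) = (H^{m-1}(\R^2), H^m(\R^2))_{1/2,2}$ from \cite{Triebel}. I expect the main obstacle to be precisely this final bookkeeping step: checking that the error terms produced by the flattening diffeomorphism are indeed of lower order in the relevant norms. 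This is where the smoothness of $\Gamma$ (as opposed to the mere $C^{1,\alpha}$-regularity of $\Sigma$) is essential, and it is precisely what forced us to first transfer the question from $\Sigma$ to $\Gamma$.
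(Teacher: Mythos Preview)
Your approach is correct and matches the paper's: the paper omits the proof entirely, stating only that ``the proof is similar to the argument for \eqref{eq:equiv-frac-1/2},'' which is precisely the localization-and-flattening strategy you carry out. Your additional first step of reducing from $\Sigma$ to $\Gamma$ via the pullback $\Psi_\Sigma$ (using that it is an $H^{m-1}$-- and $H^m$--isomorphism under \eqref{eq:notationhp}, then interpolating via \eqref{ineq:inter-functional}) is natural given that Definition~\ref{defn:fractionalnorm2} already builds in this pullback, and it cleanly isolates the remaining work on the smooth reference surface $\Gamma$.
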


\subsection{Geometric Preliminaries}

We begin by recalling basic results from differential geometry. We define the Riemann curvature tensor $R \in \mathscr{T}^4(\Sigma)$ \cite{Lee, MantegazzaBook}  via interchange of covariant  derivatives of a vector field  $Y^i$ and a covector field  $Z_i$ as   
\begin{equation}
\label{eq:curv-tensor}
\begin{split}
&\bar \nabla_i \bar \nabla_j Y^s - \bar \nabla_j \bar \nabla_i Y^s = R_{ijkl} g^{ks} Y^l,\\
&\bar \nabla_i \bar \nabla_j Z_k - \bar \nabla_j \bar \nabla_i Z_k =  R_{ijkl} g^{ls} Z_s,
\end{split}
\end{equation}
where we have used the Einstein summation convention. We may write the Riemann tensor in local coordinates by using the second fundamental form $B$ as
\begin{equation}
\label{eq:curv-tensor2}
R_{ijkl} = B_{ik}B_{jl} - B_{il}B_{jk}.
\end{equation}
We will also need the Simon's identity which reads as 
\begin{equation}
    \label{eq:Simon}
    \Delta_{\Sigma} B_{ij} =  \bar \nabla_i \bar \nabla_j  H + H B_{il} g^{ls} B_{sj} - |B|^2 B_{ij}.
\end{equation}

Let us recall  that  the interpolation inequality  holds for smooth compact  $n$-dimensional hypersurface $\Sigma \subset \R^{n+1}$, see e.g. \cite{AubinBook2},
\beq \label{eq:class-inter}
 \| \bar \nabla^k u \|_{L^p(\Sigma)} \leq C_{\Sigma}  \|u \|_{W^{l,r}(\Sigma)}^\theta  \| u \|_{L^q(\Sigma)}^{(1-\theta)}, 
\eeq
where
\[
\frac{1}{p} = \frac{k}{n} + \theta \left(\frac{1}{r} - \frac{l}{n} \right) + \frac{1}{q}(1- \theta).
\]
In particular, \eqref{eq:class-inter} holds on  the reference surface $\Gamma \subset \R^3$ and   in $\R^{n}$ for functions with compact support $\text{supp} \, u \subset B_R$. 
 
 In order to have the interpolation inequality for a general surface $\Sigma \subset \R^{n+1}$ with control on the constant $C_{\Sigma}$,  we use the result in    \cite{Mantegazza2002}, which states that once the mean curvature $H_\Sigma$ satisfies the bound  $\|H_{\Sigma}\|_{L^{n+\delta}(\Sigma)} \leq C$, then the above interpolation inequality holds on $\Sigma$ with uniform bound on the constant. We state this for our purpose, where $\Sigma$  is $2$-dimensional surfaces  that  is  uniformly $C^{1,\alpha}(\Gamma)$-regular and satisfies the bound  $\|B_\Sigma\|_{L^4(\Sigma)} \leq C$. The reason for the $L^4$-curvature bound will be clear from the results in Section 6.   The following interpolation inequality follows from  \cite[Proposition 6.5]{Mantegazza2002}. 

\begin{proposition}
\label{prop:interpolation}
Assume $\Sigma \subset \R^3$  is a compact $2$-dimensional hypersurface which is  uniformly $C^{1,\alpha}(\Gamma)$-regular and  satisfies the bound  $\|B_\Sigma\|_{L^4(\Sigma)} \leq M$. Then  for integers $k,l$,  $0 \leq k <l$ and numbers $p, r \in [1,\infty)$ and $q \in [1,\infty]$ we have for all tensor fields $T$ that 
\[
 \| \bar \nabla^k T \|_{L^p(\Sigma)} \leq C  \|T \|_{W^{l,r}(\Sigma)}^\theta  \| T \|_{L^q(\Sigma)}^{(1-\theta)}, 
\]
where $p$ and $\theta \in [0,1]$ are given by 
\[
\frac{1}{p} = \frac{k}{2} + \theta \left(\frac{1}{r} - \frac{l}{2} \right) + \frac{1}{q}(1- \theta).
\]
The constant $C$ depends on $M, k, p, l, r, q$. 
\end{proposition}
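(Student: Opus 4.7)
The plan is to reduce the statement to Proposition 6.5 of \cite{Mantegazza2002}, which establishes exactly this type of Gagliardo--Nirenberg interpolation inequality on a compact $n$-dimensional hypersurface $\Sigma \subset \R^{n+1}$, with the constant depending on $\Sigma$ only through a single curvature quantity: a uniform bound $\|H_\Sigma\|_{L^{n+\delta}(\Sigma)} \leq C$ for some $\delta>0$. The two things to check are (i) that our hypotheses supply this bound in the right range, and (ii) that the low regularity of $\Sigma$ does not prevent us from invoking that result.

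First I would verify the hypothesis of Mantegazza's inequality. Since $n=2$, we need $\|H_\Sigma\|_{L^{2+\delta}(\Sigma)} \leq C$ for some $\delta>0$. The pointwise inequality $|H_\Sigma| \leq \sqrt{2}\,|B_\Sigma|$ combined with the assumed bound $\|B_\Sigma\|_{L^4(\Sigma)} \leq C$ immediately gives this with $\delta = 2$, i.e.\ $L^{n+\delta}=L^4$. In particular, the constant in the interpolation inequality depends only on the $L^4$-norm of $B_\Sigma$, which is exactly what we want for applications later in the paper.

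Second, I would handle the $C^{1,\alpha}$-regularity of $\Sigma$ by an approximation argument, since Mantegazza's result is stated for smooth hypersurfaces. Using the graph parametrization $\Sigma = \{x + h(x)\nu_\Gamma(x) : x \in \Gamma\}$ over the smooth reference $\Gamma$, I would mollify $h$ on $\Gamma$ by convolution (in local charts, using a partition of unity) to obtain a family of smooth height functions $h_\varepsilon \to h$ in $C^{1,\alpha}$ and smooth hypersurfaces $\Sigma_\varepsilon$ with $\|B_{\Sigma_\varepsilon}\|_{L^4(\Sigma_\varepsilon)} \to \|B_\Sigma\|_{L^4(\Sigma)}$. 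On each $\Sigma_\varepsilon$ Mantegazza's inequality applies with a constant depending only on the $L^4$-bound on $B_{\Sigma_\varepsilon}$, hence uniform in $\varepsilon$; pulling everything back to $\Gamma$ via $\Psi_{\Sigma_\varepsilon}$ and passing to the limit using the equivalence of norms under the $C^{1,\alpha}$-parametrization (as in the proof of Proposition \ref{prop:extension} and the discussion preceding Definition \ref{defn:fractionalnorm2}) yields the desired inequality on $\Sigma$ for smooth tensor fields, and a density argument extends it to all $T \in W^{l,r}(\Sigma)$.

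The underlying mechanism for the smooth case, which I would only recall rather than reproduce, rests on the Michael--Simon--Sobolev inequality $\|u\|_{L^{2p/(2-p)}(\Sigma)} \leq C\bigl(\|\bar\nabla u\|_{L^p(\Sigma)} + \|H_\Sigma u\|_{L^p(\Sigma)}\bigr)$ with a purely dimensional constant. The $L^4$-bound on $H_\Sigma$ lets one absorb the $\|H_\Sigma u\|_{L^p}$ term by H\"older's inequality, and iterating together with Kato's inequality $|\bar\nabla |T|| \leq |\bar\nabla T|$ to pass from scalars to tensor fields gives the full Gagliardo--Nirenberg family with the stated scaling for $1/p$. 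The only real obstacle in producing a self-contained proof is the approximation step in the previous paragraph; everything else is either a direct citation of \cite{Mantegazza2002} or a routine consequence of the Sobolev inequality.
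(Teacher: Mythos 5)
Your proof takes essentially the same route as the paper: the paper's entire argument is the one-line citation "follows from [Proposition 6.5, Mantegazza2002]," and you correctly identify the two points worth flagging — that $\|B_\Sigma\|_{L^4}\leq C$ furnishes the required bound $\|H_\Sigma\|_{L^{2+\delta}}\leq C$ via $|H_\Sigma|\leq\sqrt 2\,|B_\Sigma|$, and that the constant in Mantegazza's inequality depends on $\Sigma$ only through this curvature norm. The one remark I'd add is that the mollification step is more caution than is strictly needed in context: in this paper the hypersurfaces $\Sigma=\Sigma_t$ come from a smooth flow and are therefore smooth, so Mantegazza's result applies directly; what matters is only the \emph{uniformity} of the constant with respect to the $L^4$-curvature bound, not an extension of the inequality to genuinely $C^{1,\alpha}$-only surfaces. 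Your approximation argument is nevertheless a sound way to obtain that extra generality if one wants it.
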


In particular, we have the Sobolev embedding, i.e., for $p \in [1,n)$ it holds  $\| u\|_{L^{p^*}(\Sigma)} \leq C \|u\|_{W^{1,p}(\Sigma)}$ with  $p^* = \frac{np}{n-p}$, for $p = n$ it holds $\| u\|_{L^{q}(\Sigma)} \leq C \|u\|_{W^{1,p}(\Sigma)}$ for all $q <\infty$ and for $p >n$  it holds $\|u\|_{C^{\alpha}(\Sigma)}\leq C \|u\|_{W^{1,p}(\Sigma)}$ for $\alpha = 1- \frac{n}{p}$. 

There is a danger for confusion  in terminology when we use interpolation of function spaces and interpolation inequality. We use the term 'interpolation argument', when we interpolate between two function spaces, and  'interpolation inequality' or merely 'interpolation' when we refer to Proposition \ref{prop:interpolation}.

Let $\Sigma = \pa \Omega\subset \R^3$ be a compact hypersurface  in $\R^3$ such that $\Sigma = \pa \Omega$ which is  $C^{1,\alpha}(\Gamma)$-regular. Then the Sobolev embedding extends to half-integers, i.e., it holds 
\[
\|u\|_{L^p(\Sigma)}\leq C \|u\|_{H^{\frac12}(\Sigma)} , \qquad \text{for } \, p \leq 4
\]
and
\[
\|u\|_{L^p(\Omega)}\leq C \|u\|_{H^{\frac12}(\Omega)} , \qquad \text{for } \, p \leq 3.
\]
We need the above interpolation inequality also for half-integers and for functions defined in $\Omega$. To this aim we need to assume that $\Sigma$ satisfies the condition \eqref{eq:notationhp}.

\begin{corollary}
\label{coro:interpolation}
Let $m \in \N$  and  $\Sigma \subset \R^3$  is compact $2$-dimensional hypersurface which is uniformly $C^{1,\alpha}(\Gamma)$-regular such that $\Sigma = \pa \Omega$ and satisfies the condition \eqref{eq:notationhp}. Then for all half-integers $k$ and $l$ with $k< l \leq m$ and for $q \in [1,\infty]$ it holds
\[
\| u\|_{H^{k}(\Sigma)}\leq C \|u\|_{H^{l}(\Sigma)}^\theta
\|u\|_{L^q(\Sigma)}^{1-\theta},
\]
  where $\theta \in [0,1]$ is given by  
  \[
  1 = k - \theta (l - 1) + \frac{2}{q}(1- \theta).
  \]

In addition, it holds 
  \[
\| u\|_{H^{k}(\Omega)}\leq C \|u\|_{H^{l}(\Omega)}^\theta
\|u\|_{L^q(\Omega)}^{1-\theta},
\]
  where $\theta \in [0,1]$ is given by
  \[
  \frac{1}{2} = \frac{k}{3} + \theta \left(\frac{1}{2} - \frac{l}{3} \right) + \frac{1}{q}(1- \theta).
  \]
  Moreover, the inequality \eqref{eq:class-inter} holds on $\Omega \subset \R^3$ with $r=2$ and integers $k <l \leq m$. The constants depends on $m, q$ and on the $C^{1,\alpha}$-norm of the heightfunction. 
\end{corollary}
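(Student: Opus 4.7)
The strategy is to upgrade the integer-order interpolation inequality of Proposition \ref{prop:interpolation} to half-integer orders by exploiting the identification of half-integer Sobolev norms with real interpolation norms provided by Propositions \ref{prop:sobolev} and \ref{prop:sobolev2}, together with the canonical extension operator of Proposition \ref{prop:extension}.

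For $\Sigma$, the integer-order case ($k,l \in \N$) is exactly Proposition \ref{prop:interpolation}. When $k = n + \tfrac12$ is a half-integer while $l$ is an integer, Proposition \ref{prop:sobolev} identifies $H^{n+\frac12}(\Sigma)$ with $(H^n(\Sigma), H^{n+1}(\Sigma))_{\frac12, 2}$, and the standard K-functional estimate for real interpolation yields
\[
\|u\|_{H^{n+\frac12}(\Sigma)} \leq C\, \|u\|_{H^n(\Sigma)}^{\frac12}\, \|u\|_{H^{n+1}(\Sigma)}^{\frac12}.
\]
Applying the integer-order inequality to each factor with the same target $l$ and exponent $q$ and multiplying, one obtains a bound whose exponents average correctly, because $\theta(k) = (k-1+2/q)/(l-1+2/q)$ is affine in $k$, so the average of the values at $k=n$ and $k=n+1$ is exactly $\theta(n+\tfrac12)$. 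When instead $l = n+\tfrac12$ is the half-integer, we invoke the reiteration theorem (see \cite{Triebel}) together with the same interpolation characterization, which places $H^k$ on the interpolation scale between $L^q$ and $H^l$ with the prescribed exponent $\theta(l)$; equivalently, one applies the integer-case inequalities with targets $n$ and $n+1$ and combines them via the K-functional on the pair $(H^n, H^{n+1})$, again exploiting affineness of $\theta$, now in $l$.

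The argument for $\Omega$ is analogous. The integer-order case is reduced to $\R^3$ by extending $u$ via Proposition \ref{prop:extension} (with uniform bound thanks to \eqref{eq:notationhp}) and applying the classical Gagliardo--Nirenberg inequality for compactly supported functions in $\R^3$; restricting to $\Omega$ and using that the extension has bounded $L^q(\R^3)$ norm yields the claim. The half-integer cases then follow by the same two-step procedure, now using Proposition \ref{prop:sobolev2} in place of Proposition \ref{prop:sobolev}. The final assertion that \eqref{eq:class-inter} holds on $\Omega$ with $r=2$ and integer $k<l\leq m$ is an immediate consequence of the same extension argument. The main technical obstacle is the half-integer $l$ case, where the naive K-functional estimate goes in the wrong direction and one must instead appeal to reiteration of real interpolation spaces; the uniformity of the constants relies on the assumption \eqref{eq:notationhp}, which underlies both the boundedness of the extension and the norm equivalence between the interpolation-based and extension-based definitions of the half-integer Sobolev spaces.
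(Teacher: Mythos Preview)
Your proposal is correct and follows essentially the same route as the paper: split the half-integer $k$ via the interpolation identification of Proposition~\ref{prop:sobolev} (resp.\ \ref{prop:sobolev2}) to reduce to two integer levels, apply the integer-order inequality of Proposition~\ref{prop:interpolation} at each, and average the exponents; for $\Omega$, push through the extension of Proposition~\ref{prop:extension} to $\R^3$. You are in fact more explicit than the paper about the half-integer $l$ case, correctly flagging that the naive $K$-functional bound points the wrong way and that one needs reiteration; the paper simply asserts that case ``follows from the same argument''.
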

\begin{proof}
We sketch the proof  only for the first claim when $k=\tilde k -\frac12$ for $\tilde k \in \N$ and $l$ is an integer. By Proposition \ref{prop:sobolev} and by the classical interpolation theory stated in \eqref{ineq:inter-functional}  we have 
\[
\|u\|_{H^{k}(\Sigma)} \leq C\|u\|_{H_\star^{k}(\Sigma)} \leq 
C\|u\|_{H^{\tilde k}(\Sigma)}^\frac12\|u\|_{H^{\tilde k-1}(\Sigma)}^\frac12.
\]
Proposition \ref{prop:interpolation} yields
\[
\|u\|_{H^{\tilde k}(\Sigma)} \leq C
\|u\|_{H^{l}(E)}^{\theta_1}
\|u\|_{L^q(E)}^{1-\theta_1},
\]
where $\theta_1$ is given by $1 = \tilde k - \theta_1 (l - 1) + \frac{2}{q}(1- \theta_1)$, and 
  \[
\|u\|_{H^{\tilde k-1}(\Sigma)} \leq C
\|u\|_{H^{l}(E)}^{\theta_2}
\|u\|_{L^q(E)}^{1-\theta_2},
\]
where $\theta_2$ is given by $1 = (\tilde k -1) - \theta_2 (l - 1) + \frac{2}{q}(1- \theta_2)$. This implies the claim. The  case when $l$ is half-integer follows from the  same argument. Finally the second interpolation inequality follows by extending $u$ to whole $\R^3$, where the inequality is well-known,  and using  Proposition \ref{prop:extension}. 
\end{proof}

\subsection{Functional and geometric inequalities}

We begin by recalling the extension of the interpolation inequality \eqref{eq:class-inter}, or the Gagliardo-Nirenberg inequality,  in $\R^n$ for fractional Sobolev spaces \cite{BM}. We state the result in the setting that we need, where for all $f \in C_0^\infty(B_R)$ it holds   
\beq \label{eq:BM}
\|f\|_{W^{s,p}(B_R)} \leq C \|f\|_{W^{s_1,p_1}(B_R)}^\theta \|f\|_{L^{p_2}(B_R)}^{1-\theta}, 
\eeq
for  $0 \leq s \leq s_1$, $p_2 \in (1,\infty)$ and $\theta \in (0,1)$ which satisfy
\[
s = \theta s_1 \qquad \text{and}\qquad \frac{1}{p} = \frac{\theta}{p_1} + \frac{1-\theta}{p_2}.
\]

Next we recall the Kato-Ponce inequality, or the fractional Leibniz rule, in $\R^n$ which is proven e.g. in \cite{GOh}. We may define the norm $\|f\|_{W^{k, p}(\R^n)}$ for half-integer $k \geq 0$ and $p \in (1,\infty)$ by using Bessel potentials  $\langle D \rangle^k$ as
\[
\|f\|_{W^{k, p}(\R^n)} = \| \langle D \rangle^k f\|_{L^p(\R^n)}.
\]
The Kato-Ponce inequality, in the form we are interested in, states that for $f,g \in C_0^\infty(\R^n)$ and for numbers $2 \leq  p_1, q_2 < \infty$ and $2\leq  p_2, q_1 \leq \infty$ with  
\begin{equation}
    \label{eq:kato-ponce-con}
\frac{1}{p_1} + \frac{1}{q_1} = \frac{1}{p_2} + \frac{1}{q_2} = \frac12
\end{equation}
it holds 
\begin{equation}
    \label{eq:kato-ponce}
\|fg\|_{H^k(\R^n)}\leq C\|f\|_{W^{k,p_1}(\R^n)}\|g\|_{L^{q_1}(\R^n)}+C \|f\|_{L^{p_2}(\R^n)}\|g\|_{W^{k,q_2}(\R^n)}.
\end{equation}

We need the following generalization of the Kato-Ponce inequality both on the boundary $\Sigma$ and in the domain $\Omega$.
\begin{proposition}
    \label{prop:kato-ponce}
Let $m\geq 1$ be an integer and assume $\Sigma$ is uniformly $C^{1,\alpha}(\Gamma)$-regular and satisfies the condition \eqref{eq:notationhp}.  Then for all half-integers  $k \leq m$ it holds     
\[
\|fg\|_{H^k(\Sigma)}\leq C\|f\|_{H^{k}(\Sigma)}\|g\|_{L^{\infty}(\Sigma)}+C\|f\|_{L^{\infty}(\Sigma)}\|g\|_{H^{k}(\Sigma)}
\]
and
\[
\|fg\|_{H^k(\Omega)}\leq C\|f\|_{H^{k}(\Omega)}\|g\|_{L^{\infty}(\Omega)}+C\|f\|_{L^{\infty}(\Omega)}\|g\|_{H^{k}(\Omega)}.
\]

Moreover, assume that $\|B\|_{L^4} \leq M$ and let $k \in \N$. Then for $p_1,p_2, q_1,q_2 \in [2,\infty]$ with $p_1, q_2 <\infty$ which satisfies \eqref{eq:kato-ponce-con} it holds 
\[
\|fg\|_{H^k(\Sigma)}\leq C\|f\|_{W^{k,p_1}(\Sigma)}\|g\|_{L^{q_1}(\Sigma)}+C\|f\|_{L^{p_2}(\Sigma)}\|g\|_{W^{k,q_2}(\Sigma)}.
\]
The constants depend on $M, m, k, p_1,p_2, q_1,q_2$ and on the  $C^{1,\alpha}$-norm of the heightfunction.
\end{proposition}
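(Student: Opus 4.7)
The plan is to reduce every inequality to the Euclidean Kato--Ponce inequality \eqref{eq:kato-ponce}, first for integer $k$ and then for half-integer $k$ by real interpolation. The two devices for this reduction are already in place: in the bulk we use the extension operator $T$ of Proposition \ref{prop:extension}, which maps $H^m(\Omega)$ into $H^m_0(\mathbb{R}^3)$ with norm controlled by the geometric hypothesis \eqref{eq:notationhp}, and on the surface we localize by a partition of unity subordinate to balls $B_\delta(x_i)$ and flatten via the $C^{1,\alpha}$-charts already used in the proof of Proposition \ref{prop:sobolev}. The equivalences $H^{k}(\Omega)\simeq H^{k}_\star(\Omega)$ and $H^{k}(\Sigma)\simeq H^{k}_\star(\Sigma)$ from Propositions \ref{prop:sobolev2} and \ref{prop:sobolev} then let us pass freely between Sobolev norms and interpolation norms, which is what makes interpolation of the bilinear map $(f,g)\mapsto fg$ available at half-integer orders.

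\textbf{Domain case.} I extend $f,g$ on $\Omega$ to $Tf,Tg\in H_0^m(\mathbb{R}^3)$. The construction of $T$ in Proposition \ref{prop:extension} is a finite linear combination of reflections after composition with a bi-$C^{1,\alpha}$ diffeomorphism, so in addition to the $H^m$-bound one has $\|Tu\|_{L^\infty(\mathbb{R}^3)}\leq C\|u\|_{L^\infty(\Omega)}$. Applying \eqref{eq:kato-ponce} on $\mathbb{R}^3$ with $p_1=q_2=2$, $q_1=p_2=\infty$ and using $\|fg\|_{H^k(\Omega)}\leq \|Tf\cdot Tg\|_{H^k(\mathbb{R}^3)}$ gives the inequality for every \emph{integer} $k\leq m$. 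For $k=j+\tfrac12$ the bilinear map $M(f,g)=fg$ is bounded $H^{j}(\Omega)\times L^\infty(\Omega)\to H^{j}(\Omega)$ and $H^{j+1}(\Omega)\times L^\infty(\Omega)\to H^{j+1}(\Omega)$; the standard real-interpolation property \eqref{ineq:inter-functional} applied in the first variable, together with $H^{k}_\star(\Omega)=H^k(\Omega)$ from Proposition \ref{prop:sobolev2}, yields the half-integer case. The symmetric estimate is obtained by swapping the roles of $f$ and $g$.

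\textbf{Surface case.} For the first inequality on $\Sigma$ I pull back to $\Gamma$ via $\Psi_\Sigma$ (Definition \ref{defn:fractionalnorm2} together with the chain rule as in Proposition \ref{prop:sobolev}), cover $\Gamma$ by finitely many coordinate patches $B_\delta(x_i)$ with smooth partition of unity $\{\eta_i\}$, and in each patch pull back $\eta_i f$ and $g$ to $\mathbb{R}^2$ by the $C^{1,\alpha}$-graph chart. Applying \eqref{eq:kato-ponce} in $\mathbb{R}^2$ and summing over $i$ gives the inequality for integer $k\leq m$; the half-integer case again follows by interpolating the bilinear map, now using the equivalence $H^{k}_\star(\Sigma)=H^k(\Sigma)$ from Proposition \ref{prop:sobolev}. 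For the second statement on $\Sigma$ with general exponents $p_i,q_i$, the same localization-and-flattening procedure applies: after the chart, the Euclidean Kato--Ponce \eqref{eq:kato-ponce} with the prescribed $p_1,q_1,p_2,q_2$ is available directly because it is an integer-order statement, and the hypothesis $\|B\|_{L^4}\leq C$ ensures the chain-rule factors coming from the chart are controlled uniformly, exactly as in the proof of Proposition \ref{prop:sobolev}.

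\textbf{Main obstacle.} The main technical point is that at the half-integer level one cannot simply differentiate the Leibniz rule and has to produce the estimate by interpolation; thus the crucial input is the identification of the Sobolev scale with the real-interpolation scale of Propositions \ref{prop:sobolev2} and \ref{prop:sobolev}, which in turn relies on the low-regularity hypothesis \eqref{eq:notationhp}. A second delicate point, for the mixed-exponent bound on $\Sigma$, is tracking how the $L^{p_i}$ and $W^{k,q_i}$ norms behave under the $C^{1,\alpha}$ change of variables; as in Proposition \ref{prop:extension}, the worst chain-rule terms are of the form $\nabla^{m-1}\Psi\star\nabla^2 u$, so the $L^4$-integrability of $B$ is precisely what closes the H\"older estimate on each patch and lets the localization sum uniformly.
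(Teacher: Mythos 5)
Your reduction to the Euclidean Kato--Ponce inequality via the extension operator $T$ (bulk) and via localization-and-flattening (boundary) is the right idea, and for integer $k$ it matches the paper's argument, which simply invokes Proposition~\ref{prop:extension} together with \eqref{eq:kato-ponce} for the bulk case and the localization of \eqref{eq:equiv-frac-1/2} for the surface case. However, two steps in your proposal do not close.

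First, your half-integer step is flawed. You assert that the bilinear map $M(f,g)=fg$ is bounded $H^{j}\times L^\infty \to H^{j}$, i.e., $\|fg\|_{H^j}\leq C\|f\|_{H^j}\|g\|_{L^\infty}$, and then interpolate in the first variable using \eqref{ineq:inter-functional}. That boundedness is false: take $f\equiv 1$ and $g\in H^j$ with $\|g\|_{L^\infty}=1$; then $\|fg\|_{H^j}=\|g\|_{H^j}$ is unbounded. The estimate one actually proves is the two-sided bound $\|fg\|_{H^j}\leq C\|f\|_{H^j}\|g\|_{L^\infty}+C\|f\|_{L^\infty}\|g\|_{H^j}$, which is not an operator-norm statement and does not interpolate directly to produce the stated half-integer inequality. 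There is no need for this detour: the Euclidean inequality \eqref{eq:kato-ponce} is already formulated, via Bessel potentials, for half-integer $k$, so after applying $T$ (bulk) or after flattening (boundary) you get the half-integer estimate on $\Omega$ and $\Sigma$ directly, with no interpolation in $f$.

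Second, for the mixed-exponent inequality on $\Sigma$ under only $\|B_\Sigma\|_{L^4}\leq C$, localization-and-flattening does not work. That hypothesis only gives charts of regularity $W^{2,4}$ (essentially $\nabla^2\Psi\in L^4$), so the chain-rule terms $\nabla^{1+\alpha_1}\Psi\star\cdots\star\nabla^{1+\alpha_j}\Psi\star\nabla^{1+\alpha_{j+1}}u$ appearing when one pulls back $\bar\nabla^k$ to $\R^2$ involve derivatives $\nabla^j\Psi$ with $j$ up to $k$, and for $k\geq 3$ these are simply not controlled by $\|B\|_{L^4}$. Your appeal to Proposition~\ref{prop:sobolev} to absorb these factors is not justified: that proposition assumes the stronger hypothesis \eqref{eq:notationhp}, not merely $\|B\|_{L^4}\leq C$. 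The paper sidesteps the chart-regularity problem entirely by arguing intrinsically on $\Sigma$: expand $\bar\nabla^k(fg)=\sum_{i+j=k}\bar\nabla^i f\star\bar\nabla^j g$ by the Leibniz rule, apply H\"older to get $\|\bar\nabla^i f\|_{L^4}\|\bar\nabla^j g\|_{L^4}$, use the intrinsic Gagliardo--Nirenberg interpolation of Proposition~\ref{prop:interpolation} (which, by Mantegazza's result, is valid under just $\|B\|_{L^4}\leq C$ and requires no flattening) to bound each factor by $\|f\|_{W^{k,p_1}}^{\theta_i}\|f\|_{L^{p_2}}^{1-\theta_i}$ and the analogous expression for $g$, and conclude with Young's inequality since $\theta_i+\theta_j=1$. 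That is the key structural device your proposal is missing.
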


\begin{proof}
The second inequality follows immediately from the property of the extension operator given by Proposition \ref{prop:extension} and by the classical Kato-Ponce inequality \eqref{eq:kato-ponce}, see e.g. \cite{CS}.  Also the first inequality follows from a similar localization argument as we used in \eqref{eq:equiv-frac-1/2}.

We  prove the third inequality, since we  will use the argument also later.   First by Leibniz formula we may write  
\[
\bar \nabla^k (fg) = \sum_{i+j=k} \bar \nabla^i f \star \bar  \nabla^j g .
\]
The claim thus follows once we prove
\begin{equation}
    \label{eq:est-product}
    \sum_{i+j=k} \|\bar \nabla^i f \star \bar \nabla^j g \|_{L^2(\Sigma)} \leq C\|f\|_{W^{k,p_1}(\Sigma)}\|g\|_{L^{q_1}(\Sigma)}+\|f\|_{L^{p_2}(\Sigma)}\|g\|_{W^{k,q_2}(\Sigma)}.  
\end{equation}

To this aim we use H\"older's inequality as 
\[
 \sum_{i+j=k} \|\bar  \nabla^i f \star \bar  \nabla^j g \|_{L^2(\Sigma)} \leq  \sum_{i+j=k} \|\bar  \nabla^i f\|_{L^4(\Sigma)} \|\bar  \nabla^j g \|_{L^4(\Sigma)} .
\]
By interpolation inequality in Proposition \ref{prop:interpolation} we have 
\[
\|\bar \nabla^{i} f\|_{L^4(\Sigma)} \leq C\|f\|_{W^{k,p_1}(\Sigma)}^{\theta_i} \| f\|_{L^{p_2}(\Sigma)}^{1-\theta_i}
\qquad\text{ with }\qquad
\theta_i= \frac{\frac{i}{2}-\frac{1}{4} + \frac{1}{p_2}}{\frac{k}{2} + \frac{1}{p_2} - \frac{1}{p_1}}
\]
and recalling that $\frac{1}{p_1} + \frac{1}{q_1} = \frac{1}{p_2} + \frac{1}{q_2} = \frac12$ we have 
\[
\|\bar \nabla^{j} g\|_{L^4(\Sigma)} \leq C\|g\|_{W^{k,q_2}(\Sigma)}^{\theta_j} \| g\|_{L^{q_1}(\Sigma)}^{1-\theta_j}
\qquad\text{ with }\qquad
\theta_j= \frac{\frac{j}{2}-\frac{1}{4} + \frac{1}{q_1}}{\frac{k}{2} + \frac{1}{p_2} - \frac{1}{p_1}}.
\]
In particular, $i+j = k$ implies $ \theta_i + \theta_j =1$. Therefore we have by Young's inequality $ a^{\theta_i} b^{\theta_j} \leq \theta_i a+ \theta_j b \leq a+b $ that 
\[
\begin{split}
 \sum_{i+j=k} \|\bar\nabla^i f \star \bar \nabla^j g \|_{L^2} &\leq C \| f\|_{L^{p_2}}\| g\|_{L^{q_1}}\sum_{i+j=k}  \|f\|_{W^{k,p_1}}^{\theta_i} \| f\|_{L^{p_2}}^{-\theta_i}\|g\|_{W^{k,q_2}}^{\theta_j} \| g\|_{L^{q_1}}^{-\theta_j}\\
 &\leq C \| f\|_{L^{p_2}}\| g\|_{L^{q_1}} \left(\frac{\|f\|_{W^{k,p_1}}}{\| f\|_{L^{p_2}}} + \frac{\|g\|_{W^{k,q_2}}}{\| g\|_{L^{q_1}}} \right)
 \end{split}
\]
and the claim follows. 
\end{proof}

We remark that we do not generalize the last inequality in Proposition \ref{prop:kato-ponce} for half-integers $k$ since we do not define the space $W^{k,p}(\Sigma)$ for $p \neq 2$, when $k$ is not an integer. However, under the assumption of Proposition \ref{prop:kato-ponce}, we obtain a weaker version which reads as follows
\beq \label{eq:kato-weak}
\|fg\|_{H^{\frac12}(\Sigma)}\leq C \|f\|_{H^{\frac12}(\Sigma)}\|g\|_{L^{\infty}(\Sigma)}+\|f\|_{L^{p}(\Sigma)}\|g\|_{W^{1,q}(\Sigma)},
\eeq
for $\frac{1}{p} + \frac{1}{q} = \frac12$. Again, since the proof is similar to the argument we used in \eqref{eq:equiv-frac-1/2} we leave it for the reader, but refer to  \cite[Lemma 4.3]{DDM} for the proof of the case $p=q=4$. In particular, when $g$ is   Lipschitz, we may estimate the product simply by
\[
\|fg\|_{H^{\frac12}(\Sigma)} \leq C\|f\|_{H^{\frac12}(\Sigma)}\|g\|_{C^{1}(\Sigma)}.
\]

Next we recall (see e.g. \cite{FJM3D})    that it holds $\|u\|_{H^{k+2}(\Sigma)} \leq C_{\Sigma}(\|\Delta_\Sigma u\|_{H^{k}(\Sigma)} + \|u \|_{L^2(\Sigma)})$. However,  the constant depends on the curvature of $\Sigma$ and we need to quantify this dependence. 
\begin{proposition}
\label{prop:laplace-bound}
Assume that $\Sigma$ is uniformly $C^{1,\alpha}(\Gamma)$-regular and satisfies  $\|B_\Sigma\|_{L^4} \leq M$. For all $\e>0$ there exists a constant $C_\e$ such that for $k = 0, \frac12, 1$ it holds 
\[
\|u\|_{H^{k+2}(\Sigma)} \leq (1+\e)\|\Delta_\Sigma u\|_{H^{k}(\Sigma)} +   C_{\e} \|u \|_{L^2(\Sigma)}.
\]
Let $m$ be an integer with $m \geq 3$ and assume that $\Sigma$ satisfies in addition the condition \eqref{eq:notationhp}. Then for every half-integer $2 \leq k \leq m$  it holds
\[
\|u\|_{H^{k}(\Sigma)} \leq (1+\e)\|\Delta_\Sigma u\|_{H^{k-2}(\Sigma)} +   C_{\e} \|u \|_{L^2(\Sigma)}.
\]
The constant $C_\e$ depends on $\e, M, m$ and on the  $C^{1,\alpha}$-norm of the heightfunction. 
\end{proposition}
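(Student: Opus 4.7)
\emph{Strategy and base case.} The plan is to establish each of the three low-regularity cases $k\in\{0,\tfrac12,1\}$ by direct integration-by-parts combined with curvature-error absorption, and then to bootstrap to higher $k$ by induction using the Kato-Ponce inequality. For the base case $k=0$, I would integrate by parts twice and use the Ricci identity \eqref{eq:curv-tensor} to obtain the Bochner-type identity
\[
\int_\Sigma |\bar\nabla^2 u|^2\,d\mathcal{H}^2 = \int_\Sigma (\Delta_\Sigma u)^2\,d\mathcal{H}^2 + \int_\Sigma R\star\bar\nabla u\star\bar\nabla u\,d\mathcal{H}^2.
\]
By \eqref{eq:curv-tensor2} one has $|R|\leq C|B|^2$ pointwise, so H\"older's inequality and $\|B\|_{L^4}\leq C$ bound the remainder by $C\|\bar\nabla u\|_{L^4}^2$. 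Proposition \ref{prop:interpolation} (in dimension $n=2$, bounding $\|\bar\nabla u\|_{L^4}$ by a power of $\|u\|_{H^2}$ times a power of $\|u\|_{L^2}$) together with Young's inequality then turns this into $\varepsilon\|u\|_{H^2}^2+C_\varepsilon\|u\|_{L^2}^2$, and absorbing the first term on the left-hand side yields the $k=0$ estimate.

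\emph{Cases $k=1$ and $k=\tfrac12$.} For $k=1$, I would commute $\bar\nabla$ past $\Delta_\Sigma$ via $\bar\nabla_i\Delta_\Sigma u=\Delta_\Sigma\bar\nabla_i u+R\star \bar\nabla u$ (the tensorial form of \eqref{eq:curv-tensor}) and apply the Bochner identity to the $1$-form $\bar\nabla u$. This reduces $\int|\bar\nabla^3 u|^2$ to $\int|\bar\nabla\Delta_\Sigma u|^2$ plus error terms schematically of the form $\int R\star\bar\nabla u\star\bar\nabla^2 u$, all of which are controlled by $C\|B\|_{L^4}^2$ times mixed $L^p$-norms of derivatives of $u$; Proposition \ref{prop:interpolation} and Young's inequality send them to $\varepsilon\|u\|_{H^3}^2+C_\varepsilon\|u\|_{L^2}^2$, and absorption concludes. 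For the half-integer case $k=\tfrac12$, I would use the real-interpolation characterization $H^{1/2}(\Sigma)=(L^2(\Sigma),H^1(\Sigma))_{1/2,2}$ (and analogously for $H^{5/2}$) from Proposition \ref{prop:sobolev}, and interpret the $k=0$ and $k=1$ bounds as saying that the inverse Laplacian on mean-zero functions has operator norm $\leq 1+\varepsilon$ from $H^k\to H^{k+2}$ for $k=0,1$; applying the interpolation estimate \eqref{ineq:inter-functional} with $\theta=\tfrac12$ then gives the same bound at $k=\tfrac12$, with the mean of $u$ and the null space of $\Delta_\Sigma$ absorbed into the $C_\varepsilon\|u\|_{L^2}$ correction.

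\emph{Higher regularity and main obstacle.} For $m\geq 3$ and half-integer $k$ with $2\leq k\leq m$, I would induct on $k$. Setting $f=\Delta_\Sigma u$, the inductive step applies $\bar\nabla^{k-2}$ (understood in the half-integer sense via the interpolation definition of Section 2) to $\Delta_\Sigma u=f$ and commutes derivatives past $\Delta_\Sigma$, producing commutators schematically of the form $\bar\nabla^j B\star\bar\nabla^{k-j}u$; Proposition \ref{prop:kato-ponce} together with the full hypothesis \eqref{eq:notationhp} controls these by products such as $\|B\|_{L^\infty}\|u\|_{H^{k}}$ and $\|B\|_{H^{m-2}}\|\bar\nabla^{k-m+2}u\|_{L^\infty}$, which after Proposition \ref{prop:interpolation} and Young's inequality fit inside $\varepsilon\|u\|_{H^{k}}^2+C_\varepsilon\|u\|_{L^2}^2$. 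The main obstacle is preserving the sharp constant $(1+\varepsilon)$ rather than a generic $C$, which forces every curvature-induced remainder to be rewritten, via Proposition \ref{prop:interpolation} and Young's inequality with carefully chosen exponents, into a form absorbable by the leading term. The most delicate case is $k=1$ under the weak assumption $\|B\|_{L^4}\leq C$, where the cross terms $R\star\bar\nabla u\star\bar\nabla^2 u$ combine low-regularity curvature with near-top-order derivatives of $u$ and leave only a narrow margin in the interpolation exponents.
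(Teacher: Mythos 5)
Your proposal follows essentially the same route as the paper: an integration-by-parts Bochner-type identity with the curvature remainder controlled by $\|B\|_{L^4}$ and Proposition \ref{prop:interpolation} for $k=0,1$; real interpolation of the inverse Laplacian (acting on mean-zero functions) between $L^2\to H^2$ and $H^1\to H^3$ to obtain $k=\tfrac12$; and commutation plus Proposition \ref{prop:kato-ponce} under the stronger hypothesis \eqref{eq:notationhp} for $k\geq 2$, with half-integer cases again obtained by interpolation. The paper writes out the $k=1$ case via explicit repeated integration by parts rather than invoking a 1-form Bochner formula by name, but the identity and error structure are the same.
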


\begin{proof}
The case $k = 0$ follows from  \cite[Lemma 4.11]{DDM} but we give the proof for the reader's convenience. We  recall that the Riemann tensor $R$ satisfies by \eqref{eq:curv-tensor2} $|R|\leq C|B|^2$ and deduce by     \cite[Remark 2.4]{FJM3D} (see also \cite{AubinBook2}) that 
\[
\|\bar \nabla^2 u\|_{L^{2}(\Sigma)}^2 \leq \|\Delta_\Sigma u\|_{L^{2}(\Sigma)}^2 + C \int_\Sigma |B|^2 |\bar \nabla u|^2\, d \H^2 .
\]
By Proposition  \ref{prop:interpolation} there is $\theta \in (0,1)$ such that 
\[
 \int_\Sigma |B|^2 |\bar \nabla u|^2\, d \H^2 \leq \|B\|_{L^4}^2 \|\bar \nabla u\|_{L^{4}}^2 \leq C  \|  u\|_{H^{2}}^{2\theta} \|u\|_{L^{2}}^{2(1-\theta)} \leq \e  \|  u\|_{H^{2}} + C_{\e}\|u\|_{L^{2}}^2.
\]
This implies the claim for $k =0$.

For the case $k=1$ we use \eqref{eq:curv-tensor} and integration by parts
\[
\begin{split}
\|\bar \nabla \Delta_\Sigma u\|_{L^{2}(\Sigma)}^2 &=  \int_{\Sigma}   \bar \nabla^k \bar \nabla_i\bar \nabla^i u  \, \bar \nabla_k \bar \nabla^j\bar \nabla_j u \, d \H^2 \\
&= \int_{\Sigma} \bar \nabla_i  \bar \nabla^k \bar \nabla^i u  \, \bar \nabla_k \bar \nabla^j\bar \nabla_j u \, d \H^2 + \int_{\Sigma} (R  \star \bar \nabla u \star \bar \nabla^3 u) \, d \H^2\\
&\geq -\int_{\Sigma}   \bar \nabla^k \bar \nabla^i u  \, \bar \nabla_i \bar \nabla_k \bar \nabla^j\bar \nabla_j u \, d \H^2 - \|R \star \bar \nabla u\|_{L^2}\, \|\bar \nabla^3 u\|_{L^2}.
\end{split}
\]
As before we have by Proposition \ref{prop:interpolation} and by $|R|\leq C|B|^2$ that 
\begin{equation}
    \label{eq:laplace-bound1}
\|R \star \bar \nabla u\|_{L^2} \leq  C\|B\|_{L^4}^2\|\bar \nabla u\|_{L^\infty} \leq  \e \|u\|_{H^3} + C_{\e} \| u\|_{L^2}.
\end{equation}
We proceed by using \eqref{eq:curv-tensor} and by integrating by parts
\[
\begin{split}
 -\int_{\Sigma}   &\bar \nabla^k \bar \nabla^i u  \, \bar \nabla_i \bar \nabla_k \bar \nabla^j\bar \nabla_j u \, d \H^2 \\
 &\geq -\int_{\Sigma}   \bar \nabla^k \bar \nabla^i u  \,  \bar \nabla_k \bar \nabla_i \bar \nabla^j\bar \nabla_j u \, d \H^2  + \int_{\Sigma}   \bar \nabla^2 u \star R \star \bar \nabla^2 u\, d \H^2 \\
 &\geq -\int_{\Sigma}   \bar \nabla^k \bar \nabla^i u  \,  \bar \nabla_k \bar \nabla^j \bar \nabla_i  \bar \nabla_j u \, d \H^2 - \int_{\Sigma}   \bar \nabla^k \bar \nabla^i u  \,  \bar \nabla_k \bar [ \bar \nabla_i\nabla^j -  \bar \nabla_j\nabla^i] \bar \nabla_j u \, d \H^2\\
 &\,\,\,\,\,\,\,\,- C\|B\|_{L^4}^2 \|\bar \nabla^2 u\|_{L^4}^2\\
 &\geq \int_{\Sigma}   \bar \nabla^j \bar \nabla^k \bar \nabla^i u  \,  \bar \nabla_k  \bar \nabla_i  \bar \nabla_j u \, d \H^2 + \int_{\Sigma}   \bar \nabla_k   \bar \nabla^k \bar \nabla^i u  \, \bar [ \bar \nabla_i\nabla^j -  \bar \nabla_j\nabla^i] \bar \nabla_j u \, d \H^2\\
 &\,\,\,\,\,\,\,\,- C\|B\|_{L^4}^2 \|\bar \nabla^2 u\|_{L^4}^2\\
 &\geq \|\bar \nabla^3 u\|_{L^2}^2 - C\|B\|_{L^4}^2 \|\bar \nabla^2 u\|_{L^4}^2 - \|R  \star \bar \nabla u\|_{L^2}\, \|\bar \nabla^3 u\|_{L^2}.
 \end{split}
\]
The inequality for $k =1$ then follows from \eqref{eq:laplace-bound1} and from Proposition \ref{prop:interpolation} which yields 
\[
\|\bar \nabla^2 u\|_{L^4} \leq \e \|u\|_{H^3} + C_\e \|u\|_{L^2}. 
\]

The case $k=1/2$ follows from the previous two estimates and   Proposition \ref{prop:sobolev} with standard interpolation argument which we briefly sketch here for the reader's convenience. 
We define a linear operator $\mathcal{F}: \tilde H^{k}(\Sigma) \to \tilde H^{k+2}(\Sigma)$ such that $\mathcal{F}(g) = u$, where $u$ is the solution of 
\[
\Delta_\Sigma u = g \qquad \text{on } \, \Sigma, 
\]
and $\tilde H^k(\Sigma) = \{ f \in H^k(\Sigma) : \int_{\Sigma} f \, d \H^2 = 0\}$. The operator $\mathcal{F}$ is well-defined and by the previous estimates it satisfies 
\[
\|\mathcal{F}\|_{\mathcal L(L^2,H^2)} \leq C \qquad \text{and} \qquad \|\mathcal{F}\|_{\mathcal L(H^1,H^3)} \leq C 
\]
 By the  interpolation theory discussed in \eqref{ineq:inter-functional}  it holds 
\[
\|\mathcal{F}(g)\|_{H_\star^{\frac52}(\Sigma)} \leq C \|g\|_{H_\star^{\frac12}(\Sigma)}.
\]
Proposition \ref{prop:sobolev} then yields
\[
\|\mathcal{F}(g)\|_{H^{\frac52}(\Sigma)} \leq C \|g\|_{H^{\frac12}(\Sigma)}.
\]
We apply this to $\tilde u = u - \bar u$, where $\bar u = \fint_{\Sigma} u \, d \H^2$ and the claim follows. 

 The argument for  higher $m$ and $k$ is similar and we merely sketch it. Let $k$ be an integer with $2 \leq k \leq m$. Using  \eqref{eq:curv-tensor} and arguing as above we obtain after long but straightforward calculations that
\[
\|\bar \nabla^{k} u\|_{L^{2}(\Sigma)}^2 \leq  \|\bar \nabla^{k-2} \Delta_\Sigma u\|_{L^{2}(\Sigma)}^2  + C \sum_{\alpha+ \beta\leq k-2}\|\bar \nabla^\alpha R \star \bar \nabla^{1+\beta} u \|_{L^2(\Sigma)}^2.
\]
Then by \eqref{eq:curv-tensor2}, \eqref{eq:est-product},  Proposition \ref{prop:interpolation} and by the assumption  $\|B\|_{L^\infty}, \|B\|_{H^{k-2}}\leq C$ we have  
\[
\begin{split}
\sum_{\alpha+ \beta\leq k-2}\|\bar \nabla^\alpha (R \star \bar \nabla^{1+\beta} u) \|_{L^2(\Sigma)} &\leq C \|B\|_{L^\infty}^2 \|u\|_{H^{k-1}} + C \|B\|_{L^\infty}  \|B\|_{H^{k-2}} \|\bar \nabla u\|_{L^\infty}\\
&\leq \e \|u\|_{H^k(\Sigma)} + C_{\e} \|u\|_{L^2(\Sigma)}.
\end{split}
\]
This yields the claim for integers $2 \leq k \leq l$. 

If $k \leq m-\frac12$ is an half-integer but not an integer, then we may use the previous argument for integer $l = k + \frac12 \leq m$  and deduce 
\[
\| u\|_{H^{l}} \leq  (1+\e)\|\Delta_\Sigma u\|_{H^{l-2}(\Sigma)} +   C_{\e} \|u \|_{L^2(\Sigma)}.
\]
The same holds for $l-1$. Hence, the claim follows by Proposition \ref{prop:sobolev} and by the same interpolation argument we used above. 
\end{proof}

By using the previous proposition and the Simon's identity \eqref{eq:Simon} we deduce that we may bound the second fundamental form by the mean curvature.

\begin{proposition}
\label{prop:meancrv-bound}
Assume that $\Sigma$ is  uniformly $C^{1, \alpha}(\Gamma)$-regular. Then for every $p \in (1,\infty)$ it holds
\[
\| B_\Sigma\|_{L^p(\Sigma)} \leq C(1+ \|H_\Sigma\|_{L^p(\Sigma)}). 
\]
If in addition  $\|B_\Sigma\|_{L^4(\Sigma)} \leq M$, then  for  $k =\frac12, 1, 2$ it holds
\[
\|B_\Sigma\|_{H^{k}(\Sigma)} \leq C(1+ \|H_\Sigma\|_{H^{k}(\Sigma)}).
\]
Finally let $m\geq 3$ be an integer  and assume that $\Sigma$ satisfies in addition the condition \eqref{eq:notationhp} for $m$. Then the  above estimate holds for all half-integers  $k\leq m$. The constants depend on $M, p, m$ and on the  $C^{1,\alpha}$-norm of the heightfunction. 
\end{proposition}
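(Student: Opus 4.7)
The proof splits into the three statements of the proposition, and the main tool is Simon's identity \eqref{eq:Simon} together with Proposition \ref{prop:laplace-bound}, with the first statement handled separately by elliptic regularity on the reference manifold.

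For the first claim, I would use the height function parametrization $\Psi_\Sigma(x) = x + h(x)\nu_\Gamma(x)$ with $\|h\|_{C^{1,\alpha}(\Gamma)} \leq C$. Pulling back to $\Gamma$, a direct computation expresses the mean curvature as a quasilinear elliptic operator
\[
H_\Sigma \circ \Psi_\Sigma = a^{ij}(x,h,\bar\nabla h)\,\bar\nabla_i\bar\nabla_j h + f(x,h,\bar\nabla h),
\]
whose coefficients $a^{ij}$ are uniformly elliptic and belong to $C^{0,\alpha}(\Gamma)$ and whose nonlinearity $f$ is uniformly bounded (with smooth dependence on the reference geometry of $\Gamma$ and on $h,\bar\nabla h$). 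Since $\Gamma$ is smooth and compact, the Calder\'on--Zygmund $L^p$ estimates with H\"older continuous coefficients yield $\|\bar\nabla^2 h\|_{L^p(\Gamma)} \leq C(1+\|H_\Sigma\|_{L^p(\Sigma)})$, and because $B_\Sigma \circ \Psi_\Sigma = A(x,h,\bar\nabla h)\star \bar\nabla^2 h + r(x,h,\bar\nabla h)$ with uniformly bounded $A,r$, this transfers to $\|B_\Sigma\|_{L^p(\Sigma)} \leq C(1+\|H_\Sigma\|_{L^p(\Sigma)})$.

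For the $H^k$ estimate with $k\in\{\tfrac12,1,2\}$ under the extra assumption $\|B\|_{L^4}\leq C$, the case $k=1$ follows by multiplying Simon's identity by $B^{ij}$, integrating over $\Sigma$, and using Codazzi in the form $\bar\nabla_i B^{ij} = \bar\nabla^j H$ together with integration by parts; this gives the clean identity
\[
\int_\Sigma |\bar\nabla B|^2\,d\H^2 = \int_\Sigma |\bar\nabla H|^2\,d\H^2 + \int_\Sigma \bigl(|B|^4 - H\,\mathrm{tr}(B^3)\bigr)\,d\H^2,
\]
and H\"older's inequality bounds the last term by $C(\|B\|_{L^4}^4 + \|H\|_{L^4}\|B\|_{L^4}^3)\leq C$ thanks to the first claim, so $\|B\|_{H^1}\leq C(1+\|H\|_{H^1})$. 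For $k=2$, Proposition \ref{prop:laplace-bound} gives $\|B\|_{H^2}\leq (1+\varepsilon)\|\Delta_\Sigma B\|_{L^2} + C_\varepsilon\|B\|_{L^2}$, and Simon's identity reduces the task to estimating $\|\bar\nabla^2 H\|_{L^2}$ plus $\|HB^2\|_{L^2}$ and $\||B|^2 B\|_{L^2}$. The first is $\leq \|H\|_{H^2}$, and for the cubic terms I would use the Sobolev embedding $H^2(\Sigma)\hookrightarrow L^\infty(\Sigma)$ on the $H$-factor together with the interpolation in Proposition \ref{prop:interpolation} (which bounds $\|B\|_{L^6}^3$ by $C\|B\|_{H^1}^2\|B\|_{L^2}$, controlled by the $k=1$ step); Young's inequality then absorbs sublinear contributions to yield $\|B\|_{H^2}\leq C(1+\|H\|_{H^2})$. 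The half-integer case $k=\tfrac12$ follows by interpolating the $L^2$ and $H^1$ estimates and using the norm equivalence of Proposition \ref{prop:sobolev}.

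For the last statement, I would argue by induction on integer $k$ with $3\leq k\leq m$. Proposition \ref{prop:laplace-bound} gives $\|B\|_{H^k}\leq (1+\varepsilon)\|\Delta_\Sigma B\|_{H^{k-2}} + C_\varepsilon\|B\|_{L^2}$; Simon's identity then reduces the control to $\|\bar\nabla^2 H\|_{H^{k-2}} \leq \|H\|_{H^k}$ plus the two nonlinear terms $\|HB^2\|_{H^{k-2}}$ and $\||B|^2 B\|_{H^{k-2}}$. These are handled by Proposition \ref{prop:kato-ponce} (Kato--Ponce), placing the top Sobolev norm on one factor and $L^\infty$ on the other; the latter is controlled via the Sobolev embedding $H^{k-1}(\Sigma)\hookrightarrow L^\infty(\Sigma)$ valid for $k\geq 3$, while the former is controlled by the induction hypothesis $\|B\|_{H^{k-1}}\leq C(1+\|H\|_{H^{k-1}})$ combined with interpolation (Corollary \ref{coro:interpolation}). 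Any term involving $\|B\|_{H^k}$ itself appears with a small factor coming from Young's inequality and is absorbed into the left-hand side. The half-integer exponents $k\leq m$ are then obtained from the integer cases by the interpolation norm equivalence of Proposition \ref{prop:sobolev}. The main obstacle is keeping the estimate \emph{linear} in $\|H\|_{H^k}$: this forces the Kato--Ponce splittings to be chosen so that every cubic curvature term has either an $L^\infty$-factor bounded by lower-order data from \eqref{eq:notationhp} and the induction hypothesis, or a small coefficient that Young's inequality permits to absorb.
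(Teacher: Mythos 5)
Your arguments for the first claim, for $k=1$ (via the Codazzi identity $\bar\nabla_i B^{ij}=\bar\nabla^j H$ and the resulting integral identity), for $k=2$, and for integer $3\leq k\leq m$ by induction with Simon's identity, Proposition~\ref{prop:laplace-bound} and Proposition~\ref{prop:kato-ponce} are sound, and the $k=1$ route via Codazzi is genuinely different from the paper's (the paper handles both $k=\tfrac12$ and $k=1$ by the same device it uses for $k=\tfrac12$; see below). However, there is a real gap in your treatment of the half-integer cases.

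You propose to obtain $\|B\|_{H^{1/2}}\leq C(1+\|H\|_{H^{1/2}})$ ``by interpolating the $L^2$ and $H^1$ estimates.'' This does not work: the estimates $\|B\|_{L^2}\leq C(1+\|H\|_{L^2})$ and $\|B\|_{H^1}\leq C(1+\|H\|_{H^1})$ are nonlinear pointwise bounds, not statements about a bounded linear operator, so the interpolation machinery of \eqref{ineq:inter-functional} is not available. What you actually get by interpolating $B$ alone is $\|B\|_{H^{1/2}}\leq C\|B\|_{H^1}^{1/2}\|B\|_{L^2}^{1/2}\leq C(1+\|H\|_{H^1})^{1/2}$, and $(1+\|H\|_{H^1})^{1/2}$ is \emph{not} controlled by $1+\|H\|_{H^{1/2}}$ in general (e.g.\ if $H$ oscillates at frequency $n$ with amplitude $n^{-3/4}$, then $\|H\|_{H^{1/2}}\to 0$ while $\|H\|_{H^1}^{1/2}\to\infty$). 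The same objection applies to your final sentence, where you propose to pass from integer $k\leq m$ to half-integer $k\leq m$ ``by the interpolation norm equivalence of Proposition~\ref{prop:sobolev}''; that proposition only identifies the half-integer Sobolev norms with interpolation norms, it does not let you interpolate a nonlinear inequality between two integer levels.

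The paper avoids this by working with a genuinely linear object. It uses the geometric identity $\Delta_\Sigma x_i = -H_\Sigma\,\nu_i$ for the coordinate functions $x_i$: Proposition~\ref{prop:laplace-bound} at fractional level gives $\|B\|_{H^{1/2}}\leq C\sum_i(1+\|\bar\nabla^2 x_i\|_{H^{1/2}})\leq C\sum_i(1+\|\Delta_\Sigma x_i\|_{H^{1/2}})=C\sum_i(1+\|H\nu_i\|_{H^{1/2}})$, and the fractional Leibniz estimate \eqref{eq:kato-weak} splits the last term as $\|H\|_{H^{1/2}}+\|H\|_{L^4}\|\nu\|_{W^{1,4}}$, yielding the claimed bound. (Interpolation \emph{is} legitimately used inside the proof of Proposition~\ref{prop:laplace-bound} itself, but there it is applied to the linear solution operator for $\Delta_\Sigma$.) For the half-integer exponents $k\geq 2$ you do not need interpolation at all: Propositions~\ref{prop:laplace-bound} and~\ref{prop:kato-ponce} are stated directly for half-integer $k$, so your Simon's-identity induction step goes through verbatim with half-integer $k$; you should state it that way rather than appealing to interpolation between consecutive integers.
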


\begin{proof}
The first claim follows from standard Calderon-Zygmund estimate \cite{GT} and we omit it. Let us proof the second claim for $k=\frac12$. We recall the geometric fact 
\[
\Delta_\Sigma x_i = - H_\Sigma \nu_i,
\]
where $x_i = x \cdot e_i$ and $\nu_i = \nu_\Sigma \cdot e_i$.  Then we have by Proposition \ref{prop:laplace-bound} and  \eqref{eq:kato-weak}
\[
\begin{split}
\|B_\Sigma \|_{H^{\frac12}(\Sigma)} &\leq C\sum_{i=1}^3(1+ \|\nabla^2_\Sigma x_i  \|_{H^{\frac12}(\Sigma)})  \leq \sum_{i=1}^3 C(1+ \|\Delta_\Sigma x_i  \|_{H^{\frac12}(\Sigma)}) \\
&= \sum_{i=1}^3 C(1+ \| H_\Sigma \nu_i\|_{H^{\frac12}(\Sigma)}) \\
&\leq C(1+  \| H_\Sigma\|_{H^{\frac12}(\Sigma)} + \| H_\Sigma\|_{L^{4}(\Sigma)} \|\nu_\Sigma\|_{W^{1,4}(\Sigma)}) \leq C(1+  \| H_\Sigma\|_{H^{\frac12}(\Sigma)}).
\end{split}
\]
The argument for $k=1$ is similar. 

In the case $k =2$ we use the Simon's identity \eqref{eq:Simon}  to deduce 
\[
\|\Delta_\Sigma B\|_{L^2(\Sigma)}^2 \leq \|\bar \nabla^{2} H\|_{L^2(\Sigma)}^2 + C \| B\|_{L^6(\Sigma)}^6.
\]
 Proposition \ref{prop:laplace-bound}   yields  $\| B\|_{H^2(\Sigma)} \leq 2\|\Delta_\Sigma B\|_{L^{2}(\Sigma)} +C$. The claim then follows from interpolation inequality (Proposition \ref{prop:interpolation})
 \[
\| B\|_{L^6(\Sigma)}^6 \leq \|B\|_{H^2(\Sigma)}^{\frac23} \|B\|_{L^4(\Sigma)}^{\frac{16}{3}} \leq \e \|B\|_{H^2(\Sigma)} + C_\e.
 \]

Let us then fix $m\geq 3$, assume that $\Sigma$ satisfies the condition \eqref{eq:notationhp} for $m$ and let $k \leq m$.
We use the Simon's identity \eqref{eq:Simon} and Proposition  \ref{prop:kato-ponce} to deduce 
\[
\begin{split}
\|\Delta_\Sigma B\|_{H^{k-2}(\Sigma)} &\leq \| H\|_{H^k(\Sigma)} + C \|B\star B \star B\|_{H^{k-2}(\Sigma)}\\
&\leq \| H\|_{H^k(\Sigma)} + C \|B\|_{L^\infty(\Sigma)}^2 \|B\|_{H^{k-2}(\Sigma)}\\
&\leq \| H\|_{H^k(\Sigma)} + \e\|B\|_{H^{k}(\Sigma)} + C_\e,
\end{split}
\]
where the last inequality follows from $\|B\|_{L^\infty}\leq C$ and from interpolation. The claim then follows from  Proposition \ref{prop:laplace-bound}.  
\end{proof}

Note that by the definition of the space $\|\cdot \|_{H^{k}(\Sigma)}$ in Definition \ref{defn:fractionalnorm2} it is not yet clear if it holds 
\[
\|\nabla_\tau u\|_{H^{k-1}(\Sigma)} \leq C \| u\|_{H^{k}(\Sigma)}
\]
when $k$ is not an integer. We conclude this by section by proving this in the following technical lemma.  

\begin{lemma}
\label{lem:frac-gradient}
Let $m$ be an integer with $m \geq 3$ and assume that $\Sigma$ is   uniformly $C^{1, \alpha}(\Gamma)$-regular and satisfies the condition \eqref{eq:notationhp}. Then it holds
\[
\|\nabla_\tau u \|_{H^{m-\frac32}(\Sigma)} \leq C \| u \|_{H^{m-\frac12}(\Sigma)}.
\]
\end{lemma}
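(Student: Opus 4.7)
The strategy is to combine the linear interpolation principle \eqref{ineq:inter-functional} with the identification of the intrinsic half-integer space and the real interpolation space provided by Proposition \ref{prop:sobolev}. First I would establish the claim at integer orders, namely that for every integer $k \in \{m-1, m\}$ the linear map $\mathcal{F}: u \mapsto \nabla_\tau u$ is bounded from $H^k(\Sigma)$ to $H^{k-1}(\Sigma)$ (with the target norm understood componentwise for the $\R^3$-valued field). This follows from Leibniz' formula, which writes $\bar\nabla^{k-1}(\nabla_\tau u)$ as a sum of contractions of the form $\bar\nabla^{i_1} B \star \cdots \star \bar\nabla^{i_j} B \star \bar\nabla^{1+i_{j+1}} u$ with $i_1 + \cdots + i_{j+1} \leq k-1$; the assumption \eqref{eq:notationhp} for $m$, together with Proposition \ref{prop:kato-ponce} and the interpolation inequality (Proposition \ref{prop:interpolation}), then yields $\|\nabla_\tau u\|_{H^{k-1}(\Sigma)} \leq C \|u\|_{H^k(\Sigma)}$.

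Next I would apply \eqref{ineq:inter-functional} to the interpolation couples $(X_1, X_2) = (H^{m-1}(\Sigma), H^m(\Sigma))$ and $(Y_1, Y_2) = (H^{m-2}(\Sigma), H^{m-1}(\Sigma))$, with $\theta = \tfrac12$ and $p = 2$. This gives a bounded operator
\[
\mathcal{F} : \bigl(H^{m-1}(\Sigma), H^m(\Sigma)\bigr)_{\frac12, 2} \longrightarrow \bigl(H^{m-2}(\Sigma), H^{m-1}(\Sigma)\bigr)_{\frac12, 2},
\]
and by the definition \eqref{def:H_star} the source space is $H^{m-\frac12}_\star(\Sigma)$ while the target space is $H^{m-\frac32}_\star(\Sigma)$.

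Finally I would identify these interpolation spaces with the intrinsic ones via Proposition \ref{prop:sobolev}. For the source this is immediate from the hypothesis \eqref{eq:notationhp} at order $m$. For the target we need \eqref{eq:notationhp} at order $m-1$; since $m \geq 3$, this follows from the hypothesis at order $m$: for $m > 3$ the bound $\|B_\Sigma\|_{H^{m-2}(\Sigma)} \leq C$ implies $\|B_\Sigma\|_{H^{m-3}(\Sigma)} \leq C$, and for $m = 3$ the bound $\|B_\Sigma\|_{L^\infty(\Sigma)} + \|B_\Sigma\|_{H^1(\Sigma)} \leq C$ trivially gives $\|B_\Sigma\|_{L^4(\Sigma)} \leq C$. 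Proposition \ref{prop:sobolev} then yields $H^{m-\frac12}_\star(\Sigma) \simeq H^{m-\frac12}(\Sigma)$ and $H^{m-\frac32}_\star(\Sigma) \simeq H^{m-\frac32}(\Sigma)$, and the stated inequality follows by composing the three equivalences.

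The main obstacle is the integer-order estimate: producing $\bar\nabla^{k-1}(\nabla_\tau u)$ in intrinsic form requires differentiating the normal projection, which brings in the second fundamental form and its derivatives, and one must use precisely the regularity assumed in \eqref{eq:notationhp} to close the estimate. Once this is handled, the interpolation and identification steps are essentially formal consequences of the results already proved in Section 2.
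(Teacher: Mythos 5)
Your proposal takes a genuinely different route from the paper's argument. The paper works directly on the reference surface $\Gamma$: using Definition~\ref{defn:fractionalnorm2}, it writes $(\nabla_\tau u \circ \Psi_\Sigma)(x) = A(x)\nabla\tilde u(x)$ for a matrix field $A = A(x, h, \bar\nabla h)$, applies Proposition~\ref{prop:kato-ponce} on $\Gamma$, and closes the estimate using $\|A\|_{L^\infty(\Gamma)}, \|A\|_{H^{m-1}(\Gamma)} \leq C$, which follows from the height-function regularity implied by \eqref{eq:notationhp}. Your proposal instead proves the integer-order estimates $\|\nabla_\tau u\|_{H^{k-1}(\Sigma)} \leq C\|u\|_{H^k(\Sigma)}$ for $k = m-1, m$ intrinsically on $\Sigma$ and then interpolates via \eqref{ineq:inter-functional} and Proposition~\ref{prop:sobolev}. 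Both routes are legitimate: the paper's is shorter because it exploits the parametrization that underlies the definition of $H^{m-\frac12}(\Sigma)$, while yours is arguably more modular (the half-integer case is reduced to integer cases plus abstract interpolation, the pattern already used in the proofs of Proposition~\ref{prop:laplace-bound} and Proposition~\ref{prop:dirichlet1}).

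One point to tighten: the Leibniz formula you state, with the constraint $i_1 + \cdots + i_{j+1} \leq k-1$, nominally permits the term $\bar\nabla^{k-1}B \star \bar\nabla u$ (take $j = 1$, $i_1 = k-1$, $i_2 = 0$). For $k = m$ this would require control of $\bar\nabla^{m-1}B$, which \eqref{eq:notationhp} does not provide (it only gives $\|B\|_{H^{m-2}(\Sigma)}$). In the actual expansion this term does not occur: regarding $\nabla_\tau u$ as an $\R^3$-valued map, each differentiation of the tangential frame or of the normal produces one factor of $B$ and simultaneously consumes one of the $k-1$ available derivatives, so the correct constraint is $i_1 + \cdots + i_{j+1} \leq k-1-j$, and hence the highest derivative of $B$ that appears is $\bar\nabla^{k-2}B$. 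With this corrected bookkeeping, Kato-Ponce together with $\|B\|_{L^\infty} + \|B\|_{H^{m-2}} \leq C$ and the Sobolev embedding on the two-dimensional surface (e.g. $\|\bar\nabla u\|_{L^4} \leq C\|u\|_{H^2}$ for the worst $m = 3$ case) does close the integer-order estimate, and the remaining interpolation and identification steps are as you describe.
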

\begin{proof}
Let us denote $\tilde u = u \circ \Psi : \Gamma \to \R$ and in order to simplify the notation denote the extension given by the projection  in \eqref{def:projection} $(\tilde u \circ \pi_\Gamma)$ simply  by $\tilde u$. We observe that there is  a matrix field $A(x) = A(x,h, \bar \nabla h)$ such that 
\[
(\nabla_\tau u \circ \Psi)(x)  = A(x) \nabla \tilde u(x) \qquad \text{for  } \, x \in \Gamma.
\]
Therefore we have by Definition \ref{defn:fractionalnorm2} and by Proposition \ref{prop:kato-ponce}
\[
\begin{split}
\|\nabla_\tau u  \|_{H^{m-\frac32}(\Sigma)} &= \|\nabla_\tau u \circ \Psi \|_{H^{m-\frac32}(\Gamma)} = \|A \, \nabla \tilde u \|_{H^{m-\frac32}(\Gamma)} \\
&\leq C\|A\|_{L^\infty} \, \|\nabla \tilde u \|_{H^{m-\frac32}(\Gamma)} + C \|A\|_{H^{m-\frac32}(\Gamma)} \, \|\nabla \tilde u \|_{L^\infty}.
\end{split}
\]

The assumption $\|B\|_{L^\infty(\Sigma)},\|B\|_{H^{m-2}(\Sigma)}\leq C$ implies  for the height function  $\|h\|_{C^2(\Gamma)}\leq C$ and  $ \|h\|_{H^m(\Gamma)} \leq C$ and therefore $\|A\|_{L^{\infty}(\Gamma)},  \|A\|_{H^{m-1}(\Gamma)} \leq C$. Moreover,  since $m\geq 3$  the Sobolev embedding  yields $\|\nabla  \tilde u\|_{L^\infty(\Gamma)} \leq C\|\nabla \tilde u\|_{H^{m-\frac32}(\Gamma)}$. Therefore we have 
\[
\|\nabla_\tau u  \|_{H^{m-\frac32}(\Sigma)} \leq C \|\nabla  \tilde u \|_{H^{m-\frac32}(\Gamma)} \leq C\| \tilde  u \|_{H^{m-\frac12}(\Gamma)} = C \| u  \|_{H^{m-\frac12}(\Sigma)}.
\]
\end{proof}

\section{Elliptic estimates for vector fields and functions}

In this section we recall some known and  provide some new  div-curl type estimates for vector fields in the domain, i.e.,  $F : \Omega \to \R^3$. We will need estimates where we control the norm $\|F\|_{H^k(\Omega)}$ by the $\Div F$, $\curl F$ in $\Omega$ and with $F_n$ on the boundary $\Sigma$. The main result of the section is Theorem \ref{prop:CS-k=1} where we prove this estimate for $k=1$ and require the boundary merely to satisfy $\|B_\Sigma\|_{L^4} \leq C$. We do not expect the  $L^4$-integrability  to be the optimal condition.  However, related to this we note that we may   construct a cone $\Omega \subset \R^3$ and a harmonic function $u : \Omega \to \R$ with zero Neumann boundary data $\pa_\nu u = 0$  arguing as in \cite[Section 3]{FJ}, such that $u$ can be written in spherical coordinates as $u(\rho,\theta) = \sqrt{\rho} f(\theta)$ for a smooth functions $f$. In particular,  $ u \notin H^2(\Omega \cap B_R)$  and therefore we  may deduce that a necessary condition for the curvature is at least  $\|B_\Sigma\|_{L^2} \leq C$ for Lemma \ref{lem:poisson1} and Theorem \ref{prop:CS-k=1} to hold.

We will also prove boundary regularity estimates for harmonic functions in Theorem \ref{teo:reg-capa}, which quantify the boundary regularity of the harmonic functions with respect to the regularity of the boundary. We note that in Theorem \ref{teo:reg-capa} it is crucial to assume that the boundary is uniformly $C^{1,\alpha}(\Gamma)$-regular. Indeed, the statement does not hold for Lipschitz domains.  

\subsection{Regularity estimates for vector fields}

We begin this section by recalling the following result which is essentially from \cite{CS} (see also \cite{Taylor}). Recall that we define 
\[
\curl F = \nabla F - (\nabla F)^T.
\]
Throughout the section we assume that $\Omega$ is connected, but its boundary $\Sigma = \pa \Omega$ may have many components.   
\begin{theorem}
\label{thm:chen-shkoller}
Let $l \geq 2$ be an integer and let  $\Omega$ be a  domain such that  $\Sigma= \pa \Omega$ is uniformly $C^{1,\alpha}(\Gamma)$-regular and $\|B_{\Sigma}\|_{H^{\frac32l-1}(\Sigma)}\leq M$. Then there exists a constant $C$, which depends on $M, l$ and on the $C^{1,\alpha}$-norm of the heightfunction, such that for all smooth vector fields $F:\Omega \to \R^3$   and every half-integers $1\leq k \leq \frac32 l$ it holds
\[
\|F\|_{H^{k}(\Omega)} \leq  C( \| F_n\|_{H^{k-\frac12}(\Sigma)}+ \| F\|_{L^2( \Omega)}  +\|\Div F\|_{H^{k-1}( \Omega)}+\|\curl F\|_{H^{k-1}} ).
\]
Moreover, for $k = \lfloor \frac32(l+1)\rfloor$ it holds 
\[
\|F\|_{H^{k}( \Omega)} \leq C( \| \nabla_\tau F_n\|_{H^{k-\frac32}(\Sigma)}+  (1+\|B_\Sigma\|_{H^{\frac32 l}}) \| F\|_{L^\infty}  +\|\Div F\|_{H^{k-1}( \Omega)}+\|\curl F\|_{H^{k-1}} ).
\]
\end{theorem}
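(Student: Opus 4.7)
The plan is to follow the standard div-curl framework as in \cite{CS}, combined with interpolation for the half-integer case and a careful book-keeping of the curvature dependence. First I would handle the base case $k=1$ via the identity
\[
\int_\Omega |\nabla F|^2 \, dx = \int_\Omega \left(|\Div F|^2 + \tfrac{1}{2}|\curl F|^2\right) dx + \int_\Sigma \left(B_\Sigma(F_\tau, F_\tau) + \text{lower order}\right) d\H^2,
\]
obtained by integrating $-\Delta F = -\nabla \Div F + \Div(\curl F)$ against $F$ and integrating by parts twice. The boundary term is then estimated by writing $F = F_\tau + F_n \nu$ and using that $\|F_\tau\|_{L^2(\Sigma)}^2$ can be absorbed by $\|F\|_{H^1(\Omega)}^{1/2}\|F\|_{L^2(\Omega)}^{1/2}$ via the trace theorem plus interpolation, while the $B_\Sigma$ factor is controlled by $\|B_\Sigma\|_{L^4}$ together with $L^4$-regularity of $F$ on $\Sigma$ from Proposition \ref{prop:interpolation}. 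Recovering $\|F\|_{H^1}$ from the normal datum $\|F_n\|_{H^{1/2}(\Sigma)}$ uses a harmonic extension in the spirit of Definition \ref{def:H-1/2-boundary}.

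For integer $k \geq 2$, I would proceed by induction, applying a tangential derivative $\bar\nabla$ to $F$ and reducing to the base estimate applied to $\bar\nabla F$. The commutators $[\bar\nabla, \Div]F$ and $[\bar\nabla, \curl]F$ produce terms of the form $B_\Sigma \star F$, which one controls using Proposition \ref{prop:kato-ponce} and the hypothesis $\|B_\Sigma\|_{H^{3l/2-1}(\Sigma)} \leq C$; for normal derivatives one uses that $\partial_\nu F$ is algebraically recoverable from $\Div F$, $\curl F$ and $\nabla_\tau F$, so normal differentiation does not cost anything beyond the right-hand side. The boundary condition at each induction step is on $(\bar\nabla F)_n$, which splits as $\bar\nabla F_n + B_\Sigma \star F_\tau$, yielding the claimed right-hand side.

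For half-integer $k$, I would interpolate between the two adjacent integer estimates using the interpolation characterization of $H^{k}(\Omega)$ provided by Proposition \ref{prop:sobolev2} and the boundedness property \eqref{ineq:inter-functional} applied to the linear map $F \mapsto (F_n, \Div F, \curl F)$; this immediately gives the stated half-integer inequality.

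For the second, sharper estimate at $k = \lfloor 3(l+1)/2 \rfloor$, the improvement from $\|F_n\|_{H^{k-1/2}}$ to $\|\nabla_\tau F_n\|_{H^{k-3/2}}$ costs one extra derivative that must be absorbed elsewhere. The idea is to decompose $F_n = \bar u + (F_n - \bar u)$ where $\bar u$ is the mean of $F_n$, controlled in $L^\infty$ by $\|F\|_{L^\infty}$, and use Poincaré on the zero-mean part; then $\|F_n - \bar u\|_{H^{k-1/2}(\Sigma)}$ is bounded by $\|\nabla_\tau F_n\|_{H^{k-3/2}(\Sigma)}$ via Lemma \ref{lem:frac-gradient} plus Proposition \ref{prop:laplace-bound}, at the price of picking up an extra factor $\|B_\Sigma\|_{H^{3l/2}}$ from the curvature estimates on $\Sigma$ needed to control the tangential Laplace--Beltrami inverse. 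I expect the main obstacle to be the precise tracking of commutator and nonlinear curvature terms at each level of the induction: every time one differentiates and regroups, one generates products of the form $\bar\nabla^a B_\Sigma \star \bar\nabla^b F$, and ensuring that these fall below the allowed $\|B_\Sigma\|_{H^{3l/2-1}}$ threshold — without hidden losses — requires careful application of Proposition \ref{prop:kato-ponce} combined with Corollary \ref{coro:interpolation} at each step.
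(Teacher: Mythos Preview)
Your from-scratch approach via the base integration-by-parts identity, induction on tangential differentiation, and interpolation for half-integers is the standard strategy and would work, but the paper takes a much shorter route. The paper's proof simply invokes \cite[Theorem 1.3]{CS} directly, which already gives
\[
\|F\|_{H^{k}(\Omega)} \leq  C\big( \| \nabla_\tau F \cdot \nu \|_{H^{k-\frac32}(\Sigma)}+ \| F\|_{L^2(\Omega)}  +\|\Div F\|_{H^{k-1}(\Omega)}+\|\curl F\|_{H^{k-1}(\Omega)} \big)
\]
for all the relevant $k$ at once, so the entire induction and interpolation machinery you describe is outsourced to the reference. The only new work in the paper is a one-line algebraic identity $\nabla_\tau F \cdot \nu = \nabla_\tau F_n + F \star B$, followed by Proposition \ref{prop:kato-ponce} to split the product and an interpolation to absorb $\|F\|_{H^{k-\frac32}(\Sigma)}$ back into the left-hand side. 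Both inequalities in the statement fall out simultaneously: for $k \leq \frac32 l$ the curvature factor $\|B\|_{H^{k-\frac32}}$ is bounded by hypothesis and disappears into the constant, while for $k = \lfloor \frac32(l+1)\rfloor$ it is simply kept as the explicit $(1+\|B_\Sigma\|_{H^{\frac32 l}})\|F\|_{L^\infty}$ term. Your Poincar\'e/mean-subtraction device for the second estimate is therefore unnecessary --- the paper never passes through $\|F_n\|_{H^{k-\frac12}}$ at the top order, so there is nothing to downgrade. What your approach buys is self-containment and insight into where the curvature enters; what the paper's approach buys is a proof that is a short paragraph rather than a full section.
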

\begin{proof}
We first note that the assumption $\|B\|_{H^{\frac32l-1}(\Sigma)}\leq M$ implies that $\Sigma$ satisfies the condition \eqref{eq:notationhp} for $m =\lfloor \frac32 l + 1\rfloor \geq 4$. 
We use \cite[Theorem 1.3]{CS} to  deduce 
\[
\|F\|_{H^{k}(\Omega)} \leq  C( \| \nabla_\tau F \cdot \nu \|_{H^{k-\frac32}(\Sigma)}+ \| F\|_{L^2(\Omega)}  +\|\Div F\|_{H^{k-1}(\Omega)}+\|\curl F\|_{H^{k-1}(\Omega)} )
\]
for all $k \leq \lfloor \frac32(l+1)\rfloor$. We write $\nabla_\tau F \cdot \nu = \nabla_\tau F_n + F \star B$ and  use Proposition \ref{prop:kato-ponce} to obtain
\[
\| \nabla_\tau F \cdot \nu \|_{H^{k-\frac32}(\Sigma)} \leq \| \nabla_\tau F_n \|_{H^{k-\frac32}(\Sigma)} + C\|  F\|_{L^\infty} \| B\|_{H^{k-\frac32}(\Sigma)} + C\|  B\|_{L^\infty} \| F\|_{H^{k-\frac32}(\Sigma)}.
\]
Interpolation inequality yields
\[
\| F\|_{H^{k-\frac32}(\Sigma)} \leq \| F\|_{H^{k-1}(\Omega)} \leq \e \| F\|_{H^{k}(\Omega)} + C_{\e}\| F\|_{L^{\infty}(\Omega)}.
\]
Thus we have the second inequality. The first one follows from the fact that for $k \leq \frac32 l$ it holds $\| B\|_{H^{k-\frac32}(\Sigma)} \leq M$ by the assumption. 
\end{proof}

We combine Proposition \ref{prop:laplace-bound} and Theorem \ref{thm:chen-shkoller} and obtain  the following  inequality which is suitable to our purpose. 

\begin{proposition}
\label{prop:vector-high}
Let $l$ and $\Omega$ be as in Theorem \ref{thm:chen-shkoller}. Then for all smooth vector fields $F:\Omega \to \R^3$   and every half-integer $\frac52 \leq k \leq \frac32 l$ it holds
\[
\|F\|_{H^{k}(\Omega)} \leq  C( \|\Delta_\Sigma F_n\|_{H^{k-\frac52}(\Sigma)}+ \| F\|_{L^2(\Omega)}  +\|\Div F\|_{H^{k-1}(\Omega)}+\|\curl F\|_{H^{k-1}(\Omega)} ).
\]
Moreover, for $k = \lfloor \frac32(l+1)\rfloor$ it holds 
\[
\|F\|_{H^{k}(\Omega)} \leq  C( \| \Delta_\Sigma F_n\|_{H^{k-\frac52}(\Sigma)}+  (1+\|B\|_{H^{\frac32 l}}) \| F\|_{L^\infty}  +\|\Div F\|_{H^{k-1}(\Omega)}+\|\curl F\|_{H^{k-1}(\Omega)} ).
\]
\end{proposition}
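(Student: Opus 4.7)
The proposition is essentially a corollary of Theorem \ref{thm:chen-shkoller} combined with the elliptic Laplace-Beltrami estimate in Proposition \ref{prop:laplace-bound}, plus a trace/interpolation absorption argument. The plan is to first reduce the boundary term in Theorem \ref{thm:chen-shkoller} from $\|F_n\|_{H^{k-\frac12}(\Sigma)}$ (resp.\ $\|\nabla_\tau F_n\|_{H^{k-\frac32}(\Sigma)}$ in the highest case) to $\|\Delta_\Sigma F_n\|_{H^{k-\frac52}(\Sigma)}$ via Proposition \ref{prop:laplace-bound}, and then absorb the lower-order surface trace $\|F_n\|_{L^2(\Sigma)}$ back into the bulk norms. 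The hypothesis $\|B_\Sigma\|_{H^{\frac32 l-1}(\Sigma)}\leq C$, together with Sobolev embedding (which gives $\|B_\Sigma\|_{L^\infty}\leq C$ for $l\geq 2$), guarantees that $\Sigma$ satisfies the condition \eqref{eq:notationhp} for all integer $m$ with $m-2 \leq \frac32 l - 1$.

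For the first inequality, that is for half-integer $\tfrac52 \leq k \leq \tfrac32 l$, I would apply Theorem \ref{thm:chen-shkoller} to get
\[
\|F\|_{H^k(\Omega)} \leq C\bigl(\|F_n\|_{H^{k-\frac12}(\Sigma)} + \|F\|_{L^2(\Omega)} + \|\Div F\|_{H^{k-1}(\Omega)} + \|\curl F\|_{H^{k-1}(\Omega)}\bigr),
\]
and then apply Proposition \ref{prop:laplace-bound} to $F_n$ with index $k-\tfrac12$ (verifying that $k-\tfrac12 \leq \tfrac32 l + 1$, so the hypothesis \eqref{eq:notationhp} is satisfied), obtaining
\[
\|F_n\|_{H^{k-\frac12}(\Sigma)} \leq (1+\varepsilon)\|\Delta_\Sigma F_n\|_{H^{k-\frac52}(\Sigma)} + C_\varepsilon \|F_n\|_{L^2(\Sigma)}.
\]
The final step is to control $\|F_n\|_{L^2(\Sigma)}$. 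Using the trace inequality and Corollary \ref{coro:interpolation} (bulk interpolation),
\[
\|F_n\|_{L^2(\Sigma)} \leq C\|F\|_{H^{\frac12}(\Omega)} \leq \varepsilon' \|F\|_{H^k(\Omega)} + C_{\varepsilon'}\|F\|_{L^2(\Omega)},
\]
and then choosing $\varepsilon,\varepsilon'$ small enough to absorb $\|F\|_{H^k(\Omega)}$ into the left-hand side yields the first inequality.

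For the highest case $k=\lfloor \tfrac32(l+1)\rfloor$, I would use the second (sharper) inequality of Theorem \ref{thm:chen-shkoller}, which carries the boundary term $\|\nabla_\tau F_n\|_{H^{k-\frac32}(\Sigma)}$ and the extra factor $(1+\|B_\Sigma\|_{H^{\frac32 l}})\|F\|_{L^\infty}$. I would then invoke Lemma \ref{lem:frac-gradient} (with $m=k$) to bound $\|\nabla_\tau F_n\|_{H^{k-\frac32}(\Sigma)} \leq C\|F_n\|_{H^{k-\frac12}(\Sigma)}$, followed by Proposition \ref{prop:laplace-bound} as above, and finally estimate $\|F_n\|_{L^2(\Sigma)}$ either by the trace/interpolation argument or directly by $C\|F\|_{L^\infty}$ using boundedness of $\mathcal{H}^2(\Sigma)$.

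The main obstacle is a book-keeping one: verifying that the hypothesis \eqref{eq:notationhp} required by Proposition \ref{prop:laplace-bound} and by Lemma \ref{lem:frac-gradient} at the highest index $k=\lfloor \tfrac32(l+1)\rfloor$ is indeed supplied by $\|B_\Sigma\|_{H^{\frac32 l-1}(\Sigma)}\leq C$. For $l$ even this is immediate, while for $l$ odd there is a potential $\tfrac12$-derivative gap in the curvature regularity needed for Lemma \ref{lem:frac-gradient}; this can be handled by paying a Kato--Ponce commutator $F\star B$ when differentiating $F_n = F\cdot\nu$, which is exactly the source of the extra $(1+\|B_\Sigma\|_{H^{\frac32 l}})\|F\|_{L^\infty}$ term that already appears on the right-hand side of the statement. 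No other step is more than a routine interpolation.
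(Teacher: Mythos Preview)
Your argument for the first inequality ($\tfrac52\le k\le\tfrac32 l$) is correct and is exactly what the paper does: Theorem \ref{thm:chen-shkoller} followed by Proposition \ref{prop:laplace-bound} at index $k-\tfrac12$, with the residual $\|F_n\|_{L^2(\Sigma)}$ absorbed by trace and interpolation.

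For the highest case $k=\lfloor\tfrac32(l+1)\rfloor$, however, the order of operations you propose does not close when $l$ is odd, and your suggested fix does not repair it. You want to apply Lemma \ref{lem:frac-gradient} at $m=k$ and then Proposition \ref{prop:laplace-bound} at index $k-\tfrac12$; both require \eqref{eq:notationhp} at level $m=k$, i.e.\ $\|B\|_{H^{k-2}(\Sigma)}\le C$. For $l$ odd one has $k-2=\tfrac{3l-1}{2}$, whereas the standing hypothesis only gives $\|B\|_{H^{(3l-2)/2}(\Sigma)}\le C$, a genuine half-derivative deficit. Your proposed fix (``pay a Kato--Ponce commutator $F\star B$ via $F_n=F\cdot\nu$'') does not address this: the obstruction lives inside the hypotheses of Proposition \ref{prop:laplace-bound} and Lemma \ref{lem:frac-gradient} themselves, not in a product expansion of $F_n$, and writing $\nabla_\tau F_n=(\nabla_\tau F)\cdot\nu+F\star B$ just undoes the step already performed in the proof of Theorem \ref{thm:chen-shkoller}. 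Note also that the factor $(1+\|B\|_{H^{3l/2}})\|F\|_{L^\infty}$ in the statement is inherited directly from Theorem \ref{thm:chen-shkoller}; it is not there to absorb an additional curvature cost in the Laplace--Beltrami reduction.

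The paper avoids the gap by reversing the order of the two reductions. It first applies Proposition \ref{prop:laplace-bound} to $\nabla_\tau F_n$ at the \emph{lower} index $k-\tfrac32$ (which only needs \eqref{eq:notationhp} at $m=\lfloor\tfrac32 l+1\rfloor$, available), producing $\|\Delta_\Sigma\nabla_\tau F_n\|_{H^{k-7/2}(\Sigma)}$. It then commutes via
\[
\Delta_\Sigma\nabla_\tau u=\nabla_\tau\Delta_\Sigma u+(B\star B)\star\nabla_\tau u,
\]
applies Lemma \ref{lem:frac-gradient} at the lower level $m=k-2$ to get $\|\nabla_\tau\Delta_\Sigma F_n\|_{H^{k-7/2}}\le C\|\Delta_\Sigma F_n\|_{H^{k-5/2}}$, and bounds the commutator term by Kato--Ponce using only $\|B\|_{H^{k-7/2}}\le\|B\|_{H^{\frac32 l-1}}\le C$. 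The leftover lower-order pieces $\|F_n\|_{H^{k-5/2}(\Sigma)}$ and $\|\nabla_\tau F_n\|_{L^\infty(\Sigma)}$ are then absorbed by interpolation into $\varepsilon\|\nabla_\tau F_n\|_{H^{k-3/2}(\Sigma)}+\varepsilon\|F\|_{H^k(\Omega)}+C_\varepsilon\|F\|_{L^\infty}$, which closes with Theorem \ref{thm:chen-shkoller}.
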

\begin{proof}
Recall that  $\|B\|_{H^{\frac32 l-1}(\Sigma)}\leq M$ implies that $\Sigma $  satisfies the condition \eqref{eq:notationhp} for $m =\lfloor \frac32 l + 1\rfloor \geq 4$. The first  inequality then follows from Theorem \ref{thm:chen-shkoller} and  Proposition \ref{prop:laplace-bound}. 

Let us then prove the last inequality. We have by Proposition  \ref{prop:laplace-bound} that 
\[
\| \nabla_\tau F_n\|_{H^{k-\frac32}(\Sigma)} \leq C(\| \Delta_{\Sigma} \nabla_\tau F_n\|_{H^{k-\frac72}(\Sigma)} + \|F_n\|_{L^2(\Sigma)}).
\]
We use the commutation formula  \eqref{eq:curv-tensor} for the tangential gradient of  $u : \Sigma \to \R$  and obtain
\[
\Delta_{\Sigma} (\nabla_\tau u) =  \nabla_\tau (\Delta_\Sigma u) + (B \star B) \star \nabla_\tau u. 
\]
Therefore we have by Lemma \ref{lem:frac-gradient}
\[
\begin{split}
\| \Delta_{\Sigma} \nabla_\tau F_n\|_{H^{k-\frac72}(\Sigma)} &\leq C\left( \|\nabla_\tau (\Delta_\Sigma F_n) \|_{H^{k-\frac72}(\Sigma)} +  \| (B \star B) \star \nabla_\tau F_n \|_{H^{k-\frac72}(\Sigma)} \right)\\
&\leq C ( \|\Delta_\Sigma F_n \|_{H^{k-\frac52}(\Sigma)} +  \| (B \star B) \star \nabla_\tau F_n \|_{H^{k-\frac72}(\Sigma)} ).
\end{split}
\]
Recall that $k = \lfloor \frac32(l+1)\rfloor$. We have by Proposition \ref{prop:kato-ponce},  by Lemma \ref{lem:frac-gradient} and by the assumption $\|B\|_{H^{\frac32 l -1}(\Sigma)} \leq C$ that 
\[
\begin{split}
\| (B \star B) \star \nabla_\tau F_n \|_{H^{k-\frac72}(\Sigma)} &\leq C\left( \|B\|_{L^\infty}^2 \|\nabla_\tau F_n \|_{H^{k-\frac72}} + \|B\|_{L^\infty} \|B\|_{H^{k-\frac72}} \|\nabla_\tau F_n\|_{L^\infty}  \right)\\
&\leq C (\| F_n \|_{H^{k-\frac52}(\Sigma)} + \|\nabla_\tau F_n\|_{L^\infty(\Sigma)}  ).
\end{split}
\]
Finally we have by the Sobolev embedding and  by Corollary \ref{coro:interpolation} 
\[
\|\nabla_\tau F_n\|_{L^\infty(\Sigma)} +\| F_n \|_{H^{k-\frac52}(\Sigma)}    \leq \e \|\nabla_\tau  F_n\|_{H^{k-\frac32}(\Sigma)} + \e \|F\|_{H^k(\Omega )}+ C_{\e} \|F\|_{L^\infty}.
\]
The second inequality then follows from  Theorem \ref{thm:chen-shkoller} and  combining the above inequalities.  
\end{proof}

Proposition  \ref{prop:vector-high} provides the inequality we need when we have the bound $\|B_\Sigma\|_{H^{\frac32l -1}(\Sigma)} \leq C$ for  $l \geq 2$. When $l=1$ the above bound reduces to $\|B_\Sigma\|_{H^{\frac12}(\Sigma)}$, which is the bound that we are able to prove in Section 6, but is not enough to apply the  results from \cite{CS, Taylor}. Note that by the Sobolev embedding this implies $\|B_{\Sigma}\|_{L^{4}(\Sigma)} \leq C$. We need to work more in order to prove the first inequality in Theorem \ref{thm:chen-shkoller} under the assumption   $\|B_{\Sigma}\|_{L^{4}(\Sigma)} \leq C$.

We begin by recalling  the following Reilly's type identity for vector fields. First, if $\psi:\Omega \to \R^3$ is a smooth divergence free vector field such that $\psi \cdot \nu = 0$ on $\Sigma$ then it holds 
 \begin{equation}
 \label{eq:divcurl1}
 \|\nabla \psi\|_{L^2(\Omega)}^2 =  \frac12\|\curl \psi\|_{L^2(\Omega)}^2 - \int_{\Sigma} \langle B_\Sigma \, \psi, \psi \rangle \, d \H^2.
  \end{equation}
 Second, if $u: \Omega \to \R$ is a smooth function then it holds
 \begin{equation}
 \label{eq:divcurl2}
 \begin{split}
 \|\nabla^2 u\|_{L^2(\Omega)}^2 =  &\|\Delta u\|_{L^2(\Omega)}^2 -2\int_{\Sigma} \Delta_\Sigma u \, \pa_\nu u \, d \H^2 \\
 &- \int_{\Sigma} \langle B_\Sigma \bar \nabla  u, \bar \nabla u \rangle \, d \H^2 - \int_{\Sigma} H_\Sigma (\pa_\nu u)^2 \, d \H^2.
 \end{split}
  \end{equation}

 We give the calculations for \eqref{eq:divcurl1} and \eqref{eq:divcurl2} for the reader's convenience. First, for a generic smooth vector field $F: \Omega\to \R^3$  it holds 
\[
\begin{split}
\int_{\Omega} |\Div F|^2 + \frac12 |\text{curl}\, F|^2\,dx &=
\sum_{i,j=1}^3\int_{\Omega} (\pa_i F_j)^2\,dx
+ \sum_{i,j=1}^3\int_{\Omega} (\pa_i F_i\pa_j F_j - \pa_i F_j \pa_j F_i) \,dx\\
&= \|\nabla F\|_{L^2(\Omega)}^2 + \sum_{i,j=1}^3\int_{\Omega} (\pa_i F_i\pa_j F_j - \pa_i F_j \pa_j F_i) \,dx. 
\end{split}
\]
By using divergence theorem twice we obtain
\[
\begin{split}
\int_{\Omega} \pa_i F_i\pa_j F_j\, dx &= - \int_{\Omega} F_i \pa_i \pa_j F_j\, dx + \int_{\Sigma}  \pa_j F_j  \,  F_i \nu_i,\ d \H^2\\
&= \int_{\Omega} \pa_j F_i \pa_i  F_j\, dx  + \int_{\Sigma}  \pa_j F_j  \,  F_i \nu_i,\ d \H^2 - \int_{\Sigma} F_i \pa_i F_j \nu_j \ d \H^2.
\end{split}
\]
Combining the two above equalities yield 
\begin{equation}
 \label{eq:divcurl3}
\begin{split}
\|\nabla F\|_{L^2(\Omega)}^2  &= \|\Div F\|_{L^2(\Omega)}^2 + \frac12\|\text{curl}\, F\|_{L^2(\Omega)}^2\\
&+ \int_{\Sigma} (\nabla F \, F) \cdot  \nu \ d \H^2 - \int_{\Sigma} \Div F \, F_n \, d \H^2 .
\end{split}
  \end{equation}

Assume now that  $\psi $ is a divergence free vector field such that $\psi\cdot \nu = 0$ on $\Sigma$. Since $\psi$ is a tangent field on $\Sigma$  we have  $\nabla_\psi (\psi\cdot \nu)=0$ and thus $ \nabla \psi \, \psi \cdot \nu= -\langle \bar \nabla_\psi\nu,\psi \rangle= -\langle B_\Sigma \psi,\psi\rangle$.
Therefore the equality \eqref{eq:divcurl1} follows from \eqref{eq:divcurl3} and from $F_n = \psi \cdot \nu = 0$.

To obtain  \eqref{eq:divcurl2} we apply \eqref{eq:divcurl3} for $F = \nabla u$ and deduce 
\[
\|\nabla^2 u\|_{L^2(\Omega)}^2 = \|\Delta u\|_{L^2(\Omega)}^2 +  \int_{\Sigma} ((\nabla^2  u \,  \nabla u) \cdot  \nu  -\Delta u \, \pa_\nu u) \, d \H^2.
\]
We write $\nabla u = \nabla_\tau u + \pa_\nu u \, \nu$ and observe
\[
\begin{split}
(\nabla^2  u \,  \nabla u )\cdot  \nu &= (\nabla^2  u \,  \nabla_\tau u) \cdot  \nu  + (\nabla^2  u \,  \nu )\cdot  \nu \,\pa_\nu u \\
&=\nabla_\tau (\pa_\nu u) \cdot \nabla_\tau u - \langle B_\Sigma \, \bar \nabla u, \bar \nabla u \rangle  + (\nabla^2  u \,  \nu) \cdot  \nu\,\pa_\nu u.
\end{split}
\]
Again by divergence theorem 
\[
\int_{\Sigma} \nabla_\tau (\pa_\nu u) \cdot \nabla_\tau u \, d \H^2 = - \int_{\Sigma} \Delta_\Sigma u \, \pa_\nu u\, d \H^2.
\]
The equality \eqref{eq:divcurl2}  then  follows from 
\begin{equation}
    \label{eq:tang-laplace}
\Delta_\Sigma u = \Delta u - (\nabla^2  u \,  \nu )\cdot  \nu - H_\Sigma \, \pa_\nu u.
\end{equation}

We remark that it is crucial in Theorem \ref{thm:chen-shkoller} that the boundary term on the RHS has only the normal component of the vector field. The next lemma is a generalization of \cite{JK} and it essentially states that we may control the vector field on the boundary by its normal or its tangential component. 

\begin{lemma} 
\label{lem:divcurl}
Let $\Omega\subset \R^3$ with  $\Sigma =\pa \Omega$ be uniformly  $C^{1,\alpha}(\Gamma)$-regular. Then for all  vector fields $F \in \dot{H}^1(\Omega; \R^3)$ 
it holds
\[
\|F\|_{L^2(\Sigma)}^2 \leq C \left(\|F_n \|_{L^2(\Sigma)}^2  + \|F\|_{L^2(\Omega)}^2 +\|\Div F\|_{L^2(\Omega)}^2 +\|\curl F\|_{L^2(\Omega)}^2 
\right)
\]
and
\[
\|F\|_{L^2(\Sigma)}^2 \leq C \left(\|F_\tau \|_{L^2(\Sigma)}^2  + \|F\|_{L^2(\Omega)}^2 +\|\Div F\|_{L^2(\Omega)}^2 +\|\curl F\|_{L^2(\Omega)}^2
\right),
\]
where $F_n = F\cdot \nu$ and $F_\tau = F - F_n \, \nu$. Here $F \in \dot{H}^1(\Omega; \R^3)$ means that $\|F\|_{\dot{H}^1(\Omega)} = \|\nabla F \|_{L^2(\Omega)} + \| F \|_{L^6(\Omega)} < \infty$. Note that $\Omega$ may be unbounded, but 
its boundary is compact. 
\end{lemma}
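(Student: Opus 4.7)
The plan is to establish a Rellich-type multiplier identity. First I would choose a smooth, compactly supported vector field $N \in C^\infty_c(\R^3;\R^3)$ that equals $\nu_\Gamma \circ \pi_\Gamma$ in a neighborhood of $\Sigma$, where $\pi_\Gamma$ is the nearest-point projection from \eqref{def:projection}. The $C^1(\Gamma)$-regularity of $\Sigma$ guarantees that $\theta := N \cdot \nu_\Sigma$ is bounded below by some $c_0 > 0$ on $\Sigma$, while $\|N\|_{W^{1,\infty}(\R^3)} \leq C$. Since $N$ has compact support, all bulk integrals below are finite for any $F \in \dot H^1(\Omega;\R^3)$, and a standard cutoff-and-approximation argument justifies the integrations by parts even when $\Omega$ is unbounded.

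The key step is to compute $\Div(|F|^2 N)$ in two ways. The product rule gives $\Div(|F|^2 N) = 2 F\cdot ((N\cdot\nabla) F) + |F|^2 \Div N$, while an elementary index manipulation yields
\[
F\cdot((N\cdot\nabla)F) = \Div\bigl((F\cdot N) F\bigr) - (F\cdot N)\Div F - \nabla N : (F \otimes F) + \mathcal{Q}(F, N, \curl F),
\]
where $\mathcal{Q}$ is a pointwise contraction satisfying $|\mathcal{Q}| \leq |F|\,|N|\,|\curl F|$. Equating the two expressions, integrating over $\Omega$ and applying the divergence theorem produces the boundary identity
\[
\int_\Sigma \bigl(|F|^2 (N\cdot\nu) - 2(F\cdot N) F_n\bigr)\, d\H^2 = \mathcal R(F),
\]
where $\mathcal R(F)$ is bounded by $C\bigl(\|F\|_{L^2(\Omega)}^2 + \|\Div F\|_{L^2(\Omega)}^2 + \|\curl F\|_{L^2(\Omega)}^2\bigr)$ via Cauchy-Schwarz and the uniform bound $\|N\|_{W^{1,\infty}} \leq C$.

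Finally, decomposing $N = \theta \nu + N_\tau$ on $\Sigma$ with $|N_\tau|\leq 1$, the boundary integral on the left reduces to
\[
\int_\Sigma \theta\bigl(|F_\tau|^2 - F_n^2\bigr)\,d\H^2 - 2\int_\Sigma F_n (F_\tau \cdot N_\tau)\,d\H^2.
\]
Young's inequality $|2 F_n(F_\tau\cdot N_\tau)| \leq \delta |F_\tau|^2 + C_\delta |F_n|^2$ with $\delta < c_0$ lets one absorb the cross term into the $\theta|F_\tau|^2$ contribution and deduce $\|F_\tau\|_{L^2(\Sigma)}^2 \leq C(\|F_n\|_{L^2(\Sigma)}^2 + \|F\|_{L^2(\Omega)}^2 + \|\Div F\|_{L^2(\Omega)}^2 + \|\curl F\|_{L^2(\Omega)}^2)$. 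Adding $\|F_n\|_{L^2(\Sigma)}^2$ to both sides gives the first inequality, while rearranging the same identity to place $F_n^2$ on the left yields the second. The main technical subtlety I anticipate is the unbounded case, handled by a cutoff argument relying on the $L^6$-integrability coming from $\dot H^1$ and the compact support of $N$, together with verifying the quantitative lower bound $\theta \geq c_0$, which depends only on the qualitative $C^1(\Gamma)$-parametrization and not on any curvature estimate.
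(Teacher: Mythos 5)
Your proof is correct and rests on the same Rellich-type multiplier identity as the paper's argument, but the choice of multiplier field differs in a way that matters. The paper tests against the position vector $x$, for which the identity \eqref{eq:dividentity} simplifies nicely ($\nabla x = I$, $\Div x = 3$), but the sign condition $x \cdot \nu \geq c_0$ forces a star-shapedness assumption; the general and unbounded cases are then deferred to a localization argument borrowed from \cite{AFJM}. You instead test against a compactly supported field $N$ agreeing with $\nu_\Gamma \circ \pi_\Gamma$ near $\Sigma$, for which $\theta = N \cdot \nu_\Sigma \geq c_0 > 0$ holds automatically on any $C^1(\Gamma)$-graph (with $c_0$ depending on the $C^1$-norm of the height function, which is admissible since the constant is allowed to depend on $\Sigma$). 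This eliminates both the star-shapedness hypothesis and the localization step at the cost of two extra bulk error terms, $\nabla N : (F\otimes F)$ and $|F|^2 \Div N$, which are harmless because $\|N\|_{W^{1,\infty}} \leq C$. The algebra you sketch (your $\mathcal{Q}$ term is $\sum_{i,j} F_j N_i (\curl F)_{ij}$, bounded by $|F||N||\curl F|$) is correct, and the final Young's-inequality absorption with $\delta < c_0$ goes through exactly as in the paper. Net assessment: same core identity, but your multiplier is better adapted to the geometry of the statement, yielding a cleaner self-contained argument.
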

\begin{proof}
We only consider the case when $\Omega$ is bounded. Let us first  assume that $\Omega$ is uniformly star shaped with respect to the origin, i.e.,  we have that there exists a constant $c_0>0$ such that $x \cdot \nu_\Omega \geq c_0$ for all $x \in \Sigma$.  We claim that the following identity holds
\beq \label{eq:dividentity}
\Div \left( |F|^2x- 2 (F\cdot x)  F   \right)
= |F|^2 -2 \curl F (F  \cdot x)
-2 \Div F \,  (F \cdot x).
\eeq
Indeed, this follows from the following straightforward computation, where we denote  the Dirac delta  by $\delta_i^j$,  
\[
\begin{split}
    \Div ( |F|^2x- &2 (F\cdot x)  F  ) = \sum_{i,j =1}^3
    \pa_i \left( F_j^2 x_i - 2x_jF_j F_i\right)\\
    &= 3|F|^2 + \sum_{i,j =1}^3  2x_i\pa_iF_jF_j  - 2 \delta_i^j F_jF_i - 2 x_j \pa_iF_j F_i-2 x_j F_j\pa_iF_i \\
    &=  |F|^2 -2 ( F\cdot x) \Div F -2\sum_{i,j =1}^3
    (\pa_jF_i-\pa_i F_j )F_j x_i.
\end{split}
\]
Thus we integrate \eqref{eq:dividentity} 
to find
\[
\int_{\Sigma} \big((x \cdot \nu)|F|^2 -2 F_n (F \cdot x)\big)\, d\H^2
=
\int_{\Omega} |F|^2 - 2\curl F (F \cdot  x)
-2 \Div F (F \cdot x) \,dx .
\]
Note that   $|F|^2 = |F_\tau|^2 + F_n^2$ and $(F \cdot x) = ( x \cdot \nu) F_n + (F_\tau \cdot x)$. Therefore we have the equality
\[
\begin{split}
\int_{\Sigma} \big(-( x \cdot \nu) F_n^2 &+ ( x \cdot \nu)  |F_\tau|^2  -2 F_n (F_\tau \cdot x) \big)\, d\H^2\\
&=\int_{\Omega} |F|^2 + 2\curl F (F \cdot  x)
-2 \Div F (F \cdot x) \,dx. 
\end{split}
\]
We use the fact that  $x \cdot \nu \geq c_0$ on $\Sigma$ and obtain the first claim by re-organizing the terms in  above and estimating $| F_n (F_\tau \cdot x)| \leq \e |F_\tau|^2 + C_e F_n^2 $
\[
\begin{split}
c_0\int_{\Sigma} |F_\tau|^2\, d\H^2
\leq &\int_{\Sigma} (\e |F_\tau|^2 + C_\e|F_n|^2) \, d\H^2 \\
&+ C(\|F\|_{L^2(\Omega)}^2 + \|\curl F\|_{L^2(\Omega)}^2 + \|\Div F\|_{L^2(\Omega)}^2).
\end{split}
\]
This yields the first inequality. The second follows from  similar argument. 

To prove the general case, i.e. when $\Omega$ is not starshaped, we use a localization argument which is similar to  \cite{AFJM}. 
\end{proof}

\begin{remark}
\label{rem:annoying}
We observe that the proof gives us a slightly stronger estimate. Indeed, we may improve the second inequality in Lemma \ref{lem:divcurl} as 
\[
\|F\|_{L^2(\Sigma)}^2 \leq C \left(\|F_\tau \|_{L^2(\Sigma)}^2  + \|F\|_{L^2(\Omega)}^2 +\| F\Div F\|_{L^1(\Omega)} +\|F\curl F\|_{L^1(\Omega)})\right).
\]
\end{remark}

In order to estimate $\|\nabla F\|_{H^1(\Omega)}$ we first consider the case when $F$ is curl-free, i.e., $F = \nabla u$. Since we define the norm $\|\pa_\nu u\|_{H^{\frac12}(\Sigma)}$ via the harmonic extension, we prove the next lemma using standard results from harmonic analysis instead of localizing and flattening the boundary.

\begin{lemma}
\label{lem:poisson1}
Assume that $\Omega$, with $\Sigma= \pa \Omega$, is uniformly  $C^{1,\alpha}(\Gamma)$-regular and  $\|B_\Sigma\|_{L^4} \leq M$ and $u :\Omega \to \R$ is a smooth function. There exists a constant $C$, depending on $M$ and the $C^{1,\alpha}$-norm of the heightfunction,  such that it holds
\[
 \| u\|_{H^2(\Omega)} \leq  C\left(\|\pa_\nu u\|_{H^{\frac12}(\Sigma)} + \|u\|_{L^2(\Omega)}  + \|\Delta u\|_{L^2(\Omega)} \right).
\]
The reverse also holds $\|\pa_\nu u\|_{H^{\frac12}(\Sigma)} \leq C \| u\|_{H^2(\Omega)}$.
\end{lemma}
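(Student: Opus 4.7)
The plan is to prove the forward inequality by using the Reilly-type identity \eqref{eq:divcurl2} directly and estimating each of the three boundary terms it produces. Writing
\[
\|\nabla^2 u\|_{L^2(\Omega)}^2 = \|\Delta u\|_{L^2(\Omega)}^2 - 2 I_1 - I_2 - I_3,
\]
with $I_1 = \int_\Sigma \Delta_\Sigma u\,\pa_\nu u\,d\H^2$, $I_2 = \int_\Sigma \langle B_\Sigma\bar\nabla u,\bar\nabla u\rangle\,d\H^2$, and $I_3 = \int_\Sigma H_\Sigma(\pa_\nu u)^2\,d\H^2$, the task reduces to controlling each $I_j$ by $\e\|u\|_{H^2(\Omega)}^2$ plus admissible right-hand sides.

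For the leading term $I_1$, since $\Sigma$ is closed I would integrate by parts tangentially to get $I_1 = -\int_\Sigma \nabla_\tau u\cdot \nabla_\tau(\pa_\nu u)\,d\H^2$ and then apply $H^{1/2}(\Sigma)$-$H^{-1/2}(\Sigma)$ duality:
\[
|I_1|\le \|\nabla_\tau u\|_{H^{1/2}(\Sigma)}\,\|\nabla_\tau(\pa_\nu u)\|_{H^{-1/2}(\Sigma)}.
\]
The trace theorem on the $C^{1,\alpha}$ domain $\Omega$ gives $\|\nabla_\tau u\|_{H^{1/2}(\Sigma)}\le C\|\nabla u\|_{H^{1}(\Omega)}\le C\|u\|_{H^2(\Omega)}$. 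For the second factor I would show that $\nabla_\tau\colon H^{1/2}(\Sigma)\to H^{-1/2}(\Sigma)$ is bounded by interpolating between the trivial bounds $\nabla_\tau\colon H^1(\Sigma)\to L^2(\Sigma)$ and $\nabla_\tau\colon L^2(\Sigma)\to H^{-1}(\Sigma)$ (the latter by duality with $\Div_\Sigma$ on the closed surface), using the equivalence $H^{1/2}(\Sigma)=H^{1/2}_\star(\Sigma)$ from \eqref{eq:equiv-frac-1/2}. This yields $\|\nabla_\tau(\pa_\nu u)\|_{H^{-1/2}(\Sigma)}\le C\|\pa_\nu u\|_{H^{1/2}(\Sigma)}$, and Young's inequality gives $|I_1|\le \e\|u\|_{H^2(\Omega)}^2+C_\e\|\pa_\nu u\|_{H^{1/2}(\Sigma)}^2$.

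For the curvature terms I would use the Sobolev embedding $H^{1/2}(\Sigma)\hookrightarrow L^4(\Sigma)$ on the 2-dimensional surface together with $\|B_\Sigma\|_{L^4(\Sigma)}\le C$ (which also controls $\|H_\Sigma\|_{L^2(\Sigma)}$ via Hölder and Proposition~\ref{prop:meancrv-bound}):
\[
|I_2|\le \|B_\Sigma\|_{L^2(\Sigma)}\|\bar\nabla u\|_{L^4(\Sigma)}^2\le C\|\nabla u\|_{H^{1/2}(\Sigma)}^2\le C\|u\|_{H^2(\Omega)}\|u\|_{H^1(\Omega)},
\]
and $|I_3|\le \|H_\Sigma\|_{L^2(\Sigma)}\|\pa_\nu u\|_{L^4(\Sigma)}^2\le C\|\pa_\nu u\|_{H^{1/2}(\Sigma)}^2$. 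The mixed $H^2\cdot H^1$ term in $|I_2|$ is absorbed by Young combined with the interpolation $\|u\|_{H^1(\Omega)}^2\le C\|u\|_{L^2(\Omega)}(\|u\|_{H^2(\Omega)}+\|u\|_{L^2(\Omega)})$ from Corollary~\ref{coro:interpolation}. Adding everything back to Reilly gives the first inequality after absorbing $\e\|\nabla^2 u\|_{L^2}^2$ on the left. For the reverse inequality, the trace theorem gives $\|\nabla u\|_{H^{1/2}(\Sigma)}\le C\|u\|_{H^2(\Omega)}$; writing $\pa_\nu u=\nabla u\cdot \nu$ and using the weak Kato-Ponce \eqref{eq:kato-weak} together with $\nu\in W^{1,4}(\Sigma)\cap L^\infty(\Sigma)$ (which follows from the $L^4$ curvature bound) yields $\|\pa_\nu u\|_{H^{1/2}(\Sigma)}\le C\|u\|_{H^2(\Omega)}$.

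The main obstacle is the delicate control of $\nabla_\tau\colon H^{1/2}(\Sigma)\to H^{-1/2}(\Sigma)$ under the weak regularity assumption $\Sigma\in C^{1,\alpha}$ with only an $L^4$ curvature bound: the usual proofs of this mapping property in the literature rely on more regularity of $\Sigma$ (typically $C^{1,1}$ or better), so one must carefully invoke the equivalence of the harmonic-extension norm with the interpolation norm (Proposition~\ref{prop:sobolev2} and \eqref{eq:equiv-frac-1/2}), which is precisely what was set up in Section~2 to make such interpolation arguments legitimate at this low level of regularity.
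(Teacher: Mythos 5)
Your treatment of the leading boundary term $I_1=\int_\Sigma\Delta_\Sigma u\,\pa_\nu u$ is a genuinely different route from the paper's. After the tangential integration by parts you pair $\nabla_\tau u$ against $\nabla_\tau(\pa_\nu u)$ via $H^{1/2}(\Sigma)$ versus $H^{-1/2}(\Sigma)$ duality, reducing the problem to the mapping property $\nabla_\tau\colon H^{1/2}(\Sigma)\to H^{-1/2}(\Sigma)$, which you propose to obtain by interpolating $\nabla_\tau\colon H^1(\Sigma)\to L^2(\Sigma)$ against $\nabla_\tau\colon L^2(\Sigma)\to H^{-1}(\Sigma)$. The paper instead uses \eqref{eq:tang-laplace} to trade $\Delta_\Sigma u$ for $\nabla^2 u\,\nu\cdot\nu-\Delta u$, introduces the harmonic extension $\tilde\nu$ of the normal, compares the harmonic extension $v$ of $\tilde u=\nabla u\cdot\tilde\nu$ to $\tilde u$ itself, and relies on the Fabes--Jodeit--Rivi\`ere estimate $\|\pa_\nu\tilde\nu\|_{L^4(\Sigma)}\le C\|\nabla_\tau\tilde\nu\|_{L^4(\Sigma)}$ to keep the curvature in play only through $\|B_\Sigma\|_{L^4}$. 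Your route is conceptually cleaner and avoids both auxiliary harmonic extensions, but it requires the identification $H^{-1/2}(\Sigma)=(H^{-1}(\Sigma),L^2(\Sigma))_{1/2,2}$, a dual-space interpolation step the paper does not record (it only states $(H^{-1/2}(\Sigma),H^{1/2}(\Sigma))_{1/2,2}=L^2(\Sigma)$), together with a vector-valued version of the duality pairing. You correctly identify the $\nabla_\tau$ mapping property as the delicate point.

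Your estimate of $I_2$, however, contains a genuine error: the chain $\|\bar\nabla u\|_{L^4(\Sigma)}^2\le C\|\nabla u\|_{H^{1/2}(\Sigma)}^2\le C\|u\|_{H^2(\Omega)}\|u\|_{H^1(\Omega)}$ fails at the last step. No trace-interpolation inequality of this form exists, since there is no generalized trace on $L^2(\Omega)$; a scaling test shows it. Near a flat boundary piece take $u=\operatorname{Re}\big(e^{i\omega x_1}e^{-\omega x_3}\big)$ localized: then $\|u\|_{H^2(\Omega)}\|u\|_{H^1(\Omega)}\sim\omega^2$ while $\|\nabla u\|_{H^{1/2}(\Sigma)}^2\sim\omega^3$, so the claimed bound fails as $\omega\to\infty$. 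The correct inequality is only $\|\nabla u\|_{H^{1/2}(\Sigma)}\le C\|u\|_{H^2(\Omega)}$, which leaves no small factor to absorb. The fix, which is what the paper does, is to distribute the H\"older exponents the other way: bound $\int_\Sigma|B|\,|\bar\nabla u|^2\le\|B\|_{L^4(\Sigma)}\|\bar\nabla u\|_{L^{8/3}(\Sigma)}^2$, interpolate $\|\bar\nabla u\|_{L^{8/3}(\Sigma)}^2\le C\|\nabla u\|_{H^{1/2}(\Sigma)}\|\nabla u\|_{L^2(\Sigma)}$ using Corollary~\ref{coro:interpolation} (legitimate under $\|B\|_{L^4}\le C$), absorb the $H^{1/2}(\Sigma)$ factor by Young, and control $\|\nabla u\|_{L^2(\Sigma)}^2$ via Lemma~\ref{lem:divcurl} by $\|\pa_\nu u\|_{L^2(\Sigma)}^2+\|\nabla u\|_{L^2(\Omega)}^2+\|\Delta u\|_{L^2(\Omega)}^2$.
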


\begin{proof}
We have by \eqref{eq:divcurl2} and \eqref{eq:tang-laplace} that 
\[
 \|\nabla^2 u\|_{L^2(\Omega)}^2 \leq \|\Delta u\|_{L^2(\Omega)}^2 + 2 \int_{\Sigma}  ((\nabla^2 u\, \nu )\cdot  \nu  - \Delta u) \, \pa_\nu u \, d \H^2 + C \int_{\Sigma} |B_\Sigma| \, |\nabla u|^2\, d \H^2 .
\]
To estimate the last terms, we use the interpolation inequality (Proposition \ref{prop:interpolation}), Lemma \ref{lem:divcurl} and the assumption $\|B\|_{L^4} \leq C$ and have for some $\theta \in (0,1)$
\begin{equation}
    \label{eq:poisson1.2}
\begin{split}
\int_{\Sigma} |B_\Sigma| \, |\nabla u|^2\, d \H^2 &\leq C \|B\|_{L^4(\Sigma)} \|\nabla u \|_{L^{\frac83}(\Sigma)}^2 \leq   \|\nabla u \|_{H^{\frac12}(\Sigma)}^{2\theta} \|\nabla u \|_{L^{2}(\Sigma)}^{2(1 -\theta)}\\
&\leq \e \|u \|_{H^{2}(\Omega)}^{2} + C_\e\|\nabla u \|_{L^{2}(\Sigma)}^{2}\\
&\leq \e \|u \|_{H^{2}(\Omega)}^{2} + C_\e(\|\pa_\nu u\|_{L^2(\Sigma)}^2 +  \|\nabla u \|_{L^{2}(\Omega)}^{2} + \|\Delta u\|_{L^2(\Omega)}^2 )\\
&\leq \e \|u \|_{H^{2}(\Omega)}^{2} + C_\e(\|\pa_\nu u\|_{L^2(\Sigma)}^2 +  \|u \|_{L^{2}(\Omega)}^{2} + \|\Delta u\|_{L^2(\Omega)}^2 ).  
\end{split}
\end{equation}

Let us then  show that 
\begin{equation}
    \label{eq:poisson1-1}
    \int_{\Sigma}  ((\nabla^2 u\, \nu )\cdot  \nu  - \Delta u) \, \pa_\nu u \, d \H^2 \leq  \e \|u \|_{H^{2}(\Omega)}^{2} + C_\e( \|\pa_\nu u\|_{H^{\frac12}(\Sigma)}^2 +  \|u \|_{L^{2}(\Omega)}^{2} + \|\Delta u\|_{L^2(\Omega)}^2 ). 
\end{equation}
To this aim we denote the harmonic extension of $\nu$ by $\tilde \nu$ and denote $f = \Delta u$. Then we have 
\[
\begin{split}
\int_{\Sigma} (\nabla^2 u\, \nu )\cdot  \nu \, \pa_\nu u  \, d \H^2 &=  \int_{\Sigma} \big(\pa_\nu (\nabla u \cdot \tilde \nu)  -(\pa_\nu \tilde \nu \cdot \nabla u) \big)\,(\nabla u \cdot \tilde \nu)\,  d \H^2  \\
&\leq \int_{\Sigma} \pa_\nu (\nabla u \cdot \tilde \nu)\,(\nabla u \cdot \tilde \nu)\,  d \H^2 + C\|\pa_\nu \tilde \nu\|_{L^4(\Sigma)} \| \nabla u \|_{L^{\frac83}(\Sigma)}^2 .
\end{split}
\]
We argue as in \eqref{eq:poisson1.2}  and obtain 
\[
 \| \nabla u \|_{L^{\frac83}(\Sigma)}^2 \leq  \e \|u \|_{H^{2}(\Omega)}^{2} + C_\e(\|\pa_\nu u\|_{L^2(\Sigma)}^2 +  \|u \|_{L^{2}(\Omega)}^{2} + \|f\|_{L^2(\Omega)}^2 ).
\]
Next we use the result from \cite{FJR}  for harmonic functions $\varphi:\Omega \to \R$ in $C^{1,\alpha}$-domains which states that 
\[
\|\pa_\nu \varphi\|_{L^p(\Sigma)} \leq C_p \|\nabla_\tau \varphi\|_{L^p(\Sigma)} \qquad \text{for }\, p \in (1,\infty).
\]
We use  this for $\tilde \nu$ component-wise, use the fact that on $\Sigma$ it holds $\tilde \nu = \nu$ and obtain
\begin{equation}
\label{eq:JK-for-nu}    
\|\nabla  \tilde \nu\|_{L^4(\Sigma)} \leq C\|\nabla_\tau \tilde \nu\|_{L^4(\Sigma)} \leq \|B \|_{L^4(\Sigma)}\leq C. 
\end{equation}
Therefore we have 
\begin{equation}
    \label{eq:poisson1-3}
    \begin{split}
\int_{\Sigma} (\nabla^2 u\, \nu \cdot ) \nu \, \pa_\nu u \, d \H^2 \leq &\int_{\Sigma} \pa_\nu (\nabla u \cdot \tilde \nu)\,(\nabla u \cdot \tilde \nu)\,  d \H^2 \\
&+ \e \|u \|_{H^{2}(\Omega)}^{2} + C_\e(\|\pa_\nu u\|_{L^2(\Sigma)}^2 +  \|u \|_{L^{2}(\Omega)}^{2} + \|f\|_{L^2(\Omega)}^2 ).
\end{split}
\end{equation}

Let us denote $\tilde u = (\nabla u \cdot \tilde \nu)$ for short and let $v$ be the harmonic extension of $\tilde u$ to $\Omega$. Let us show that $v$ is close to  $\tilde u$, i.e., we show that
\begin{equation}
    \label{eq:poisson1-6}
\|\nabla(\tilde u - v) \|_{L^2(\Omega)} \leq \e\|\nabla^2 u \|_{L^2(\Omega)} +  C_\e(\|f\|_{L^2(\Omega)} + \|\tilde u - v\|_{L^2(\Omega)}) .
  \end{equation}
To this aim  we calculate (recall that $f= \Delta u$)
\begin{equation}
    \label{eq:poisson1-4}
\Delta \tilde u = \nabla f \cdot \tilde \nu + 2 \nabla^2 u : \nabla \tilde \nu.
\end{equation}
This implies by integration by parts
\[
\begin{split}
\|\nabla &(\tilde u - v) \|_{L^2(\Omega)}^2 =   -\int_{\Omega} \Delta \tilde u (\tilde u - v)\, dx  = \int_{\Omega}( \nabla f \cdot \tilde \nu + 2 \nabla^2 u : \nabla \tilde \nu) (\tilde u  - v )\, dx \\
&= \int_{\Omega} f \Div \, ((\tilde u - v) \, \tilde \nu) - 2( \nabla^2 u : \nabla \tilde \nu) (\tilde u  - v )\, dx \\
&\leq C \|\nabla(\tilde u - v) \|_{L^2(\Omega)}\|f\|_{L^2(\Omega)} + C (\|\nabla^2 u \|_{L^2(\Omega)}+ \|f\|_{L^2(\Omega)}) \|\nabla \tilde \nu  \|_{L^{4}(\Omega)}\|(\tilde u - v) \|_{L^{4}(\Omega)}.
\end{split}
\] 
 By standard estimates from harmonic analysis \cite{Dahl}  and by \eqref{eq:JK-for-nu}  it holds
\begin{equation}
    \label{eq:poisson1-5}
 \|\nabla \tilde \nu \|_{L^{4}(\Omega)} \leq C  \|\nabla \tilde \nu \|_{L^{4}(\Sigma)} \leq C. 
\end{equation}
 On the other hand we have by H\"older's inequality and by Sobolev embedding (recall that $\tilde u - v = 0$ on $\Sigma$)
\[
\|(\tilde u - v) \|_{L^{4}(\Omega)} \leq  \|(\tilde u - v)  \|_{L^6(\Omega)}^{\frac12} \|\tilde u - v\|_{L^2(\Omega)}^{\frac12} \leq C \|\nabla (\tilde u - v)\|_{L^2(\Omega)}^{\frac12} \|\tilde u - v\|_{L^2(\Omega)}^{\frac12}.
 \]
Therefore  by combining the previous inequalities we obtain \eqref{eq:poisson1-6} by Young's inequality. 

We proceed by using  \eqref{eq:poisson1-4}  and by integrating by parts  
\[
\begin{split}
\int_{\Sigma} &\pa_\nu \tilde u\,\tilde u \,  d \H^2 =\int_{\Sigma} \pa_\nu \tilde u\,v \,  d \H^2 = \int_{\Omega} (\nabla \tilde u \cdot \nabla v + \Delta \tilde u \, v) \, dx  \\
&\leq 2\|\nabla v\|_{L^2(\Omega)}^2 + 2\|\nabla (\tilde u- v)\|_{L^2(\Omega)}^2 + \int_\Omega (\nabla f \cdot \tilde \nu + 2 \nabla^2 u : \nabla \tilde \nu) \, v\, dx \\
&= 2\|\nabla v\|_{L^2(\Omega)}^2 + 2\|\nabla (\tilde u - v)\|_{L^2(\Omega)}^2 + \int_{\Sigma} f \, v \, (\tilde \nu \cdot \nu) \, d \H^2\\
&\,\,\,\,\,\,\,\,\,\,\,\,\,\,+ \int_{\Omega} -f \Div \, (v \, \tilde \nu) + 2(\nabla^2 u : \nabla \tilde \nu) v \, dx.
\end{split}
\]
Recall that $\tilde \nu = \nu$ and $v = \tilde u$ on $\Sigma$. Therefore we obtain by the above inequality, by   $\|v\|_{L^4(\Omega)} \leq C \|v\|_{H^1(\Omega)}$,  \eqref{eq:poisson1-6}  and \eqref{eq:poisson1-5}  that 
\[
\begin{split}
\int_{\Sigma} \pa_\nu \tilde u\,\tilde u  - f \, \tilde u \,  d \H^2 &\leq  \e\|\nabla^2 u \|_{L^2(\Omega)} \\
&\,\,\,\,\,\,\,\,\,\,\,+ C_\e(\|\nabla \nu\|_{L^4(\Omega)}\|v\|_{L^4(\Omega)}+  \|v\|_{H^1(\Omega)}^2 +\|f\|_{L^2(\Omega)}^2 + \|\tilde u - v\|_{L^2(\Omega)}^2) \\
&\leq \e\|\nabla^2 u \|_{L^2(\Omega)} + C_\e(\|v\|_{H^1(\Omega)}^2 +\|f\|_{L^2(\Omega)}^2 + \|\tilde u - v\|_{L^2(\Omega)}^2).
\end{split}
\]
The inequality \eqref{eq:poisson1-1} then  follows from the above and \eqref{eq:poisson1-3} together with  
\[
\|v\|_{H^1(\Omega)} = \| \pa_\nu u\|_{H^{\frac12}(\Sigma)}, \qquad  \|\tilde u - v\|_{L^2(\Omega)}^2 \leq \e \|\nabla^2 u \|_{L^2(\Omega)} + C_\e \|u\|_{L^2(\Omega)}^2 + \|v\|_{L^2(\Omega)}^2,
\]
and by  recalling that $\tilde u = (\nabla u \cdot \tilde \nu) = \pa_\nu u$ on $\Sigma$ and $f = \Delta u$. This yields the first claim. The second inequality follows from reversing the previous calculations. 
\end{proof}

We state our lower order version of  Theorem \ref{thm:chen-shkoller}. 

\begin{theorem}
\label{prop:CS-k=1}
Assume that $\Omega$, with $\Sigma= \pa \Omega$, is uniformly  $C^{1,\alpha}(\Gamma)$-regular and  $\|B_\Sigma\|_{L^4(\Sigma)} \leq M$. There exists a constant $C$,  depending on $M$ and  the $C^{1,\alpha}$-norm of the heightfunction, such that  for all vector fields $F \in H^1(\Omega;\R^3)$ it holds
\[
\|F\|_{H^1(\Omega)} \leq  M( \|F_n\|_{H^\frac12(\Sigma)} + \|F\|_{L^2(\Omega)} +\|\Div F\|_{L^2(\Omega)}+\|\curl F\|_{L^2(\Omega)} ).
\]
\end{theorem}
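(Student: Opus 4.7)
My plan is to Helmholtz-decompose $F = \nabla u + \psi$ and treat the two pieces with the two ``Reilly'' identities \eqref{eq:divcurl2} and \eqref{eq:divcurl1} respectively. Concretely, I would first define $u$ as a solution of the Neumann problem
\[
\Delta u = \Div F \quad \text{in } \Omega, \qquad \pa_\nu u = F_n \quad \text{on } \Sigma,
\]
which is solvable because $\int_\Omega \Div F \, dx = \int_\Sigma F_n\, d\H^2$ by the divergence theorem. Normalising by $\int_\Omega u\,dx = 0$, an integration by parts against $u$ itself gives $\|\nabla u\|_{L^2(\Omega)}^2 = \int_\Omega \nabla u \cdot F\,dx$, hence $\|\nabla u\|_{L^2(\Omega)} \le \|F\|_{L^2(\Omega)}$, and Poincar\'e yields $\|u\|_{L^2(\Omega)} \le C\|F\|_{L^2(\Omega)}$. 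Now Lemma \ref{lem:poisson1}, which is exactly built for this setting under the assumption $\|B_\Sigma\|_{L^4} \le C$, gives
\[
\|\nabla u\|_{H^1(\Omega)} \le C\bigl(\|F_n\|_{H^{1/2}(\Sigma)} + \|F\|_{L^2(\Omega)} + \|\Div F\|_{L^2(\Omega)}\bigr).
\]

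Next I set $\psi := F - \nabla u$, so that $\Div\psi = 0$ in $\Omega$, $\psi\cdot\nu = 0$ on $\Sigma$, and $\curl\psi = \curl F$. To this divergence-free tangent field I apply identity \eqref{eq:divcurl1}:
\[
\|\nabla\psi\|_{L^2(\Omega)}^2 = \tfrac12\|\curl F\|_{L^2(\Omega)}^2 - \int_\Sigma \langle B_\Sigma\psi,\psi\rangle\, d\H^2.
\]
The only dangerous term is the last one, and this is where the main work lies. Since $\|B_\Sigma\|_{L^4(\Sigma)}\le C$, H\"older gives $|\int_\Sigma \langle B\psi,\psi\rangle| \le C\|\psi\|_{L^{8/3}(\Sigma)}^2$. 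Using the 2D Sobolev embedding $H^{1/4}(\Sigma)\hookrightarrow L^{8/3}(\Sigma)$ together with the interpolation estimate $\|\psi\|_{H^{1/4}(\Sigma)} \le C\|\psi\|_{H^{1/2}(\Sigma)}^{1/2}\|\psi\|_{L^2(\Sigma)}^{1/2}$, and then the trace bound $\|\psi\|_{H^{1/2}(\Sigma)} \le C\|\psi\|_{H^1(\Omega)}$, I obtain
\[
\Bigl|\int_\Sigma \langle B\psi,\psi\rangle\, d\H^2\Bigr| \le \e\|\psi\|_{H^1(\Omega)}^2 + C_\e\|\psi\|_{L^2(\Sigma)}^2.
\]

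Finally the first inequality of Lemma \ref{lem:divcurl}, applied to $\psi$ (so that $\psi_n=0$ and $\Div\psi = 0$), produces
\[
\|\psi\|_{L^2(\Sigma)}^2 \le C\bigl(\|\psi\|_{L^2(\Omega)}^2 + \|\curl F\|_{L^2(\Omega)}^2\bigr) \le C\bigl(\|F\|_{L^2(\Omega)}^2 + \|\curl F\|_{L^2(\Omega)}^2\bigr),
\]
using $\|\psi\|_{L^2(\Omega)} \le \|F\|_{L^2(\Omega)} + \|\nabla u\|_{L^2(\Omega)} \le 2\|F\|_{L^2(\Omega)}$. Feeding this into the previous inequality, choosing $\e$ small so that the term $\e\|\psi\|_{H^1(\Omega)}^2$ is absorbed in the LHS, and adding a Poincar\'e-type control of $\|\psi\|_{L^2(\Omega)}$, yields $\|\psi\|_{H^1(\Omega)} \le C(\|F\|_{L^2(\Omega)} + \|\curl F\|_{L^2(\Omega)})$. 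The theorem follows from $\|F\|_{H^1(\Omega)}\le \|\nabla u\|_{H^1(\Omega)} + \|\psi\|_{H^1(\Omega)} + \|F\|_{L^2(\Omega)}$. The main obstacle is thus the boundary integral involving $B_\Sigma$: the $L^4$ bound on $B_\Sigma$ is critical, since it just barely allows the interpolation-and-trace chain above to close, and any weaker bound (e.g.\ only $L^2$) would force the use of a higher fractional trace and the argument would break down.
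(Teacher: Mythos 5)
Your proof is correct and follows essentially the same route as the paper: the Helmholtz--Hodge splitting $F = \nabla u + \psi$, Lemma~\ref{lem:poisson1} for the gradient part, the Reilly identity \eqref{eq:divcurl1} for the divergence-free tangential part, and absorbing the boundary term $\int_\Sigma \langle B\psi,\psi\rangle$ via H\"older with $\|B\|_{L^4}$ and interpolation. The only cosmetic difference is that you close the boundary term by explicitly passing through $\|\psi\|_{L^2(\Sigma)}$ and invoking Lemma~\ref{lem:divcurl}, whereas the paper's interpolation step is compressed directly into $\e\|\nabla\psi\|_{L^2(\Omega)}^2 + C_\e\|\psi\|_{L^2(\Omega)}^2$; both versions are valid.
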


\begin{proof}
By approximation argument we may assume that $F$ and $\Omega$ are smooth. We use the Helmholtz-Hodge decomposition and write $F=\nabla \phi+\psi$ where $\phi$ is the unique solution of the Neumann problem
\[
\begin{cases}
\Delta \phi=\Div F\qquad &x\in \Omega
\\
\pa_\nu \phi= F_n &x\in \Sigma
\end{cases}
\]
with zero average and $\psi$ solves
\[
\begin{cases}
\curl \psi= \curl F \qquad &x\in \Omega
\\
\Div \psi =0 &x\in \Omega
\\
\psi \cdot \nu  = 0  &x\in \Sigma .
\end{cases}
\]
We also note that $\nabla \phi $ and $\psi$ are orthogonal in $L^2(\Omega)$ and thus
\[
\int_{\Omega} |\nabla\phi|^2 +|\psi|^2dx =\int_{\Omega}|F|^2 dx. 
\]
For $\phi$ we have by Lemma \ref{lem:poisson1} that 
\begin{equation}
    \label{eq:CS-k=1.1}
\begin{split}
\|\phi\|_{H^2(\Omega)} &\leq C( \|\pa_\nu \phi\|_{H^\frac12(\Sigma)} + \|\nabla \phi\|_{L^2(\Omega)} +\|\Delta \phi\|_{L^2(\Omega)})\\
&\leq C( \|F_n\|_{H^\frac12(\Sigma)} + \|F\|_{L^2(\Omega)} +\|\Div F\|_{L^2(\Omega)}).
\end{split}
\end{equation}
For $\psi$ we have by \eqref{eq:divcurl1} 
\[
\|\nabla \psi\|_{L^2(\Omega)}^2\, dx= \frac12\|\text{curl}\, \psi\|_{L^2(\Omega)}^2 - \int_{\Sigma} \langle B_\Sigma \, \psi, \psi \rangle \, d\H^2.
\]
We  use the assumption $\|B_\Sigma\|_{L^4} \leq C$, H\"older's inequality and interpolation inequality to deduce   
\[
\begin{split}
-\int_{\Sigma}  \langle B_\Sigma \, \psi, \psi \rangle  \, d\H^2 &\leq \|B_{\Sigma}\|_{L^4(\Sigma)}\|\psi\|_{L^{\frac83}(\Sigma)}^2 \\
&\leq \e\|\psi\|_{H^{\frac12}(\Sigma)}^2 + C_\e \|\psi\|_{L^2(\Omega)}^2 \leq \e \|\nabla \psi\|_{L^{2}(\Omega)}^2 + C_\e \|F\|_{L^2(\Omega)}^2.
\end{split}
\]
Thus we have
\[
\|\nabla \psi\|_{L^2(\Omega)} \leq C(\|\curl F\|_{L^2(\Omega)} + \|F\|_{L^2(\Omega)}) 
\]
This together with \eqref{eq:CS-k=1.1} yields the claim. 
\end{proof}

We proceed by using Theorem \ref{prop:CS-k=1} to control the higher order norms $\|F\|_{H^2(\Omega)}$ and $\|F\|_{H^3(\Omega)}$. We do not need the sharp dependence on the curvature for these estimates and we state the result in a form that is suitable for us. We also treat the case $\|F\|_{H^{\frac32}(\Omega)}$, but only for curl-free vector fields. In this case we need the 'sharp' curvature dependence but this time we have non-optimal dependence on the divergence. 
\begin{lemma}
\label{lem:CS-intermed}
Assume that $\Omega$, with $\Sigma= \pa \Omega$, is uniformly $C^{1,\alpha}(\Gamma)$-regular and  $\|B_\Sigma\|_{H^{\frac12}(\Sigma)} \leq M$. Then for all vector fields $F \in H^3(\Omega;\R^3)$ it holds 
\[
\|F\|_{H^3(\Omega)} \leq  C \big(\|\Delta_\Sigma F_n\|_{H^\frac12(\Sigma)}  + (1+\|H_\Sigma\|_{H^{2}(\Sigma)})\|F\|_{L^{\infty}(\Omega)}  +\|\Div F\|_{H^2(\Omega)}+\|\curl F\|_{H^2(\Omega)} \big)
\]
and
\[
\|F\|_{H^2(\Omega)} \leq  C\big(\|\Delta_\Sigma F_n\|_{H^\frac12(\Sigma)}  + \|F\|_{L^{\infty}(\Omega)}  +\|\Div F\|_{H^1(\Omega)}+\|\curl F\|_{H^1(\Omega)} \big)
\]
for some constant $C$,  depending on $M$ and the $C^{1,\alpha}$-norm of the heightfunction. 
Moreover, if $F = \nabla u$ then it holds 
\[
\| \nabla u\|_{H^{\frac32}(\Omega)} \leq  C(\|\pa_\nu u \|_{H^1(\Sigma)}  + \|u\|_{L^2(\Omega)}  +\|\Delta u\|_{H^1(\Omega)})
\]
and for $k = \frac12, 1$ it holds
\[
\|u\|_{H^{k}(\Sigma)} \leq  C \|u\|_{H^{k+\frac12}(\Omega)}.
\]
\end{lemma}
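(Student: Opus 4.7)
I plan to establish the four inequalities in Lemma \ref{lem:CS-intermed} as a bootstrap built on Theorem \ref{prop:CS-k=1} and Lemma \ref{lem:poisson1}. The trace inequality $\|u\|_{H^k(\Sigma)} \leq C\|u\|_{H^{k+\frac12}(\Omega)}$ for $k = \tfrac12, 1$ is the easiest: it follows from the classical trace theorem for $H^{k+\frac12}(\R^3)$ applied to the canonical extension $Tu$ of Proposition \ref{prop:extension}, combined with the definition of the half-integer norm on $\Sigma$ (Definition \ref{defn:fractionalnorm2}). The third claim, $\|\nabla u\|_{H^{3/2}(\Omega)} \leq C(\|\partial_\nu u\|_{H^1(\Sigma)} + \|u\|_{L^2} + \|\Delta u\|_{H^1})$, I obtain by real interpolation between Lemma \ref{lem:poisson1} and its $H^3$ analogue, the latter being proved by differentiating once in a tangential direction and reapplying Lemma \ref{lem:poisson1} to the first derivative, with the curvature remainder absorbed using Corollary \ref{coro:interpolation} and $\|B\|_{L^4}\leq C$. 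Viewing the solution map $(\partial_\nu u, \Delta u) \mapsto \nabla u$ as a linear operator and interpolating, as described at the end of Section 2, between $(H^{1/2}(\Sigma), L^2(\Omega)) \to H^1(\Omega)$ and $(H^{3/2}(\Sigma), H^1(\Omega)) \to H^2(\Omega)$ yields the intermediate $H^{3/2}$ bound.

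\textbf{Helmholtz decomposition for the $H^2$ and $H^3$ estimates on $F$.} For the first two claims I use the Helmholtz--Hodge decomposition $F = \nabla \phi + \psi$ employed in the proof of Theorem \ref{prop:CS-k=1}, with $\phi$ the zero-mean solution of the Neumann problem $\Delta \phi = \Div F$ in $\Omega$, $\partial_\nu \phi = F_n$ on $\Sigma$, and $\psi$ divergence-free with $\psi \cdot \nu = 0$ and $\curl \psi = \curl F$. The curl-free piece $\nabla \phi$ is handled by iterating Lemma \ref{lem:poisson1} (once for $H^2$, twice for $H^3$); the boundary datum is controlled through Proposition \ref{prop:laplace-bound}, which under the assumption $\|B\|_{H^{1/2}}\leq C$ (and hence $\|B\|_{L^4}\leq C$) gives
\[
\|F_n\|_{H^{5/2}(\Sigma)} \leq C\left(\|\Delta_\Sigma F_n\|_{H^{1/2}(\Sigma)} + \|F_n\|_{L^2(\Sigma)}\right).
\]
The divergence-free piece $\psi$ is handled by applying Theorem \ref{prop:CS-k=1} to $\partial_i \psi$ for each $i$: its divergence vanishes, its curl equals $\partial_i \curl F$, and its normal trace on $\Sigma$ is computed from the constraints $\psi \cdot \nu = 0$ and $\Div \psi = 0$, which after a short calculation using a harmonic extension $\bar\nu$ of the normal yield
\[
(\partial_i \psi)\cdot \nu \big|_\Sigma = -\nu_i \, \Div_\Sigma \psi + \psi \star \nabla \bar\nu.
\]
Iterating this procedure once more (applying Theorem \ref{prop:CS-k=1} to $\partial_i \partial_j \psi$) produces the $H^3$ bound, with the corresponding boundary decomposition for $(\partial_i \partial_j F) \cdot \nu$.

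\textbf{Main obstacle.} The central difficulty is that the boundary representation of $(\partial_i F) \cdot \nu$ obtained above contains the term $\Div_\Sigma \psi$, a single tangential derivative of $\psi$ on $\Sigma$, whose $H^{1/2}(\Sigma)$ norm is by the trace theorem at most $C\|\psi\|_{H^2(\Omega)}$ --- precisely the quantity we are trying to bound. The resolution is an absorption argument: using the weak Kato-Ponce inequality \eqref{eq:kato-weak} together with interpolation (Corollary \ref{coro:interpolation}) and the trace theorem one obtains bounds of the schematic form
\[
\|F\|_{W^{1,q}(\Sigma)} \leq \varepsilon\, \|F\|_{H^k(\Omega)} + C_\varepsilon \|F\|_{L^\infty(\Omega)}
\]
for suitable $q$, so that each problematic term is majorised by $\varepsilon \|F\|_{H^k(\Omega)} + C_\varepsilon \|F\|_{L^\infty}$ and absorbed on the left. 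This is exactly why $\|F\|_{L^\infty}$, rather than $\|F\|_{L^2}$, appears on the right of both estimates. The extra factor $(1 + \|H_\Sigma\|_{H^2(\Sigma)})$ in the $H^3$ case arises when Proposition \ref{prop:kato-ponce} is applied to a product $F \star B$ at second-derivative level: the branch $\|F\|_{L^\infty}\|B\|_{H^2(\Sigma)}$ is the only way to avoid losing a derivative on $F$, and $\|B\|_{H^2}$ is controlled by $\|H_\Sigma\|_{H^2}$ through Proposition \ref{prop:meancrv-bound}. No such term appears for $H^2$ because there the Kato-Ponce split uses only $\|B\|_{L^4}$, which is absorbed into the implicit constant.
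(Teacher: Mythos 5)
Your strategy is genuinely different from the paper's. The paper does not use the Helmholtz decomposition for the $H^2$/$H^3$ estimates at all: it defines a harmonic extension $\tilde\nu$ of the normal, builds tangential frame fields $\tau_i = e_i - (\tilde\nu\cdot e_i)\tilde\nu$, and applies Theorem \ref{prop:CS-k=1} to the \emph{contracted} second (or first) derivatives $(F_{ij})_k = \nabla^2 F_k\,\tau_i\cdot\tau_j$. On the boundary, $F_{ij}\cdot\nu$ differs from $\nabla_\tau^2 F_n$ by commutator terms $\nabla_\tau F\star B + F\star\nabla_\tau B$, which are one-half order below critical and hence absorbable. The normal-normal components of $\nabla^3 F$ are then recovered pointwise from $\Div F$ and $\curl F$ by a combinatorial argument. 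Your plan replaces this with a Helmholtz split $F = \nabla\phi + \psi$ and coordinate derivatives $\partial_i\psi$.

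The gap is in the $\psi$ part, and it is exactly at the place you flagged as the ``main obstacle.'' Your boundary identity $(\partial_i\psi)\cdot\nu = -\nu_i\,\Div_\Sigma\psi + \psi\star\nabla\bar\nu$ is correct, but the term $\|\nu_i\Div_\Sigma\psi\|_{H^{1/2}(\Sigma)}$ is \emph{at the critical order}: $\Div_\Sigma\psi$ is a full tangential derivative of $\psi$, so its $H^{1/2}(\Sigma)$ norm is comparable (after trace) to $\|\psi\|_{H^2(\Omega)}$, with no room to spare. Your proposed absorption $\|F\|_{W^{1,q}(\Sigma)} \leq \varepsilon\|F\|_{H^k(\Omega)} + C_\varepsilon\|F\|_{L^\infty}$ controls $W^{1,q}(\Sigma)$ for finite $q$, which corresponds to $H^{3/2-\delta}(\Sigma)$ on a two-dimensional surface, not $H^{3/2}(\Sigma)$; it therefore does not majorize $\|\Div_\Sigma\psi\|_{H^{1/2}(\Sigma)}$. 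Summing over $i$ gives $\sum_i \nu_i^2 = 1$, so the offending contribution is not killed by cancellation either. The circularity is genuine. The reason the paper's version of this boundary term is harmless is precisely that it works with \emph{tangential} contractions $\nabla^2 F\,\tau_i\cdot\tau_j$ rather than the coordinate derivative $\partial_i$: the normal direction is never differentiated on the boundary, and the commutator with $B$ lands at half a derivative below critical. Your argument could be repaired by applying Theorem \ref{prop:CS-k=1} only to tangential derivatives $\nabla_{\tau_i}\psi$ (whose normal trace is $-\langle B\tau_i,\psi\rangle$, lower order) and recovering the normal-normal derivatives pointwise from $\Div\psi = 0$ and $\curl\psi$; but that repair is essentially the paper's argument and at that point the Helmholtz decomposition is no longer doing any work.

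A secondary issue is the third inequality, $\|\nabla u\|_{H^{3/2}(\Omega)} \leq C(\|\pa_\nu u\|_{H^1(\Sigma)} + \cdots)$. You propose interpolating the Neumann solution operator between Lemma \ref{lem:poisson1} (an $H^2$ estimate) and an ``$H^3$ analogue.'' But look at the first inequality of the lemma being proved: at the $H^3$ level the constant is $(1 + \|H_\Sigma\|_{H^2(\Sigma)})$, i.e.\ the $H^3$ Neumann estimate is \emph{not} uniform under $\|B\|_{L^4}\leq C$ or $\|B\|_{H^{1/2}}\leq C$. If you interpolate between a uniform $H^2$ estimate and a non-uniform $H^3$ estimate you do not get a uniform $H^{5/2}$ estimate, yet the third inequality claims a constant depending only on the $H^{1/2}$ bound on $B$. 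The paper avoids this by interpolating one level lower (between $H^1$ and $H^2$, yielding $\|u\|_{H^{3/2}} \leq C(\|g\|_{L^2(\Sigma)} + \|f\|_{L^2(\Omega)})$), then applying the resulting $H^{3/2}$ estimate to $\nabla u$ componentwise, so that the boundary term $\pa_\nu(\nabla u)$ only needs to be estimated in $L^2(\Sigma)$. That $L^2$-level estimate is where the curvature enters at low order via $\|\nabla\tilde\nu\|_{L^4}$, which is available; no $H^3$ endpoint is invoked.
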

\begin{proof}
By approximation argument we may assume that $F$ and $\Omega$ are smooth. Note also that by Sobolev embedding $\| B_\Sigma\|_{L^4(\Sigma)} \leq \|B_\Sigma\|_{H^{\frac12}(\Sigma)} \leq M$.

Let $\tilde \nu$ be the harmonic extension of the normal field $\nu$ to $\Omega$. Let us define the vector fields $\tau_i = e_i - (\tilde \nu \cdot e_i) \tilde  \nu$ for $i =1,2,3$, where $\{e_i\}_i$ is a coordinate basis of $\R^3$. For $i,j$ we define a vector field $F_{ij} :\Omega \to \R^3$  as  $(F_{ij})_k =  \nabla^2 F_k \tau_i \cdot  \tau_j  $. We apply Theorem \ref{prop:CS-k=1}  for $F_{ij}$  and obtain 
\[
\|\nabla F_{ij}\|_{L^2(\Omega)} \leq  C( \| F_{ij} \cdot \nu \|_{H^\frac12(\Sigma)} + \| F_{ij}\|_{L^2(\Omega)} +\|\Div  F_{ij}\|_{L^2(\Omega)}+\|\curl F_{ij}\|_{L^2(\Omega)} ).
\]
Recall that  \eqref{eq:poisson1-5}  implies  $\|\nabla \tilde \nu\|_{L^4(\Omega)} \leq C$. Moreover by maximum principle it holds $\|\tilde \nu \|_{L^\infty(\Omega)} \leq C$. Therefore 
\[
\begin{split}
\|\Div F_{ij}\|_{L^2(\Omega)} &\leq C\|\Div F\|_{H^2(\Omega)} + C\|\nabla^2 F \|_{L^4(\Omega)} \|\nabla \nu \|_{L^4(\Omega)} \\
&\leq C\|\Div F\|_{H^2(\Omega)} + C \|\nabla^2 F \|_{L^4(\Omega)}.
\end{split}
\]
By interpolation we have $ \|\nabla^2 F \|_{L^4(\Omega)} \leq \e  \| F\|_{H^3(\Omega)} + C_{\e} \|F\|_{L^2(\Omega)}$ and thus 
\[
\| F_{ij}\|_{L^2(\Omega)} + \|\Div F_{ij}\|_{L^2(\Omega)} \leq  \e  \| F \|_{H^3(\Omega)} +   C_\e(\|\Div F\|_{H^2(\Omega)}+ \|F\|_{L^2(\Omega)}) .
\]
By a similar argument
\[
\|\curl F_{ij}\|_{L^2(\Omega)} \leq  \e  \| F \|_{H^3(\Omega)} + C_\e(\|\curl F\|_{H^2(\Omega)}+ \|F\|_{L^2(\Omega)})
\]
and 
\[
\| \nabla  F_{ij}\|_{L^2(\Omega)}^2 \geq    \sum_{k,l=1}^3 \|  \nabla^2 \nabla_l F_k \, \tau_i \cdot \tau_j \|_{L^2(\Omega)}^2  - \e \| F \|_{H^3(\Omega)}^2 - C_\e \|F\|_{L^2(\Omega)}^2.
\]

Let us fix a point $x \in \Omega$ and estimate  the norm
\[
\sum_{i,j,k,l=1}^3|\nabla^2 \nabla_l F_k(x) \, \tau_i \cdot \tau_j|^2.
\]
First we observe that the above quantity does not depend on the choice of the coordinates in $\R^3$. Let us choose the coordinates  such that  $ \tilde \nu(x) \cdot e_i = 0$ for $i=1,2$. Then we have 
\[
\sum_{i,j,k,l=1}^3 |\nabla^2 \nabla_l F_k(x) \, \tau_i \cdot  \tau_j  |^2 \geq  \sum_{k,l=1}^3 \sum_{i,j=1}^2 | \nabla_i\nabla_j\nabla_l F_k(x)|^2.
\]
By a simple combinatorial argument we deduce 
\[
\begin{split}
&\sum_{i,j,k,l=1}^3 | \nabla_i\nabla_j\nabla_l F_k(x)|^2\\
&\leq C\sum_{k,l=1}^3 \sum_{i,j=1}^2 | \nabla_i\nabla_j\nabla_l F_k(x)|^2  + C |\nabla^2 \Div F(x)|^2 + C |\nabla^2 \curl F(x)|^2. 
\end{split}
\]
By applying the above argument for every $x$ we  have 
\[
\sum_{i,j,k,l=1}^3 \| \nabla  F_{ij}\|_{L^2(\Omega)}^2 \geq c_0 \|\nabla^3 F\|_{L^2(\Omega)}^2 - C(\|\Div F\|_{H^2(\Omega)}^2+ \|\curl F\|_{H^2(\Omega)}^2+  \|F\|_{L^2(\Omega)}^2).
\]
Combing all the previous estimates we obtain 
\[
\|\nabla^3 F\|_{L^2(\Omega)} \leq \sum_{i,j=1}^3 C(\| F_{ij} \cdot \nu \|_{H^\frac12(\Sigma)} + \|F\|_{L^2(\Omega)}  +\|\Div F\|_{H^2(\Omega)}+ \|\curl F\|_{H^2(\Omega)}).
\]
The first inequality  follows once we show
\begin{equation}
    \label{eq:CS-intermed.1}
\sum_{i,j=1}^3 \| F_{ij}\cdot  \nu \|_{H^\frac12(\Sigma)} \leq C\| \Delta_\Sigma F_n  \|_{H^\frac12(\Sigma)}+ \e \|F\|_{H^3(\Omega)} +C_\e(1 + \|H_\Sigma\|_{H^{2}})\|F\|_{L^{\infty}(\Sigma)}.
\end{equation}

To this aim we first note that on $\Sigma$ it holds $\tau_i = e_i - (\nu \cdot e_i)\nu $ and therefore $\tau_i$ is tangential on $\Sigma$. Thus  we have
\begin{equation}
    \label{eq:CS-intermed.2}
   (\nabla^2 F_n) \tau_i \cdot  \tau_j =  F_{ij} \cdot \nu +   \nabla_\tau F \star B_\Sigma + F \star \nabla_\tau B_\Sigma.
    \end{equation}
We use Proposition  \ref{prop:kato-ponce}   and  get 
\[
\|\nabla_\tau F \star B_\Sigma\|_{H^{\frac12}(\Sigma)} \leq C\| \nabla F\|_{H^{\frac12}(\Sigma)} \|B_\Sigma\|_{L^{\infty}(\Sigma)} + C\| \nabla_\tau F\|_{L^{\infty}(\Sigma)} \| B_\Sigma\|_{H^{\frac12}(\Sigma)}.
\]
 Recall that $\| B_\Sigma \|_{H^{\frac12}(\Sigma)} \leq C$. By interpolation we have 
\[
\| \nabla_\tau F\|_{L^{\infty}(\Sigma)} \leq \e \| F\|_{H^3(\Omega)} + C_\e\|  F\|_{L^{\infty}}.
\]
Moreover, by Sobolev embedding, Proposition \ref{prop:interpolation} and by Proposition \ref{prop:meancrv-bound} we have for $\theta < \frac12$
\[
\|B_\Sigma\|_{L^{\infty}(\Sigma)} \leq C\| B_\Sigma\|_{W^{1, \frac73}(\Sigma)} \leq C\| B_\Sigma\|_{H^{2}(\Sigma)}^{\theta}\| B_\Sigma\|_{L^{4}(\Sigma)}^{1 - \theta}\leq C(1+ \| H_\Sigma\|_{H^{2}(\Sigma)}^{\theta}).
\]
On the other hand, Corollary \ref{coro:interpolation} implies  
\[
\| \nabla F\|_{H^{\frac12}(\Sigma)}\leq C \| \nabla F\|_{H^{1}(\Omega)}\leq C\| F\|_{H^3(\Omega)}^{\frac12} \|\nabla F\|_{L^2(\Omega)}^{\frac12}. 
\]
Therefore we have  by the above estimates and by Young's inequality
\[
\begin{split}
\|\nabla_\tau F \star B_\Sigma\|_{H^{\frac12}(\Sigma)} &\leq \e \| F\|_{H^3(\Omega)} + C_{\e}\|F\|_{L^\infty} + C_{\e}(1+ \| H_\Sigma\|_{H^{2}(\Sigma)}^{\theta})\|\nabla F\|_{L^2(\Omega)}\\
&\leq \e \| F\|_{H^3(\Omega)} + C_\e (1+ \| H_\Sigma\|_{H^{2}(\Sigma)}) \|F\|_{L^\infty(\Omega)},
\end{split}
\]
where the last inequality follows from interpolation. 

Let us then bound the last term in \eqref{eq:CS-intermed.2}. We have by Proposition  \ref{prop:kato-ponce} 
\[
\begin{split}
\| F \star  \nabla_\tau B_\Sigma\|_{H^{\frac12}(\Sigma)} &\leq \| F \star  \nabla_\tau B_\Sigma\|_{H^{1}(\Sigma)} \\
&\leq C \|F\|_{L^\infty(\Sigma)}\|  \bar \nabla B_\Sigma\|_{H^{1}(\Sigma)} + C  \| F\|_{W^{1,3}(\Sigma)} \| \bar \nabla B_\Sigma\|_{L^{6}(\Sigma)}.
\end{split}
\]
Proposition \ref{prop:meancrv-bound} yields $\|  B_\Sigma\|_{H^{2}(\Sigma)} \leq C(1+  \| H_\Sigma\|_{H^{2}(\Sigma)})$. Interpolation implies 
\[
 \| \bar \nabla B_\Sigma\|_{L^{6}(\Sigma)}\leq C \|  B_\Sigma\|_{H^{2}(\Sigma)}^{\theta} \|  B_\Sigma\|_{L^{2}(\Sigma)}^{1-\theta}
\]
and 
\[
\| F\|_{W^{1,3}(\Sigma)} \leq \| F\|_{H^{3}(\Omega)}^{1-\theta} \| F\|_{L^{\infty}(\Omega)}^{\theta}
\]
for some $\theta \in (0,1)$.
Therefore we have by \eqref{eq:CS-intermed.2}
\[
\|  F_{ij} \cdot  \nu  \|_{H^\frac12(\Sigma)} \leq \| \bar \nabla^2 F_n \|_{H^\frac12(\Sigma)} + \e \|F\|_{H^3(E)} +C_\e(1 + \|H_\Sigma\|_{H^{2}(\Sigma)})\|F\|_{L^{\infty}(\Omega)}.
\]
The inequality \eqref{eq:CS-intermed.1} then follows from  Proposition \ref{prop:laplace-bound} as
\[
\| \bar \nabla^2 F_n \|_{H^\frac12(\Sigma)} \leq 2\|  \Delta_\Sigma F_n \|_{H^\frac12(\Sigma)} + C_\e \|F_n\|_{L^2(\Sigma)}.
\]

The second inequality follows from a similar argument. 


Let us next prove the last part of the statement, i.e. the inequalities when $F=\nabla u$. Let $u$ be a solution of the Neumann boundary problem
\[
\begin{cases}
\Delta u =f \qquad & \text{in } \Omega
\\
\pa_\nu u = g  &\text{on }  \Sigma,
\end{cases}
\]
where $\int_{\Sigma}g = \int_{\Omega} f$ and $\int_\Omega u \, dx = 0$. First, clearly $\|u\|_{H^{\frac12}(\Sigma)} \leq C \| u\|_{H^1(\Omega)}$. By the equation and by divergence theorem  
\[
\int_\Omega |\nabla u|^2 \, dx + \int_\Omega u \, f  \, dx = \int_{\Sigma} u \, \pa_\nu u \, d \H^2 \leq \|u\|_{H^{\frac12}(\Sigma)} \|g\|_{H^{-\frac12}(\Sigma)} \leq C\|u\|_{H^1(\Omega)} \|g\|_{H^{-\frac12}(\Sigma)}. 
\]
Therefore by  
\[
\left|\int_\Omega u \, f  \, dx \right| \leq \|u\|_{L^{2}(\Omega)} \|f\|_{L^{2}(\Omega)}
\]
and by Poincar\'e inequality $\|u\|_{L^{2}(\Omega)} \leq C \|\nabla u\|_{L^{2}(\Omega)}$ we have 
\[
\|u\|_{H^1(\Omega)} \leq C(\|g\|_{H^{-\frac12}(\Sigma)} + \|f\|_{L^{2}(\Omega)}).
\]
On the other hand Lemma \ref{lem:poisson1} implies 
\[
\begin{split}
 \|u\|_{H^{2}(\Omega)} &\leq C(\|g\|_{H^{\frac12}(\Sigma)} + \|u\|_{L^2} + \|f\|_{L^{2}(\Omega)})\\
 &\leq C(\|g\|_{H^{\frac12}(\Sigma)}  + \|f\|_{L^{2}(\Omega)}).
\end{split}
 \]
We use the two above inequalities and standard interpolation argument to deduce
\begin{equation}
    \label{eq:CS-k=1.3}
\|u\|_{H^{3/2}(\Omega)} \leq C(\|g\|_{L^2(\Sigma)} + \|f\|_{L^2(\Omega)}).
\end{equation}

We proceed by applying \eqref{eq:CS-k=1.3} for $u_{x_i} = \nabla u \cdot e_i - c_i$, for $i = 1,2,3$, $c_i = \fint_{E} u_{x_i}$, and obtain
\begin{equation}
    \label{eq:CS-k=1.4}
\|\nabla u\|_{H^{3/2}(\Omega)} \leq C(\|\pa_\nu (\nabla u)\|_{L^2(\Sigma)}+  \|f\|_{H^1(\Omega)}).
\end{equation}
In order to treat the first term on the RHS we let $\tilde \nu$ be the harmonic extension of  $\nu$ to $\Omega$. We write $\nabla u = \nabla_\tau u + (\pa_\nu u)\, \nu $ and have 
\[
\pa_\nu (\nabla u) = \nabla_\tau (\pa_\nu u) + \pa_\nu (\nabla u \cdot \tilde \nu) \nu + \nabla \tilde \nu \star \nabla u. 
\]
Recall that we have  by maximum principle $\|\tilde \nu\|_{L^\infty} \leq C$ and by \eqref{eq:poisson1-5}  $\|\nabla \tilde \nu\|_{L^4(\Omega)} \leq C$. We argue as in \eqref{eq:poisson1.2}  and obtain  
\[
\|\nabla \tilde \nu \star \nabla u\|_{L^2(\Sigma)} \leq \|\nabla \tilde \nu \|_{L^4(\Sigma)}\,  \|\nabla u\|_{L^{4}(\Sigma)} \leq C\|u\|_{H^2(\Omega)}.
\]
We use Remark \ref{rem:annoying} for $F = \nabla (\nabla u \cdot \tilde \nu)$ and \eqref{eq:poisson1-4} and  have
\[
\begin{split}
\|\pa_\nu (\nabla u \cdot \tilde \nu)\|_{L^2(\Sigma)}^2 &\leq C(\|\nabla_\tau (\nabla u \cdot \tilde \nu)\|_{L^2(\Sigma)}^2 + \|\nabla(\nabla u \cdot \tilde \nu)  \, \Delta (\nabla u \cdot \tilde \nu)\|_{L^1(\Omega)} + \| \nabla u \cdot \tilde \nu\|_{H^1(\Omega)}^2)\\
&\leq C(\|\pa_\nu u \|_{H^1(\Sigma)}^2 + \|u\|_{H^2(\Omega)}^2 + \|f\|_{H^1(\Omega)}^2+ \| \nabla u \cdot \tilde \nu\|_{H^1(\Omega)}^2)\\
&\,\,\,\,\,\,\,+ C(\| \nabla^2 u \star \nabla^2 u \star \nabla \tilde \nu\|_{L^1(\Omega)} + \| \nabla^2 u  \star \nabla u \star  \nabla \tilde \nu \star \nabla \tilde \nu\|_{L^1(\Omega)}).
\end{split}
\]
First, we obtain  by using the previous estimates
\[
\| \nabla u \cdot \tilde \nu\|_{H^1(\Omega)}^2 \leq C \|u
\|_{H^2(\Omega)}^2.  
\]
We bound the second last term by H\"older's inequality and by the Sobolev embedding 
\[
\| \nabla^2 u \star \nabla^2 u \star \nabla \tilde \nu\|_{L^1(\Omega)} \leq C\|\nabla \tilde \nu\|_{L^4(\Omega)}\|\nabla^2 u\|_{L^{\frac83}(\Omega)}^2 \leq \e \|\nabla u\|_{H^{\frac32}(\Omega)}^2 + C_{\e}\|u\|_{H^2(\Omega)}^2.
\]
Similarly we estimate the last term  
\[
\begin{split}
\| \nabla^2 u  \star \nabla u \star  \nabla \tilde \nu \star \nabla \tilde \nu\|_{L^1(\Omega)} &\leq C\|\nabla \tilde \nu\|_{L^4(\Omega)}^2 \|\nabla^2 u\|_{L^{\frac83}(\Omega)} \|\nabla u\|_{L^8(\Omega)}\\
&\leq \e \|\nabla u\|_{H^{\frac32}(\Omega)}^2 + C_{\e}\|u\|_{H^2(\Omega)}^2.
\end{split}
\]
 Therefore we have 
\[
\|\pa_\nu (\nabla u)\|_{L^2(\Sigma)} \leq \e \| \nabla u \|_{H^{\frac32}(\Omega)} + C_\e(\|\pa_\nu u \|_{H^1(\Sigma)}+ \|u\|_{H^2(\Omega)} +\|f\|_{H^1(\Omega)}).
\]
Recall that we have 
\[
\|u\|_{H^2(\Omega)} \leq C(\|g\|_{H^{\frac12}(\Sigma)} + \|f\|_{L^2(\Omega)}).
\]
Therefore the third inequality follows from \eqref{eq:CS-k=1.4}.

For the last inequality we recall that while the Trace operator is not bounded $T: H^{\frac12}(\R^3) \to L^{2}(\R^2)$ it is bounded as $T: H^{\frac32}(\R^3) \to H^{1}(\R^2)$. We prove the statement by localization argument similar to the one in the proof of Proposition \ref{prop:kato-ponce} and we only give the sketch of the proof. 

We cover $\Sigma$ with balls of radius $\delta$, $B_{\delta}(x_i), i = 1, \dots, N$  such that the set $\Sigma \cap B_{2\delta}(x_i)$ is contained in the graph of $\phi_i$ and $\Omega$ is above the graph. We denote the partition of unity by $\eta_i$. Let us fix $i$ and we may assume that $x_i = 0$ and $\phi_i(0) = \nabla \phi_i(0) = 0$. By the regularity assumptions it holds $\|\phi_i\|_{C^{1,\alpha}(\R^2)}, \|\phi_i\|_{W^{2,4}(\R^2)} \leq C$. We define $u_i(x) = \eta_i(x)u_i(x)$ and  
$v_i(x',x_3)=u_i(x', x_3+\phi_i(x'))$ for $x_3 \geq 0$ and  extend $v_i$ to  $\R^3$  by the extension operator.   Then we have by the Trace Theorem 
\[
\|u_i\|_{H^1(\Sigma \cap B_\delta)} \leq C\| v_i\|_{H^1(\R^2)} \leq C \|v_i\|_{H^{\frac32}(\R^3)}.
\]
Recall that the assumption $\|B_\Sigma\|_{L^4} \leq C$ guarantees that  $\Omega$ is an $H^2$-extension domain. Therefore it holds $\|v_i\|_{H^{\frac32}(\R^3)} \leq C\| u\|_{H^{\frac32}(\Omega)}$ and the last inequality follows.  
\end{proof}

\subsection{Regularity estimates for functions}

In this subsection we prove regularity estimates for functions $u : \Omega \to \R$ defined as a solution of  the Dirichlet problem
\begin{equation} \label{eq:dirichlet}
\begin{cases} 
\Delta u= f \qquad& x\in \Omega
\\
u=g \qquad &x\in \Sigma
\end{cases}
\end{equation}

We first  consider the case when $g = 0$ and improve in this case the third inequality in Lemma \ref{lem:CS-intermed}. Here we assume that the boundary has the regularity $\|B_\Sigma\|_{H^{\frac12}(\Sigma)} \leq C$. Note that by the Sobolev embedding this implies $\|B_\Sigma\|_{L^{4}(\Sigma)} \leq C$.
 
\begin{proposition}     \label{prop:dirichlet1}
Assume $\Omega$, with $\Sigma = \pa \Omega$,  is uniformly $C^{1,\alpha}(\Gamma)$-regular and  $\|B_\Sigma\|_{H^{\frac12}(\Sigma)} \leq M$.  There exists a constant $C$, depending on $M$ and the $C^{1,\alpha}$-norm of the heightfunction,  such that the solution of the problem \eqref{eq:dirichlet} with zero Dircihlet boundary datum, i.e.,  $g = 0$
satisfies
\[
\|\pa_\nu u\|_{H^1(\Sigma)} +\| \nabla u\|_{H^{\frac32}(\Omega)} \leq C\|f\|_{H^\frac12(\Omega)}.
\]
\end{proposition}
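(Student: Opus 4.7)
My plan is to prove the estimate by real interpolation between two endpoint estimates for the Dirichlet solution operator $S : f \mapsto u$ with $u|_\Sigma = 0$. I aim to establish
\[
S : L^2(\Omega) \to H^2(\Omega) \qquad \text{and} \qquad S : H^1(\Omega) \to H^3(\Omega)
\]
as bounded linear operators. Since $S$ is linear, the real interpolation theorem recalled in \eqref{ineq:inter-functional}, together with the identifications $(L^2, H^1)_{1/2, 2} = H^{1/2}(\Omega)$ and $(H^2, H^3)_{1/2, 2} = H^{5/2}(\Omega)$ coming from Proposition \ref{prop:sobolev2} (and a shift in the order of differentiation), then yields $\|u\|_{H^{5/2}(\Omega)} \leq C\|f\|_{H^{1/2}(\Omega)}$, whence $\|\nabla u\|_{H^{3/2}(\Omega)} \leq C\|f\|_{H^{1/2}(\Omega)}$ follows at once. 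The boundary bound $\|\pa_\nu u\|_{H^1(\Sigma)}$ is then obtained by writing $\pa_\nu u = \nabla u \cdot \tilde{\nu}$ with $\tilde{\nu}$ an extension of the outer normal via the projection map \eqref{def:projection}, noting that $\tilde{\nu}$ is Lipschitz with controlled derivatives, and applying the last inequality of Lemma \ref{lem:CS-intermed} together with Proposition \ref{prop:kato-ponce} to pass from $\nabla u \in H^{3/2}(\Omega)$ to $\pa_\nu u \in H^1(\Sigma)$.

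To establish the two endpoint estimates I would use a Newtonian-potential decomposition. Extending $f$ by zero to $\R^3$ and setting $v = N \star f$, where $N(x) = \tfrac{1}{4\pi|x|}$ is the fundamental solution of $-\Delta$, we obtain $\Delta v = f$ in $\R^3$. Standard Calder\'on--Zygmund theory then gives $\|v\|_{H^2(\R^3)} \leq C\|f\|_{L^2(\Omega)}$ and $\|v\|_{H^3(\R^3)} \leq C\|f\|_{H^1(\Omega)}$, after multiplication by a cutoff if necessary to handle decay at infinity. The function $w := v - u$ is harmonic in $\Omega$ with boundary datum $w|_\Sigma = v|_\Sigma$, which by trace lies in $H^{3/2}(\Sigma)$, respectively $H^{5/2}(\Sigma)$, with norms controlled by $\|f\|_{L^2(\Omega)}$, resp.\ $\|f\|_{H^1(\Omega)}$ (the trace is well defined in this regularity thanks to the parametric bounds implied by $\|B_\Sigma\|_{H^{1/2}} \leq C$ via Proposition \ref{prop:extension} and Definition \ref{defn:fractionalnorm2}). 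Invoking Theorem \ref{teo:reg-capa} for the harmonic function $w$ then yields $\|w\|_{H^2(\Omega)} \leq C\|f\|_{L^2(\Omega)}$ and $\|w\|_{H^3(\Omega)} \leq C\|f\|_{H^1(\Omega)}$, and combining with the potential bound on $v$ produces the two endpoint estimates for $u = v - w$.

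The main obstacle is the harmonic boundary regularity used in the last step. Classical $H^2$-regularity up to the boundary for the Dirichlet Laplacian requires a $C^{1,1}$ boundary, whereas in our setting $\Sigma$ is only $C^{1,\alpha}(\Gamma)$ with $\|B_\Sigma\|_{H^{1/2}(\Sigma)} \leq C$ (which, by Sobolev embedding on the two-dimensional surface, gives at best $\|B_\Sigma\|_{L^4} \leq C$ pointwise). The role of Theorem \ref{teo:reg-capa} is precisely to provide quantitative boundary regularity estimates for harmonic functions under exactly this weak hypothesis, following the techniques of \cite{FJM3D}; once that theorem is available, Proposition \ref{prop:dirichlet1} follows from the scheme above. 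A secondary technical point is the need to control commutator terms arising when passing between the parametric $H^{5/2}(\Sigma)$ norm and ambient traces, which is handled by Proposition \ref{prop:kato-ponce} under the curvature assumption $\|B_\Sigma\|_{H^{1/2}} \leq C$.
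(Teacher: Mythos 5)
Your high-level scheme — establish the two endpoint bounds $S:L^2(\Omega)\to H^2(\Omega)$ and $S:H^1(\Omega)\to H^3(\Omega)$ for the Dirichlet solution operator, interpolate to get $\|\nabla u\|_{H^{3/2}(\Omega)}\le C\|f\|_{H^{1/2}(\Omega)}$, then pass to the boundary — matches the paper's structure exactly. The difference, and the gap, is in how you propose to get the endpoint estimates.

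You want to split $u=v-w$ with $v=N\star f$ a Newtonian potential and $w$ harmonic in $\Omega$ with $w|_\Sigma=v|_\Sigma$, and then apply Theorem \ref{teo:reg-capa} to $w$. For the $H^1\to H^3$ endpoint this means invoking Theorem \ref{teo:reg-capa} with $k=2$, which requires the curvature condition \eqref{eq:notationhp} for $m=3$, i.e.\ $\|B_\Sigma\|_{L^\infty(\Sigma)}+\|B_\Sigma\|_{H^1(\Sigma)}\le C$. But Proposition \ref{prop:dirichlet1} only assumes $\|B_\Sigma\|_{H^{1/2}(\Sigma)}\le C$, which by Sobolev embedding on the $2$-dimensional surface yields $\|B_\Sigma\|_{L^4}\le C$ and nothing more: $H^{1/2}(\Sigma)$ embeds neither into $L^\infty(\Sigma)$ nor into $H^1(\Sigma)$. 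So under the hypotheses of the proposition, the $k=2$ case of Theorem \ref{teo:reg-capa} is simply not available, and your $H^1\to H^3$ endpoint fails. (A secondary issue is that Theorem \ref{teo:reg-capa} is stated for harmonic functions in $\Omega^c$, not $\Omega$; the techniques transfer, but one would have to re-derive the interior version.)

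The paper avoids this by not using Theorem \ref{teo:reg-capa} at all here. Instead it obtains $\|u\|_{H^2(\Omega)}\le C\|f\|_{L^2(\Omega)}$ directly from the Reilly identity \eqref{eq:divcurl2}, using only $\|B\|_{L^4}\le C$, and then differentiates $u$ tangentially: writing $\tilde\nu$ for the harmonic extension of $\nu$ and $\tau_i=e_i-\langle e_i,\tilde\nu\rangle\tilde\nu$, the functions $u_i=\nabla u\cdot\tau_i$ again vanish on $\Sigma$ because $u|_\Sigma=0$ and $\tau_i$ is tangential, so the same $H^2$ estimate applies to $u_i$. The commutator $\Delta u_i$ involves only $\nabla\tilde\nu$ and $\nabla^2\tilde\nu$, and these are controlled in $L^4(\Omega)$ and $L^2(\Omega)$ respectively (via \eqref{eq:poisson1-5} and the second inequality of Lemma \ref{lem:CS-intermed}), which needs nothing beyond $\|B\|_{H^{1/2}}\le C$. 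This bootstrap is what gains one derivative without any additional regularity of $B$, and it is precisely the step your proposal replaces with a result that needs more.

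One further point: in your boundary step you assert that $\tilde\nu$, defined as the extension of $\nu_\Sigma$ via the projection map $\pi_\Gamma$, is Lipschitz with controlled derivatives. This is false under the stated hypotheses: such an extension inherits the regularity of $\nu_\Sigma$, whose tangential gradient is $B_\Sigma$, and $\|B_\Sigma\|_{L^\infty}$ is not controlled by $\|B_\Sigma\|_{H^{1/2}}$. The paper uses the harmonic extension of $\nu$ and works only with $\|\nabla\tilde\nu\|_{L^4(\Omega)}\le C$ and $\|\nabla^2\tilde\nu\|_{L^2(\Omega)}\le C$, feeding these weaker bounds into the Kato--Ponce inequality and Lemma \ref{lem:CS-intermed}.
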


\begin{proof}
First we note that since $u = 0$ on $\Sigma$ then by \eqref{eq:divcurl2} we have 
\[
\|\nabla^2 u\|_{L^2(\Omega)}^2   =\|f\|_{L^2(\Omega)}^2 -  \int_{\Sigma} H_\Sigma |\pa_\nu u|^2\, d \H^2. 
\]
By \eqref{eq:poisson1.2} it holds 
\[
-\int_{\Sigma} H_\Sigma |\pa_\nu u|^2\, d \H^2 \leq \e \|u\|_{H^2(\Omega)}^2 + C_\e\|\nabla u\|_{L^2(\Sigma)}^2.
\]
We apply Lemma \ref{lem:divcurl} for $F = \nabla u$ and recall that $u=0$ on $\Sigma$ to deduce 
\[
\begin{split}
\|\nabla u\|_{L^2(\Sigma)} &\leq C(\|\nabla_\tau u\|_{L^2(\Sigma)} + \|u\|_{H^1(\Omega)} + \|f\|_{L^2(\Omega)})\\
&\leq \e \|u\|_{H^2(\Omega)}  + C_\e(\|u\|_{L^2(\Omega)} + \|f\|_{L^2(\Omega)}).
\end{split}
\]
Therefore we have 
\[
\|\nabla^2 u\|_{L^2(\Omega)}^2 \leq C(\|u\|_{L^2(\Omega)} + \|f\|_{L^2(\Omega)}).
\] 
We bound $\|u\|_{L^2(\Omega)}$ simply by multiplying the equation \eqref{eq:dirichlet} by $u$ and integrating by parts  $\|\nabla u\|_{L^2(\Omega)}^2 \leq \|f\|_{L^2(\Omega)}\|u\|_{L^2(\Omega)}$. Poincar\'e inequality then implies $\|u\|_{L^2(\Omega)} \leq C \|f\|_{L^2(\Omega)}$
and we have
\beq \label{eq:dirichlet-2}
\|u\|_{H^2(\Omega)} \leq C \|f\|_{L^2(\Omega)}.
\eeq

Let $\tilde \nu$ be the harmonic extension of the normal field and let us define $\tau_i = e_i - \langle e_i,\tilde \nu \rangle \tilde \nu$ as in the proof of Lemma \ref{lem:CS-intermed}. Define  $u_i = \nabla u \cdot \tau_i$. Observe that $u_i = 0$ on $\Sigma$ and apply \eqref{eq:dirichlet-2} to deduce
\begin{equation}
    \label{eq:dirichlet-3}
\|\nabla^2 u_i\|_{L^2(\Omega)} \leq C \|\Delta u_i \|_{L^2(\Omega)}.
\end{equation}
We have (recall $\Delta u = f$)
\[
\Delta u_i = \nabla f \star \tau_i +  \nabla^2 u \star \nabla \tilde \nu  + \nabla u \star \nabla \tilde \nu\star \nabla \tilde \nu.
\]
Arguing similarly as in the proof of  Lemma \ref{lem:poisson1} and using \eqref{eq:dirichlet-2} yields
\begin{equation}
   \label{eq:dirichlet-4}
\|\Delta u_i \|_{L^2(\Omega)} \leq \e  \|u\|_{H^3(\Omega)} +C\|f\|_{H^1(\Omega)} 
\end{equation}

Let us then treat the LHS of \eqref{eq:dirichlet-3}. We have  (recall that $\tau_i = e_i - \langle e_i, \tilde \nu \rangle \tilde \nu$) 
\[
\nabla_{j}\nabla_{k} u_i =  \nabla (\nabla_{j}\nabla_{k} u) \cdot   \tau_i  + \nabla^2 u \star \nabla  \tilde \nu + \nabla u \star \nabla  \tilde \nu  \star \nabla  \tilde \nu + \nabla u \star  \nabla^2  \tilde \nu.
\]
Therefore arguing as in the proof of  Lemma \ref{lem:poisson1}, we obtain 
\begin{equation}
    \label{eq:dirichlet-5}
    \begin{split}
\|\nabla^2 u_i\|_{H^2(\Omega)} \geq \sum_{i,j,k =1}^3 &\|\nabla (\nabla_{j}\nabla_{k} u) \cdot   \tau_i \|_{L^2(\Omega)} \\
&- \e \|u\|_{H^3(\Omega)} - C_\e \|f\|_{H^1(\Omega)} - C\|\nabla u\|_{L^\infty(\Omega)}\|\nabla^2  \tilde \nu\|_{L^2(\Omega)}.
    \end{split}
\end{equation}
Let us fix a point $x \in \Omega$ and as in the proof of Lemma \ref{lem:CS-intermed} we may assume that $\tilde \nu(x) \cdot e_i =0$ for $i =1,2$. Then it is easy to see that 
\[
\begin{split}
\sum_{i,j,k =1}^3 |\nabla (\nabla_{j}\nabla_{k} u(x)) \cdot   \tau_i |^2 &\geq \sum_{i=1}^2\sum_{j,k =1}^3 |\langle \nabla \nabla_j \nabla_k u(x), \tau_i \rangle |^2\\
&\geq c\sum_{i,j,k =1}^3 | \nabla_i \nabla_j \nabla_k u(x)|^2 - C |\nabla \Delta u(x)|^2.
\end{split}
\]
This together with \eqref{eq:dirichlet-5} yields 
\begin{equation}
    \label{eq:dirichlet-6}
    \|u_i\|_{H^2(\Omega)}  \geq c \|\nabla^3 u\|_{L^2(\Omega)} -   C \|f\|_{H^1(\Omega)}- C\|\nabla u\|_{L^\infty}\|\nabla^2  \tilde \nu\|_{L^2(\Omega)}.
\end{equation}    

We proceed by recalling that $\tilde \nu$ is the harmonic extension of $\nu$. We claim that it holds 
\beq \label{eq:dirichlet-66}
\|\nabla^2  \tilde \nu\|_{L^2(\Omega)} \leq  C. 
\eeq
Indeed, this follows from already familiar argument and we only give its outline. Define  $\tau_i = e_i - \langle e_i,\tilde \nu \rangle \tilde \nu$ as in the proof of Lemma \ref{lem:CS-intermed}
 and  let $u_{ij} = \la \nabla \tilde  \nu \, \tau_i \tau_j \ra$. Then it holds $u_{ij} = \la B_\Sigma \tau_i, \tau_j \ra$ on $\Sigma$ and therefore by the assumptions it holds $\|u_{ij}\|_{H^{\frac12}(\Sigma)} \leq C$. Arguing as in the proof of Lemma \ref{lem:poisson1} we deduce
\[
\| \nabla u_{ij}\|_{L^2(\Omega)}^2 \leq \|u_{ij}\|_{H^{\frac12}(\Sigma)}^2 + \e \|\nabla^2  \tilde \nu\|_{L^2(\Omega)}^2  +C_\e. 
\]
By applying this to every $i,j, =1, 2,3$ and arguing as above we obtain \eqref{eq:dirichlet-66}.

We have by interpolation inequality in Corollary \ref{coro:interpolation}
\[
\|\nabla u\|_{L^\infty(\Omega)} \leq C \|\nabla^2 u\|_{L^{4}(\Omega)} \leq C\|\nabla^3 u\|_{L^2(\Omega)}^{\frac34}\|\nabla^2 u\|_{L^2(\Omega)}^{\frac14}. 
\]
Therefore by Young's inequality and by \eqref{eq:dirichlet-2}
\[
\begin{split}
\|\nabla u\|_{L^\infty(\Omega)}\|\nabla^2  \tilde \nu\|_{L^2(\Omega)} &\leq \e \|\nabla^3 u\|_{L^2(\Omega)} +C_\e \|\nabla^2 u\|_{L^2(\Omega)}\\
&\leq \e \|\nabla^3 u\|_{L^2(\Omega)}+ C_\e \|f\|_{L^2(\Omega)}.
\end{split}
\]
Hence, \eqref{eq:dirichlet-3}, \eqref{eq:dirichlet-4} and  \eqref{eq:dirichlet-6} imply
\begin{equation}
    \label{eq:dirichlet-7}
    \|u\|_{H^3(\Omega)} \leq C \|f\|_{H^1(\Omega)}.
\end{equation} 

We set $\mathcal F$ to be the linear operator such that it associates  $f$ with the unique solution $u$ of the problem \eqref{eq:dirichlet}. Then we have by \eqref{eq:dirichlet-2} and \eqref{eq:dirichlet-7}  
\[
\|\mathcal{F}\|_{\mathcal L(L^2,H^2)} \leq C \qquad\text{and} \qquad
\|\mathcal{F}\|_{\mathcal L(H^1,H^3)} \leq C.
\]
Then we have the inequality 
\beq \label{eq:dirichlet-77}
\|\nabla u\|_{H^{\frac32}(\Omega)} \leq C \|f\|_{H^{\frac12}(\Omega)}
\eeq
by standard interpolation theory. 

We need yet to bound $\|\pa_\nu u\|_{H^1(\Sigma)}$. To this aim we extend $\nabla u$ to $\R^3$ by $T$ such that 
\[
\|T(\nabla u)\|_{H^{\frac32}(\R^3)} \leq C  \|\nabla u\|_{H^{\frac32}(\Omega)}. 
\]
Let us denote $U = T(\nabla u)$. Let  $\tilde \nu$ be the Harmonic extension of $\nu$ as before, which we may also extend to $\R^3$. We note that we may assume that the extensions have support in $B_R$. We have by Lemma \ref{lem:CS-intermed}
\[
\|\nabla u \cdot \nu\|_{H^1(\Sigma)} \leq C\| U \cdot \tilde \nu\|_{H^{\frac32}(\Omega)} \leq C \| U \cdot \tilde \nu\|_{H^{\frac32}(\R^3)}. 
\]
The Kato-Ponce inequality \eqref{eq:kato-ponce} with $p_2 =8, q_2=8/3$ yields
\[
\| U \cdot \tilde \nu\|_{H^{\frac32}(\R^3)} \leq C \| U \|_{H^{\frac32}(\R^3)} \|\tilde \nu \|_{L^\infty(\R^3)}+ C \| U \|_{L^{8}(\R^3)}\|\tilde \nu\|_{W^{\frac32,\frac83}(\R^3)}.
\]
We have $\|\tilde \nu \|_{L^\infty(\R^3)}\leq C$ and by the Sobolev embedding $\| U \|_{L^{8}(\R^3)} \leq C\| U \|_{H^{\frac32}(\R^3)}$. We use \eqref{eq:BM} to deduce that 
\[
\|\nabla \tilde \nu\|_{W^{\frac12,\frac83}(\R^3)} \leq C \|
\nabla \tilde\nu\|_{H^{1}(\R^3)}^{\frac12} \|\nabla \tilde \nu\|_{L^{4}(\R^3)}^{\frac12}.
\]
By \eqref{eq:poisson1-5} we have 
\[
\| \nabla \tilde \nu\|_{L^{4}(\R^3)} \leq \|\tilde \nabla \nu\|_{L^{4}(\Omega)} \leq C
\]
 and by \eqref{eq:dirichlet-66}
 \[
 \|\nabla \tilde\nu\|_{H^{1}(\R^3)} \leq C  \|\nabla^2 \tilde\nu\|_{L^{2}(\Omega)} \leq C.
 \]
Therefore by combining the previous inequalities we have  
\[
\|\nabla u \cdot \nu\|_{H^1(\Sigma)} \leq C  \| U \cdot \tilde \nu\|_{H^{\frac32}(\R^3)} \leq C\| U \|_{H^{\frac32}(\R^3)} \leq C\|\nabla u \|_{H^{\frac32}(\Omega)}. 
\]
The result then follows from \eqref{eq:dirichlet-77}.
\end{proof}

We conclude this section by proving the sharp boundary  regularity estimate for  the Dirichlet problem. The proof follows the argument in \cite[Theoreom 4.1]{FJM3D}, with the difference that here we have Dirichlet boundary datum, instead of the zero Neumann case.  
\begin{theorem}\label{teo:reg-capa}
Assume $\Omega$, with $\Sigma = \pa \Omega$,  is  uniformly $C^{1,\alpha}(\Gamma)$-regular and satisfies $\eqref{eq:notationhp}$ for $m \geq 2$. Let  
 $u \in \dot{H}^1(\Omega^c)$ be the solution of 
\begin{equation}
    \begin{cases}
    \Delta u = 0 \qquad &x\in \Omega^c
    \\
    u=g &x\in \Sigma.
    \end{cases}
\end{equation}
Then for all integers $0\leq k \leq m-1$ it holds 
\begin{equation}
    \|\nabla^k u\|_{H^\frac12(\Sigma)} \leq C (1+ \|B_{\Sigma}\|_{H^{k-1}(\Sigma)} +\| g \|_{H^{k+\frac12}(\Sigma)})
\end{equation}
for some constant $C$, depending on $m$ and on the $C^{1,\alpha}$-norm of the heightfunction.
Moreover, if $g$ is constant then the above holds  for  all $k \in \N$.
\end{theorem}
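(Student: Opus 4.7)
My plan is to prove the theorem by induction on $k$, adapting the strategy of \cite[Theorem 4.1]{FJM3D} from the Neumann to the Dirichlet setting. The base case $k=0$ is trivial since $u|_\Sigma = g$. For $k=1$, I would split $\nabla u = \nabla_\tau g + (\partial_\nu u)\nu$ on $\Sigma$: the tangential part is directly bounded by $\|g\|_{H^{3/2}(\Sigma)}$ via Lemma \ref{lem:frac-gradient}, and the normal part is handled as in Lemma \ref{lem:poisson1} and Proposition \ref{prop:dirichlet1}. Namely, I would introduce the harmonic extension $\tilde\nu$ of $\nu$ into a bounded portion of $\Omega^c$, use identities \eqref{eq:divcurl2}--\eqref{eq:tang-laplace} together with the FJR estimate $\|\nabla \tilde\nu\|_{L^p(\Sigma)} \leq C\|\nabla_\tau \tilde\nu\|_{L^p(\Sigma)} \leq C\|B_\Sigma\|_{L^p(\Sigma)}$, and pass from $L^2$ to $H^{1/2}$ via the harmonic-extension definition of the $H^{1/2}$-norm.

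For the inductive step, assume the estimate holds up to order $k-1$. The key identity comes from \eqref{eq:tang-laplace} combined with $\Delta u = 0$: on $\Sigma$ one has
\[
(\nabla^2 u\,\nu)\cdot \nu = -\Delta_\Sigma g - H_\Sigma\,\partial_\nu u.
\]
Working in the tubular coordinates $\Psi(y,s) = y + s\nu_\Gamma(y)$ and iterating this relation to replace every second normal derivative by tangential derivatives plus curvature times a single normal derivative, I can express $\nabla^k u\big|_\Sigma$ as a linear combination of terms of the form $\bar\nabla^{i_1} B_\Sigma \star\cdots\star \bar\nabla^{i_s} B_\Sigma \star \bar\nabla^{j} g$ and $\bar\nabla^{i_1} B_\Sigma \star\cdots\star \bar\nabla^{i_s} B_\Sigma \star \bar\nabla^{j}(\partial_\nu u)$ with $\sum_l i_l + j + s \leq k$ and $j \leq k-1$.

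I would then estimate each product in $H^{1/2}(\Sigma)$ by means of the Kato--Ponce inequality (Proposition \ref{prop:kato-ponce} and \eqref{eq:kato-weak}): in every term the highest-order factor is placed in $H^{1/2}$ and the remaining factors are placed in $L^\infty$ via the Sobolev embedding, using interpolation with $\|B_\Sigma\|_{L^4} \leq C$ to control the lower-order curvature norms by an absolute constant. The top-order factor $\|\bar\nabla^{k-1}\partial_\nu u\|_{H^{1/2}(\Sigma)}$ is absorbed using the induction hypothesis: tangential derivatives of $u$ satisfy an auxiliary Dirichlet problem in $\Omega^c$ whose boundary data is controlled by $\|g\|_{H^{k+\frac12}(\Sigma)}$ modulo lower-order geometric corrections.

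The main obstacle is keeping the dependence on $\|B_\Sigma\|_{H^{k-1}(\Sigma)}$ \emph{linear} rather than polynomial. This forces the book-keeping of derivatives in each Leibniz expansion so that at most one factor carries the top-order covariant derivative of $B_\Sigma$, while all other curvature factors remain in $L^\infty$ by Sobolev embedding under assumption \eqref{eq:notationhp}. When $g$ is constant, all tangential derivatives $\bar\nabla^j g$ vanish for $j \geq 1$ and the iteration reduces to purely geometric quantities built from $B_\Sigma$ together with the single scalar $\partial_\nu u$ (the capacitary density); since no boundary regularity of $g$ is then needed and the curvature factors are explicit on the right-hand side, the induction can be continued for all $k \in \mathbb{N}$ without the restriction $k \leq m-1$.
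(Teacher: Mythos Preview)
Your strategy is genuinely different from the paper's and carries a real gap in the inductive step.  The paper does \emph{not} argue intrinsically on $\Sigma$; it flattens the boundary locally via $\Psi(x',x_3)=(x',x_3-\phi(x'))$, writes the equation as $\Div(A_\phi\nabla v)=0$ on a half-ball, differentiates $k$ times in the horizontal directions, and tests with $(\bar v-\bar w)\zeta^2$, where $\bar w$ carries the Dirichlet datum.  This yields a bulk estimate $\|v\|_{H^{k+1}(B^+_{\delta/2})}\le C(1+\|\phi\|_{H^{k+1}}+\|w\|_{H^{k+1}})$, the vertical derivatives being recovered from the equation in strong form.  The boundary bound $\|\nabla^k u\|_{H^{1/2}(\Sigma)}$ then follows by trace from the bulk $H^{k+1}$ estimate.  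The hypothesis \eqref{eq:notationhp} enters only when converting $\|w\|_{H^{k+1}(B^+_\delta)}$ back to $\|g\|_{H^{k+1/2}(\Sigma)}$, which is why it becomes irrelevant when $g$ is constant.

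Your inductive step, by contrast, hinges on controlling $\|\bar\nabla^{k-1}(\partial_\nu u)\|_{H^{1/2}(\Sigma)}$, and the proposed route through an ``auxiliary Dirichlet problem for tangential derivatives of $u$'' does not close.  If you apply the hypothesis at level $k-1$ to $\partial_{x_i}u$ (which is harmonic), its Dirichlet datum on $\Sigma$ equals $(\nabla_\tau g)_i+(\partial_\nu u)\nu_i$, so the right-hand side already contains $\|\partial_\nu u\|_{H^{k-1/2}(\Sigma)}$; since this quantity is equivalent (up to curvature products) to the very $\|\nabla^k u\|_{H^{1/2}(\Sigma)}$ you are estimating, the inequality reads $\|\nabla^k u\|_{H^{1/2}}\le C+C\|\nabla^k u\|_{H^{1/2}}$ with $C$ of unit size, and you cannot absorb.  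If instead you use $u_i=\nabla u\cdot\tau_i$ with $\tau_i=e_i-(\tilde\nu\cdot e_i)\tilde\nu$ so that $u_i|_\Sigma$ depends only on $g$, then $u_i$ is \emph{not} harmonic: $\Delta u_i=\nabla^2 u\star\nabla\tilde\nu+\nabla u\star\nabla\tilde\nu\star\nabla\tilde\nu$, and bounding this source in the required bulk Sobolev space needs $\|u\|_{H^{k+1}}$ near $\Sigma$, i.e.\ exactly the bulk regularity the paper proves directly.  The identity $\partial_\nu^2 u=-\Delta_\Sigma g-H\partial_\nu u$ reduces only the purely normal derivatives; the mixed term $\bar\nabla^{k-1}(\partial_\nu u)$ carries the full Dirichlet-to-Neumann information and cannot be captured by boundary algebra plus the induction hypothesis alone.
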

\begin{proof}
{\bf Step 1: Flattening the boundary.}
Since $\Sigma$ is $C^{1,\alpha}(\Gamma)$, for any $x\in \Sigma$ we  find $\delta>0$ such that after rotating and translating the coordinates 
$$
\Omega^c\cap B_\delta= \{(x', x_3):\, x_3>\phi(x') \}
$$
with $\phi \in C^{1,\alpha}(B_\delta) $, $\phi(0)=0$ and $\nabla \phi(0)= 0$. Consider the diffeomorfism $\Psi:\Omega^c\cap B_\delta\to B_\delta^+$
$\Psi (x',x_3)\to (x',x_3-\phi(x'))$ and let $v:= u\circ \Psi^{-1}$ and $w:= g\circ \Psi^{-1}$. Let us extend $g$ by its harmonic extension, denote it by $\tilde g$, and thus $w = \tilde g\circ \Psi^{-1}$ is defined in $B_{\delta}^+$.   By standard calculations we deduce that $v$ is the solution of 
\begin{equation}\label{eq:flatboundary}
    \begin{cases}
    \Div (A_\phi \nabla  v) = 0 \qquad &x\in B^+_\delta
    \\
    v=w &x_3=0,
    \end{cases}
\end{equation}
where $A_\phi$ is symmetric matrix which can be written as  $A_\phi = I + \tilde A(\nabla \phi)$ where $\tilde A(\nabla \phi(x)) = 0$ if $\nabla \phi(x)= 0$. In particular, by choosing $\delta$ small enough $A_\phi$ is postitive definite. 
In  weak form \eqref{eq:flatboundary} reads as
\[
\int_{B_\delta^+}  A_\phi \nabla v \cdot  \nabla \varphi \, dx= 0
\]
for all $\varphi \in C_0^\infty(B_\delta^+)$.

Let $k $ be an integer as in the statement. Let us differentiate the equation \eqref{eq:flatboundary} $k$ times in tangential directions. To this aim let us fix an index vector $\gamma=(\gamma_1,\gamma_2,0)$ with $\gamma_1+\gamma_2=k $, 
and denote $\bar v = \nabla ^\gamma v$  and $\bar w= \nabla^\gamma w$. 
Then $\bar v$  is the solution of 
\begin{equation}\label{eq:flatboundary1}
    \begin{cases}
    \Div (A_\phi \nabla  \bar v) = -\sum_{\tilde \alpha,\beta}  \Div (\nabla^{\tilde \alpha} A_\phi \nabla  \nabla^{\beta} v) \qquad &x\in B^+_\delta
    \\
    \bar v=\bar w &x_3=0 .
    \end{cases}
\end{equation}
with $\tilde \alpha=(\tilde \alpha_1,\tilde \alpha_2,0)$, $\beta=(\beta_1,\beta_2,0)$,   $|\beta|\leq k-1$ and $|\tilde \alpha|+|\beta|\leq k$. In the weak form this reads as
\[
\int_{B_\delta^+}  A_\phi \nabla \bar v \cdot  \nabla \varphi \, dx=
 - \sum_{\tilde \alpha,\beta} \int_{B_\delta^+}  (\nabla^{\tilde \alpha} A_\phi \nabla  \nabla^\beta v ) \cdot \nabla \varphi  \, dx 
\]
for all $\varphi \in C_0^\infty(B_\delta^+)$.

{\bf Step 2: Choice of the test function that has zero boundary value}
Let $\zeta \in C_0^\infty(B_\delta^+)$ be a smooth cut-off function such that $\zeta(x) =1$ for $|x|\leq \frac{\delta}{2}$ and $0\leq \zeta \leq 1$. We choose a test function $\varphi=(\bar v- \bar w )\zeta^2$, which has zero boundary value. With this choice we have
\[
\begin{split}
&\int_{B_\delta^+}  (A_\phi \nabla \bar v \cdot  \nabla \bar v )\,\zeta^2 dx=\int_{B_\delta^+} ( A_\phi \nabla \bar v \cdot  \nabla \bar w ) \zeta^2 \, dx +
\int_{B_\delta^+} ( A_\phi \nabla \bar v \cdot  \nabla \zeta )\, (\bar w  -\bar v) \zeta dx\\
&- \sum_{\tilde \alpha,\beta}\int_{B_\delta^+} (\nabla^{\tilde \alpha} A_\phi \nabla  \nabla^\beta v \cdot \nabla (\bar v- \bar w  )) \zeta^2\, dx 
-2\sum_{\tilde \alpha,\beta}\int_{B_\delta^+} (\nabla^{\tilde \alpha} A_\phi \nabla  \nabla^\beta v \cdot \nabla \zeta ) (\bar v- \bar w  )\zeta\, dx\\
&=I_1+I_2+I_3+I_4.
\end{split}
\]
By the assumption $\phi \in C^{1,\alpha}$ it holds  $\|A_\phi\|_{L^\infty} \leq C$. Thus we may bound  the first two terms as 
\[
I_1+I_2\leq C\|\nabla \bar v\|_{L^2(B_\delta^+)}( \|\nabla \bar w\|_{L^2(B_\delta^+)} +\| \bar v - \bar w\|_{L^2(B^+_\delta)}).
\]

The term $I_3$ is more difficult to treat. Note first since $A_\phi$ is of the form $I + \tilde A(\nabla \phi)$ we have a point-wise bound by the Leibniz rule
\[
\sum_{\tilde \alpha,\beta}|\nabla^{\tilde \alpha} A_\phi|^2||\nabla \nabla^\beta v|^2  \leq
C\sum_{\substack{|\alpha| +|\beta|\leq k\\|\beta|\leq k-1}}(1+ |\nabla^{\alpha_1}  \nabla \phi|^2\dots |\nabla^{\alpha_{k}}\nabla \phi|^2 ) |\nabla  \nabla^{\beta} v|^2.
\]
Hence, we obtain by H\"older's inequality
\[
\begin{split}
I_3 \leq&  C\|\nabla (\bar v-\bar w)\|_{L^2(B^+_\delta)} \\
&\cdot \sum_{\substack{|\alpha| +|\beta|\leq k\\|\beta|\leq k-1}}\Big(1+ \|\nabla^{\alpha_1}  \nabla \phi\|_{L^{\frac{2k}{\alpha_1}}(B_\delta^+)}\dots \|\nabla^{\alpha_{k}}\nabla \phi\|_{L^{\frac{2k}{  \alpha_k}}(B^+_\delta)}  
\Big)\|\nabla  \nabla^{\beta} v\|_{L^{\frac{2k}{|\beta|}}(B^+_\delta)}.
\end{split}
\]
We use interpolation inequality to estimate  
\[
\|\nabla^{\alpha_i} \nabla \phi\|_{L^{\frac{2k}{\alpha_1}}(B_\delta^+)}
\leq  \|\nabla \phi\|_{H^{k}(B_\delta^+)}^{\frac{\alpha_i}{k} } \|\nabla \phi\|^{1-\frac{\alpha_i}{k}}_{L^\infty (B_\delta^+)}.
\]
Also by interpolation we have
\[
\|\nabla\nabla^\beta v \|_{L^{\frac{2k}{|\beta|}} (B_\delta^+)}\leq \|v\|_{H^{k+1}(B_\delta^+)}^{\frac{|\beta|}{k}}\|\nabla v\|_{L^\infty(B_\delta^+)}^{1-\frac{|\beta|}{k}}
\]
and $|\beta|\leq k-1$. Since $\Sigma$ is $C^{1,\alpha}$-regular, we have by  Schauder estimates \cite{GT} that  $\nabla v \in C^{0,\alpha}(B_\delta^+)$. Note that $\sum_i \frac{\alpha_i}{k} \leq \frac{k-|\beta|}{k} $ and $\frac{|\beta|}{k} <1$. Therefore by  Young's inequality  we deduce
\[
\begin{split}
   |I_3|\leq& C \|\nabla (\bar  v-\bar w)\|_{L^2(B^+_\delta)}( 1+ \|\nabla \phi\|_{H^k(B_\delta^+)}^{\frac{k-|\beta|}{k}}) \| v\|_{H^{k+1}(B_\delta^+)}^{\frac{|\beta|}{k}}\\
   \leq&  \e \|\nabla (\bar  v-\bar w)\|_{L^2(B^+_\delta)}^2 + \e \|v\|_{H^{k+1}(B_\delta^+)}^2 + C_\e (1+  \|\nabla \phi\|_{H^k(B_\delta^+)}^2).
\end{split}
\]
We bound the last term $I_4$ similarly. 

Finally we collect the previous estimates, use the ellipticity of the matrix $A_\phi$ and the definition of $\bar w$ and obtain
\[
    \|\nabla \bar v\|_{L^2(B^+_{\delta/2})}^2 
\leq
4\e \|v\|_{H^{k+1}(B_\delta^+)}^2 
   + C(1+ \|\phi\|_{H^{k+1}(B_\delta^+)}^2 + \| w\|_{H^{k+1}(B_\delta^+)}^2).
\]
Summing over all the multi index of the type $(\gamma_1,\gamma_2)$
we have the control over the horizontal derivatives. 
To estimate the vertical derivatives, we use the equation in the strong form as in \cite{FJM3D}, and obtain
\beq \label{eq:reg-boundary-1}
\| v\|_{H^{k+1}(B^+_{\delta/2})}^2 \leq C\e \|v\|_{H^{k+1}(B_\delta^+)}^2 
   + C(1+ \|\phi\|_{H^{k+1}(B_\delta^+)}^2 + \| w\|_{H^{k+1}(B_\delta^+)}^2).
\eeq
\\
{\bf Step 3: Going back to the original function.}
We need to go back to the original function $u$. The argument is similar to \cite{FJM3D} and we merely sketch it.  We note that arguing as in  \cite[Thorem 4.1]{FJM3D}  we may control 
\[
\|\phi\|_{H^{k+1}(B_\delta^+)} \leq C(1+ \|B_{\Sigma}\|_{H^{k-1}(\Sigma)})
\]
for all $k \in \N$. Recall that $\tilde g$ is the harmonic extension of $g$. Using the assumption that  the curvature satisfies the condition \eqref{eq:notationhp} for $m$, we may deduce, arguing as in the proof of Proposition \ref{prop:extension}, that for $k \leq m-1$ it holds 
\[
\| w\|_{H^{k+1}(B_\delta^+)} \leq C\|\tilde g\|_{H^{k+1}(\Omega^c \cap B_{\delta })} \leq C \|g\|_{H^{k+\frac12}(\Sigma)}. 
\]
Obviously if $g$ is constant the above inequality is trivial.

Fix $\sigma$ small such that $\cup_{x\in \Sigma}B_\delta (x)$ covers $\mathcal N_\delta =\{x\in \Omega^c : d(x,\Omega)\leq \delta \}$  and $\sigma_1<\sigma_2<\sigma$. 
By compactness we may choose a finite family of balls covering $\mathcal N_\delta$. Choosing $\e $ small enough we have by \eqref{eq:reg-boundary-1} and by the above inequalities
\[
\|u \|_{H^{k+1}(\mathcal N_{\sigma_1})}^2
\leq C (\|u \|_{H^{k+1}(\mathcal N_{\sigma_2}\setminus \mathcal N_{\sigma_1})}^2 + 1+ \|B_{\Sigma}\|^2_{H^{k-1}(\Sigma)} +\| g \|^2_{H^{k+\frac12}(\Sigma)}).
\]
Since $u$ is harmonic, the interior regularity  yields
\[
\|u \|_{H^{k+1}(\mathcal N_{\sigma_2}\setminus \mathcal N_{\sigma_1})}\leq C \|u\|_{L^2(\mathcal N_{\sigma_2})}.
\]
By standard estimates from harmonic analysis \cite{Dahl} it holds for $R$ large
\[
\|u\|_{L^2(N_{{\sigma}_2})} \leq \|u\|_{L^2(\Omega^c\cap B_R)} \leq C(\|u\|_{L^2(\Sigma)} + \|u\|_{L^2(\pa B_R)} ) \leq C(1 + \|g\|_{L^2(\Sigma)} ).
\]
Therefore  we have
\[
\|u \|_{H^{k+1}(\mathcal N_{\sigma_1})} \leq C (1+  \|B_{\Sigma}\|_{H^{k-1}(\Sigma)} +\| g \|_{H^{k+\frac12}(\Sigma)}).
\]
The claim follows from 
\[
\|\nabla^k u\|_{H^\frac12(\Sigma)} \leq C \| \nabla^{k+1}u \|^2_{L^2(\mathcal N_{\sigma_1})}.
\]
\end{proof}

\section{Useful formulas}

In this section we  focus on the equations \eqref{system} and assume that the family of sets  $(\Omega_t)_{t \in (0,T)}$ and velocities $v(\cdot,t)$ are solution of \eqref{system}. We derive a general formula for the commutators of the material derivative of high order $\D_t^k$ with spatial derivatives. We apply this to calculate  $[\D_t^k,\nabla] v$ and   $[\D_t^k,\nabla] p$, which  will produce two types of error terms, \eqref{eq:def-R-div} and \eqref{def:error-bulk},  defined in the fluid domain $\Omega_t$. We will also calculate the formula for $\D_t^k p$ on the moving boundary $\Sigma_t$ in Lemma \ref{formula:Dtp}, which includes third type of error term defined in \eqref{eq:R-p-in-3}. The precise structures of these error terms are  complicated and we only need to trace the order of the derivatives that appear.  Therefore we effectively use the notation from \cite{Ham}
\[
\nabla^k f \star \nabla^l g 
\]
to denote a contraction of some indexes of tensors  $\nabla^i f$  and $\nabla^j g$ for $i \leq k$ and $j \leq l$. Note that we include the lower order derivatives.

We begin by recalling the following formulas from \cite{CPAM}
\begin{equation}
\label{eq:comm1}
	[\D_t,\nabla] f = \D_t \nabla  f -  \nabla \D_t f =  - \nabla v^T\nabla f,
\end{equation}
\begin{equation}
\label{eq:comm2}
	[\D_t, \nabla_\tau] f =  - (\nabla_\tau v)^T \nabla_\tau f 
\end{equation}
and
\begin{equation}\label{eq:comm3}
	[\D_t, \Delta_\Sigma]  f  =   \nabla_\tau^2 f \star  \nabla v -  \nabla_\tau f \cdot \Delta_\Sigma v + B \star \nabla v \star \nabla_\tau f.
\end{equation}
Let us also recall  the material derivative of the normal field. We use the shorthand notation $\nu = \nu_{\Sigma_t}$, $B= B_{\Sigma_t}$ and  $v_n = v \cdot \nu$. We have by \cite{CPAM}
\begin{equation} \label{eq:normal1}
	\D_t \nu = - (\nabla _\tau v)^T \nu.
\end{equation}
Since $\nabla_\tau v_n = \nabla_\tau v^T \nu + B v_\tau$,  we may write \eqref{eq:normal1} as
\begin{equation} \label{eq:normal2}
	 \D_t \nu = - \nabla_\tau v_n+  B v_\tau.
\end{equation}

We need higher order versions of the commutation formula \eqref{eq:comm1}, i.e., for 
\[
[\D_t^l, \nabla^k] f =  \D_t^l \nabla^k f -\nabla^k  \D_t^l f.
\]
Recall the definition of the norm of an index vector  $\alpha = (\alpha)_{i=1}^k \in \N^k$ 
\[
|\alpha| = \sum_{i =1}^k  \alpha_i
\]
and  note that we include zero in the set of natural numbers $\N$.
\begin{lemma}
\label{lem:comm1}
For $l, k \in \N$ with $l,k \geq 1$ it holds
\[
[\D_t^l, \nabla^k] f =  \sum_{\substack{|\alpha| \leq  k-1 \\|\beta| \leq l-1 }} \nabla^{1+\alpha_1} \D_t^{\beta_1} v \star \cdots \star  \nabla^{1+\alpha_l} \D_t^{\beta_l} v \star \nabla^{1+\alpha_{l+1}} \D_t^{\beta_{l+1}}f.
\]
\end{lemma}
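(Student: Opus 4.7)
The plan is a double induction. First I establish the case $l = 1$ by induction on $k$, starting from \eqref{eq:comm1} as the base $k = 1$. For the inductive step, the identity
\[
[\D_t, \nabla^{k+1}] f \;=\; \nabla \bigl( [\D_t, \nabla^{k}] f \bigr) + [\D_t, \nabla](\nabla^k f)
\]
combined with the inductive hypothesis on the first summand (distributing the extra $\nabla$ by Leibniz) and \eqref{eq:comm1} on the second, produces exactly the sum $\sum_{|\alpha|\leq k} \nabla^{1+\alpha_1} v \star \nabla^{1+\alpha_2} f$ required at level $k+1$; the boundary term $-(\nabla v)^T \nabla^{k+1} f$ corresponds to $\alpha = (0,k)$, which still respects $|\alpha| \leq k$.

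With $l = 1$ in hand, I then induct on $l$ for arbitrary $k$, using
\[
[\D_t^{l+1}, \nabla^k] f \;=\; [\D_t, \nabla^k](\D_t^l f) \;+\; \D_t \bigl( [\D_t^l, \nabla^k] f \bigr).
\]
The first summand, by the $l = 1$ case applied to $\D_t^l f$, is a sum of terms of the form $\nabla^{1+\alpha_1} v \star \nabla^{1+\alpha_2} \D_t^l f$ with $|\alpha| \leq k-1$, which fits the level-$(l{+}1)$ schema: assign the full time weight to the $f$-slot and fill the remaining $v$-slots with lower-order factors, which is permissible under the $\star$-convention that includes lower-order derivatives. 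For the second summand, I apply $\D_t$ by Leibniz to the template term
\[
\nabla^{1+\alpha_1} \D_t^{\beta_1} v \star \cdots \star \nabla^{1+\alpha_l} \D_t^{\beta_l} v \star \nabla^{1+\alpha_{l+1}} \D_t^{\beta_{l+1}} f
\]
furnished by the inductive hypothesis. When $\D_t$ hits the $i$-th $v$-slot I rewrite
\[
\D_t \nabla^{1+\alpha_i} \D_t^{\beta_i} v \;=\; \nabla^{1+\alpha_i} \D_t^{\beta_i+1} v \;+\; [\D_t, \nabla^{1+\alpha_i}] \D_t^{\beta_i} v,
\]
with the first piece incrementing $|\beta|$ by one and the second, by the $l = 1$ case already proved, splitting as $\nabla^{1+\gamma_1} v \star \nabla^{1+\gamma_2} \D_t^{\beta_i} v$ with $|\gamma| \leq \alpha_i$; the $f$-slot is treated identically. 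Tracking the indices, every resulting term fits the level-$(l{+}1)$ schema with $|\alpha| \leq k-1$ and $|\beta| \leq l$.

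The substance of the argument is purely combinatorial: multi-index bookkeeping under repeated Leibniz expansions and applications of the $l = 1$ commutator identity. The one technical subtlety, and the step most likely to mislead, is verifying that the bounds $|\alpha| \leq k-1$ and $|\beta| \leq l-1$ (at each level) are never violated. This hinges precisely on the sharpness of the base-case bound $|\gamma| \leq m - 1$ for $[\D_t, \nabla^{m}]$ and on the $\star$-convention absorbing any extraneous low-order $v$-factors generated along the way.
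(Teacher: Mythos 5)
Your proof is correct and follows essentially the same route as the paper's: a double induction, first on $k$ at $l=1$ using the telescoping identity $[\D_t,\nabla^{k+1}]f = \nabla([\D_t,\nabla^k]f) + [\D_t,\nabla](\nabla^k f)$, then on $l$ using the decomposition $[\D_t^{l+1},\nabla^k]f = \D_t([\D_t^l,\nabla^k]f) + [\D_t,\nabla^k](\D_t^l f)$, with Leibniz and the $l=1$ case used to expand $\D_t\nabla^{1+\alpha_i}\D_t^{\beta_i}v$. The index bookkeeping you describe, including the observation that the commutator $[\D_t,\nabla^{1+\alpha_i}]$ redistributes but does not increase $|\alpha|$, matches what the paper does implicitly.
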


\begin{proof}
Let us first assume $l=1$ and prove 
\beq \label{eq:commgrad}
\D_t\nabla^k f -\nabla^k \D_t f = \sum_{|\alpha| \leq  k-1} \nabla^{1+\alpha_1} v \star  \nabla^{1+\alpha_{2}} f.
\eeq
We argue by induction over $k$ and observe immediately that the case $k=1$ follows from \eqref{eq:comm1}. Assume that \eqref{eq:commgrad} holds  for $k-1$  and note that by \eqref{eq:comm1} we have
\[
\D_t\nabla^k f = \D_t \, \nabla (\nabla^{k-1} f) =  \nabla \, \D_t (\nabla^{k-1} f) + \nabla v \star (\nabla^{k} f).
\]
By  induction assumption we have
\[
\begin{split}
\nabla \, \D_t (\nabla^{k-1} f) &= \nabla \big(\nabla^{k-1} \D_t f +   \sum_{|\alpha| \leq  k-2} \nabla^{1+\alpha_1} v \star \nabla^{1+\alpha_{2}} f\big)
\\&= \nabla^k \D_t f +
\sum_{|\alpha| \leq  k-1} \nabla^{1+\alpha_1} v \star   \nabla^{1+\alpha_{2}} f.
\end{split}
\]
This yields the claim for $l=1$. 

The proof for $l\geq 1$ follows from a similar induction argument. Assume that the claim holds for $l-1$ and 
note that
\[
\D_t^l \nabla^k f= \nabla^k \D_t^l f +\D_t ([ \D_t^{l-1},\nabla^k ] f) + [\D_t, \nabla^k] (\D_t^{l-1} f).
\]
By the first claim we have 
\[
[\D_t, \nabla^k] (\D_t^{l-1} f) = \sum_{|\alpha| \leq  k-1 } \nabla^{1+\alpha_1}  v \star \nabla^{1+\alpha_2} \D_t^{l-1}f.
\]
On the other hand,  by the  induction assumption we have 
\[
\begin{split}
    \D_t[\D_t^{l-1} ,\nabla^k] f =  \D_t  \sum_{\substack{|\alpha| \leq  k-1 \\|\beta| \leq l-2 }} \nabla^{1+\alpha_1} \D_t^{\beta_1} v \star \cdots \star  \nabla^{1+\alpha_{l-1}} \D_t^{\beta_{l-1}} v \star \nabla^{1+\alpha_{l}} \D_t^{\beta_{l}}f .
\end{split}
\]
We use the Leibniz rule and the first claim to deduce that  
\[
\begin{split}
    \D_t\nabla^{1+\alpha_1} \D_t^{\beta_1} v=\nabla^{1+\alpha_1} \D_t^{1+\beta_1} v+
    \sum_{|\tilde \alpha| \leq  \alpha_1}
    \nabla^{1+\tilde \alpha_1} v \star \nabla^{1+\tilde \alpha_{2}} \D_t^{\beta_1}v.
\end{split}
\]
Similar formula holds also for $\D_t\nabla^{1+\alpha_l} \D_t^{\beta_l} f$. Hence, we obtain the claim. 
\end{proof}

Let us next prove higher order commutation formulas for \eqref{eq:comm2} and  a formula for $\D_t^l \nu$ and $\D_t B$. Below $a_{\beta}(\nu)$ and $a_{\alpha,\beta}(\nu, B)$ denote bounded coefficients which depend on $\nu$ and on $\nu $ and $B$ respectively. 

\begin{lemma}
\label{lem:comm2}
 For $l \geq 1$  it holds
\begin{equation} \label{lem:comm:eq0}
[\D_t^l, \nabla_\tau] f =      \sum_{ |\beta|  \leq l-1} a_{\beta}(\nu) \nabla \D_t^{\beta_1} v \star \cdots \star \nabla \D_t^{\beta_l} v \star \nabla_\tau \D_t^{\beta_{l+1}}f
\end{equation}
  and 
\begin{equation} \label{lem:comm:eq1}
[\D_t^l, \nabla^2_\tau] f =      \sum_{\substack{|\alpha|\leq 1 \\|\beta| \leq l-1 } } a_{\alpha, \beta}(\nu, B)   \nabla^{1+\alpha_1}  \D_t^{\beta_1} v \star \cdots \star \nabla^{1+\alpha_{l}} \D_t^{\beta_{l}} v \star \nabla_\tau^{1+ \alpha_{l+1}} \D_t^{\beta_{l+1}}f.
\end{equation}
Moreover we have 
\begin{equation} \label{lem:comm:eq2}
\D_t^l \nu = \sum_{ |\beta|\leq l-1} a_{\beta}(\nu) \,\nabla \D_t^{\beta_1} v \star \cdots \star \nabla \D_t^{\beta_l} v   
\end{equation}
 and
\begin{equation} \label{lem:comm:eq3}
\D_t^l B = \sum_{\substack{|\alpha| \leq 1\\ |\beta| \leq l-1 }} a_{\alpha, \beta}(\nu, B) \nabla^{ 1+\alpha_1} \D_t^{\beta_i} v \star \cdots \star \nabla^{1+\alpha_{l+1}} \D_t^{\beta_{l+1}} v . 
\end{equation}
\end{lemma}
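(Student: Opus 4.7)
The plan is to prove the four identities \eqref{lem:comm:eq0}--\eqref{lem:comm:eq3} together by a simultaneous induction on $l$, using the first-order identities \eqref{eq:comm1}--\eqref{eq:normal2} and Lemma \ref{lem:comm1} as the two main tools. The four statements are genuinely coupled: differentiating the coefficients $a_\beta(\nu)$ and $a_{\alpha,\beta}(\nu,B)$ requires the formulas for $\D_t\nu$ and $\D_t B$, while the formulas for $\D_t^l\nu$ and $\D_t^l B$ in turn rely on the commutator structure of $\nabla_\tau$ and $\nabla_\tau^2$, so none of the four can be handled in isolation.

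For the base case $l=1$, the identity \eqref{lem:comm:eq2} is a restatement of \eqref{eq:normal1} with $a_0(\nu)\,\nabla v = -(\nabla_\tau v)^T\nu$, and \eqref{lem:comm:eq0} is a restatement of \eqref{eq:comm2}. For \eqref{lem:comm:eq3} at $l=1$, I extend $\nu$ to a tubular neighborhood of $\Sigma_t$ so that $B = \nabla_\tau \nu$ and compute
\[
\D_t B = [\D_t,\nabla_\tau]\nu + \nabla_\tau(\D_t\nu);
\]
by \eqref{eq:comm2} and \eqref{eq:normal1} each term on the right is a product of at most one factor $\nabla^2 v$ with factors $\nabla v$, $\nu$ and $B$, which matches the claimed structure with $|\alpha|\leq 1$ and $|\beta|=0$. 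The case $l=1$ of \eqref{lem:comm:eq1} follows from the scalar identity \eqref{eq:comm3} by unpacking $\nabla_\tau^2 f = \nabla_\tau(\nabla f - (\nabla f\cdot\nu)\nu)$ and using \eqref{eq:normal1} to see that all $\nu$- and $B$-dependent terms can be grouped into coefficients $a_{\alpha,\beta}(\nu,B)$.

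For the inductive step, assuming all four formulas at level $l$, I apply $\D_t$ to each and use the key identity
\[
\D_t\bigl(\nabla^{1+\alpha}\D_t^{\beta}v\bigr) = \nabla^{1+\alpha}\D_t^{1+\beta}v + \sum_{|\tilde\alpha|\leq \alpha}\nabla^{1+\tilde\alpha_1}v\star\nabla^{1+\tilde\alpha_2}\D_t^{\beta}v,
\]
which is Lemma \ref{lem:comm1} applied with $k=1+\alpha$. Leibniz on the star-products in \eqref{lem:comm:eq0}--\eqref{lem:comm:eq3} either raises one of the $\D_t$-exponents by one or splits a factor into $\nabla v\star(\text{same})$, while differentiation of the coefficients $a_\beta(\nu)$ or $a_{\alpha,\beta}(\nu,B)$ produces, by the $l=1$ cases of \eqref{lem:comm:eq2} and \eqref{lem:comm:eq3}, additional factors $\nabla v$ or $\nabla^2 v$ that are absorbed into the star-product. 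A re-indexing of $(\alpha,\beta)$ then yields exactly the formulas at level $l+1$.

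The main obstacle is the bookkeeping of the index vectors: one must verify that $|\alpha|\leq 1$ is preserved in \eqref{lem:comm:eq1} and \eqref{lem:comm:eq3}. This is where the sharpness of the $l=1$ formulas for $\D_t\nu$ and $\D_t B$ is essential, since they guarantee that each application of $\D_t$ to a coefficient introduces at most one $\nabla^2 v$ and no factor of higher order. The total time-derivative count $|\beta|$ increases by exactly one at each step, which matches the induction hypothesis, and no analytic estimate enters the argument. Once the simultaneous induction is set up, the computation is entirely mechanical.
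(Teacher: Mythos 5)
Your simultaneous induction departs from the paper's argument, which is sequential: the paper proves \eqref{lem:comm:eq2} by induction (using only \eqref{eq:normal1} and \eqref{eq:comm1}), then proves \eqref{lem:comm:eq0} by a second self-contained induction (using \eqref{eq:comm2} and \eqref{eq:normal1}), and then obtains \eqref{lem:comm:eq3} from $B=\nabla_\tau\nu$ and \eqref{lem:comm:eq1} from $\nabla_\tau^2 f = \nabla_\tau(\nabla_\tau f)$ by a single application of $\nabla_\tau$ to the already-established formulas --- no fresh induction for the last two. The decomposition is not as tightly coupled as your opening paragraph suggests.

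This matters because your inductive step for \eqref{lem:comm:eq1} and \eqref{lem:comm:eq3} has a genuine gap. The coefficients there are $a_{\alpha,\beta}(\nu,B)$, so applying $\D_t$ produces a term $\pa_B a_{\alpha,\beta}\cdot\D_t B$, and by the $l=1$ formula $\D_t B$ itself contains a $\nabla^2 v$ factor. If the product $\nabla^{1+\alpha_1}\D_t^{\beta_1}v\star\cdots$ already carries a $\nabla^2$ factor (that is, $|\alpha|=1$), this produces a term with two $\nabla^2$ factors, i.e.\ $|\alpha|=2$, which the stated formula does not allow. Your remark that "each application of $\D_t$ to a coefficient introduces at most one $\nabla^2 v$'' is correct but does not resolve this: the problem is stacking that new $\nabla^2 v$ on top of an existing one, not the number introduced by the coefficient alone. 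The paper avoids the issue because the inductive part of the proof only involves \eqref{lem:comm:eq0} and \eqref{lem:comm:eq2}, whose coefficients $a_\beta(\nu)$ depend on $\nu$ alone; differentiating them via $\D_t\nu = -(\nabla_\tau v)^T\nu$ never produces a $\nabla^2 v$. When \eqref{lem:comm:eq3} is then derived from $\D_t^l B = \nabla_\tau\D_t^l\nu + [\D_t^l,\nabla_\tau]\nu$, the single $\nabla_\tau$ is the only source of a second-order derivative, so $|\alpha|\leq 1$ holds automatically; similarly for \eqref{lem:comm:eq1}. To salvage your simultaneous induction you would need a strengthened induction hypothesis --- for example that in \eqref{lem:comm:eq1} and \eqref{lem:comm:eq3} the terms with $|\alpha|=1$ carry coefficients depending only on $\nu$ and not on $B$ --- which is in fact true of the expressions the paper produces, but which you neither state nor verify. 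As written, the inductive step for \eqref{lem:comm:eq1} and \eqref{lem:comm:eq3} does not close.
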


\begin{proof}
Let us first prove \eqref{lem:comm:eq2}. First, the claim holds for $l=1$ by \eqref{eq:normal1}. Let us assume that \eqref{lem:comm:eq2} holds for $l-1$. Then 
\[
\D_t^{l} \nu = \D_t \sum_{ |\beta|\leq l-2} a_{\beta}(\nu) \,\nabla \D_t^{\beta_1} v \star \cdots \star \nabla \D_t^{\beta_{l-1}} v.
\]
By  \eqref{eq:normal1}  it holds $\D_t a_{\beta}(\nu) = \tilde a_{\beta}(\nu) \nabla v$ and by \eqref{eq:comm1} we have
\[
\D_t \nabla \D_t^{\beta_i} v =\nabla \D_t^{\beta_i+1} v + \nabla v \star \nabla \D_t^{\beta_i} v.
\]
Thus we deduce 
\[
\D_t \sum_{|\beta| \leq l-2 } a_{\beta}(\nu) \, \nabla \D_t^{\beta_1} v \star \cdots \star \nabla \D_t^{\beta_{l-1}} v  = \sum_{|\beta| \leq l-1} \tilde a_{\beta}(\nu) \,\nabla\D_t^{\beta_1} v \star \cdots \star \nabla \D_t^{\beta_{l}} v 
\]
which implies \eqref{lem:comm:eq2}.

Let us next prove \eqref{lem:comm:eq0}. By \eqref{eq:comm2}  the claim holds for $l=1$. Let us assume that the claim holds for $l-1$. Then 
\[
\D_t^{l} \nabla_\tau f = \D_t  \nabla_\tau \D_t^{l-1}f+\D_t \sum_{|\beta| \leq l-2} a_{\beta}(\nu) \nabla \D_t^{\beta_1} v \star \cdots \star \nabla\D_t^{\beta_{l-1}} v \star \nabla_\tau \D_t^{\beta_{l}}f .
\]
As before we have by \eqref{eq:normal1} $\D_t a_{\beta}(\nu) = \tilde a_{\beta}(\nu) \nabla v$ and by \eqref{eq:comm2}  
\[
\D_t \nabla_\tau\D_t^{\beta_i} f =  \nabla_\tau \D^{\beta_i+1}_t f + a(\nu)  \nabla v \star \nabla_\tau \D_t^{\beta_i} f.
\]
Therefore we obtain by Leibniz rule
\[
\D_t^{l} \nabla_\tau f = \nabla_\tau \D^{l}_t f +  \sum_{ |\beta| \leq l-1} \tilde a_{\beta}(\nu) \nabla \D_t^{\beta_1} v \star \cdots \star \nabla \D_t^{\beta_{l}} v \star \nabla_\tau \D_t^{\beta_{l}}f
\]
and \eqref{lem:comm:eq0} follows. 

We notice next that  \eqref{lem:comm:eq3} follows from  $B = \nabla_\tau \nu$ and by combining  \eqref{lem:comm:eq0} with  \eqref{lem:comm:eq2}. Finally   we obtain \eqref{lem:comm:eq1} by first applying  \eqref{lem:comm:eq0}  as
\[
\begin{split}
\D_t^l \nabla_\tau^2 f &= \nabla_\tau (\D_t^l \nabla_\tau f) + \sum_{ |\beta| \leq l-1} a_{\beta}(\nu) \nabla \D_t^{\beta_1} v \star \cdots \star \nabla \D_t^{\beta_l} v \star \nabla_\tau \D_t^{\beta_{l+1}} \nabla_\tau f.
\end{split}
\]
 The claim  then follows by differentiating \eqref{lem:comm:eq0}. 
\end{proof}

\begin{remark} \label{rem:comm-vn}
By Lemma \ref{lem:comm2} we have in particular that 
\[
\D_t^l v_n =  \sum_{ |\beta| \leq l} a_{ \beta}(\nu) \,\nabla \D_t^{\beta_1} v \star \cdots \star \nabla \D_t^{\beta_l} v \star D_t^{\beta_{l+1}} v,
  \]
where $\beta_i\leq l-1$ for $i \leq l$. Moreover, since we may write  the Laplace-Beltrami operator as $\Delta_\Sigma f = \text{Tr}(\nabla_\tau^2 f)$ then Lemma \ref{lem:comm2} yields
\[
\D_t^l \Delta_\Sigma f =  \Delta_\Sigma \D_t^l f +  \sum_{\substack{|\alpha| \leq 1 \\ |\beta| \leq l-1  }} a_{\alpha, \beta}(\nu, B)   \nabla^{1+\alpha_1}  \D_t^{\beta_1} v \star \cdots \star \nabla^{1+\alpha_{l}} \D_t^{\beta_{l}} v \star \nabla_\tau^{1+ \alpha_{l+1}} \D_t^{\beta_{l+1}}f .
\]
\end{remark}

Let us next derive  formulas for the divergence and the curl of the vector field $\D_t^l v$. Recall that by \eqref{system} we have $\Div v = 0$ which then implies 
\begin{equation}
    \label{eq:pressure-1}
   - \Delta p=  \Div (\D_t v) = \text{Tr}((\nabla v)^2) = \Div \Div (v \otimes v). 
\end{equation}
For the curl we have   $\curl (D_t v) = 0$ and    $\omega = \curl v = \nabla v - \nabla v^T $ satisfies (see e.g. \cite{CPAM})
\begin{equation}
    \label{eq:omega}
\D_t\omega=  -\nabla v^T \omega - \omega \nabla v.
\end{equation}
We will derive  formulas for $\Div \D_t^l v$ and $\curl \D_t^l v$ below by using \eqref{eq:pressure-1}, \eqref{eq:omega} and  the commutation formula in Lemma \ref{lem:comm1}. 
To this aim we introduce two type of error functions. The first type we denote by $R_{\Div}^l$, which stands for any function which can be written in the form
\begin{equation}
    \label{eq:def-R-div}
 R^{l}_{\Div}=\sum_{|\beta|\leq l} a_{ \beta}(\nabla v) \nabla \D_t^{\beta_1} v \star \cdots \star\nabla \D_t^{\beta_l}v,
\end{equation}
for  $l \geq 0$. We also use the convention that the indexes are ordered as $\beta_1 \geq \beta_2 \geq \dots \geq \beta_l$. The second type of error function is slightly more general and it can be written in the form 
\begin{equation}
\label{def:error-bulk}
R_{bulk}^l = \sum_{|\alpha|\leq 1, |\beta| \leq l} a_{\alpha, \beta}(\nabla v) \nabla \D_t^{\beta_1} v \star  \cdots \star\nabla \D_t^{\beta_{l}} v  \star \nabla^{\alpha_1} \D_t^{\alpha_2 + \beta_{l+1}} v,
\end{equation}
for $l \geq 0$. Note that $R_{bulk}^l$ has one higher order term compared to $ R^{l}_{\Div}$. In particular,  all functions of type $ R^{l}_{\Div}$ are contained in $R_{bulk}^l$. The reason  for introducing  these two notations is that we need to estimate them in different norms. We will do this in the next section. Note that  using Lemma \ref{lem:comm1} and  $-\nabla p = \D_t v$ we deduce that  
\begin{equation}
\label{eq:comm-bulk}
    [\D_t^{l+1},\nabla]p = R_{bulk}^l.
\end{equation}

\begin{lemma}   
\label{lem:curl}
Let  $l\geq 1$ and denote $\omega = \curl v$. Then  it holds
\[
\D_t \nabla ^{l} \omega=   \nabla v\star  \nabla^{l} \omega + \sum_{|\alpha|\leq l}\nabla^{1+\alpha_1} v 
\star \nabla ^{1+\alpha_{2}}v.
\]
Moreover,  $\curl \D_t^l v $ and $\Div \D_t^l v $ can be written in the form 
\[
\curl \D_t^l v  =  R_{\Div}^{l-1}\qquad \text{and} \qquad \Div\D_t^l v  =  R_{\Div}^{l-1}.
\] 
We may also write the divergence  of $\D_t^{l+1} v$ as 
\[
\Div \D_t^{l+1} v= \Div\Div  ( v \otimes \D_t^{l} v) + \Div R^{l-1}_{bulk}.    
\]
\end{lemma}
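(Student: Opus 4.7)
The three identities will be proved by induction on $l$, using Lemma \ref{lem:comm1}, the vorticity evolution \eqref{eq:omega}, and $\Div v = 0$.

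For the first identity I would write $\D_t \nabla^l \omega = \nabla^l \D_t \omega + [\D_t, \nabla^l]\omega$. The commutator, by Lemma \ref{lem:comm1}, is a sum of terms $\nabla^{1+\alpha_1} v \star \nabla^{1+\alpha_2}\omega$ with $|\alpha|\leq l-1$; since $\omega = \nabla v - (\nabla v)^T$, each rewrites as $\nabla^{1+\tilde\alpha_1} v \star \nabla^{1+\tilde\alpha_2} v$ with $|\tilde\alpha|\leq l$. Expanding $\nabla^l \D_t \omega$ via \eqref{eq:omega} and Leibniz isolates the principal piece $\nabla v \star \nabla^l \omega$ (from the term where all $l$ derivatives fall on $\omega$), and the remaining contributions fall in the same structural class after the same substitution.

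For the two claims of part two, I would induct on $l$. The case $l=1$ is immediate: $\curl \D_t v = -\curl \nabla p = 0$ and $\Div \D_t v = \operatorname{Tr}((\nabla v)^2) \in R^0_{\Div}$ by \eqref{eq:pressure-1}. For the step, \eqref{eq:comm1} gives the commutators
\[
\curl \D_t F = \D_t \curl F + \nabla v \star \nabla F, \qquad \Div \D_t F = \D_t \Div F + \operatorname{Tr}(\nabla v\, \nabla F),
\]
applied to $F = \D_t^{l-1} v$. By the inductive hypothesis the $\D_t$-terms lie in $\D_t R^{l-2}_{\Div}$, and the inclusion $\D_t R^{l-2}_{\Div} \subset R^{l-1}_{\Div}$ is a direct check: $\D_t \nabla \D_t^{\beta_i} v = \nabla \D_t^{\beta_i+1}v - (\nabla v)^T \nabla \D_t^{\beta_i} v$ by \eqref{eq:comm1}, and $\D_t a_\beta(\nabla v)$ likewise introduces only one additional unit of weight in $|\beta|$.

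For the last identity, applying the $\Div$-commutator with $F = \D_t^l v$ and using $\Div v = 0$ to rewrite $\operatorname{Tr}(\nabla v \,\nabla \D_t^l v) = \Div((\nabla v)^T \D_t^l v)$ and $v\cdot\nabla\Div \D_t^l v = \Div(v\Div \D_t^l v)$, one arrives at
\[
\Div \D_t^{l+1} v - \Div\Div(v\otimes \D_t^l v) = \partial_t \Div \D_t^l v.
\]
To match the claim, I would strengthen part two inductively to: $\Div \D_t^l v = \Div W^{l-1}$ for some vector field $W^{l-1} \in R^{l-1}_{bulk}$, beginning with $W^0 = (\nabla v)^T v$ (which yields $\Div \D_t v = \Div W^0$ via \eqref{eq:pressure-1}). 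Then $\partial_t\Div \D_t^l v = \Div(\partial_t W^{l-1})$, and expanding $\partial_t = \D_t - v\cdot\nabla$ together with \eqref{eq:comm1} confirms $\partial_t W^{l-1} \in R^{l-1}_{bulk}$: the $v\cdot\nabla$ contributes exactly the one extra spatial derivative absorbed by the slot $|\alpha|\leq 1$ in the $R^{l-1}_{bulk}$ class. The main obstacle is precisely this strengthening — tracking not merely $\Div \D_t^l v \in R^{l-1}_{\Div}$ but its realization as the divergence of a bulk-type potential — and it is this bookkeeping, combined with the repeated use of $\Div v = 0$ to convert traces into genuine divergences, that makes the third identity the most delicate.
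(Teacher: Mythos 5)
Your treatment of the first two parts is essentially the paper's: the first identity is immediate from Lemma \ref{lem:comm1} together with \eqref{eq:omega}, and the second part follows from Lemma \ref{lem:comm1} once you use $\curl (\D_t v)=0$ (since $\D_t v=-\nabla p$) and $\Div v=0$. That is correct.

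The gap is in the third identity. Your identity
\[
\Div \D_t^{l+1}v - \Div\Div(v\otimes\D_t^l v)=\partial_t\Div\D_t^l v
\]
is correct (and is a nice reformulation), but the strengthened inductive hypothesis you propose and the subsequent claim that $\partial_t W^{l-1}\in R_{bulk}^{l-1}$ do not hold. Writing $\partial_t=\D_t-v\cdot\nabla$, the $\D_t$-part raises the material-derivative budget by one: if $W^{l-1}$ contains a factor $\nabla\D_t^{\beta_i}v$ with $\beta_i=l-1$ (which is allowed in $R_{bulk}^{l-1}$), then $\D_t$ of that factor produces $\nabla\D_t^{l}v$, which lies in $R_{bulk}^{l}$ but not in $R_{bulk}^{l-1}$. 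The $v\cdot\nabla$-part likewise raises the spatial budget and is not automatically absorbed by the $|\alpha|\le1$ slot, since that slot may already be occupied. Already at $l=1$ this fails concretely: the correct potential is $W^0=\nabla v\,v$ (note $(\nabla v)^T v=\tfrac12\nabla|v|^2$, whose divergence is $|\nabla v|^2+v\cdot\Delta v\neq\operatorname{Tr}((\nabla v)^2)$, so your $W^0$ is not even a valid potential for $\Div\D_t v$), and $\partial_t W^0$ contains the term $\nabla\D_t v\, v$, which is $\nabla^1\D_t^1 v\star v$ and hence has $|\alpha|=2$, outside $R_{bulk}^0$. That the total $\partial_t\Div\D_t v$ still equals $\Div R_{bulk}^0$ is true, but only after cancellations that your argument does not produce: $\partial_t W^{l-1}$ and the genuine $R_{bulk}^{l-1}$ field differ by a divergence-free remainder.

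The paper sidesteps this by not absorbing the principal piece into the error class. The inductive hypothesis is the third identity itself at level $l-1$, i.e.\ $\Div\D_t^l v=\Div(\nabla\D_t^{l-1}v\,v)+\Div R_{bulk}^{l-2}$, keeping the principal term $\Div(\nabla\D_t^{l-1}v\,v)=\Div\Div(v\otimes\D_t^{l-1}v)$ explicit. Then one applies $\D_t$ (not $\partial_t$) and uses $[\D_t,\Div]F=-\Div(\nabla v\,F)$ and $[\D_t,\nabla]f=-(\nabla v)^T\nabla f$. The point is that $\D_t$ raises the $\D_t$-budget by one but never introduces extra spatial derivatives, so $\D_t R_{bulk}^{l-2}\subset R_{bulk}^{l-1}$ is safe, while $\D_t(\nabla\D_t^{l-1}v\,v)$ produces exactly the next principal piece $\nabla\D_t^l v\,v$ plus $R_{bulk}^{l-1}$ corrections. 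If you want to salvage your $\partial_t$ formulation, you would have to track the principal piece separately in the same way and accept that $\partial_t$ alone does not preserve the $R_{bulk}$ hierarchy.
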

\begin{proof}
The first claim is an immediate consequence of Lemma \ref{lem:comm1} and  \eqref{eq:omega}. The second claim follows from  Lemma \ref{lem:comm1}  and from $\curl (\D_t v) = 0$. Similarly the third claim follows from Lemma \ref{lem:comm1}  and $\Div v = 0$.

Let us then prove the last claim. We begin by proving two useful identities. First, we claim that for a vector field $F$ it holds 
\begin{equation}
\label{eq:curl-1}
    [\D_t,\Div ]F = -\Div (\nabla v F).
\end{equation}
 Indeed, since $\Div v = 0$ we have 
 \[
 \D_t\Div F- \Div \D_t F 
 =\sum_{i,j=1}^3 v_i\pa_i (\pa_jF_j)- \pa_i(\pa_jF_i v_j)=- \sum_{i,j=1}^3 \pa_i v_j \pa_j F_i= -\Div (\nabla v F).
 \]

The second identity follows also from  $\Div v = 0$  and we may write it 
\begin{equation}
\label{eq:curl-2}
 \Div \Div( v\otimes \D_t^lv )
 =\Div (\nabla \D_t^lv \, v).
\end{equation}
Let us prove the last claim in the case $l = 1$. We use \eqref{eq:comm1}, \eqref{eq:pressure-1}, \eqref{eq:curl-1},   \eqref{eq:curl-2} and the definition of $R_{bulk}$ in \eqref{def:error-bulk}
to deduce
\[
 \begin{split}
\Div \D_t^{2}v &= \D_t \Div \D_t v - [\D_t,\Div] \D_t v \\
&= \D_t \Div (  \nabla v\, v  ) + \Div (\nabla v \, \D_t v)\\
&= \Div ( \D_t ( \nabla v\, v ) ) -\Div (\nabla v \, \nabla v \, v)+ \Div(R_{bulk}^0)\\
&= \Div ( \nabla \D_t v\, v ) ) + \Div(R_{bulk}^0)\\
&= \Div \Div (v \otimes \D_t v) ) + \Div(R_{bulk}^0). 
 \end{split}
\]

Let us assume that the claim holds for $l -1$. We argue as before and obtain by \eqref{eq:curl-1}, \eqref{eq:curl-2} and by the induction assumption 
\[
 \begin{split}
\Div \D_t^{l+1}v &= \D_t \Div \D_t^{l} v - [\D_t, \Div] \D_t^{l} v \\
&= \D_t \Div \left(  \nabla\D_t^{l-1} v\, v  +  R_{\Div}^{l-2} \right) + \Div (\nabla v \, \D_t^{l} v). 
 \end{split}
\] 
We use \eqref{eq:curl-1}, \eqref{eq:comm1} and the definition of $R_{bulk}^{l-1}$ in \eqref{def:error-bulk} and obtain
\[
\begin{split}
\D_t \Div &\left(  \nabla\D_t^{l-1} v\, v\right) =  \Div \left( \D_t( \nabla\D_t^{l-1} v\, v)\right) + [\D_t,\Div](\nabla\D_t^{l-1} v\, v) \\
&=\Div (\nabla \D_t^{l} v \, v) + \Div \left(  \nabla\D_t^{l-1} v \star \nabla v \star v + \nabla\D_t^{l-1} v \star \D_t v   \right)\\
&= \Div (\nabla \D_t^{l} v \, v) + \Div R_{bulk}^{l-1}\\
&= \Div \Div (v \otimes \D_t^{l} v) + \Div R_{bulk}^{l-1}.
\end{split}
\]
 Similarly we have 
\[
\D_t \Div R_{\Div}^{l-2} = \Div R_{\Div}^{l-1} \qquad \text{and} \qquad  \Div (\nabla v \, \D_t^{l} v) = \Div R_{\Div}^{l-1} 
\]
and the claim follows.  
\end{proof}

Let us then turn our focus on the pressure. By \eqref{eq:pressure-1} and \eqref{system} we have that $p$ is a solution of 
\[
\begin{cases}
-\Delta p = \Div \Div (v \otimes v), \quad \text{in }\, \Omega_t, \\
p = H - \frac{Q(t)}{2} |\nabla U|^2, \quad \text{on }\, \Sigma_t,   
\end{cases}
\]
where $Q(t)$ is a real valued function of time  defined in \eqref{def:constant-q} as 
\[
Q(t) = \frac{Q}{(\text{Cap}(\Omega_t))^2},
\]
$U = U_{\Omega_t}$ is the capacitary potential and $H = H_{\Sigma_t}$ is the mean curvature.

We need to derive the equation for $\D_t^l p$. We obtain the equation for $\D_t^l p$ in the bulk from Lemma \ref{lem:curl}. 
\begin{remark} \label{rmk:laplpreassure}
By Lemma \ref{lem:curl} and by \eqref{eq:comm-bulk}  the function $-\Delta \D_t^{l+1}p$ can be written as
\[
\begin{split} 
  -  \Delta  \D_t^{l+1}p &=
-\Div \D_t^{l+1}\nabla p+ \Div [\D_t^{l+1},\nabla]p = \Div \D_t^{l+2} v + \Div R_{bulk}^{l} \\
&=    \Div \Div (v \otimes \D_t^{l+1}v)   + \Div ( R_{bulk}^{l}).
\end{split}
\]
\end{remark}

To find the formula for $\D_t^l p$ on the boundary $\Sigma_t$ is more challenging. To that aim we first need to study the capacitary potential $U$.  We introduce an error term which appears when we deal with the capacity term on the boundary, i.e., for  $l \geq 0 $  we denote by  $R_U^l$  as functions on $\Sigma_t$, which can be written in the form
\begin{equation}
    \label{def:R-U}
    R_U^l = \sum_{\substack{|\alpha| + |\beta| \leq l+1\\|\beta| \leq l}} a_{\alpha,\beta}(v) \D_t^{\beta_1} v \star \cdots \star \D_t^{\beta_{l}} v  \star \nabla^{1+ \alpha_1} \pa_t^{\alpha_2} U.
\end{equation}
We note that $v$ is defined in $\Omega_t$ while $U$ is defined in $\Omega_t^c$, but they are both well-defined on the boundary $\St$.  We have the following formulas for $U$ on $\St$.
\begin{lemma}
    \label{lem:mat-capa}
    Let $l \geq 1$. Then on $\Sigma_t$  it holds 
    \[
    \D_t^l \nabla U = R_U^{l-1}
    \]
  and  
    \[
    \begin{split}
    \D_t^{l+1} \nabla U = &\nabla \pa_t^{l+1} U + \nabla^2 U \, \D_t^l v\\
    &+ \sum_{\substack{\alpha + |\beta| + \gamma \leq  l+1\\ |\beta|\leq l-1, \gamma \leq l}} a_{\alpha, \beta,\gamma}(v) \D_t^{\beta_1} v \star \cdots \star \D_t^{\beta_{l-1}} v \star \nabla^{1+ \alpha} \pa_t^{\gamma} U.
       \end{split}
    \]
 Moreover we have the following formula for $\pa_t^{l+1} U$     
\[
\pa_t^{l+1} U = - \pa_{\nu} U \, (\D_t^{l} v \cdot \nu) + R_U^{l-1} \qquad \text{on } \, \Sigma_t  .
\]
\end{lemma}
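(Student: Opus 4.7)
Plan. The three identities will be proved in turn by induction on $l$, with the third making crucial use of the boundary condition $U\equiv 1$ on $\St$.

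For the first identity the argument is induction on $l$. The base case $l=1$ is the direct computation
\[
\D_t\nabla U \;=\; \pa_t\nabla U + v\cdot\nabla(\nabla U) \;=\; \nabla\pa_t U + \nabla^2U\cdot v,
\]
which exhibits the two terms allowed in $R_U^0$. For the inductive step I apply $\D_t$ to a generic summand of $R_U^{l-1}$. Using the Leibniz rule together with the two elementary computations $\D_t(a(v))=a'(v)\cdot\D_tv$ (chain rule, since $\pa_tv+v\cdot\nabla v=\D_tv$) and $\D_t(\nabla^{1+\alpha_1}\pa_t^{\alpha_2}U)=\nabla^{1+\alpha_1}\pa_t^{\alpha_2+1}U+v\cdot\nabla(\nabla^{1+\alpha_1}\pa_t^{\alpha_2}U)$, each action of $\D_t$ either raises some $\beta_i$ by one, or raises $\alpha_2$ by one, or raises $\alpha_1$ by one at the cost of introducing a fresh factor of $v$ — each operation consistent with membership in $R_U^{l}$.

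For the second identity I refine the same induction, tracking the two leading contributions. The term $\nabla\pa_t^{l+1}U$ originates from applying $\pa_t$ to the factor $\nabla\pa_t^lU$ carried over from the previous step, while $\nabla^2U\,\D_t^lv$ arises from applying $\D_t$ to the $\D_t^{l-1}v$ factor inside $\nabla^2U\,\D_t^{l-1}v$. All other contributions either have lower $\pa_t$-order on $U$, or involve only material derivatives $\D_t^{\beta_i}v$ with $\beta_i\le l-1$, matching the error structure specified in the statement.

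For the third identity the starting point is that $U\equiv 1$ on $\St$, so along the flow map $\Phi_t$ one has $U(\Phi_t(x_0),t)\equiv 1$ for $x_0\in\Sigma_0$, and hence $\D_t^kU=0$ on $\St$ for every $k\ge 1$. Applied to $k=l+1$ and expanded non-commutatively with $V=v\cdot\nabla$, this yields on $\St$
\[
0\;=\;\pa_t^{l+1}U\;+\;\sum_{\text{words with at least one }V} w(U).
\]
The unique word producing the top material derivative $\D_t^lv$ is $\pa_t^lV$, which contributes $\pa_t^lv\cdot\nabla U$; converting $\pa_t^lv=\D_t^lv+(\text{lower-order in }v)$ by iterating $\pa_tv=\D_tv-v\cdot\nabla v$ generates $\nabla v$-corrections that cancel identically against the contributions coming from words containing two or more $V$'s — exactly as the two occurrences of $(v\cdot\nabla v)\cdot\nabla U$ cancel when $l=1$. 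What survives is $-\D_t^lv\cdot\nabla U$, and since $\nabla U=(\pa_\nu U)\nu$ on $\St$ this equals $-\pa_\nu U(\D_t^lv\cdot\nu)$; every residual term involves $\D_t^{\beta_i}v$ with $\beta_i\le l-1$ multiplied by a factor $\nabla^{1+\alpha_1}\pa_t^{\alpha_2}U$ of admissible total order, hence belongs to $R_U^{l-1}$. The main obstacle is this last cancellation: one must verify systematically that all spatial derivatives of $v$ produced by the expansion of $\D_t^{l+1}U$ collapse, after the reduction $\pa_tv=\D_tv-v\cdot\nabla v$ is carried out to completion, into products involving material derivatives of $v$ only, so that the error genuinely fits the tight structure of $R_U^{l-1}$, which permits no factor of $\nabla v$.
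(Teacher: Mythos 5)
Your treatment of the first two identities mirrors the paper's own inductive argument and is fine. For the third identity, however, your route creates an obstacle that you acknowledge but do not overcome: expanding $\D_t^{l+1}U$ as a non-commutative polynomial in $\pa_t$ and $V=v\cdot\nabla$ forces you to substitute $\pa_t v = \D_t v - (v\cdot\nabla) v$ iteratively, which introduces factors of $\nabla v$ (and higher spatial derivatives of $v$) that are not admissible in the error term $R_U^{l-1}$ defined in \eqref{def:R-U}. You assert that these cancel, having checked $l=1$, but the systematic cancellation for general $l$ is exactly what needs proof, and you concede you have not established it; as written, the argument is circular on that point.

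The paper avoids the issue by never expanding the iterated operator. Instead of starting from $\D_t^{l+1}U=0$ as a whole, it peels off one material derivative and uses $\D_t U = 0$ on $\St$ in the rearranged form $\pa_t U = -\nabla U\cdot v$. Applying $\D_t^l$ to both sides and using the Leibniz rule, the top term in $\D_t^l(\nabla U\cdot v)$ gives $\nabla U\cdot\D_t^l v$, while the remaining terms are $\D_t^i v\star\D_t^j\nabla U$ with $1\le j\le l$, which your first claim $\D_t^j\nabla U = R_U^{j-1}$ immediately places in $R_U^{l-1}$. A parallel use of the first claim gives $\D_t^l\pa_t U = \pa_t^{l+1}U + R_U^{l-1}$. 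In this route no $\nabla v$ factor is ever generated, so there is nothing to cancel. You should reorganize your argument to factor $\D_t^{l+1}U = \D_t^l(\D_t U)$ first and let the first claim carry the bookkeeping, rather than expanding $(\pa_t+V)^{l+1}$ outright.
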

\begin{proof}
The proof of the first statement is straightforward. Note that
\[
  \D_t \nabla U =  \nabla \pa_t U +    \nabla^2 U v
\]
and
\[
\D_t^2 \nabla U =  \nabla \pa_t^2 U +    \nabla^2 U \D_t v + \nabla^3 U\star v \star v  + \nabla^2 \pa_t U \star v. 
\]
Thus the first claim holds for $l =1,2$ and the second for $l=1$. The general case $l \geq 1$ follows by an induction argument. 

For the third claim we recall that the potential satisfies $U = 1$ on $\Sigma_t$. Therefore  it holds $\D_t U = 0$ on $\St$ which we write as
\[
\pa_t U = - ( \nabla U \cdot v ).
\]
Differentiating this  yields
\[
\D_t^{l} \pa_t U =  - (\nabla U \cdot \D_t^{l}  v ) + \sum_{\substack{i+j = l\\i \leq l-1}} \D_t^{i}  v \star \D_t^{j} \nabla U. 
\]
By the first claim we have $\D_t^{j} \nabla U = R_U^{j-1}$ and thus by the definition of $R_U^{j-1}$ in \eqref{def:R-U} we may write 
\[
\D_t^{l} \pa_t U = - (\nabla U \cdot \D_t^{l}  v )  + \sum_{\substack{|\alpha| + |\beta| \leq l\\|\beta| \leq l-1}} a_{\alpha,\beta}(v) \D_t^{\beta_1} v \star \cdots \star \D_t^{\beta_{l-1}} v  \star \nabla^{1+ \alpha_1} \pa_t^{\alpha_2} U.
\]
It also  holds 
\[
\D_t^{l} \pa_t U  = \pa_t^{l+1} U + \sum_{\substack{|\alpha| + |\beta| \leq l\\|\beta| \leq l-1}} a_{\alpha,\beta}(v) \D_t^{\beta_1} v \star \cdots \star \D_t^{\beta_{l-1}} v  \star \nabla^{1+ \alpha_1} \pa_t^{\alpha_2} U.
\]
Since  $U$ is constant on $\Sigma_t$ it holds  $\nabla U = \pa_{\nu} U \,  \nu $. This implies the third claim.
\end{proof}

We conclude this section by deriving a formula for $\D_t^{l+1} p$. Recall that 
\begin{equation}
    \label{eq:D_tp-0}
p = H - \frac{\Qt}{2} |\nabla U|^2 \qquad \text{on } \, \Sigma,
\end{equation}
where $\Qt$ is defined in \eqref{def:constant-q}. It is well known that (e.g. \cite{DDM})
\begin{equation}
    \label{eq:D_tp-1}
   \D_t H = - \Delta_\Sigma v_n - |B|^2 v_n + \nabla_\tau H\cdot v
\end{equation}
where $v_n = v \cdot \nu$. Using the geometric fact 
\begin{equation} \label{eq:geom}
\Delta_\Sigma \nu = -|B|^2 \nu + \nabla_\tau H  
\end{equation}
and \eqref{eq:D_tp-0} we obtain the formula 
\begin{equation} \label{eq:form-Dtp}
\D_t p = - \Delta_\Sigma v \cdot \nu - 2 B : \nabla_\tau v  -  \Qt( \D_t \nabla U \cdot \nabla U) -\frac{Q'(t)}{2} |\nabla U|^2 .
\end{equation}

We may write \eqref{eq:form-Dtp} in a different form. Indeed we use  $\nabla U =  \pa_\nu U \, \nu = - |\nabla U|\nu$ and obtain 
\[
\begin{split}
- \D_t \nabla U \cdot \nabla U &= -(\nabla \pa_t U \cdot \nabla U) - (\nabla^2 U  \nu \cdot  v) \, \pa_\nu U\\
&= -(\nabla \pa_t U \cdot \nabla U)  + ( \nabla^2 U  \nu \cdot    \nu ) \, v_n\,  |\nabla U|  - ( \nabla^2 U  \nabla U  \cdot  v_\tau ).
\end{split}
\]
We notice that  
\begin{equation}
    \label{eq:D_tp-2}
    (\nabla^2 U  \nabla U \cdot v_\tau) = \frac12 ( \nabla_\tau  |\nabla U|^2  \cdot  v).
\end{equation}
Moreover, we recall that $U$ is harmonic in $\Omega_t^c$ and constant on $\St$. Therefore it holds by \eqref{eq:tang-laplace}
\begin{equation}
    \label{eq:D_tp-3}
0 = \Delta U = \overbrace{\Delta_{\tau} U}^{=0} + (\nabla^2 U\nu \cdot \nu) + H \pa_{\nu} U = (\nabla^2 U\nu \cdot \nu) - H|\nabla U|.
\end{equation}
Thus we have by \eqref{eq:D_tp-0},  \eqref{eq:D_tp-1},  \eqref{eq:D_tp-2},  \eqref{eq:D_tp-3}  that
\begin{equation}
    \label{eq:D_tp-4}
 \D_t p = - \Delta_\Sigma v_n - |B|^2 v_n - \Qt\,\big( \pa_\nu U \, (\pa_\nu \pa_t U)  -  H |\nabla U|^2 v_n\big)  + \langle \nabla_\tau p, v\rangle  - Q'(t) \frac{|\nabla U|^2}{2}.
\end{equation}

In the next lemma we find a suitable expression  for $\D_t^{l} p$ for $l \geq 1$. Again we will have an error term which in this case is defined on the boundary $\Sigma_t$ and is more complicated than the previous ones. We define the error term $R_p^l$ for $l \geq 1$ as  
\begin{equation}
    \label{eq:R-p-in-3}
    R_{p}^l = R_{I}^l + R_{II}^l + R_{III}^l.
\end{equation}
where
\begin{equation}
    \label{def:R_I}
\begin{split}
    &R_{I}^l = - (|B|^2 - Q(t) H\, |\nabla U|^2 ) (\D_t^{l}v \cdot \nu) + (\nabla_\tau p \cdot \D_t^{l}v ), \\
    &R_{II}^l = \sum_{|\alpha|\leq 1, \, |\beta|\leq l-1}a_{\alpha, \beta,\gamma}(B) \nabla^{1+\alpha_1} \D_t^{\beta_1} v \star \cdots \star \nabla^{1+\alpha_{l+1}} \D_t^{\beta_{l+1}} v \qquad \text{and} \\
    &R_{III}^l =\sum_{\substack{|\alpha| +  |\beta| + |\gamma |\leq l+1\\ |\beta|\leq l-1, \gamma_i \leq l}} a_{\alpha, \beta,\gamma, Q}(v)  \D_t^{\beta_1} v \star \cdots \star \D_t^{\beta_{l-1}} v  \star \nabla^{1+\alpha_1} \pa_t^{\gamma_1} U \star \nabla^{1+\alpha_1} \pa_t^{\gamma_2} U ,
    \end{split}
\end{equation}
where the coefficients $a_{\alpha, \beta,\gamma,Q}(v)$
depend on $v$ and on the derivatives $Q^{(k)}(t)$ for $k \leq l+1$.
Above $a_{\alpha,\beta, \gamma}(B)$ means that the coefficient depends on the second fundamental form.   For $l=1$ we need to quantify this dependence in which case $R_{II}^1$ reads as  
\begin{equation}
    \label{def:R_II-2}
R_{II}^1 =  a_1(\nu, \nabla v) \star B + a_2(\nu, \nabla v) \star \nabla^2 v.
\end{equation}
 The reason why $R_p^l$ has three terms is that $R_{II}^l$ contains  the error terms arising from the surface tension and $R_{III}^l$ from the capacity. The first term $R_{I}^l$ is separate merely from notational reasons as it contains the  highest order material derivatives. 

\begin{lemma}
    \label{formula:Dtp}
    For $l \geq 2$ It holds
    \[
    \D_t^{l} p = -\Delta_\Sigma (\D_t^{l-1} v \cdot \nu) - Q(t) \, \pa_{\nu} U \,  (\pa_{\nu} \pa_t^{l} U)   + R_p^{l-1}
    \]
    on $\Sigma_t$, where $\Qt$ is defined in \eqref{def:constant-q}.
\end{lemma}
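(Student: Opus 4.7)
The plan is to argue by induction on $l \geq 2$, starting from the first-order formula \eqref{eq:D_tp-4} and using the commutation identities of Lemma \ref{lem:comm2}, Remark \ref{rem:comm-vn} and Lemma \ref{lem:mat-capa} as the engine that converts each additional $\D_t$ into the prescribed error structure \eqref{eq:R-p-in-3}.

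For the base case $l = 2$, I would apply $\D_t$ to each summand on the right-hand side of \eqref{eq:D_tp-4}. The Laplacian piece $-\Delta_\Sigma v_n$ becomes $-\Delta_\Sigma \D_t v_n + [\Delta_\Sigma,\D_t] v_n$; expanding $\D_t v_n = \D_t v \cdot \nu + v \cdot \D_t \nu$ via \eqref{eq:normal1} extracts the leading $-\Delta_\Sigma(\D_t v \cdot \nu)$ modulo a term of the exact form \eqref{def:R_II-2}. The two terms $-|B|^2 v_n$ and $Q(t) H |\nabla U|^2 v_n$ contribute their highest pieces $-|B|^2 (\D_t v \cdot \nu)$ and $Q(t)H|\nabla U|^2(\D_t v\cdot \nu)$, which together assemble $R_I^1$; derivatives falling on $B$, $H$ or $|\nabla U|^2$ are routed to $R_{II}^1$ via Lemma \ref{lem:comm2} and to $R_{III}^1$ via Lemma \ref{lem:mat-capa}. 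For the capacity term $-Q(t)\pa_\nu U\,\pa_\nu \pa_t U$, using $\nu\cdot\D_t\nu=0$ (from $|\nu|=1$) and the identity $\D_t\nabla \pa_t U = \nabla \pa_t^2 U + $ lower from Lemma \ref{lem:mat-capa}, we extract the new leading piece $-Q(t)\pa_\nu U\,\pa_\nu \pa_t^2 U$ with remainder inside $R_{III}^1$. Finally, the transport contribution $\nabla_\tau p\cdot v$ yields $\nabla_\tau p\cdot \D_t v \in R_I^1$; the leftover $\D_t(\nabla_\tau p)\cdot v$ is rewritten via \eqref{eq:comm2} as $\nabla_\tau(\D_t p)\cdot v$ plus a lower-order piece, after which \eqref{eq:D_tp-4} is substituted back and each tangential derivative of a summand is placed into $R_{II}^1$ or $R_{III}^1$, the extra $\nabla$ being precisely the $\alpha$-slack admitted by \eqref{def:R_II-2}.

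For the inductive step, apply $\D_t$ to the formula at level $l$. Commuting $\D_t$ past $\Delta_\Sigma$ by Remark \ref{rem:comm-vn} and past $\nu$ by Lemma \ref{lem:comm2} converts $-\Delta_\Sigma(\D_t^{l-1}v\cdot\nu)$ into $-\Delta_\Sigma(\D_t^l v\cdot\nu)$ plus a term in $R_{II}^l$. On $\St$ we have $\D_t(\pa_\nu \pa_t^l U) = \nu\cdot\D_t\nabla\pa_t^l U$ modulo tangential corrections that vanish through $\nu\cdot\D_t\nu=0$, and Lemma \ref{lem:mat-capa} identifies the surviving top-order piece as $\pa_\nu\pa_t^{l+1}U$, while the $Q'(t)$ factor and all further remainders fall into $R_{III}^l$. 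It then remains to verify $\D_t R_p^{l-1}\subset R_p^l$: each factor $\nabla \D_t^{\beta_i} v$ is incremented by Lemma \ref{lem:comm1}, each $\pa_t^{\gamma_i}U$ factor by Lemma \ref{lem:mat-capa}, and the coefficients $a_{\alpha,\beta,\gamma,Q}$ and $a_{\alpha,\beta}(\nu,B)$ by \eqref{eq:normal1} and Lemma \ref{lem:comm2}. The subtle summand is $\nabla_\tau p\cdot \D_t^{l-1} v$ inside $R_I^{l-1}$, which under $\D_t$ produces $\nabla_\tau p \cdot \D_t^l v\in R_I^l$ together with a $\D_t(\nabla_\tau p)\cdot \D_t^{l-1}v$ piece that, by \eqref{eq:comm2} and the induction hypothesis for $\D_t^l p$, decomposes cleanly along $R_{II}^l + R_{III}^l$.

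The principal obstacle is exactly this transport term: every application of $\D_t$ to $\nabla_\tau p$ resurrects the high-order tangential quantities $\nabla_\tau \Delta_\Sigma(\D_t^{l-2}v\cdot\nu)$ and $\nabla_\tau \pa_\nu \pa_t^{l-1}U$, and one must check that the extra $\nabla_\tau$ is absorbed precisely by the slack $|\alpha|\leq 1$ in the definitions \eqref{def:R_I}--\eqref{def:R_II-2}. Combined with Simon's identity \eqref{eq:Simon} and the formula \eqref{eq:geom} for $\Delta_\Sigma\nu$, which are needed whenever $\Delta_\Sigma$ acts on a product containing $\nu$, this careful bookkeeping on the index vectors $(\alpha,\beta,\gamma)$ is the heart of the proof; once it is in place the induction closes by Leibniz expansion.
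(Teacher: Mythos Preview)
Your induction scheme has a genuine gap at the point you yourself flag as the principal obstacle. When you apply $\D_t$ to $-\Delta_\Sigma v_n$ and expand via $\D_t v_n = \D_t v\cdot\nu + v\cdot\D_t\nu$, the piece $-\Delta_\Sigma(v\cdot\D_t\nu)$ contains, after \eqref{eq:normal1} and Leibniz, terms of the form $\nabla_\tau^3 v$ and $\bar\nabla B$. Likewise, your treatment of the transport term via $\nabla_\tau(\D_t p)\cdot v$ (or $\cdot\,\D_t^{l-1}v$ in the inductive step) followed by substitution of \eqref{eq:D_tp-4} produces $\nabla_\tau\Delta_\Sigma v_n$, which is again third order in space. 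Neither fits the structure of $R_{II}^1$ in \eqref{def:R_II-2}, nor of $R_{II}^l$ in general: the constraint $|\alpha|\le 1$ allows at most one factor carrying two spatial derivatives, never one carrying three, and every factor in $R_{II}^l$ must carry at least one $\nabla$, which rules out a bare $\D_t^{l-1}v$. The claimed ``$\alpha$-slack'' simply does not cover these terms.

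What actually makes the argument close is that these third-order contributions cancel, and the paper's proof is organized precisely to make the cancellation automatic. Instead of \eqref{eq:D_tp-4} the paper differentiates \eqref{eq:form-Dtp}, which has the shape $-(\Delta_\Sigma v)\cdot\nu - 2B:\nabla_\tau v - (\text{capacity})$ with $\nu$ \emph{outside} the Laplacian and \emph{no} transport term. This form is stable under repeated $\D_t$: the commutators \eqref{eq:comm3} and Lemma~\ref{lem:comm2} introduce only $\nabla^2 v$-- and $B$--type corrections, yielding the intermediate identity \eqref{eq:form-Dtpl} with $(\Delta_\Sigma\D_t^{l-1}v)\cdot\nu + 2B:\nabla_\tau(\D_t^{l-1}v)$ still intact. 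Only \emph{once}, at the end, does one apply $\Delta_\Sigma(F\cdot\nu) = (\Delta_\Sigma F)\cdot\nu + 2B:\nabla_\tau F + (\Delta_\Sigma\nu)\cdot F$ together with \eqref{eq:geom}; the resulting $(\nabla_\tau H)\cdot\D_t^{l-1}v$ combines with the capacity contribution $(\nabla_\tau\tfrac{|\nabla U|^2}{2})\cdot\D_t^{l-1}v$ to produce exactly $(\nabla_\tau p)\cdot\D_t^{l-1}v$, i.e.\ the transport part of $R_I^{l-1}$. In short, the transport term should be \emph{synthesized} once at the end from \eqref{eq:geom}, not differentiated at every step of the induction.
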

\begin{proof}

We first claim that it holds   
\begin{equation} \label{eq:form-Dtpl}
\begin{split}
\D_t^{l} p &= - (\Delta_\Sigma \D_t^{l-1} v) \cdot \nu - 2 B : \nabla_\tau (\D_t^{l-1} v) \\
&- Q(t) (  \nabla \pa_t^{l} U \cdot \nabla U ) -  Q(t) ( \nabla^2 U  \nabla U  \cdot  \D_t^{l-1} v) + R_{II}^{l-1} +  R_{III}^{l-1}.
\end{split}
\end{equation}
To obtain the claim \eqref{eq:form-Dtpl} for $l=2$ we first recall that by \eqref{eq:comm3} we have 
\[
[\D_t,\Delta_{\Sigma}] v =  a_1(\nu, \nabla v) \star  B  + a_2(\nu, \nabla v) \star \nabla^2 v, 
\]
and that \eqref{eq:normal1} implies $\D_t \nu = -(\nabla_\tau v)^T \nu $ and \eqref{eq:comm2} implies $[\D_t, \nabla_\tau] v= a(\nu) \nabla v \star \nabla v$. We  use   \eqref{eq:comm2} and \eqref{eq:normal1} to obtain
\[
\begin{split}
\D_t B =\D_t (\nabla_\tau \nu) &=  \nabla_\Sigma (\D_t \nu ) + [\D_t, \nabla_\tau] \nu \\
&= -\nabla_\tau ((\nabla_\tau v)^T \nu )  + a_1(\nu, \nabla v) \star  B  \\
&= a_1(\nu, \nabla v) \star  B + a_2(\nu, \nabla v )  \star \nabla^2 v.
\end{split}
\]
We differentiate  \eqref{eq:form-Dtp} and use the above identities and have 
\[
\begin{split}
\D_t^{2} p = - (\Delta_\Sigma \D_t v) \cdot \nu - 2 B : \nabla_\tau (\D_t v) -\D_t \Big(Q(t) (  \D_t \nabla  U \cdot \nabla U ) +\frac{Q'(t)}{2} |\nabla U|^2 \Big)    + R_{II}^1.
\end{split}
\]
 Lemma \ref{lem:mat-capa} yields 
\[
\begin{split}
\D_t (\D_t \nabla U \cdot \nabla U) &= (\D_t^2 \nabla U \cdot \nabla U)+ (\D_t \nabla U \cdot \D_t \nabla U)\\
&= (  \nabla \pa_t^{2} U \cdot \nabla U )  + (\nabla^2 U  \nabla U \cdot \,  \D_t v ) \\
&\,\,\,\,\,\,\,\,\,\,+ \sum_{\substack{|\alpha| + |\beta| \leq 2,\\ \beta_i \leq 1}} a_{\alpha,\beta}(v)  \nabla^{1+\alpha_1} \pa_t^{\beta_1} U \star \nabla^{1+\alpha_2} \pa_t^{\beta_2} U.
\end{split}
\]
We may embed the rest of the terms to $R_{III}^1$.  This implies  \eqref{eq:form-Dtpl} for $l=2$.

To obtain the claim  \eqref{eq:form-Dtpl} for $l\geq 2$   we  differentiate \eqref{eq:form-Dtp} $(l-1)$-times. Since the argument is similar to the case $l=2$, we only highlight the most subtle steps. To identify the error terms  we recall that by Lemma \ref{lem:comm2} we have for $i \leq l-1$   
\[
\D_t^i \nu = \sum_{|\beta| \leq i-1} a_{\beta}(\nu)\nabla \D_t^{\beta_1} v \star  \cdots \star \nabla \D_t^{\beta_i} v ,
\]
 \[
[\D_t^i, \nabla_\Sigma ] v = \sum_{|\beta| \leq i-1} a_{\beta}(\nu)\nabla \D_t^{\beta_1} v \star  \cdots \star \nabla \D_t^{\beta_{i+1}} v,
\]
and  by Remark \ref{rem:comm-vn}
\[
[\D_t^i,\Delta_\Sigma] v =  \sum_{\substack{|\alpha| \leq 1 \\ |\beta| \leq i-1  }} a_{\alpha, \beta}( B)   \nabla^{1+\alpha_1}  \D_t^{\beta_1} v \star \cdots \star \nabla^{1+\alpha_{i}} \D_t^{\beta_{i}} v \star \nabla_\tau^{1+ \alpha_{i+1}} \D_t^{\beta_{i+1}}v .
\]
In order to treat the capacitary terms we  first observe that 
\[
\D_t^{l-1} \langle \D_t \nabla U, \nabla U\rangle = \langle \D_t^{l} \nabla U, \nabla U \rangle + \sum_{\substack{i+j\leq l\\i,j \leq l-1}} \D_t^{i} \nabla U \star  \D_t^{i}\nabla U 
\]
and then use   Lemma \ref{lem:mat-capa} to deduce  
\[
\begin{split}
\D_t^{l-1} &(\D_t \nabla U \cdot \nabla U) = (  \nabla \pa_t^{l} U \cdot \nabla U )  + ( \nabla^2 U  \nabla U \cdot  \D_t^{l-1} v) \\
&+ \sum_{\substack{|\alpha| + |\beta| +|\gamma| \leq l,\\ |\beta| \leq l-2, \, \gamma_i \leq l-1}} a_{\alpha,\beta,\gamma}(v) \D_t^{\beta_1} v \star \cdots \star \D_t^{\beta_{l-1}} \star \nabla^{1+\alpha_1} \pa_t^{\gamma_1} U \star \nabla^{1+\alpha_2} \pa_t^{\gamma_2} U.
\end{split}
\]
This implies \eqref{eq:form-Dtpl}.

We proceed by  calculating  and  by  using  \eqref{eq:geom} 
\[
\begin{split}
\Delta_\Sigma (\D_t^{l-1} v \cdot \nu) &= (\Delta_\Sigma \D_t^{l-1} v) \cdot \nu + 2 B : \nabla_\tau (\D_t^{l-1} v) + ( \Delta_\Sigma  \nu) \cdot (\D_t^{l-1} v)\\
&= (\Delta_\Sigma \D_t^{l-1} v) \cdot \nu + 2 B : \nabla_\tau (\D_t^{l-1} v)  -|B|^2  (\D_t^{l-1} v  \cdot \nu) + ( \nabla_\tau H \cdot \D_t^{l-1} v). 
\end{split}
\]
Moreover, we recall that  $\nabla U = \pa_{\nu} U  \nu$ and that  \eqref{eq:D_tp-3} implies $( \nabla^2 U \nu \cdot \nu ) =  H \pa_\nu U $. Therefore we have 
\[
\begin{split}
\langle  \nabla^2 U  \nabla U , \D_t^{l-1} v \rangle &= \langle  \nabla^2 U  \nu , \nu  \rangle  \, \pa_{\nu} U \, (\D_t^{l-1} v \cdot \nu) + \langle  \nabla^2 U  \nabla U , (\D_t^{l-1} v)_\tau \rangle\\
&= H |\nabla U|^2  (\D_t^{l-1} v \cdot \nu) + ( \nabla_\tau  \frac{|\nabla U|^2}{2} \cdot \D_t^{l-1} v).
\end{split} 
\]
By combining the previous identities with \eqref{eq:form-Dtpl}  implies
\[
\begin{split}
    \D_t^l p &= - \Delta_\Sigma (\D_t^{l-1} v \cdot \nu)  - Q(t) \, \pa_{\nu} U \,  (\pa_{\nu} \pa_t^{l} U)   \\
    &-(|B|^2 - Q(t)\, H|\nabla U|^2)  (\D_t^{l-1} v  \cdot \nu) + \big(\nabla_\tau (H - Q(t)\frac{|\nabla U|^2}{2} )\cdot \D_t^{l-1} v\big)+ R_p^{l-1}.
\end{split}
\]
Hence, the claim follows from \eqref{eq:D_tp-0}.  
\end{proof}

\section{Estimation of the error terms}

In the previous section we  introduced four error  terms $R_{\Div}^l, R_{bulk}^l, R_U^l$ and $R_p^l$, which will appear later in the proof of the Main Theorem. These are nonlinear and   characterized  by their order $l \geq 0$ and their precise forms can be found in \eqref{eq:def-R-div}, \eqref{def:error-bulk} \eqref{def:R-U} and \eqref{eq:R-p-in-3} respectively. The first two terms  $R_{\Div}^l$  and $R_{bulk}^l$ are defined in the fluid domain and appear already in the case when the shape of the drop does not change. The term $R_U^l$  is due to the nonlinearity of the capacitary term. The term $R_p^l$ is due to the nonlinear behavior of the pressure on the moving boundary and it is by far the most difficult to treat. 

In this section our goal is to estimate these error terms by the energy quantity of order $l \in\ \N$ which we define as     
\begin{equation}
\label{def:E_l}
E_l(t) :=  \sum_{k=0}^{l}  \|\D_t^{l+1-k}v\|_{H^{\frac32 k}(\Omega_t)}^2 + \|v\|_{H^{\lfloor \frac32(1+1)\rfloor}(\Omega_t)}^2+ 
 \|\D_t^l v \cdot \nu \|_{H^1(\Sigma_t)}^2+1.
\end{equation}
The most difficult is to estimate the lowest order terms $R_{div}^1, \dots$, i.e., the case $l=1$, and we treat it separately. The difficulty of the case $l=1$ makes the arguments in this section long and cumbersome.

As we explained in the introduction, the proof of the Main Theorem is by induction argument, where we assume that we have the bound $E_{l-1}(t) \leq C$ and then use this to bound  $E_l(t)$. We begin this section by proving that the bound $E_{l-1}(t) \leq C$ for $l \geq 2$ implies
 \[
 \|B\|_{H^{\frac32l -1}(\St)} \leq M(C).
 \]
This will guarantee that every step improves the regularity of the flow. Perhaps the most challenging part is to start the argument and we show in Section 6 that the a priori bounds \eqref{eq:apriori_est}  imply the following estimate on the pressure  
\[
\|p\|_{H^1(\Omega_t)} \leq C 
\]
for all $t \leq T$. We will show that this implies the following curvature bound 
\[
\|B\|_{H^{\frac12}(\St)} \leq C,
\]
which in particular implies the bound $\|B\|_{L^{4}(\St)} \leq C$.

We notice that the above curvature bounds ensure that $\St$ satisfies the condition \eqref{eq:notationhp} for  $m = \lfloor \frac32l\rfloor +1$ when $l \geq 2$ and $m=2$ for $l =1$. This means that  the results from Section 2 such as Proposition \ref{prop:extension},  Proposition \ref{prop:sobolev},  Corollary \ref{coro:interpolation}, Proposition \ref{prop:kato-ponce}, Proposition \ref{prop:laplace-bound} and Proposition \ref{prop:meancrv-bound}  hold for all $k \leq m$. We take this for granted in the calculations throughout this section without further mention in order to make the presentation less heavy.  

We begin by estimating the capacitary potential $U$ by the pressure via the identity \eqref{eq:D_tp-0}. We note that in the next lemma the a priori  $C^{1,\alpha}$-bound for the boundary  $\St = \pa \Omega_t$ is crucial. 

\begin{lemma}
\label{lem:reg-capa}
Let $l\geq 1$ and assume that $\St$ is uniformly $C^{1,\alpha}(\Gamma)$-regular and satisfies the condition $\|B\|_{L^4(\St)} \leq M$ when $l = 1$ and $\|B\|_{H^{\frac32 l- 1}(\Sigma_t)} \leq M$ when $l \geq 2$. Let  $U$ be the  capacitary potential defined in \eqref{eq:def-capapot}. There exists a constant $C$, depending on $M, l$ and  on the $C^{1,\alpha}$-norm of the heightfunction, such that  
\[
\|\nabla^{1+k} U \|_{H^{\frac12}(\St)} \leq C (1+ \|p \|_{H^{k}(\St)} ) \qquad \text{on } \, \St
\]
for all integers $k \leq \frac32l +\frac12$.
\end{lemma}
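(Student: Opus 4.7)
My plan is to deduce the inequality by combining three ingredients: the pressure identity on $\St$ from \eqref{eq:D_tp-0}, the sharp boundary regularity for the Dirichlet problem with constant datum (Theorem \ref{teo:reg-capa}), and the curvature estimate of Proposition \ref{prop:meancrv-bound}. Rearranging \eqref{eq:D_tp-0} gives
\[
H = p + \frac{Q(t)}{2}|\nabla U|^2 \qquad \text{on } \St,
\]
so the mean curvature is controlled by the pressure and the boundary value of the electric field.

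Since $U\equiv 1$ on $\St$, the ``moreover'' part of Theorem \ref{teo:reg-capa} applies with constant datum and yields, for every $k\in\N$,
\[
\|\nabla^{1+k} U\|_{H^{1/2}(\St)} \leq C\bigl(1+\|B\|_{H^{k}(\St)}\bigr).
\]
Proposition \ref{prop:meancrv-bound} then replaces $\|B\|_{H^k(\St)}$ by $\|H\|_{H^k(\St)}$ in the allowed range of $k$, and the pressure identity further gives
\[
\|\nabla^{1+k} U\|_{H^{1/2}(\St)} \leq C\bigl(1+\|p\|_{H^{k}(\St)} + \||\nabla U|^2\|_{H^{k}(\St)}\bigr).
\]

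What remains is to absorb the nonlinear term $\||\nabla U|^2\|_{H^{k}(\St)}$ into the left-hand side. I would proceed by induction on $k$. The base case $k=0$ follows directly from Theorem \ref{teo:reg-capa} without invoking the pressure, since $\|B\|_{L^2(\St)}$ is controlled by the $L^4$-curvature hypothesis. For the inductive step, Kato-Ponce (Proposition \ref{prop:kato-ponce}) yields
\[
\||\nabla U|^2\|_{H^k(\St)} \leq C\|\nabla U\|_{L^\infty(\St)}\|\nabla U\|_{H^k(\St)},
\]
where the $L^\infty$-bound $\|\nabla U\|_{L^\infty(\St)}\leq C$ comes from classical Schauder estimates for the harmonic function $U$ with constant Dirichlet data on the uniformly $C^{1,\alpha}(\Gamma)$-regular boundary $\St$. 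The factor $\|\nabla U\|_{H^{k}(\St)}$ is then dominated, via Corollary \ref{coro:interpolation}, by an arbitrarily small multiple of $\|\nabla^{1+k}U\|_{H^{1/2}(\St)}$ plus lower-order terms already controlled by the inductive hypothesis, and Young's inequality lets the critical term be absorbed.

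The main obstacle I anticipate is precisely this absorption step in the low-regularity case $l=1$, where only $\|B\|_{L^4(\St)}$ is assumed a priori and Theorem \ref{teo:reg-capa} at the endpoint $k=2$ nominally requires $\|B\|_{H^1(\St)}$ to be finite. The induction circumvents this apparent circularity: once the case $k=1$ has been established, combining it with Proposition \ref{prop:meancrv-bound} and the pressure identity at order one upgrades the curvature bound to $\|B\|_{H^1(\St)} \leq C(1+\|p\|_{H^1(\St)})$, which makes Theorem \ref{teo:reg-capa} available at $k=2$ and closes the argument.
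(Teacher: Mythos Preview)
Your proof follows the same induction scheme as the paper: Theorem \ref{teo:reg-capa} with constant datum, Proposition \ref{prop:meancrv-bound}, the pressure identity \eqref{eq:D_tp-0}, Kato--Ponce, and the Schauder bound $\|\nabla U\|_{L^\infty}\leq C$, with the nonlinear term absorbed via interpolation and the inductive hypothesis. The only place where the paper is more explicit is the absorption step: it inserts the auxiliary inequality $\|\nabla u\|_{H^k(\St)}\leq C(\|u\|_{L^2(\St)}+\|\nabla^{1+k}u\|_{L^2(\St)})$ (relating tangential to ambient derivatives, with curvature corrections controlled by $\|B\|_{L^4}$) together with Lemma \ref{lem:divcurl} for the harmonic function, rather than appealing to Corollary \ref{coro:interpolation} alone; your bootstrap remark for the $l=1$ endpoint is correct in spirit.
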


\begin{proof}
Let us note that the assumptions on the curvature imply that $\St$ satisfies the condition \eqref{eq:notationhp} for $m =  \lfloor \frac32l\rfloor+1$.  In particular, the condition $k \leq \frac32l +\frac12$ implies $k \leq m$. 

Let us prove the claim by induction over $k$ and consider first the case $k = 0$. This follows immediately from  Theorem \ref{teo:reg-capa} and from $\|B\|_{L^4(\St)} \leq C$ as 
\[
\|\nabla U\|_{H^{\frac12}(\St)} \leq C(1+ \|B\|_{L^2(\St)}) \leq C. 
\]
Let us then fix $k$ and assume that the claim holds for $k-1$. Since $U$ is constant on $\Sigma_t$, Theorem \ref{teo:reg-capa}  implies
\[
\|\nabla^{1+k} U\|_{H^{\frac12}(\St)} \leq C(1 + \|B\|_{H^k(\St)}).
\]
  By Proposition \ref{prop:meancrv-bound} we have
\[
\|B\|_{H^k(\St)} \leq C(1+ \|H\|_{H^k(\St)})
\leq C(1+\|p\|_{H^k(\St)}+\||\nabla U|^2\|_{H^k(\St)}).
\]
Proposition \ref{prop:kato-ponce} yields
\[
\||\nabla U|^2\|_{H^k(\St)} \leq C\|\nabla U\|_{L^\infty(\St)} \|\nabla U\|_{H^k(\St)}.
\]
Since $\Omega_t$ is uniformly $C^{1,\alpha}$-regular we have  by Schauder estimates $\| U\|_{C^{1,\alpha}(\St)} \leq C$. By combining the previous inequalities we obtain
\begin{equation}
    \label{eq:reg-capa-0}
\|\nabla^{1+k} U\|_{H^{\frac12}(\St)} \leq C(1 + \|p\|_{H^k(\St)} + \|\nabla U\|_{H^k(\St)})
\end{equation}

We claim next that under the assumptions of the lemma,  it holds for every smooth function $u: \Omega_t \to \R$ and for all $k \leq m$  
\begin{equation}
    \label{eq:reg-capa-1}
\|\nabla u\|_{H^k(\Sigma_t)} \leq C(\|u\|_{L^2(\St)} + \|\nabla^{1+k} u\|_{L^{2}(\St)}). 
\end{equation}
Indeed, for $k =1$ we have $\|\nabla u\|_{H^1(\Sigma_t)} \leq \|u\|_{L^2(\St)} +  \|\nabla^2 u\|_{L^2(\Sigma_t)}$, while  for $k=2$ the assumption $\|B\|_{L^4} \leq C$ and the Sobolev embedding  imply
\[
\begin{split}
\|\nabla u\|_{H^2(\Sigma_t)} &\leq  \|u\|_{L^2(\St)}+  \|\nabla^3 u\|_{L^2(\Sigma_t)} + \|B\star \nabla^2 u \|_{L^2(\Sigma_t)}\\
&\leq   \|u\|_{L^2(\St)} + \|\nabla^3 u\|_{L^2(\Sigma_t)} + \|B\|_{L^4(\Sigma_t)}  \|\nabla^2 u \|_{L^4(\Sigma_t)}\\
&\leq C(\|u\|_{L^2(\St)}+\|\nabla^3 u\|_{L^2(\Sigma_t)}).
\end{split}
\]
The case $k \geq 3$ follows from the same argument. We will take \eqref{eq:reg-capa-1} for granted from now on. 

We obtain by  \eqref{eq:reg-capa-0} and \eqref{eq:reg-capa-1} that 
\[
\|\nabla^{1+k} U\|_{H^{\frac12}(\St)} \leq C(1 + \|p\|_{H^k(\St)} + \|\nabla^{1+k} U\|_{L^{2}(\St)}).
\]
We deduce by Lemma \ref{lem:divcurl}, by interpolation and by the induction assumption (that the claim holds for $k-1$)
\[
\begin{split}
\|\nabla^{1+k} U\|_{L^{2}(\St)} &\leq C(1+\|\nabla^{k} U\|_{H^{1}(\St)}) \leq \e  \|\nabla^{k} U\|_{H^{\frac32}(\St)} +C_\e (1+\|\nabla^{k} U\|_{L^{2}(\St)})\\
&\leq \e  \|\nabla^{1+k} U\|_{H^{\frac12}(\St)} +  C_\e(1+\|p\|_{H^{k-1}(\St)}).
\end{split}
\]
 Thus by choosing $\e$ small enough we obtain  the claim.
\end{proof}

From  Lemma \ref{lem:reg-capa} we deduce that an estimate on the pressure implies bound on the curvature. The statement follows from the proof of Lemma \ref{lem:reg-capa}  and we leave the proof for the reader. 

 \begin{lemma}
\label{lem:press-curv}
Let $l \geq 1$ and assume that $\St$ is uniformly $C^{1,\alpha}(\Gamma)$-regular and for $l=1$ it holds  $\|p\|_{H^1(\Omega_t)} \leq M$ and for $l \geq 2 $ it holds  $E_{l-1}(t) \leq M$. In the case $l=1$ we have  
\[
\|B\|_{H^{\frac12}(\St)} \leq C
\]
and $\|B\|_{L^{4}(\St)} \leq C$. In the case  $l \geq 2$   we have
\[
\|B\|_{H^{\frac32 l- 1}(\Sigma_t)} \leq C.
\]
Moreover for $l \geq 1$ we have 
\[
\|B \|_{H^{k}(\Sigma_t)} \leq M(1+ \|p \|_{H^{k}(\Sigma_t)} )
\]
for integers  $k \leq \frac32l + \frac12$. The constants depend on $M, l$ and on the $C^{1,\alpha}$-norm of the heightfunction.
\end{lemma}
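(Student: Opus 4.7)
The strategy is the very bootstrap already employed in Lemma \ref{lem:reg-capa}, now read in the opposite direction: instead of deducing regularity for $U$ from regularity of $B$, one uses the boundary identity coming from the pressure equation in \eqref{system},
\[
H_{\St} = p + \frac{\Qt}{2}|\nabla U_{\Omega_t}|^2 \qquad \text{on } \St,
\]
to trade regularity of the pressure for regularity of the mean curvature, and then of the second fundamental form via Proposition \ref{prop:meancrv-bound}. Since $\St$ is uniformly $C^{1,\alpha}(\Gamma)$-regular, Schauder estimates for the exterior Dirichlet problem \eqref{eq:def-capapot} yield $\|\nabla U\|_{L^\infty(\St)} \leq C$ for free, so $|\nabla U|^2$ never creates an obstruction; the entire bookkeeping reduces to bounding $\|\nabla U\|_{H^k(\St)}$ via Lemma \ref{lem:reg-capa} and chaining it with the Kato--Ponce inequality of Proposition \ref{prop:kato-ponce}.

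For the base case $l=1$ the only input is $\|p\|_{H^1(\Omega_t)} \leq C$. The trace embedding extracts $\|p\|_{H^{1/2}(\St)} \leq C$, and the critical two-dimensional Sobolev embedding $H^{1/2}(\St) \hookrightarrow L^4(\St)$ gives $\|p\|_{L^4(\St)} \leq C$. Combined with $\|\nabla U\|_{L^\infty(\St)} \leq C$ and the boundary identity this implies $\|H_{\St}\|_{L^4(\St)} \leq C$, and the first part of Proposition \ref{prop:meancrv-bound} upgrades it to $\|B\|_{L^4(\St)} \leq C$. At this point $\St$ satisfies \eqref{eq:notationhp} with $m=2$, so Lemma \ref{lem:reg-capa} applies and yields $\|\nabla U\|_{H^{1/2}(\St)} \leq C$. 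Proposition \ref{prop:kato-ponce} then controls $\||\nabla U|^2\|_{H^{1/2}(\St)}$; the pressure identity gives $\|H_{\St}\|_{H^{1/2}(\St)} \leq C$, and the second part of Proposition \ref{prop:meancrv-bound} closes the loop to produce $\|B\|_{H^{1/2}(\St)} \leq C$.

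For $l \geq 2$ I would proceed by induction, so that the curvature bound at order $l-1$ is already available and Lemma \ref{lem:reg-capa} may be used at that order. From $E_{l-1}(t)\leq C$ one reads $\|\D_t v\|_{H^{\frac32(l-1)}(\Omega_t)} \leq C$; since $\nabla p = -\D_t v$ and $-\Delta p = \mathrm{Tr}((\nabla v)^2)$ by \eqref{eq:pressure-1}, standard elliptic regularity combined with the Dirichlet datum $p|_{\St} = H_{\St} - \Qt|\nabla U|^2/2$ and the inductive curvature bound delivers $\|p\|_{H^{\frac32 l - \frac12}(\Omega_t)} \leq C$, hence $\|p\|_{H^{\frac32 l - 1}(\St)} \leq C$ after trace. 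Inserting this into the bootstrap yields $\|B\|_{H^{\frac32 l - 1}(\St)} \leq C$. The final quantitative inequality for integers $k \leq \frac32 l + \frac12$ is then a direct reading of the same bootstrap: Lemma \ref{lem:reg-capa} provides $\|\nabla^{1+k} U\|_{H^{1/2}(\St)} \leq C(1+\|p\|_{H^k(\St)})$, the auxiliary estimate \eqref{eq:reg-capa-1} from its proof converts this into $\|\nabla U\|_{H^k(\St)} \leq C(1+\|p\|_{H^k(\St)})$, Kato--Ponce bounds $\||\nabla U|^2\|_{H^k(\St)}$ by $\|\nabla U\|_{L^\infty}\|\nabla U\|_{H^k(\St)}$, and Proposition \ref{prop:meancrv-bound} delivers the desired $\|B\|_{H^k(\St)} \leq C(1+\|p\|_{H^k(\St)})$. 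The only genuine obstacle is precisely the base step: one must manufacture \eqref{eq:notationhp} with $m=2$ out of a $C^{1,\alpha}$ boundary and an $H^1$ pressure bound alone, and this is exactly where the two-dimensional critical Sobolev embedding $H^{1/2}(\St)\hookrightarrow L^4(\St)$ is used in an essential way.
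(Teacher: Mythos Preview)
Your proposal is correct and is precisely the elaboration the paper intends: the paper's own ``proof'' of this lemma is the single sentence that it follows from the proof of Lemma~\ref{lem:reg-capa}, and the bootstrap you describe (pressure identity $H=p+\tfrac{\Qt}{2}|\nabla U|^2$, Kato--Ponce, the intermediate estimate \eqref{eq:reg-capa-0}, and Proposition~\ref{prop:meancrv-bound}) is exactly the content of that proof read in the direction $p\Rightarrow B$ rather than $B\Rightarrow U$. Your identification of the base step---getting $\|B\|_{L^4(\St)}\le C$ from $C^{1,\alpha}$ regularity and $\|p\|_{H^1(\Omega_t)}$ alone via the $L^p$ part of Proposition~\ref{prop:meancrv-bound} and the critical embedding $H^{1/2}(\St)\hookrightarrow L^4(\St)$---is correct and is indeed the only place where care is needed.

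Two minor remarks. First, the sentence invoking ``standard elliptic regularity combined with the Dirichlet datum $p|_{\St}=H_{\St}-\Qt|\nabla U|^2/2$ and the inductive curvature bound'' is superfluous and flirts with circularity (the Dirichlet datum involves $H$, which involves $B$); you already have $\|p\|_{H^{\frac32 l-\frac12}(\Omega_t)}\le C$ directly from $\nabla p=-\D_t v$ and the definition of $E_{l-1}$, so just drop that clause. Second, the induction for $l\ge 2$ is more cleanly phrased as a bootstrap on the integer $k$ in $\|B\|_{H^k(\St)}\le C(1+\|p\|_{H^k(\St)})$ rather than on $l$: each step upgrades the curvature regularity, which in turn justifies \eqref{eq:notationhp} at the next level so that Proposition~\ref{prop:meancrv-bound}, Proposition~\ref{prop:kato-ponce}, and Lemma~\ref{lem:reg-capa} apply at the next $k$; once you reach $k=\lfloor\frac32 l-1\rfloor$ (and the companion half-integer, by interpolation), Lemma~\ref{lem:reg-capa} is available at level $l$ and delivers the final range $k\le\frac32 l+\frac12$.
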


From now on we will assume that, in addition to \eqref{eq:apriori_est}, we have  for $l=1$  the estimate $\|p\|_{H^1(\Omega_t)} \leq C$  and for $l \geq 2$  $E_{l-1}(t)\leq C$. By  Lemma \ref{lem:press-curv} these imply curvature bounds that we mentioned at the beginning of the section.  

We begin to estimate the error terms and we begin with $R_{\Div}^l$ defined in \eqref{eq:def-R-div}.
\begin{lemma}
\label{lem:R-div-est}
Consider $R_{\Div}^l$ defined in \eqref{eq:def-R-div}. Assume that  \eqref{eq:apriori_est} holds and  $\|p\|_{H^1(\Omega_t)} \leq M$. Then we have 
\[
\|R_{\Div}^1\|_{H^\frac12(\Omega_t)}^2 \leq C(1+ \|p\|_{H^2(\Omega_t)}^2)E_1(t)
\]
for $C= C(M)$. \\
Let  $l \geq 2$ and assume  also that $E_{l-1}(t)\leq M$. Then there exists $C = C(M,l)$, such that 
\beq \label{eq:divestimate}
\| R_{\Div}^l\|_{H^{\frac12}(\Omega_t)}^2  \leq C E_l(t)
\eeq
and for integers $1 \leq k \leq l$ and every $\e>0$ it holds
\beq \label{eq:divestimatehigh}
\|R_{\Div}^{l-k}\|_{H^{\frac32 k -1}(\Omega_t)}^2  \leq \e E_{l}(t) + C_\e
\eeq
for some $C_\e = C_\e(M,l,\e)$.
\end{lemma}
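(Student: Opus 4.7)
The plan is to prove each of the three estimates separately. Throughout, the main tools are the Kato-Ponce inequality (Proposition \ref{prop:kato-ponce}), the Sobolev embeddings and interpolation inequalities of Proposition \ref{prop:interpolation} and Corollary \ref{coro:interpolation}, and Lemma \ref{lem:press-curv}, which under the stated hypotheses supplies enough regularity of $\St$ to apply these tools. Estimates \eqref{eq:divestimate} and \eqref{eq:divestimatehigh} are proved by an induction-style argument relying on the hypothesis $E_{l-1}(t)\leq C$; the base case acquires the $(1+\|p\|_{H^2}^2)$ prefactor precisely because this inductive hypothesis is unavailable for $l=1$.

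For $l\geq 2$, I would expand $R^l_{\Div}$ from \eqref{eq:def-R-div} into its finite sum of products $a_\beta(\nabla v)\nabla\D_t^{\beta_1}v\star\cdots\star\nabla\D_t^{\beta_l}v$ with $|\beta|\leq l$. In any such product at most one index $\beta_i$ can attain the maximum compatible with $|\beta|\leq l$; the other factors involve $\nabla\D_t^{\beta_i}v$ with $\beta_i$ small enough that, by $E_{l-1}(t)\leq C$ together with the Sobolev embedding $H^2(\Omega_t)\hookrightarrow L^\infty(\Omega_t)$, they are uniformly bounded in $L^\infty(\Omega_t)$. Iterating Proposition \ref{prop:kato-ponce} with the ``top'' factor placed in $H^{1/2}(\Omega_t)$, where it is controlled by $\|\D_t^l v\|_{H^{3/2}(\Omega_t)}\leq E_l(t)^{1/2}$, and the remaining factors in $L^\infty$ yields \eqref{eq:divestimate}. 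For \eqref{eq:divestimatehigh} the same scheme applies, but with an effective gap of $3/2$ derivatives available in $E_l$ compared with the norm we need; interpolating the top-order factor between its $E_l$-controlled norm and an $E_{l-1}$-controlled lower norm and applying Young's inequality converts this slack into the $\e E_l(t)+C_\e$ form.

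For the base case $l=1$, decompose $R^1_{\Div}=a_0(\nabla v)\nabla v+a_1(\nabla v)\nabla\D_t v$. The first summand is handled directly by Proposition \ref{prop:kato-ponce} using $\|\nabla v\|_{L^\infty}\leq\Lambda_T$ and $\|v\|_{H^{3/2}(\Omega_t)}\leq C\|v\|_{H^3(\Omega_t)}\leq CE_1(t)^{1/2}$. For the second summand, invoke the Euler equation $\D_t v=-\nabla p$ to rewrite $\nabla\D_t v=-\nabla^2 p$ and split via Proposition \ref{prop:kato-ponce} as
\[
\|a_1(\nabla v)\nabla^2 p\|_{H^{1/2}(\Omega_t)}\leq C\|a_1\|_{L^\infty}\|\nabla^2 p\|_{H^{1/2}(\Omega_t)}+C\|a_1\|_{H^{1/2}(\Omega_t)}\|\nabla^2 p\|_{L^q(\Omega_t)}
\]
for a suitable Sobolev exponent $q$. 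The first factor is bounded via $\|\nabla^2 p\|_{H^{1/2}}\leq\|p\|_{H^{5/2}(\Omega_t)}\leq C(1+\|\D_t v\|_{H^{3/2}(\Omega_t)})\leq C(1+E_1(t)^{1/2})$, obtained from $\nabla p=-\D_t v$ together with Poincar\'e. The second factor is controlled by interpolating $\|\nabla^2 p\|_{L^q}$ between $\|\nabla^2 p\|_{L^2}\leq\|p\|_{H^2(\Omega_t)}$ and a Sobolev-embedded higher norm, producing the $\|p\|_{H^2}^2$ prefactor after squaring and summing.

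The main obstacle is this base case: in three dimensions $E_1(t)$ provides no $L^\infty$-control on $\nabla\D_t v$ (even after the substitution $\nabla\D_t v=-\nabla^2 p$), so a naive Kato-Ponce splitting cannot close. Accepting the $\|p\|_{H^2}^2$ prefactor is what keeps the estimate linear in $E_1(t)$, which is in turn what makes the first-order energy estimate \eqref{eq:energy3-intro} viable in Section 7.
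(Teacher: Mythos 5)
Your treatment of the cases $l\geq 2$ follows essentially the same route as the paper: isolate the highest-order material derivative, put it in $H^{1/2}$ (for \eqref{eq:divestimate}) or in the appropriate $H^{\frac32k-1}$ norm (for \eqref{eq:divestimatehigh}), bound all lower factors in $L^\infty$ using $E_{l-1}(t)\leq C$ together with the Sobolev embedding, and for \eqref{eq:divestimatehigh} exploit the spare half-derivative via interpolation and Young. Your summary glosses over a borderline case (e.g.\ $l=2$ with $\beta_1=\beta_2=1$), which the paper treats explicitly, but the strategy is correct.

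The base case $l=1$, however, contains a genuine gap. The displayed inequality
\[
\|a_1(\nabla v)\nabla^2 p\|_{H^{1/2}(\Omega_t)}\leq C\|a_1\|_{L^\infty}\|\nabla^2 p\|_{H^{1/2}(\Omega_t)}+C\|a_1\|_{H^{1/2}(\Omega_t)}\|\nabla^2 p\|_{L^q(\Omega_t)}
\]
is not a consequence of Proposition \ref{prop:kato-ponce}. The first part of that proposition gives the endpoint split $\|a_1\|_{H^{1/2}}\|\nabla^2 p\|_{L^\infty}+\|a_1\|_{L^\infty}\|\nabla^2 p\|_{H^{1/2}}$, and $\|\nabla^2 p\|_{L^\infty}$ is precisely what $E_1(t)$ fails to control. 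The fractional Leibniz rule with general Lebesgue exponents, \eqref{eq:kato-ponce}, imposes $\frac{1}{p_1}+\frac{1}{q_1}=\frac12$: to place $a_1$ in $W^{1/2,p_1}=H^{1/2}$ one must take $p_1=2$, hence $q_1=\infty$, and $\|\nabla^2 p\|_{L^q}$ with finite $q$ cannot appear in that slot. In short, the second term of your split violates the Kato--Ponce scaling constraint, so the argument does not close.

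The paper's proof resolves this differently. It extends $\nabla v$ and $\nabla\D_t v$ to $\R^3$ via Proposition \ref{prop:extension}, applies \eqref{eq:kato-ponce} on $\R^3$ with the high-order factor $\nabla\D_t v$ placed in the role of $f$ and $\nabla v$ in the role of $g$, and chooses the exponents $p_1=2,\,q_1=\infty,\,p_2=12/5,\,q_2=12$. The resulting second term $\|\nabla\D_t v\|_{L^{12/5}}\|\nabla v\|_{W^{1/2,12}}$ is then handled by Gagliardo--Nirenberg interpolation \eqref{eq:BM}: the $L^{12/5}$ norm interpolates between $\|\nabla\D_t v\|_{H^{1/2}}$ and $\|\nabla\D_t v\|_{L^2}=\|p\|_{H^2(\Omega_t)}$, while the $W^{1/2,12}$ norm interpolates between $\|\nabla v\|_{W^{1,6}}\leq C\|v\|_{H^3}$ and $\|\nabla v\|_{L^\infty}\leq\Lambda_T$. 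This produces the product $\|p\|_{H^2}^{1/2}\|\D_t v\|_{H^{3/2}}^{1/2}\|v\|_{H^3}^{1/2}$, which after squaring gives exactly the $(1+\|p\|_{H^2}^2)E_1(t)$ bound. Your diagnosis of \emph{why} the $\|p\|_{H^2}^2$ prefactor must appear is correct; the mechanism by which one actually extracts it requires the extension-to-$\R^3$ step and the nonobvious exponent choice rather than the $L^\infty$--$H^{1/2}$/$H^{1/2}$--$L^q$ decomposition you wrote.
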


\begin{proof}
For $l=1$ we have by  the definition of $R_{\Div}^1 $ \eqref{eq:def-R-div} that 
\[
R_{\Div}^1 = a(\nabla v) \star \nabla \D_t v,
\]
where $a$ is smooth. Note that in this case $E_1(t)$, defined in \eqref{def:E_l}, reads as 
\[
E_1(t) =  \|\D_t^{2} v\|_{L^{2}(\Omega_t)}^2 + \|\D_t v\|_{H^{\frac32}(\Omega_t)}^2 + \|v\|_{H^3(\Omega_t)}^2+ 
 \|\D_t v \cdot \nu \|_{H^1(\Sigma_t)}^2+1.
\]
Since $\|B\|_{L^4} \leq C$, we may extend $\nabla v$ and $\nabla \D_t v$ to  $\R^3$, denote the extensions $F_v$ and $G_{v_t} $ respectively, such that  the extensions satisfy  
\[
\|F_v\|_{L^\infty(\R^3)} \leq C \|\nabla v\|_{L^\infty(\Omega_t)} ,\qquad  \|F_v\|_{H^2(\R^3)} \leq C \|\nabla v\|_{H^2(\Omega_t)} 
\]
and
\[
\|G_{v_t}\|_{L^2(\R^3)} \leq C \|\nabla \D_t v\|_{L^2(\Omega_t)} , \qquad \|G_{v_t}\|_{H^{\frac12}(\R^3)} \leq C \|\D_t v\|_{H^{\frac32}(\Omega_t)}. 
\]
Moreover, since $\Omega_t$ is bounded we may assume that $F_{v}, G_{v_t} \in C_0^\infty(B_R)$. 

We use the Kato-Ponce  inequality  \eqref{eq:kato-ponce} in $\R^3$ with $p_1 = 2, q_1 = \infty$,  $p_2 = \frac{12}{5}$ and $q_2= 12$ to deduce
\[
\begin{split}
\|R_{\Div}^1\|_{H^{\frac12}(\Omega_t)} &\leq C \|\nabla \D_t v\star a(\nabla v)\|_{H^{\frac12}(\Omega_t)} \leq C\| G_{v_t} \star a(F_v)\|_{H^{\frac12}(\R^3)} \\
&\leq C \|G_{v_t}\|_{H^{\frac12}(\R^3)} \|F_v\|_{L^\infty(\R^3)} + C \|G_{v_t}\|_{L^{\frac{12}{5}}(\R^3)}\|F_v\|_{W^{\frac12, 12}(\R^3)}.
\end{split}
\]
Since $\|F_v\|_{L^\infty} \leq C$ we have 
\[
\|G_{v_t}\|_{H^{\frac12}(\R^3)} \|F_v\|_{L^\infty(\R^3)} \leq C \|\D_t v\|_{H^{\frac32}(\Omega_t)} \leq C E_1(t)^{\frac12}.
\]
We have by using the Sobolev embedding $\|u\|_{L^p(B_R)} \leq C \|u\|_{H^s(B_R)} = C \|u\|_{W^{s,2}(B_R)}$, for $p = \frac{6}{3-2s}$ and $s=\frac14$, and by the general Gagliardo-Nirenberg inequality \eqref{eq:BM} that  
\[
\|G_{v_t}\|_{L^{\frac{12}{5}}(\R^3)} \leq C\|G_{v_t}\|_{H^{\frac{1}{4}}(\R^3)}  \leq C \|G_{v_t}\|_{H^{\frac12}(\R^3)}^{\frac12}\|G_{v_t}\|_{L^{2}(\R^3)}^{\frac12}.
\]
By   \eqref{eq:BM} we also have 
\[
\|F_{v}\|_{W^{\frac12, 12}(\R^3)} \leq C \|F_{v}\|_{W^{1,6}(\R^3)}^{\frac12}\|F_{v}\|_{L^{\infty}(\R^3)}^{\frac12} \leq C \|F_{v}\|_{H^{2}(\R^3)}^{\frac12}\|F_{v}\|_{L^{\infty}(\R^3)}^{\frac12}.
\]
Therefore by $\|F_{v}\|_{L^{\infty}(\R^3)} \leq C$, $\|F_{v}\|_{H^{2}(\R^3)} \leq C\|\nabla v\|_{H^2(\Omega_t)}\leq C\|v\|_{H^3(\Omega_t)}$ and 
\[
\|G_{v_t}\|_{L^{2}(\R^3)} \leq C \|\nabla \D_t v \|_{L^2(\Omega_t)} = C \|p\|_{H^2(\Omega_t)}
\]
we have 
\[
\|G_{v_t}\|_{L^{\frac{12}{5}}(\R^3)}\|F_{v}\|_{W^{\frac12, 12}(\R^3)} \leq C \|p\|_{H^2(\Omega_t)}^{\frac12} \|\D_t v\|_{H^{\frac32}(\Omega_t)}^{\frac12}  \|F_{v}\|_{H^{2}(\R^3)}^{\frac12} \leq C \|p\|_{H^2(\Omega_t)}^{\frac12} E_1(t)^{\frac12}.
\]
This implies the first inequality.

Let  $l\geq2$.  In order to estimate the product \eqref{eq:def-R-div} we use Proposition  \ref{prop:kato-ponce} to deduce
 \[
 \|R^l_{\Div}\|_{H^\frac12(\Omega_t)}
 \leq \sum_{|\beta|\leq l}\|\nabla \D^{\beta_1}_t v\|_{H^\frac12(\Omega_t)} \prod_{i=2}^l\|\nabla\D_t^{\beta_i} v\|_{L^\infty(\Omega_t)},
\]
where we use the convention that $\beta_1\geq \beta_2 \geq \dots \geq \beta_l $. By Recalling the definition of $E_{l-1}(t)$ in \eqref{def:E_l}, by the assumption $E_{l-1}(t)\leq C$ and by Sobolev embedding it holds for $\beta_i \leq l-2$
\[
\|\nabla\D^{\beta_i}_t v\|_{L^{\infty}(\Omega_t)}^2 \leq C \|\D_t^{\beta_i}v\|_{H^3(\Omega_t)}^2 \leq \sum_{k=0}^{l-1} \|\D_t^{l-k} v\|_{H^{\frac32 k}(\Omega_t)}^2 \leq C E_{l-1}(t) \leq C.
\]
For future purpose we also note that by the same argument we  have
\begin{equation}
    \label{eq:induction}
\|\nabla^{1+ \alpha}\D^{\beta}_t v\|_{L^{\infty}(\Omega_t)}^2  \leq C E_{l-1}(t) \quad\text{for } \, \alpha + \beta \leq l-2.
\end{equation}
Moreover, by the same argument it holds
\[
\|\nabla \D_t^{l-1}v\|^2_{L^\infty(\Omega_t)}\leq C E_l(t) .
\]
We also have 
\[
\|\nabla \D_t^{l-1}v\|_{H^\frac12(\Omega_t)}^2\leq C\| \D_t^{l-1}v\|_{H^\frac32(\Omega_t)}^2 \leq CE_{l-1}(t)\leq C.
\]
Recall that by the definition of $R_{\Div}^l$  above, the norm of the index is $|\beta| \leq l$. Therefore since $l \geq 2$ it holds $\beta_i \leq l-1$ for $i \geq 2$ and $\beta_i \leq l-2$ for $i \geq 3$. 
Thus we conclude by the above estimates that  
\[
 \|R^l_{\Div}\|_{H^\frac12(\Omega_t)}
 \leq C(1+ \|\nabla \D^l_t v\|_{H^\frac12(\Omega_t)} +\|\nabla \D_t^{l-1}v\|_{H^\frac12(\Omega_t)} \|\D_t^{l-1} v\|_{L^\infty(\Omega_t)}) \leq C E_{l}(t)^{\frac12},
\]
which implies  \eqref{eq:divestimate}.
 
The proof of \eqref{eq:divestimatehigh} follows from similar argument  and we merely sketch it. For $k =l$ the statement is trivial. 
For $1 \leq k\leq l-1$ 
we recall that 
\[
 R^{l-k}_{\Div}=\sum_{|\beta|\leq l-k} a_\beta(\nabla v) \nabla \D_t^{\beta_1} v \star \cdots \star\nabla \D_t^{\beta_{l-k}}v.
\]
First, if $k = 1$ then  by applying the previous estimate for $l-1$ we obtain
\[
\|R^{l-1}_{\Div}\|_{H^{\frac12}(\Sigma_t)}^2 \leq C(1 + \|p\|_{H^2(\Omega_t)}^2) E_{l-1}(t).
\]
But now the condition $E_{l-1}(t)\leq C$ yields
\[
\|p\|_{H^2(\Omega_t)}^2 \leq \|\D_t v\|_{H^1(\Omega_t)}^2 \leq C E_{l-1}(t) \leq C.  
\]
This implies the inequality for $k = 1$. 

Assume $2 \leq k \leq l-1$. We apply  Proposition \ref{prop:kato-ponce} to bound 
\[
\|R^{l-k}_{\Div}\|_{H^{\frac32k -1}(\St)} \leq \sum_{|\beta|\leq l-k} \|\nabla \D_t^{\beta_1} v \|_{L^\infty(\St)} \cdots \|\nabla \D_t^{\beta_{l-k-1}} v \|_{L^\infty(\St)} \|\nabla \D_t^{\beta_{l-k}}v\|_{H^{\frac32k -1}(\St)}.
\]
Since $k \geq 2$ then  $\beta_i \leq  l-2$ for all $i$. Therefore by \eqref{eq:induction}  we have
$\|\nabla \D_t^{\beta_i}v\|_{L^\infty(\Omega_t)}\leq C$ for all $i$. Moreover since $\beta_i \leq l-k$ it holds by the  Trace Theorem and by  interpolation
\[
\begin{split}
\| \nabla \D_t^{\beta_i} v\|_{H^{\frac32k -1}(\St)}^2 &\leq C\|\D_t^{\beta_i} v\|_{H^{\frac32k+1}(\Omega_t)}^2 \\
&\leq \e \|\D_t^{\beta_i} v\|_{H^{\frac32(k+1)}(\Omega_t)}^2 + C_\e\|\D_t^{\beta_i} v\|_{H^{\frac32k}(\Omega_t)}^2\\
&\leq \e E_l(t) + C_\e E_{l-1}(t) \leq \e E_l(t) + C_\e. 
\end{split}
\]
Hence, we have \eqref{eq:divestimatehigh}.
\end{proof}

We proceed to bound  the   $L^2$-norm of $R_{bulk}^l$, which is defined in \eqref{def:error-bulk},  in the fluid domain $\Omega_t$. Formally  $R_{bulk}^l$ is of order $1/2$ higher than $R^{l}_{\Div}$, and therefore this bound is of the same order than the previous lemma. 

\begin{lemma}
    \label{lem:bound-R_B}
   Consider $R_{bulk}^l$ defined in   \eqref{def:error-bulk}. Assume  that  \eqref{eq:apriori_est} holds and  $\|p\|_{H^1(\Omega_t)} \leq M$. There exists $C = C(M)$ such that 
    \[ 
    \|R_{bulk}^1 \|_{L^2(\Omega_t)}^2 \leq C (1+ \|p\|_{H^2(\Omega_t)}^2)E_1(t).
    \]
    Let $l \geq 2$ and assume  also that $E_{l-1}(t)\leq M$.
    There exists $C=C (M,l)$ such that
    \[
    \|R_{bulk}^l \|_{L^2(\Omega_t)}^2 \leq C  E_l(t)
    \]
    and  for integers $1\leq  k \leq l-1$ and for $\e >0 $ it holds
    \[
    \| R_{bulk}^{l-k} \|_{H^{\frac32(k-1)}(\Omega_t)}^2 \leq \e E_l(t) + C_\e
    \]
    for some constant $C_\e = C_\e(M,l,\e)$. 
\end{lemma}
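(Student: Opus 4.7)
The plan is to follow the blueprint of Lemma~\ref{lem:R-div-est}, adapting it to the structure of $R_{bulk}^l$ given in \eqref{def:error-bulk}. Compared with $R_{\Div}^l$, each summand of $R_{bulk}^l$ carries one additional factor $\nabla^{\alpha_1}\D_t^{\alpha_2+\beta_{l+1}}v$, which may contribute up to one extra spatial or material derivative. This is exactly compensated by the fact that we now measure $R_{bulk}^l$ in $L^2(\Omega_t)$ rather than in $H^{1/2}(\Omega_t)$, so the overall derivative budget is unchanged. In every case the strategy will be to split each product into one ``top-order'' factor, estimated in $L^2$ (or in an appropriate half-integer Sobolev norm) via $E_l(t)$, and a product of ``low-order'' factors, estimated in $L^\infty$ through the Sobolev embedding applied to $E_{l-1}(t)$ (or to $\Lambda_T$ when $l=1$).

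For the base case $l=1$, a direct inspection of \eqref{def:error-bulk} singles out the worst terms as $a(\nabla v)\,\nabla\D_tv\star\nabla v$ and $a(\nabla v)\,\nabla\D_tv\star\D_tv$. The critical factor is $\nabla\D_tv$; since the Euler equation in \eqref{system} gives $\D_tv=-\nabla p$, one has $\|\nabla\D_tv\|_{L^2(\Omega_t)}\leq\|p\|_{H^2(\Omega_t)}$. The remaining factors are controlled by $\|\nabla v\|_{L^\infty}\leq\Lambda_T$ and by $\|\D_tv\|_{L^2}\leq\|p\|_{H^1}\leq C$. H\"older's inequality then yields $\|R_{bulk}^1\|_{L^2(\Omega_t)}\leq C(1+\|p\|_{H^2(\Omega_t)})E_1(t)^{1/2}$, which upon squaring gives the claimed bound.

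For $l\geq 2$, the hypothesis $E_{l-1}(t)\leq C$ combined with the Sobolev embedding $H^3(\Omega_t)\hookrightarrow L^\infty(\Omega_t)$ yields $\|\nabla^{1+\alpha}\D_t^\beta v\|_{L^\infty(\Omega_t)}\leq C$ whenever $\alpha+\beta\leq l-2$, exactly as in \eqref{eq:induction}. In each summand of $R_{bulk}^l$ the total order is $|\alpha|+|\beta|\leq l+1$ distributed among $l+1$ factors, so pigeonhole guarantees that at most one factor can violate the condition $\alpha_i+\beta_i\leq l-2$. I would bound this unique top factor in $L^2$ by $C E_l(t)^{1/2}$ (the norm that appears is always contained in $E_l$, up to the trivial embedding $\|\nabla\D_t^\beta v\|_{L^2}\leq\|\D_t^\beta v\|_{H^1}$), bound the remaining $l$ factors in $L^\infty$ by a constant, and conclude by H\"older's inequality. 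The higher-order estimate $\|R_{bulk}^{l-k}\|_{H^{3(k-1)/2}}^2\leq\e E_l+C_\e$ would follow from the same scheme with H\"older replaced by the Kato--Ponce inequality of Proposition~\ref{prop:kato-ponce}: once the $\frac32(k-1)$ derivatives land on the top factor, one arrives at a Sobolev norm of $\D_t^{l-k}v$ that is strictly weaker than the $E_l$-controlled norm $\|\D_t^{l-k}v\|_{H^{3(k+1)/2}}$, and the interpolation inequality from Corollary~\ref{coro:interpolation} combined with Young's inequality produces the $\e E_l+C_\e$ split.

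The main difficulty will be the careful bookkeeping: one must verify that for every admissible index vector $(\alpha,\beta)$ in \eqref{def:error-bulk} at most one factor requires control beyond the $E_{l-1}$ threshold, and that the derivative count of this top factor always fits inside $E_l(t)$ with a strict gap, so that Young's inequality delivers the $\e E_l+C_\e$ split in the higher-order estimate. The $l=1$ case is the most delicate because $E_0$ is not available and the appearance of the explicit $\|p\|_{H^2}^2$ factor is unavoidable; this factor will ultimately be absorbed via the auxiliary inequality \eqref{eq:energy4-intro} proved in Section~6.
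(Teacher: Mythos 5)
Your proposal has two genuine gaps, both in the normative accounting.

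First, the $l=1$ case as described does not close. You bound the critical factor $\nabla\D_tv$ in $L^2$ by $\|p\|_{H^2}$ and the remaining factor $\D_tv$ in $L^2$ by $C$, and then claim H\"older yields an $L^2$ bound on the product. But two $L^2$ factors only yield an $L^1$ bound on the product; H\"older needs conjugate exponents summing to $\frac12$. Moreover, the claimed output $C(1+\|p\|_{H^2})E_1^{1/2}$ does not follow from these two pieces: the factor $E_1^{1/2}$ appears out of nowhere. The correct split (the one the paper uses) for the worst term $\nabla\D_tv\star\D_tv=\nabla\D_tv\star\nabla p$ is $L^3$--$L^6$: $\|\nabla\D_tv\|_{L^3}\leq\|\D_tv\|_{H^{3/2}}\leq E_1(t)^{1/2}$ and $\|\nabla p\|_{L^6}\leq\|p\|_{H^2}$. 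Both factors must receive a nontrivial Sobolev norm; neither can be reduced to a constant. (You also omit the summand $\nabla v\star\D_t^2v$, though that one is benign.)

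Second, the pigeonhole claim for $l\geq 2$ is false for small $l$. You assert that since $|\alpha|+|\beta|\leq l+1$ is distributed among $l+1$ factors, at most one factor can violate the $L^\infty$-threshold $\alpha_i+\beta_i\leq l-2$. Two violating factors require a total of at least $2(l-1)$, and $2(l-1)\leq l+1$ whenever $l\leq3$. Concretely, for $l=2$ the summand $\nabla\D_tv\star\nabla\D_tv\star v$ (indices $\beta_1=\beta_2=1$, $\beta_3=0$, $\alpha=0$, which is admissible since $|\beta|=2=l$) has two factors exceeding the threshold $l-2=0$. Neither $\nabla\D_tv$ factor is uniformly bounded in $L^\infty$ under the hypothesis $E_1(t)\leq C$ alone, so the scheme ``one factor in $L^2$, the rest in $L^\infty$'' breaks down. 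The paper handles this by a more elaborate case analysis in \eqref{eq:Rbulk11}: several summands require intermediate $L^3$--$L^6$ splits, and factors of the form $\nabla\D_t^\beta v$ with $\beta\in\{1,\dots,l-1\}$ are controlled through the interpolation $\|\nabla\D_t^\beta v\|_{L^6}\leq C\|\nabla\D_t^\beta v\|_{H^2}^{1/2}\|\nabla\D_t^\beta v\|_{L^2}^{1/2}\leq CE_l(t)^{1/4}E_{l-1}(t)^{1/4}$, so that a product of two such subcritical factors still lands inside $CE_l(t)^{1/2}$. Without this interpolation step your estimate cannot be closed for $l\in\{2,3\}$. Your treatment of the third estimate (the case $k\geq2$) is directionally right -- all $\beta_i\leq l-k\leq l-2$ and Kato--Ponce plus interpolation delivers the $\e E_l+C_\e$ split -- but the $k=1$ subcase is best handled by applying the second estimate at level $l-1$, which is what the paper does.
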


\begin{proof}
By \eqref{def:error-bulk} and the uniform bound on $\nabla v$  given by  \eqref{eq:apriori_est} we have a  pointwise  estimate
    \[
    |R_{bulk}^l| \leq C \sum_{|\alpha|\leq 1, |\beta| \leq l} |\nabla \D_t^{\beta_1} v |   \cdots |\nabla \D_t^{\beta_{l}} v | \, | \nabla^{\alpha_1} \D_t^{\alpha_2 + \beta_{l+1}} v|.
    \]
Let us first consider the case $l=1$. Then we have by the above inequality, by $\D_t v = -\nabla p$ and by ignoring the terms of the form $|\nabla v |$, as they are uniformly bounded, and obtain a pointwise bound
 \[
|R_{bulk}^1|  \leq  C \left(1+ |\D_t^{2} v| +   | \nabla \D_t v||\nabla p|\right). 
 \]
 Therefore we have by H\"older's inequality and  by the  Sobolev embedding 
 \[
 \begin{split}
 \|R_{bulk}^1 \|_{L^2(\Omega_t)}^2 &\leq C(\|\D_t^{2} v\|_{L^2(\Omega_t)}^2 + \|\nabla p  \|_{L^6(\Omega_t)}^2  \|\nabla \D_t v  \|_{L^3(\Omega_t)}^2  )\\
 &\leq C(1+ \| p\|_{H^2(\Omega_t)}^2)(1+ \|\D_t^2 v\|_{L^2(\Omega_t)}^2 + \|\D_t v\|_{H^{3/2}(\Omega_t)}^2)\\
 &\leq C(1+ \| p\|_{H^2(\Omega_t)}^2)(1+ E_1(t)).
 \end{split}
 \]
 This implies the claim for $l =1$.

Let us then treat the case $l \geq 2$. Let us assume that the first $l$ indexes are ordered as $\beta_1 \geq \beta_2 \geq \dots \geq \beta_{l}$. As before we ignore all the terms in above which are uniformly bounded by the a priori assumption and by the assumption  $E_{l-1}(t) \leq C$. Recall first that by \eqref{eq:induction} it holds  
\[
\|\nabla \D_t^{\beta_i} v\|_{L^\infty} \leq C \quad \text{when } \quad \beta_i \leq l-2.
\]
Recall that it holds $|\beta| \leq l$. Therefore, if  $\beta_{l+1} \geq l-1$  then $\beta_1 \leq 1$ and $\beta_i =0$ for $i \geq 2$. When $\beta_{l+1} = l-2$ then the only possible other none-zero indexes  are when $\beta_1 = 2$  or when  $\beta_1 = \beta_2 = 1$. Finally when $l \geq 3$ and  $\beta_{l+1} \leq l-3$ then $\nabla^{\alpha_1}\D_t^{\alpha_2+\beta_{l+1}} v$  is itself uniformly bounded and the only  nontrivial terms  are given by the indexes  $\beta_1 = l-1$ and $\beta_2 = 1$. Hence, we have a pointwise bound which we write by relabeling the indexes as
 \[
 \begin{split}
    &|R_{bulk}^l| \leq C \sum_{|\alpha|\leq 1} |\nabla^{\alpha_1} \D_t^{\alpha_2 + l} v | +C\sum_{|\alpha|\leq 1, \beta_i \leq l-1}|\nabla \D_t^{\beta_1}  v| |\nabla^{\alpha_1} \D_t^{\alpha_2 + \beta_2} v | \\
    &+  C \sum_{|\alpha| \leq 1}(|\nabla \D_t^2 v | +|\nabla \D_t v |^2 ) | \nabla^{\alpha_1} \D_t^{\alpha_2 + l-2} v| +C(|\nabla \D_t^{l}  v| + |\nabla \D_t^{l-1}  v||\nabla \D_t  v|) |\nabla\D_t v| 
    \end{split}
    \]
Then by  H\"older's inequality we deduce
\begin{equation}
    \label{eq:Rbulk11}
\begin{split}
\| R_{bulk}^l\|_{L^2(\Omega_t)}^2 &\leq C \sum_{|\alpha|\leq 1} \|\nabla^{\alpha_1} \D_t^{\alpha_2 +l}  v\|_{L^2}^2 +C \sum_{|\alpha| \leq 1, \beta_i \leq l-1}   \|\nabla \D_t^{\beta_1}  v\|_{L^3}^2  \| \nabla^{\alpha_1} \D_t^{\alpha_2 +\beta_2} v \|_{L^6}^2\\
&+ C \sum_{|\alpha| \leq 1}   (  \| \nabla \D_t^2 v\|_{L^3}^2 +  \| \nabla\D_t  v\|_{L^6}^4 ) \|\nabla^{\alpha_1}  \D_t^{\alpha_2 + l-2}   v\|_{L^6}^2 \\
&+ C(\|\nabla \D_t^l v\|_{L^3}^2  + \|\nabla \D_t^{l-1} v\|_{L^6}^2\|\nabla \D_t v\|_{L^6}^2) \| \D_t v\|_{L^6}^2. 
\end{split}
\end{equation}
We bound the  first term on RHS of \eqref{eq:Rbulk11} for $\alpha_1+ \alpha_2 \leq  1$
\[
\|\nabla^{\alpha_1} \D_t^{\alpha_2 +l}  v\|_{L^2(\Omega_t)}^2 \leq \| \D_t^{l+1} v \|_{L^2(\Omega_t)}^2 + \| \D_t^{l} v \|_{H^{3/2}(\Omega_t)}^2  \leq  E_l(t).
\]
We claim  that the  next term  with the $L^3$-norm,  $\| \nabla \D_t^{\beta_1}  v \|_{L^3}$,  is  bounded. Indeed, we use the Sobolev embedding, the induction assumption and the fact that  $\beta_1 \leq l-1$ and have
\[
 \|\nabla \D_t^{\beta_1} v \|_{L^3(\Omega_t)}^2 \leq  \|\D_t^{\beta_1} v \|_{H^{\frac32}(\Omega_t)}^2 \leq  \sum_{k=0}^l \|\D_t^{l -k} v\|_{H^{\frac32  k}(\Omega_t)}^2  \leq  E_{l-1} (t)  \leq C.
\]
We bound the coupling term  with  $\alpha_1 + \alpha_2 \leq 1$ and $\beta_2 \leq l-1$ by
\[
\| \nabla^{\alpha_1} \D_t^{\alpha_2 +\beta_2}   v\|_{L^6(\Omega_t)}^2 \leq C\| \D_t^{\alpha_2+\beta_2}   v\|_{H^{1+\alpha_1}(\Omega_t)}^2 \leq C \sum_{k=0}^l \|\D_t^{l+1 -k} v\|_{H^{\frac32 k}(\Omega_t)}^2 \leq C E_l(t).
\]

We proceed to the next row in \eqref{eq:Rbulk11} and  for $\alpha_1 + \alpha_2 \leq 1$ we have
\[
\| \nabla^{\alpha_1} \D_t^{\alpha_2 +l-2} v \|_{L^6(\Omega_t)}^2 \leq \| \D_t^{\alpha_2 +l-2} v \|_{H^{1+\alpha_1}(\Omega_t)}^2 \leq  C \sum_{k=0}^l \|\D_t^{l -k} v\|_{H^{\frac32 k}(\Omega_t)}^2 \leq C E_{l-1}(t) \leq C.
\]
We also have 
\[
\|\nabla \D_t^2  v\|_{L^3(\Omega_t)}^2 \leq   \|\D_t^{2} v \|_{H^{3/2}(\Omega_t)}^2 \leq C E_2(t) \leq C E_l(t)
\] 
since $l \geq 2$. Moreover, by  interpolation it holds 
\[
\| \nabla \D_t  v\|_{L^6}  \leq C\|\nabla \D_t v \|_{H^2}^{\frac12}  \| \nabla \D_t  v\|_{L^2}^{\frac12} \leq C E_2(t)^{\frac14} E_1(t)^{\frac14}  \leq C E_l(t)^{\frac14},
\]
when $l \geq 2$. Hence, the second row in \eqref{eq:Rbulk11} is bounded by $E_l(t)$. 

We are left with the last row in \eqref{eq:Rbulk11}. We bound the first term by 
\[
\|\nabla \D_t^l v\|_{L^3(\Omega_t)}^2 \leq \|\nabla \D_t^l v\|_{H^{\frac12}(\Omega_t)}^2\leq \|\D_t^l v\|_{H^{\frac32}(\Omega_t)}^2 \leq E_l(t)
\]
and the last by $\| \D_t v\|_{L^6(\Omega_t)}^2 \leq \| \D_t v\|_{H^1(\Omega_t)}^2 \leq C E_1(t) \leq C $.   Finally we treat the two remaining terms by the same argument. Indeed for $\beta \leq l-1$ we have by interpolation as before
\[
\| \nabla \D_t^{\beta}  v\|_{L^6(\Omega_t)}  \leq C\|\nabla \D_t^{\beta} v \|_{H^2(\Omega_t)}^{\frac12}  \| \nabla \D_t^{\beta}  v\|_{L^2(\Omega_t)}^{\frac12} \leq C E_l(t)^{\frac14} E_{l-1}(t)^{\frac14}  \leq C E_l(t)^{\frac14}.
\]
Hence, we have
\[
\| R_{bulk}^l\|_{L^2(\Omega_t)}^2 \leq CE_l(t).
\]

We are left with the last inequality.  For $k=1$ the claim follows  by applying the previous inequality with $l-1$. Let us then assume $l\geq 3$ and $2 \leq k\leq l-1$. By definition of $R_{bulk}^{l-k}$ in \eqref{def:error-bulk} it holds $|\beta| \leq l-k \leq l-2$. Therefore \eqref{eq:induction} implies
\[
\|\nabla \D_t^{\beta_i} v\|_{L^\infty(\Omega_t)}
\leq \|\D_t^{\beta_i} v\|_{H^3(\Omega_t)}\leq C
\]
for all $i$. Therefore by Proposition \ref{prop:kato-ponce} and by relabeling the indexes  we have  
\[
\begin{split}
\|R_{bulk}^{l-k}\|_{H^{\frac32(k-1)}(\Omega_t)}
\leq 
C\sum_{\substack{{|\beta|\leq l-k}\\{|\alpha|\leq 1}}}
&\|\nabla^{\alpha_1}\D_t^{\alpha_2+\beta_{2}}v\|_{H^{\frac32(k-1)}(\Omega_t)} \\
&+
\|\nabla\D_t^{\beta_{1}}v\|_{H^{\frac32(k-1)}(\Omega_t)}\|\nabla^{\alpha_1}\D_t^{\alpha_2+\beta_{2}}v\|_{L^\infty(\Omega_t)}.
\end{split}
\]
Since $\beta_2 \leq l-k$, we may estimate the first term on RHS as
\[
\|\nabla^{\alpha_1}\D_t^{\alpha_2+\beta_{2}}v\|_{H^{\frac32(k-1)}(\Omega_t)}^2  \leq C \sum_{i=0}^{l-1} \|\D_t^{l-i}v\|_{H^{\frac32i}(\Omega_t)}^2\leq CE_{l-1}(t) \leq C.
\]
We estimate the second similarly by using $\beta_1 \leq l-k$
\[
\|\nabla\D_t^{\beta_{1}}v\|_{H^{\frac32(k-1)}(\Omega_t)}^2 \leq \|\D_t^{\beta_{1}}v\|_{H^{\frac32k}(\Omega_t)}^2 \leq CE_{l-1}(t) \leq C. 
\]
Finally we bound the last term by the Sobolev embedding, by  $\beta_{2}\leq l-k$, $\alpha_1 + \alpha_2 \leq 1$ and by interpolation
\[
\begin{split}
\|\nabla^{\alpha_1}\D_t^{\alpha_2+\beta_{2}}v\|_{L^\infty(\Omega_t)}^2 &\leq C\|\D_t^{\alpha_2+\beta_{2}}v\|_{H^{2+\alpha_1}(\Omega_t)}^2 \\
&\leq \e \|\D_t^{\alpha_2+\beta_{2}}v\|_{H^{\frac32(2+\alpha_1)}(\Omega_t)}^2 + C_\e \|\D_t^{\alpha_2+\beta_{2}}v\|_{L^{2}(\Omega_t)}^2\\
&\leq \e E_l(t) + C_\e E_{l-1}(t) \leq \e E_l(t) + C_\e.
\end{split}
\]
Thus we have 
\[
\|R_{bulk}^{l-k}\|_{H^{\frac32(k-1)}(\Omega_t)}^2 \leq  \e E_l(t) + C_\e.
\]
\end{proof}

The two previous error bounds in Lemma \ref{lem:R-div-est} and Lemma \ref{lem:bound-R_B} are similar in the sense that they only involve the material derivatives of the velocity field. We proceed to the error terms which involve  the time derivatives of the capacity potential. Note that  $\pa_t^k U$ for all $k$ is a harmonic function  in $\Omega_t^c$ but not constant on $\St$. We will  use again  Theorem \ref{teo:reg-capa} together with  Lemma \ref{lem:mat-capa} which gives the formula for $\pa_t^k U$ on the boundary $\St$. 


We first prove a generic bound which will be useful when we bound the pressure. 
 \begin{lemma}
\label{lem:reg-capa1}
Let $l\geq 2$ and assume that \eqref{eq:apriori_est}   and  the condition   $E_{l-1}(t)  \leq M$ hold. Let $\alpha, \beta \geq 0$ be integers.  When  $\alpha + \beta \leq l$, it holds  
\beq \label{eq:regcapa1}
\|\nabla^{1+\alpha}\pa_t^{\beta} U \|_{H^{\frac{\alpha}{2}+ \frac12}(\Sigma_t)}^2 \leq \e E_l(t) +C_\e,
\eeq
for $C_\e = C_\e(M, l, \e)$. On the other hand, when $\alpha + \beta \leq l+1$ and $\beta \leq l$ then 
\beq \label{eq:regcapa0}
\| \nabla^{1+\alpha}\pa_t^\beta U \|_{H^{\frac12}(\Sigma_t)}^2 \leq C E_l(t)
\eeq
for  $C = C_\e(M, l)$.
\end{lemma}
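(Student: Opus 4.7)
The overall strategy is induction on $\beta$, leveraging the fact that $\pa_t^\beta U$ is harmonic in $\Omega_t^c$ (which follows by differentiating $\Delta U=0$, valid away from the moving boundary), so that Theorem~\ref{teo:reg-capa} reduces estimating $\nabla^{1+\alpha}\pa_t^\beta U$ on $\Sigma_t$ to estimating the Dirichlet data $\pa_t^\beta U|_{\Sigma_t}$ together with a power of the second fundamental form. The curvature is controlled uniformly by the hypothesis $E_{l-1}(t)\leq C$ via Lemma~\ref{lem:press-curv}, and the boundary data is supplied by the third identity of Lemma~\ref{lem:mat-capa},
\[
\pa_t^\beta U\big|_{\Sigma_t}=-\pa_\nu U\,(\D_t^{\beta-1}v\cdot \nu)+R_U^{\beta-2},
\]
which pushes the analysis onto the normal material derivative of $v$ and onto lower-$\beta$ factors that the induction already controls.

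For the base case $\beta=0$, I will apply the \emph{moreover} clause of Theorem~\ref{teo:reg-capa} to the harmonic function $U$, which has constant boundary data, yielding $\|\nabla^{1+\alpha}U\|_{H^{1/2}(\Sigma_t)}\leq C(1+\|B\|_{H^{\alpha}(\Sigma_t)})$; together with Lemma~\ref{lem:press-curv} this handles all indices $\alpha\leq l+1$ needed for \eqref{eq:regcapa0}. To upgrade this integer-order estimate to the half-integer index $H^{\alpha/2+1/2}(\Sigma_t)$ appearing in \eqref{eq:regcapa1}, I will interpolate between two consecutive integer levels of Theorem~\ref{teo:reg-capa} using Proposition~\ref{prop:sobolev}; the extra half-derivative that this costs on $B$ is bounded, via $p=H-\tfrac{Q(t)}{2}|\nabla U|^2$ and Lemma~\ref{lem:press-curv}, by a pressure norm that sits at the top of $E_l(t)$ and is therefore absorbed into $\e E_l(t)+C_\e$ by Corollary~\ref{coro:interpolation}.

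For the inductive step $\beta\geq 1$, I will insert the formula above into Theorem~\ref{teo:reg-capa} and bound the product $\pa_\nu U\,(\D_t^{\beta-1}v\cdot \nu)$ via the Kato--Ponce inequality (Proposition~\ref{prop:kato-ponce}): the factor $\pa_\nu U$ is controlled in $W^{k,\infty}(\Sigma_t)$ by Schauder theory together with Lemma~\ref{lem:reg-capa}, while $\|\D_t^{\beta-1}v\cdot \nu\|_{H^{\alpha+3/2}(\Sigma_t)}$ is dominated via the trace theorem by $\|\D_t^{\beta-1}v\|_{H^{\alpha+2}(\Omega_t)}$, and the numerology $2\alpha+3\beta\leq 3l+2$ (which holds under $\alpha+\beta\leq l+1,\beta\leq l$) places this quantity inside the range controlled by $E_l(t)$. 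Each factor of the remainder $R_U^{\beta-2}$ is then treated term by term: the $v$-factors sit strictly below the top time-order and are bounded in $L^\infty$ via Sobolev embedding using $E_{l-1}(t)\leq C$, while the factors $\nabla^{1+\alpha_i}\pa_t^{\gamma_i}U$ with $\gamma_i<\beta$ fall under the induction hypothesis.

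The step I expect to be most delicate is the bookkeeping between the two natural scalings of the problem: $E_l(t)$ weighs spatial and material derivatives in the ratio $3:2$, whereas the Sobolev indices on the boundary enter in the ratio $2:1$. The sharp $\e E_l(t)+C_\e$ improvement in \eqref{eq:regcapa1}, as opposed to the weaker $CE_l(t)$ bound in \eqref{eq:regcapa0}, reflects precisely whether the leading boundary quantity $\|\D_t^{\beta-1}v\|_{H^{\alpha+2}(\Omega_t)}$ lands strictly below the top order that $E_l(t)$ measures; the dichotomy $\alpha+\beta\leq l$ versus $\alpha+\beta\leq l+1$ in the statement is designed to capture exactly this distinction, and I will verify it case by case when distributing derivatives across the factors listed in \eqref{def:R-U}.
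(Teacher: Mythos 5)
Your overall architecture — induction on $\beta$, reduction to Dirichlet boundary data via Theorem~\ref{teo:reg-capa}, the identity $\pa_t^\beta U=-\pa_\nu U\,(\D_t^{\beta-1}v\cdot\nu)+R_U^{\beta-2}$ from Lemma~\ref{lem:mat-capa}, Kato--Ponce for the product, and curvature control from Lemma~\ref{lem:press-curv} — coincides with the paper's. The paper additionally routes the base case through its Lemma~\ref{lem:reg-capa} (which repackages Theorem~\ref{teo:reg-capa} into a bound by $\|p\|_{H^k(\Sigma_t)}$), but that is the same idea.

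There is, however, a concrete gap in the inductive step as you have written it. The exponents you quote — $\|\pa_t^\beta U\|_{H^{\alpha+3/2}(\Sigma_t)}$, hence $\|\D_t^{\beta-1}v\|_{H^{\alpha+2}(\Omega_t)}$ after the trace theorem, with the numerology $2\alpha+3\beta\le 3l+2$ — come from applying Theorem~\ref{teo:reg-capa} at level $k=1+\alpha$, which only controls $\|\nabla^{1+\alpha}\pa_t^\beta U\|_{H^{1/2}(\Sigma_t)}$, i.e.\ estimate \eqref{eq:regcapa0}. Estimate \eqref{eq:regcapa1} asks for the higher target norm $H^{\alpha/2+1/2}(\Sigma_t)$, and since $\|\nabla^{1+\alpha}F\|_{H^{\alpha/2+1/2}(\Sigma_t)}$ carries a total of $\tfrac32(1+\alpha)$ derivatives, the correct application of Theorem~\ref{teo:reg-capa} for it is at level $k\approx 1+\tfrac32\alpha$: the Dirichlet data then must be bounded in $H^{\frac32(1+\alpha)}(\Sigma_t)$ and, after trace, the velocity factor lands in $H^{\frac32(\alpha+2)}(\Omega_t)$. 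You did perform the analogous upgrade in your base case (going to higher integer levels of Theorem~\ref{teo:reg-capa} to reach the half-integer index), but you did not carry it through in the inductive step. The paper makes this precise by first proving an intermediate, slightly stronger statement \eqref{eq:regcapa-2}, with an explicit even/odd split in $\alpha$ (needed because $\alpha/2+\frac12$ is only an integer for odd $\alpha$), and then recovers \eqref{eq:regcapa1} by interpolation. That even/odd bookkeeping is not a cosmetic detail: without it the half-integer target index does not line up with the integer levels of Theorem~\ref{teo:reg-capa}.

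Two smaller points. First, the mechanism behind the sharper form $\e E_l(t)+C_\e$ in \eqref{eq:regcapa1} is not merely that the trace quantity ``lands strictly below top order''; in the paper it is produced by an $\e$-Young absorption after interpolating $\|\D_t^{\beta-1}v\|_{L^\infty(\Sigma_t)}$ (and, in the base case, $\|p\|_{H^{3\alpha/2}(\Sigma_t)}$) between the top-order term of $E_l$ and a lower-order term bounded by $E_{l-1}(t)\le C$. Your heuristic points in the right direction but will not by itself deliver the multiplicative $\e$. Second, the paper's proof of \eqref{eq:regcapa0} at the extremal range $\alpha+\beta=l+1$ requires a separate argument for the corner case $\alpha=1,\beta=l$, which is not covered by the generic inductive numerology; this is one of the ``case-by-case'' items you defer, so be aware that it genuinely requires its own treatment.
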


\begin{proof}
Instead of  \eqref{eq:regcapa1} we prove in fact a slightly stronger result, namely 
\beq \label{eq:regcapa-2}
\begin{split}
 &\|\nabla^{1+\alpha}\pa_t^{\beta} U \|_{H^{\frac{\alpha}{2} + \frac12 }(\Sigma_t)}^2 \leq \e E_l(t) +C_\e \quad \text{when } \, \alpha \,
 \text{is  even and } \\
 &\|\nabla^{1+\alpha}\pa_t^{\beta} U \|_{H^{\frac{\alpha}{2} + 1}(\Sigma_t)}^2 \leq C E_l(t) \quad \text{when } \, \alpha \,
 \text{ is  odd}.
 \end{split}
\eeq
The inequality \eqref{eq:regcapa1} then follows from \eqref{eq:regcapa-2}  by interpolation.

We prove \eqref{eq:regcapa-2} by induction over $\beta$ and consider  first the case  $\beta = 0$. Note that then $\alpha \leq l$. Let us first consider the case when $\alpha$ is even. Then by Lemma \ref{lem:reg-capa} and  by interpolation we have
\[
\begin{split}
\| \nabla^{1+\alpha} U\|_{H^{\frac{\alpha}{2}+\frac12}(\Sigma_t)} &\leq C(1+ \|  p\|_{H^{\frac32\alpha}(\St)}) \leq \e  \|  p\|_{H^{\frac32\alpha +\frac12}(\St)} + C_\e \|p\|_{L^2(\St)}\\
&\leq \e  \|  p\|_{H^{\frac32\alpha +\frac12}(\St)} + C_\e.
\end{split}
\]
We use Lemma \ref{lem:CS-intermed},  $-\nabla p = \D_t v$, $\alpha \leq l$ and the  definition of $E_l(t)$  to estimate
\[
 \|  p\|_{H^{\frac32\alpha +\frac12}(\St)}^2  \leq C \|   p\|_{H^{\frac32\alpha +1}(\Omega_t)}^2 \leq C(1+ \| \D_t v \|_{H^{\frac{3l}{2}}(\Omega_t)}^2) \leq C E_l(t).
 \]
This implies \eqref{eq:regcapa-2} when $\alpha$ is even.  When $\alpha$ is odd we have again by  Lemma \ref{lem:reg-capa},  Lemma \ref{lem:CS-intermed}  and by the definition of $E_l$ that
\[
\| \nabla^{1+\alpha} U\|_{H^{\frac{\alpha}{2}+1}(\Sigma_t)}^2 \leq C (1+ \|p\|_{H^{\frac{3\alpha}{2} +\frac12}(\Sigma_t)}^2) \leq C (1+ \|p\|_{H^{\frac{3l}{2} +1}(\Omega_t)}^2) \leq CE_{l}(t). 
\]

Let us assume that \eqref{eq:regcapa-2} holds for $\beta \leq k-1$ for $2 \leq k \leq l$. In particular, this implies that \eqref{eq:regcapa1} holds for $\beta \leq k-1$. Let us  consider only the case when $\alpha$ is even, since the argument for $\alpha$ odd is similar.  We first observe that since $\alpha \leq l-k \leq l-1$ then by the Trace Theorem  
\beq \label{eq:regcapa-4}
\|p\|_{H^{\frac{3}{2}\alpha}(\St)}^2 \leq C(1+\|\nabla p \|_{H^{\frac{3}{2}\alpha}(\Omega_t)}^2) \leq C(1+\|\D_t v \|_{H^{\frac{3}{2}(l-1)}(\Omega_t)}^2) \leq C E_{l-1}(t) \leq C. 
\eeq
We have then  by Theorem \ref{teo:reg-capa}, Lemma \ref{lem:press-curv} and by \eqref{eq:regcapa-4} that 
\[
\begin{split}
\|\nabla^{1+\alpha } \pa_t^k U \|_{H^{ \frac{\alpha}{2}+\frac12}(\St)} &\leq C(1+ \|p\|_{H^{\frac{3}{2}\alpha}(\St)} + \|\pa_t^k U\|_{H^{ \frac{3}{2}\alpha+\frac32}(\St)})\\
&\leq C(1+ \|\pa_t^k U\|_{H^{ \frac{3}{2}(1+\alpha)}(\St)}).
\end{split}
\]
We use the expression of $\pa_t^k U$ from  Lemma \ref{lem:mat-capa} and Proposition \ref{prop:kato-ponce} to estimate the last term in above  as follows
\[
\begin{split}
\|\pa_t ^{k} U\|_{H^{\frac{3}{2}(1+\alpha)}(\St)}
\leq &\|\nabla U\|_{H^{\frac{3}{2}(1+\alpha)}(\St)} \|\D_t^{k-1}v \|_{L^\infty(\St)} \\
&+ \|\nabla U\|_{L^{\infty}(\St)}\|\D_t^{k-1}v\|_{H^{\frac{3}{2}(1+\alpha)}(\St)}+\|R^{k-2}_U\|_{H^{\frac{3}{2}(1+\alpha)}(\St)}.
\end{split}
 \]
To estimate the first term on RHS let us first assume that $k \leq l-1$. Then by \eqref{eq:induction} $\|\D_t^{k-1}v \|_{L^\infty(\St)} \leq C$ and since $\alpha \leq l-1$ we have by \eqref{eq:regcapa1} for $\beta = 0$ that 
\[
\|\nabla U\|_{H^{\frac{3}{2}(1+\alpha)}(\St)}^2  \leq \e E_l(t) + C_\e.
\]
On the other hand, when $k = l$ then $\alpha = 0$. Thus  again by  \eqref{eq:regcapa1} it holds 
\[
\|\nabla U\|_{H^{\frac{3}{2}}(\St)}^2 \leq CE_{1}(t) \leq CE_{l-1}(t) \leq C. 
\]
By the Sobolev embedding, by interpolation, by the Trace Theorem and by $\alpha + k \leq l$ we have
\[
\begin{split}
\|\D_t^{k-1}v \|_{L^\infty(\St)}^2  &\leq C \|\D_t^{k-1}v\|_{H^{\frac{3}{2}(1+\alpha)}(\St)}^2 \\
&\leq \e \|\D_t^{k-1}v\|_{H^{ \frac{3}{2}\alpha+2}(\St)}^2+ C_\e \|\D_t^{k-1}v\|_{L^2(\St)}^2\\
&\leq  C\e  \|\D_t^{k-1}v\|_{H^{ \frac{3}{2}(\alpha+2)}(\Omega_t)}^2 + C_\e E_{l-1}(t) \leq C\e E_{l}(t) + C_\e.
\end{split}
\]
Hence, we need yet to prove 
\beq 
\label{eq:regcapa-6}
\|R^{k-2}_U\|_{H^{\frac{3}{2}(1+\alpha)}(\St)}^2 \leq \e E_l(t) +C_\e ,
\eeq
where $k \leq l$.

By the definition in \eqref{def:R-U} and by the Kato-Ponce inequality (Proposition \ref{prop:kato-ponce}) we  may estimate  
\beq \label{eq:regcapa-7}
\begin{split}
    \|R^{k-2}_U\|_{H^{\frac{3}{2}(1+\alpha)}(\St)} \leq &C\sum_{\substack{|\gamma|+|\beta| \leq k-1\\|\beta|\leq k-2}}\Big( \|\D_t^{\beta_1}v \|_{L^\infty} \cdots \|\D_t^{\beta_{k-2}}v \|_{L^\infty}\|\nabla^{1+ \gamma_1} \pa_t^{\gamma_2} U\|_{H^{\frac{3}{2}(1+\alpha)}(\St)}\\
    &\,\,\,\,\,\,\,+ \|\D_t^{\beta_1}v \|_{H^{\frac{3}{2}(1+\alpha)}(\St)} \cdots \|\D_t^{\beta_{k-2}}v \|_{L^\infty}\|\nabla^{1+ \gamma_1} \pa_t^{\gamma_2} U\|_{L^{\infty}(\St)} \Big).
\end{split}
\eeq
Since $|\beta| \leq k-2 \leq l-2$, \eqref{eq:induction} implies $\|\D_t^{\beta_1}v \|_{L^\infty} \leq C$ for all $i$. Note that   $\alpha+k \leq l$ and  $\beta_1 \leq k-2$ implies $\alpha + \beta_1 \leq l-2$. Therefore we have  by the Trace Theorem and by the definition of $E_{l-1}(t)$  
\[
\|\D_t^{\beta_1}v \|_{H^{\frac{3}{2}(1+\alpha)}(\St)}^2 \leq C\|\D_t^{\beta_1}v \|_{H^{
\frac{3}{2}(\alpha +2)}(\Omega_t)}^2 \leq CE_{l-1}(t) \leq C.
\]
We bound the both capacitary terms by the Sobolev embedding and by the induction assumption, which states that \eqref{eq:regcapa1} holds for $\beta \leq k-1$. Indeed we  have by $\gamma_1 + \alpha \leq |\gamma| + \alpha \leq k-1 + \alpha \leq l-1$ and $\gamma_2 \leq k-1$ that 
\beq \label{eq:regcapa-8}
\begin{split}
    \|\nabla^{1+ \gamma_1} \pa_t^{\gamma_2} U\|_{L^{\infty}(\St)}^2 &\leq C \|\nabla^{1+ \gamma_1} \pa_t^{\gamma_2} U\|_{H^{\frac{3}{2}(1+\alpha)}(\St)}^2\\
    &\leq C(1+\|\nabla^{1+ (1+ \alpha+ \gamma_1)} \pa_t^{\gamma_2} U\|_{H^{\frac{1}{2}(1+\alpha+ \gamma_1)}(\St)}^2)\\
    &\leq \e E_l(t) + C_\e.
\end{split}
\eeq
Hence, we have \eqref{eq:regcapa-6} when $\alpha$ is even.

 Let us then prove \eqref{eq:regcapa0}. We notice that  Theorem \ref{teo:reg-capa} and \eqref{eq:regcapa1}  imply \eqref{eq:regcapa0} when $2 \leq \alpha \leq l$. On the other  Lemma \ref{lem:reg-capa} implies \eqref{eq:regcapa0}  when $\alpha = l+1$. (Note that the assumption $\alpha \leq \frac32 l$ is satisfied for all $\alpha \leq l+1$ since $l \geq 2$.) We need thus to consider the case $\alpha =1$ and $\beta = l$. For this the argument is similar than before and we only sketch it. 
 
By Theorem \ref{teo:reg-capa} and by \eqref{eq:regcapa-4} we have 
\[
\begin{split}
\|\nabla^{2} \pa_t^l U \|_{H^{\frac12}(\St)} &\leq C(1+ \|p\|_{H^{1}(\St)} + \|\pa_t^l U\|_{H^{\frac52}(\St)})\\
&\leq C(1+ \|\pa_t^l U\|_{H^{\frac52}(\St)}).
\end{split}
\]
We use the expression of $\pa_t^l U$ from  Lemma \ref{lem:mat-capa} and Proposition \ref{prop:kato-ponce} to estimate the last term in above  as follows
\[
\begin{split}
\|\pa_t ^{l} U\|_{H^{\frac52}(\St)}
&\leq \|\nabla U\|_{H^{\frac52}(\St)} \|\D_t^{l-1}v \|_{L^\infty(\St)} \\
&+ \|\nabla U\|_{L^{\infty}(\St)}\|\D_t^{l-1}v\|_{H^{\frac52}(\St)}+\|R^{l-2}_U\|_{H^{\frac52}(\St)}.
\end{split}
 \]
 By   Lemma \ref{lem:reg-capa}  and Lemma \ref{lem:CS-intermed} we have
 \[
 \begin{split}
 \|\nabla U\|_{H^{\frac52}(\St)}^2 \leq  C(1+ \|p\|_{H^2(\St)}^2) \leq C(1+ \|\nabla p\|_{H^{\frac32}(\Omega_t)}^2) \leq CE_{1}(t) \leq C.
 \end{split}
 \]
 Recall also that $\|\nabla U\|_{L^\infty} \leq C$. Sobolev embedding and Trace Theorem  yield
 \[
  \|\D_t^{l-1}v\|_{L^{\infty}(\St)}^2\leq  C\|\D_t^{l-1}v\|_{H^{2}(\St)}^2 \leq C\|\D_t^{l-1}v\|_{H^{3}(\Omega_t)}^2 \leq C E_l(t).
 \]
Therefore we need yet to estimate $ \|R^{l-2}_U\|_{H^{\frac52}(\St)} $. 

Arguing as in \eqref{eq:regcapa-7} we obtain
 \[
 \begin{split}
    \|R^{l-2}_U\|_{H^{\frac52}(\St)} \leq &\sum_{\substack{|\gamma|+|\beta| \leq l-1\\|\beta|\leq l-2}}\Big( \|\D_t^{\beta_1}v \|_{L^\infty} \cdots \|\D_t^{\beta_{l-2}}v \|_{L^\infty}\|\nabla^{1+ \gamma_1} \pa_t^{\gamma_2} U\|_{H^{\frac52}(\St)}\\
    &\,\,\,\,\,\,\,+ \|\D_t^{\beta_1}v \|_{H^{\frac52}(\St)} \cdots \|\D_t^{\beta_{l-2}}v \|_{L^\infty}\|\nabla^{1+ \gamma_1} \pa_t^{\gamma_2} U\|_{L^{\infty}(\St)} \Big).
\end{split}
\]
Arguing as before we deduce $\|\D_t^{\beta_i}v \|_{L^\infty} \leq C$  for all $i$ and \[
\|\D_t^{\beta_1}v \|_{H^{\frac52}(\St)}^2 \leq \|\D_t^{\beta_1}v \|_{H^{3}(\Omega_t)}^2 \leq E_{l-1}(t) \leq C.
\]
Finally we use the fact that \eqref{eq:regcapa0} holds for $\beta \leq l-1$ and $\gamma_1 + \gamma_2 \leq l-1$  to conclude 
\[
\begin{split}
\|\nabla^{1+ \gamma_1} \pa_t^{\gamma_2} U\|_{L^{\infty}(\St)}^2 &\leq C \|\nabla^{1+ \gamma_1} \pa_t^{\gamma_2} U\|_{H^{\frac52}(\St)}^2\\
&\leq C(1+ \|\nabla^{1+ (2+\gamma_1)} \pa_t^{\gamma_2} U\|_{H^{\frac12}(\St)}^2) \leq CE_l(t).
\end{split}
\]
This concludes the proof. 
\end{proof}

We need also the following bound on the capacitary potential and for the error term $R_U^l$, defined in \eqref{def:R-U}, associated with it. In the first statement of the following lemma we need to relax the usual assumption on the quantity  \eqref{def:apriori_est}  being bounded to assume that the set $\Omega_t$ is uniformly $C^{1,\alpha}(\Gamma)$-regular and that the velocity satisfies $\|v\|_{W^{1,4}(\St)} \leq C$. The point is that we need the following estimate when we do not have the Lipschitz bound on $v$. This does not complicate the proof and  will be useful later.   
\begin{lemma}
    \label{lem:estimate-R-U}
    Consider $R_U^l$ defined in \eqref{def:R-U}. Assume that  $\St$ is uniformly $C^{1,\alpha}(\Gamma)$-regular and satisfies   
    \[
    \|p\|_{H^1(\Omega_t)} + \|v\|_{W^{1,4}(\St)} \leq M.
    \]
    There exists $C$  such that
    \begin{equation}
        \label{eq:est-R-U-1}
        \|\nabla \pa_t^2 U\|_{L^2(\St)}^2 + \|R_U^1\|_{L^2(\St)}^2 \leq C(1+ \|p\|_{H^2(\Omega_t)}^2)E_1(t). 
    \end{equation}
    Let $l \geq 2$ and assume that $E_{l-1}(t) \leq M$. Then it holds 
  \begin{equation}
        \label{eq:est-R-U-2}
         \|\nabla \pa_t^{l+1} U\|_{L^2(\St)}^2 + \|R_U^l\|_{L^2(\St)}^2 \leq C E_l(t).
    \end{equation}
    The constants  depend on $M, l$ and on the $C^{1,\alpha}$-norm of the heightfunction.
    \end{lemma}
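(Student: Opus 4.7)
\medskip

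\noindent\emph{Proof plan.} The strategy rests on three ingredients: the harmonicity of $\pa_t^{l+1} U$ in $\Omega_t^c$, the explicit formula for its boundary trace from Lemma \ref{lem:mat-capa}, and a careful product-rule analysis of $R_U^l$ using the capacitary bounds of Lemma \ref{lem:reg-capa1}. I would first establish the estimate for $\nabla \pa_t^{l+1} U$ and then use it, together with estimates on lower-order time derivatives of $U$, to bound $R_U^l$.

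Step 1: Controlling $\|\nabla \pa_t^{l+1} U\|_{L^2(\St)}$. Since $\pa_t^{l+1} U$ is harmonic in $\Omega_t^c$ and $\St$ is uniformly $C^{1,\alpha}(\Gamma)$-regular, the Fabes-Jodeit-Riviere estimate from \cite{FJR} (the same one used in \eqref{eq:JK-for-nu}) gives
\[
\|\nabla \pa_t^{l+1} U\|_{L^2(\St)} \leq C\|\nabla_\tau \pa_t^{l+1} U\|_{L^2(\St)} \leq C \|\pa_t^{l+1} U\|_{H^1(\St)}.
\]
By Lemma \ref{lem:mat-capa} the trace on $\St$ equals $-\pa_\nu U\,(\D_t^l v\cdot\nu)+R_U^{l-1}$. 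The factor $\pa_\nu U$ is uniformly bounded in $C^{1,\alpha}(\St)$ by standard Schauder estimates (for $l=1$, use the $C^{1,\alpha}(\Gamma)$ assumption directly, and for $l\geq 2$ use Lemma \ref{lem:press-curv}), so by the Leibniz rule
\[
\|\nabla_\tau \pa_t^{l+1} U\|_{L^2(\St)} \leq C\|\D_t^l v\|_{H^1(\St)} + C\|R_U^{l-1}\|_{H^1(\St)} +C\|\D_t^l v\|_{L^2(\St)}.
\]
The first term is controlled by the Trace Theorem by $\|\D_t^l v\|_{H^{3/2}(\Omega_t)} \leq \sqrt{E_l(t)}$; for $l=1$ this uses $\D_t v=-\nabla p$, which yields $\|\D_t v\|_{H^1(\St)}\leq C\|p\|_{H^{2}(\Omega_t)}$, explaining the factor $(1+\|p\|_{H^2(\Omega_t)}^2)$ in the first statement.

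Step 2: Estimating $\|R_U^{l-1}\|_{H^1(\St)}$ and $\|R_U^l\|_{L^2(\St)}$. Writing $R_U^l$ via \eqref{def:R-U} as a sum of products
\[
a_{\alpha,\beta}(v)\,\D_t^{\beta_1}v\star\cdots\star\D_t^{\beta_l}v\star \nabla^{1+\alpha_1}\pa_t^{\alpha_2} U ,
\]
one term contains $\nabla \pa_t^{l+1} U$ itself (when $|\beta|=0$, $\alpha_1=0$, $\alpha_2=l+1$), which is handled by Step 1. All remaining terms have $\alpha_2\leq l$ and total order $|\alpha|+|\beta|\leq l+1$, so the capacitary factor can be bounded in $L^\infty(\St)$ or $L^p(\St)$ through Sobolev embedding applied to \eqref{eq:regcapa0}--\eqref{eq:regcapa1} of Lemma \ref{lem:reg-capa1}. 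The velocity factors $\D_t^{\beta_i} v$ with $\beta_i\leq l-2$ are uniformly bounded by \eqref{eq:induction}; the remaining higher-order material derivatives are controlled by the Trace Theorem and the definition of $E_l(t)$. For $l=1$, where $v$ is only in $W^{1,4}(\St)$ and \eqref{eq:induction} is unavailable, the products reduce to a handful of explicit terms such as $v\star \nabla^3 U$, $v\star \nabla^2 \pa_t U$, $v\star \nabla \pa_t^2 U$, $\D_t v\star\nabla^2 U$, and $\D_t v\star\nabla\pa_t U$; each is bounded in $L^2(\St)$ by Hölder, using Lemma \ref{lem:reg-capa} to get $\|\nabla^k U\|_{H^{1/2}(\St)}\lesssim 1+\|p\|_{H^{k-1}(\St)}$, and paying the factor $\|p\|_{H^2(\Omega_t)}$ coming from $\D_t v=-\nabla p$ whenever one of those terms multiplies $\D_t v$.

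The main obstacle will be the $l=1$ case, where the hypotheses are minimal: the boundary is only $C^{1,\alpha}$, the velocity is merely $W^{1,4}$ on $\St$, and all curvature bounds must be drawn out of $\|p\|_{H^1(\Omega_t)}\leq C$ via Lemma \ref{lem:press-curv}. This forces quadratic dependence on $\|p\|_{H^2(\Omega_t)}$ on the right-hand side and obliges one to use Proposition \ref{prop:kato-ponce} (or its weaker variant \eqref{eq:kato-weak}) rather than the more convenient higher-order Kato-Ponce bounds. The $l\geq 2$ case is then considerably cleaner: the inductive hypothesis $E_{l-1}(t)\leq C$ delivers the uniform bounds \eqref{eq:induction} and the high-order curvature estimates $\|B\|_{H^{\frac32 l-1}(\St)}\leq C$, so all terms in the Hölder decomposition carry linear (rather than quadratic) dependence on $E_l(t)$, and the conclusion $\|R_U^l\|_{L^2(\St)}^2+\|\nabla\pa_t^{l+1}U\|_{L^2(\St)}^2\leq C E_l(t)$ follows by collecting the bounds from Steps 1 and 2.
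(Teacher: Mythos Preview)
Your overall strategy coincides with the paper's: pass from $\|\nabla\pa_t^{l+1}U\|_{L^2(\St)}$ to $\|\pa_t^{l+1}U\|_{H^1(\St)}$ using harmonicity (the paper invokes Lemma~\ref{lem:divcurl} rather than \cite{FJR}, but either works), apply Lemma~\ref{lem:mat-capa} for the trace, and then estimate the products in $R_U^l$ by H\"older and the capacitary bounds.

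There is, however, a concrete error in Step~1 for $l=1$. Under the stated hypotheses you only have $U\in C^{1,\alpha}$, so $\pa_\nu U\in C^{0,\alpha}$, \emph{not} $C^{1,\alpha}$; the $C^{1,\alpha}(\Gamma)$ assumption on $\St$ does not give another derivative. Hence your Leibniz bound
\[
\|\nabla_\tau\pa_t^{2}U\|_{L^2(\St)}\leq C\|\D_t v\|_{H^1(\St)}+C\|R_U^{0}\|_{H^1(\St)}+C\|\D_t v\|_{L^2(\St)}
\]
is not justified: differentiating $\pa_\nu U\,(\D_t v\cdot\nu)$ produces the cross terms $\nabla_\tau(\pa_\nu U)\star(\D_t v\cdot\nu)$ and $B\star\nabla U\star\D_t v$, and neither coefficient is uniformly bounded. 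The paper handles these via H\"older, pairing $\|\nabla^2 U\|_{L^4(\St)}+\|B\|_{L^4(\St)}$ with $\|\D_t v\cdot\nu\|_{L^4(\St)}$ and invoking $\|\nabla^2 U\|_{L^4(\St)}\leq C(1+\|p\|_{H^1(\St)})$ from Lemma~\ref{lem:reg-capa}. It is this $\|\nabla^2 U\|_{L^4}$ contribution that generates the factor $(1+\|p\|_{H^2(\Omega_t)}^2)$ on the right---not, as you suggest, the bound on $\|\D_t v\|_{H^1(\St)}$ (which is controlled by $\|\D_t v\|_{H^{3/2}(\Omega_t)}\leq E_1(t)^{1/2}$ via trace, not by $\|p\|_{H^2(\Omega_t)}$). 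For $l\geq 2$ your $C^{1,\alpha}$ claim for $\pa_\nu U$ is correct (it follows from $\|B\|_{H^2(\St)}\leq C$), and there the simpler Leibniz estimate is legitimate. Once you repair the $l=1$ case as above, the remainder of your outline matches the paper's argument.
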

\begin{proof}
This time we only  prove \eqref{eq:est-R-U-1} since \eqref{eq:est-R-U-2} follows from similar argument. Note also that the $C^{1,\alpha}$-bound on $\St$ implies $C^{1,\alpha}$-bound on $U$. Recall also that  $\|p\|_{H^1(\Omega_t)} \leq C$  implies $\|B\|_{L^{4}(\St)} \leq \|B\|_{H^{\frac12}(\St)} \leq C$ by  Lemma \ref{lem:press-curv}.  We begin by noticing that, since $\pa_t^{2} U$ is harmonic, it holds by Lemma \ref{lem:divcurl}  
\[
\| \nabla \pa_t^{2} U \|_{L^2(\Sigma_t)}\leq C(1+ \|  \pa_t^{2} U \|_{H^1(\Sigma_t)}).
\]
Then by Lemma \ref{lem:mat-capa}  we have  
\[
\| \pa_t^{2} U \|_{H^1(\Sigma_t)}\leq  C(1+ \| \pa_\nu U  (\D_t v \cdot \nu ) \|_{H^1(\Sigma_t)} + \|R_U^0\|_{H^1(\Sigma_t)}),
\]
where 
\[
R_U^0 = \sum_{|\alpha|\leq 1} a_{\alpha}(v) \nabla^{1+\alpha_1} \pa_t^{\alpha_2} U.
\]
Since $\|\nabla U\|_{L^{\infty}(\Sigma_t)} \leq C$,  we may  estimate by  Proposition \ref{prop:kato-ponce}, $\|B\|_{L^4} \leq C$ and by the Sobolev embedding 
\begin{equation}
    \label{eq:est-R-U-4}
\begin{split}
&\|  \pa_t^{2} U \|_{H^1(\Sigma_t)}\\
&\leq C+ C \| \D_t v \cdot \nu  \|_{H^1(\Sigma_t)} + C(\| \nabla^2 U\|_{L^4(\Sigma_t)}+ \|B\|_{L^4(\Sigma_t)}) \|  \D_t v \cdot \nu  \|_{L^4(\Sigma_t)} + \|R_U^0\|_{H^1(\Sigma_t)}\\
&\leq C+ C(1+ \| \nabla^2 U\|_{L^4(\Sigma_t)}) \| \D_t v \cdot \nu  \|_{H^1(\Sigma_t)} + \|R_U^0\|_{H^1(\Sigma_t)}.
\end{split}
\end{equation}
We have by the Sobolev embedding and by  Lemma \ref{lem:reg-capa} 
\beq
\label{eq:est-R-U-5}
\| \nabla^2 U\|_{L^4(\Sigma_t)} \leq \| \nabla^2 U\|_{H^{1/2}(\Sigma_t)} \leq C(1+ \|p\|_{H^1(\Sigma_t)}).
\eeq
Since $\| \D_t v \cdot \nu  \|_{H^1(\Sigma_t)}^2 \leq E_1(t)$, we need yet to show that $\|R_U^0\|_{H^1(\Sigma_t)}^2 \leq CE_1(t)$. 

Since $\|v\|_{L^\infty(\St)} \leq C \|v\|_{W^{1,4}(\St)} \leq C$  we have by the Sobolev embedding 
\[
\begin{split}
\|R_U^0\|_{H^1(\Sigma_t)} &\leq C \sum_{|\alpha|\leq 1} \|v\|_{L^\infty} \|\nabla^{1+\alpha_1} \pa_t^{\alpha_2} U\|_{H^1(\Sigma_t)} + C  \sum_{|\alpha|\leq 1} \|v\|_{W^{1,4}} \|\nabla^{1+\alpha_1} \pa_t^{\alpha_2} U\|_{L^4(\Sigma_t)}\\
&\leq C(1 +  \|\nabla^{2}  U\|_{H^1(\Sigma_t)}  + \|\nabla \pa_t U\|_{H^1(\Sigma_t)}).
\end{split}
\]
Lemma \ref{lem:reg-capa} and Lemma \ref{lem:CS-intermed} yield 
\beq
\label{eq:est-R-U-6}
\begin{split}
\|\nabla^3  U\|_{H^{\frac12}(\Sigma_t)}^2 \leq C(1+ \|p\|_{H^2(\St)}^2) \leq C(1+ \|\nabla p\|_{H^{\frac32}(\Omega_t)}^2) \leq CE_{1}(t).
\end{split}
\eeq

We bound $\|\nabla^2 \pa_t U\|_{H^{\frac12}(\Sigma_t)}$ with a similar  argument  and thus we only sketch it. First, we recall that  it holds $\pa_t U = - \nabla U \cdot v$ on $\St$. We use Theorem \ref{teo:reg-capa}, Lemma \ref{lem:press-curv}, Proposition \ref{prop:kato-ponce} and \eqref{eq:est-R-U-6} to deduce
\beq \label{eq:est-R-U-7}
\begin{split}
  \|\nabla^2 \pa_t U\|_{H^{\frac12}(\Sigma_t)}^2 &\leq C(1 + \|p\|_{H^1(\St)}^2 + \| \nabla U \cdot v\|_{H^{\frac52}(\Sigma_t)}^2)\\
  &\leq C( 1+ \|p\|_{H^1(\St)}^2 +  \|\nabla^3  U\|_{H^{\frac12}(\Sigma_t)}^2 + \|v\|_{H^3(\Omega_t)}^2)\\
  &\leq CE_1(t).
\end{split}
\eeq
We thus deduce by \eqref{eq:est-R-U-4}, \eqref{eq:est-R-U-5}, \eqref{eq:est-R-U-6} and \eqref{eq:est-R-U-7} that 
 \beq
\label{eq:est-R-U-8}
\| \nabla \pa_t^{2} U \|_{L^2(\Sigma_t)}^2 \leq C(1+ \|p\|_{H^2(\Omega_t)}^2)E_1(t).
\eeq

We are left with 
\beq
\label{eq:est-R-U-9}
\|R_U^1\|_{L^2(\St)}^2 \leq  C(1+ \|p\|_{H^2(\Omega_t)}^2)E_1(t).
\eeq
To this aim  we recall the definition of $R_U^1$ in \eqref{def:R-U}
\[
R_U^1 = \sum_{|\alpha|+\beta \leq 2, \,\beta \leq 1} a_{\alpha, \beta} \D_t^\beta v \star \nabla^{1+ \alpha_1} \pa_t^{\alpha_2} U.
\]
We use H\"older's inequality as 
\[
\|R_U^1\|_{L^2(\St)} \leq C \big(\sum_{|\alpha|\leq 2} \|\nabla^{1+ \alpha_1} \pa_t^{\alpha_2} U\|_{L^2(\St)} + \|\D_t v\|_{L^4(\St)}  \sum_{|\alpha|\leq 1} \|\nabla^{1+ \alpha_1} \pa_t^{\alpha_2} U\|_{L^4(\St)}\big).
\]
We have by \eqref{eq:est-R-U-6}, \eqref{eq:est-R-U-7} and  \eqref{eq:est-R-U-8}
\[
\sum_{|\alpha|\leq 2} \|\nabla^{1+ \alpha_1} \pa_t^{\alpha_2} U\|_{L^2(\St)}^2 \leq  C(1+ \|p\|_{H^2(\Omega_t)}^2)E_1(t).
\]
By the Sobolev embedding and $-\nabla p = \D_t v$ it holds
\[
\|\D_t v\|_{L^4(\St)} \leq \|\D_t v\|_{H^1(\Omega_t)} \leq C \|p\|_{H^2(\Omega_t)}.
\]
Moreover \eqref{eq:est-R-U-6} and \eqref{eq:est-R-U-7} imply for $\alpha_1 + \alpha_2 \leq 1$
\[
\|\nabla^{1+ \alpha_1} \pa_t^{\alpha_2} U\|_{L^4(\St)}^2 \leq \|\nabla^{1+ \alpha_1} \pa_t^{\alpha_2} U\|_{H^{\frac12}(\St)}^2 \leq CE_1(t).
\]
Hence we have \eqref{eq:est-R-U-9}.
\end{proof}

Finally we need to bound the error term $R_p^l$, defined in \eqref{eq:R-p-in-3}, which is associated with the pressure. This term is  the most difficult to treat and it turns out that the lower order   case $l=1$ is  the most challenging to deal with. Therefore we state it as an own lemma. 
\begin{lemma}
    \label{lem:estimate-R-p-1}
    Let $R_p^l$ be as defined in \eqref{eq:R-p-in-3}. Assume that \eqref{eq:apriori_est} holds and  $\|p\|_{H^1(\Omega_t)}\leq M$. Then  it holds
    \[
    \|R_p^1\|_{H^{\frac12}(\Sigma_t)}^2 \leq C(1+ \|p\|_{H^2(\Omega_t)}^2) E_1(t).
    \]
    for some constant $C=C(M)$.
    \end{lemma}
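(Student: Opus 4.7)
The plan is to bound the three pieces of $R_p^1 = R_I^1 + R_{II}^1 + R_{III}^1$ (cf.\ \eqref{eq:R-p-in-3}--\eqref{def:R_II-2}) separately in $H^{1/2}(\Sigma_t)$, using the Kato--Ponce inequality (Proposition~\ref{prop:kato-ponce} and its weak fractional form \eqref{eq:kato-weak}) to distribute the half-derivative onto the most regular factor in each product. The three main ingredients to invoke are: the curvature-by-pressure bound $\|B\|_{H^k(\Sigma_t)} \leq C(1+\|p\|_{H^k(\Sigma_t)})$ from Lemma~\ref{lem:press-curv}, combined with the trace inequality $\|p\|_{H^{k-\frac12}(\Sigma_t)} \leq C\|p\|_{H^k(\Omega_t)}$, to control curvatures in terms of $\|p\|_{H^2(\Omega_t)}$; the sharp capacitary estimates from Lemma~\ref{lem:reg-capa} and Lemma~\ref{lem:estimate-R-U} for the time-derivatives of $U$; and the definition of $E_1(t)$ in \eqref{def:E_l} to absorb $\D_t v$, $\D_t^2 v$, $v \in H^3(\Omega_t)$, and $\D_t v \cdot \nu \in H^1(\Sigma_t)$.

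First I would handle $R_{II}^1$ and $R_{III}^1$. Since the coefficients $a_i(\nu,\nabla v)$ in $R_{II}^1$ are uniformly bounded (by the a priori bound on $\nabla v$), an application of \eqref{eq:kato-weak} with the $L^4$--$W^{1,4}$ pairing reduces the bound to controlling $\|B\|_{H^{1/2}(\Sigma_t)}$, $\|\nabla^2 v\|_{H^{1/2}(\Sigma_t)}$, $\|B\|_{W^{1,4}(\Sigma_t)}$ and $\|a_i\|_{W^{1,4}(\Sigma_t)}$. The first is bounded via Lemma~\ref{lem:press-curv}, the second two via trace plus the Sobolev embedding $H^{3/2}(\Omega_t) \hookrightarrow W^{1,4}(\Sigma_t)$ applied to $v$ and (through Lemma~\ref{lem:press-curv}) to the pressure, which is precisely where the $\|p\|_{H^2(\Omega_t)}$ factor enters. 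For $R_{III}^1$, the restriction $|\beta|\leq 0$ for $l=1$ means it is a sum of products $a(v) \star \nabla^{1+\alpha_1}\pa_t^{\gamma_1}U \star \nabla^{1+\alpha_2}\pa_t^{\gamma_2}U$ with $|\alpha|+|\gamma|\leq 2$ and $\gamma_i\leq 1$; each capacitary factor is estimated in $H^{1/2}(\Sigma_t)$ by Lemma~\ref{lem:reg-capa}, Lemma~\ref{lem:reg-capa1}, or Lemma~\ref{lem:estimate-R-U} (for the top-order $\nabla \pa_t^2 U$ term, which contributes the $\|p\|_{H^2(\Omega_t)}^2$ dependence).

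The delicate term is $R_I^1 = -(|B|^2-Q(t)H|\nabla U|^2)(\D_t v \cdot \nu) + (\nabla_\tau p \cdot \D_t v)$. For the first product I plan to bound the scalar coefficient in $H^1(\Sigma_t)$ using Lemma~\ref{lem:press-curv}, Proposition~\ref{prop:kato-ponce}, the Schauder bound $\|\nabla U\|_{C^{1,\alpha}(\Sigma_t)} \leq C$, and the estimate $\|\nabla^2 U\|_{H^{1/2}(\Sigma_t)} \leq C(1+\|p\|_{H^1(\Sigma_t)})$ from Lemma~\ref{lem:reg-capa}, obtaining $\|\,|B|^2 - Q(t)H|\nabla U|^2\|_{H^1(\Sigma_t)}\leq C(1+\|p\|_{H^2(\Omega_t)})$; pairing this with $\|\D_t v\cdot \nu\|_{H^1(\Sigma_t)}^2 \leq E_1(t)$ and interpolating at level $1/2$ (using the 2D embedding $H^{1/2}(\Sigma_t)\hookrightarrow L^4(\Sigma_t)$) gives the desired estimate. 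For the cross term $\nabla_\tau p \cdot \D_t v$ I apply \eqref{eq:kato-weak} with the $L^4$--$W^{1,4}$ split, using $\|\nabla_\tau p\|_{H^{1/2}(\Sigma_t)} \leq C\|p\|_{H^2(\Omega_t)}$ (by trace applied to $\nabla p$), $\|\D_t v\|_{W^{1,4}(\Sigma_t)} \leq C\|\D_t v\|_{H^{3/2}(\Omega_t)} \leq CE_1(t)^{1/2}$, and the identity $\D_t v = -\nabla p$ to estimate the Hölder-paired remainder by $\|p\|_{H^2(\Omega_t)} \cdot E_1(t)^{1/2}$.

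The main obstacle is exactly this cross term $\nabla_\tau p \cdot \D_t v$: the identity $\D_t v = -\nabla p$ means the pressure couples with itself, which forces the quadratic dependence $\|p\|_{H^2(\Omega_t)}^2$ on the right-hand side and precludes a linear-in-$E_1(t)$ estimate. A secondary technical difficulty is that neither $B$ nor $\nabla U$ lies a priori in $L^\infty(\Sigma_t)$, so the standard Kato--Ponce form requiring an $L^\infty$ factor is unavailable; this is circumvented throughout by using the weak fractional Leibniz rule \eqref{eq:kato-weak} with the $L^4$--$W^{1,4}$ pairing, which balances exactly in two dimensions via the embedding $H^{1/2}(\Sigma_t)\hookrightarrow L^4(\Sigma_t)$.
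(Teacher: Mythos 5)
Your overall decomposition of $R_p^1 = R_I^1 + R_{II}^1 + R_{III}^1$ and the reliance on the weak fractional Leibniz rule \eqref{eq:kato-weak} with an $L^4$--$W^{1,4}$ split mirror the paper's strategy, and the observation that the cross term forces the quadratic dependence on $\|p\|_{H^2(\Omega_t)}$ is correct. However, there is a genuine gap in the handling of the coefficient $|B|^2 - Q(t)H|\nabla U|^2$. You claim $\||B|^2 - Q(t)H|\nabla U|^2\|_{H^1(\Sigma_t)}\leq C(1+\|p\|_{H^2(\Omega_t)})$, but under only $\|p\|_{H^1(\Omega_t)}\leq C$ this is not available: via Kato--Ponce $\||B|^2\|_{H^1(\Sigma_t)}\lesssim \|B\|_{L^\infty}\|B\|_{H^1(\Sigma_t)}$, and while $\|B\|_{L^\infty}\leq C(1+\|p\|_{H^2(\Omega_t)})$ (as in \eqref{eq:B-bound2}), Lemma~\ref{lem:press-curv} together with the trace theorem and interpolation give only $\|B\|_{H^1(\Sigma_t)}\leq C(1+\|p\|_{H^{3/2}(\Omega_t)})\leq C(1+\|p\|_{H^2(\Omega_t)}^{1/2})$. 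That produces $\||B|^2\|_{H^1(\Sigma_t)}^2\lesssim 1+\|p\|_{H^2(\Omega_t)}^3$, and after pairing with $\|\D_t v\cdot\nu\|_{H^1(\Sigma_t)}^2\leq E_1(t)$ the result is $\lesssim(1+\|p\|_{H^2}^3)E_1(t)$, which is strictly weaker than the target $(1+\|p\|_{H^2}^2)E_1(t)$ and would not close the Gr\"onwall argument in Proposition~\ref{prop:case-1}. The paper avoids this by never estimating the coefficient in $H^1(\Sigma_t)$ at all: it establishes the sharp interpolation $\|B\|_{C^\alpha(\Sigma_t)}^2\leq CE_1(t)^\theta$ with $\theta<\frac49$ (cf.~\eqref{eq:B-bound1}), passes to harmonic extensions, and performs a careful Leibniz split so that the powers of $E_1(t)$ generated by the repeated curvature factors sum to strictly less than~$1$. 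This fractional interpolation is essential and is missing from your plan.

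Two further issues. First, you attribute the $\|p\|_{H^2}^2$ dependence in $R_{III}^1$ to a ``top-order $\nabla\pa_t^2 U$ term,'' but $\pa_t^2 U$ does not occur in $R_{III}^1$: the definition \eqref{def:R_I} imposes $\gamma_i\leq l=1$, so only $U$ and $\pa_t U$ appear. The $\pa_t^2 U$ term is the \emph{leading} term $-Q(t)\pa_\nu U(\pa_\nu\pa_t^2 U)$ in the formula for $\D_t^2 p$, which belongs to the energy estimate (Proposition~\ref{prop:case-1}), not to the remainder $R_p^1$; relatedly, Lemma~\ref{lem:reg-capa1} is stated only for $l\geq2$ and cannot be cited here. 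Second, the coefficients $a_{\alpha,\gamma,Q}(v)$ in $R_{III}^1$ depend on $Q^{(2)}(t)$, and the boundedness of this second time derivative of $Q/\Cap(\Omega_t)^2$ is not automatic under the stated hypotheses: the paper devotes a nontrivial computation (the argument surrounding \eqref{eq:Q-2-derivat}) to controlling it, and your proposal omits this step entirely.
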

    
    \begin{proof}
    Let us begin by recalling that by the definition of $R_p^1$ in \eqref{eq:R-p-in-3}, \eqref{def:R_I} \eqref{def:R_II-2} we may write 
 \begin{equation}
     \label{eq:Rp1-again}
     \begin{split}
    R_p^1 = &- |\nabla_\tau p|^2 - (|B|^2 - Q(t) H |\nabla U|^2)  \,  \pa_\nu p  +  a_1(\nu, \nabla v) \star B + a_2(\nu, \nabla v) \star \nabla^2 v \\
    &+ \sum_{|\alpha|+ |\gamma| \leq 2,\, \gamma_i \leq 1} a_{\alpha,\gamma, Q}(v)  \nabla^{1+\alpha_1} \pa_t^{\gamma_1} U \star \nabla^{1+\alpha_2} \pa_t^{\gamma_2} U.
    \end{split}
    \end{equation}
We first recall it holds $\|U\|_{C^{1,\alpha}(\Sigma_t)} \leq C$.  We may bound the curvature by Sobolev embedding and interpolation as
\[
\|B\|_{C^\alpha(\Sigma_t)}  \leq C (1+\|p\|_{C^\alpha(\Sigma_t)}) \leq C(1+\|\bar \nabla p\|_{L^{\frac{11}{5}}(\Sigma_t)}) \leq C(1+ \| p\|_{H^{2}(\Sigma_t)}^{\theta} \| p\|_{L^{4}(\Sigma_t)}^{1-\theta}),
\]
for $\theta < \frac49$. Recall that  $ \|p\|_{H^1(\Omega_t)} \leq C$ implies  $\| p\|_{L^{4}(\Sigma_t)}, \|B\|_{L^4} \leq C$. By the Sobolev embedding and by Lemma \ref{lem:CS-intermed} we have
\begin{equation}
     \label{eq:pH2-bound}
     \begin{split}
\| p\|_{H^{2}(\Sigma_t)}^2 \leq C\| \nabla p\|_{H^{1}(\Sigma_t)}^2 \leq  C \|\nabla p\|_{H^{\frac32}(\Omega_t)}^2 \leq C E_{1}(t).
     \end{split}
\end{equation}
Therefore we obtain 
\begin{equation}
     \label{eq:B-bound1} 
\|B\|_{C^\alpha(\Sigma_t)}^2  \leq CE_1(t)^{\theta} \qquad \text{for } \, \theta < \frac49.
\end{equation}
We may also bound the curvature simply as
\begin{equation}
     \label{eq:B-bound2}
\|B\|_{L^\infty(\Sigma_t)}  \leq \|B\|_{C^\alpha(\Sigma_t)} \leq C (1+\|p\|_{C^\alpha(\Sigma_t)}) \leq C(1+ \|p\|_{H^2(\Omega_t)}).
\end{equation}

Let us bound the first term in \eqref{eq:Rp1-again} which is of the highest order.  We observe that by interpolation it holds 
\[
 \|\nabla p\|_{L^{8}(\St)} \leq C\|\nabla p\|_{H^{1}(\St)}^{\frac12} \|\nabla p\|_{L^{4}(\St)}^{\frac12}.
\]
By using this,  the Sobolev embedding and \eqref{eq:kato-weak} we have 
\[
\begin{split}
\||\nabla_\tau p|^2 \|_{H^{1/2}(\Sigma_t)} &\leq C\| |\nabla p|^2 \|_{H^{\frac12}(\St)} +  C \| \nu\|_{W^{1,4}(\St)}  \| |\nabla p|^2\|_{L^{4}(\St)} \\
&\leq C\| |\nabla p|^2 \|_{H^{1}(\Omega_t)} + C\|B\|_{L^4(\St)} \|\nabla p\|_{L^{8}(\St)}^2  \\
&\leq C\|\nabla p \|_{L^{6}(\Omega_t)} \|\nabla^2 p  \|_{L^{3}(\Omega_t)}  + C \|\nabla p\|_{L^{4}(\St)}\|\nabla p\|_{H^{1}(\St)} \\
&\leq C\|p \|_{H^{2}(\Omega_t)} \|\nabla p  \|_{H^{\frac32}(\Omega_t)}  \\
&\leq C (1+ \| p \|_{H^{2}(\Omega_t)}) E_1(t)^{\frac12}. 
\end{split}
\]
This gives bound for the first term.

In order to bound the next term in \eqref{eq:Rp1-again} we let   $\tilde \nu$ and  $\tilde B$ be the harmonic extensions  of the normal $\nu $ and of  the second fundamental form $B$ to $\Omega_t$. By maximum principle $\|\tilde B\|_{L^\infty(\Omega_t)} \leq \| B\|_{L^\infty(\Sigma_t)}$, while  by standard results from harmonic analysis \cite{Dahl} it holds 
\[
\| \tilde B\|_{W^{1,3}(\Omega_t)} \leq C\|   B\|_{W^{1,3}(\Sigma_t)} 
\]
and by \eqref{eq:poisson1-5} $\| \tilde \nu\|_{W^{1,4}(\Omega_t)} \leq C$. Then   we have 
\begin{equation}
     \label{eq:p-H2-bound2}
\|\nabla p \cdot \tilde \nu \|_{H^{1}(\Omega_t)} \leq C\|p \|_{H^{2}(\Omega_t)}  + C \| \tilde \nu\|_{W^{1,4}(\Omega_t)}  \| p\|_{W^{1,4}(\Omega_t)}\leq C\|p \|_{H^{2}(\Omega_t)}.
\end{equation}
We have  by \eqref{eq:B-bound1}, \eqref{eq:p-H2-bound2} and by the Sobolev embedding 
\[
\begin{split}
\||B|^2   \,  &\pa_\nu p \|_{H^{1/2}(\Sigma_t)}^2 \leq C(1+\|\nabla (|\tilde B|^2   \, (\nabla p \cdot \tilde \nu))\|_{L^{2}(\Omega_t)}^2\\
&\leq C(1+ \|B\|_{L^\infty(\Sigma_t)}^4\|\nabla p \cdot \tilde \nu \|_{H^1(\Omega_t)}^2 + \|B\|_{L^\infty(\Sigma_t)}^2\|\nabla  \tilde B\|_{L^3(\Omega_t)}^2 \|\nabla p\|_{L^6(\Omega_t)}^2)\\
&\leq C(1+ \|p\|_{H^2(\Omega_t)}^2) E_1(t)^{2\theta} + E_1(t)^{\theta} \|   B\|_{W^{1,3}(\Sigma_t)}^2\|p\|_{H^2(\Omega_t)}^2),
\end{split}
\]
for $\theta < \frac49$. By interpolation in Proposition \ref{prop:interpolation},  by Lemma  \ref{lem:press-curv} and \eqref{eq:pH2-bound} we have
\begin{equation}
    \label{eq:R-p-1-00}
\|   B\|_{W^{1,3}(\Sigma_t)} \leq C\| B\|_{H^2(\Sigma_t)}^{\frac59}\| B\|_{L^4(\Sigma_t)}^{\frac49} \leq C(1+ \|p\|_{H^2(\Sigma_t)}^{\frac59})\leq C E_1(t)^{\frac12 \cdot \frac59}.
\end{equation}
Therefore, since $\theta < \frac49$,  we may bound the second term as  
\[
\||B|^2   \,\pa_\nu p \|_{H^{1/2}(\Sigma_t)}^2 \leq C(1+ \|p\|_{H^2(\Omega_t)}^2)E_1(t).
\]

By the same argument we also have  
\begin{equation}
    \label{eq:R-p-1-0}
\| H |\nabla U|^2 \pa_\nu p \|_{H^{1/2}(\Sigma_t)}^2 \leq C(1+ \|p\|_{H^2(\Omega_t)}^2)E_1(t).
\end{equation}
Indeed, the same calculations as above lead to 
\[
\| H |\nabla U|^2 \pa_\nu p \|_{H^{1/2}(\Sigma_t)}^2 \leq  C(1+ \|p\|_{H^2(\Omega_t)}^2) E_1(t)^{\frac49}(\| B\|_{W^{1,3}(\Sigma_t)}^2+ \||\nabla U|^2\|_{W^{1,3}(\Sigma_t)}^2).
\]  
Recall that \eqref{eq:R-p-1-00} yields $\| B\|_{W^{1,3}(\Sigma_t)}^2\leq CE_1(t)^{\frac59}$. By   Lemma \ref{lem:reg-capa} we deduce
\[
\begin{split}
 \||\nabla U|^2\|_{H^2(\St)}  &\leq C\|\nabla U\|_{L^\infty}\|\nabla U\|_{H^2(\St)} \\
&\leq C(1+ \| \nabla^2 U\|_{H^{\frac12}(\St)})\leq C(1+ \|p\|_{H^2(\St)}) \leq CE_1(t)^{\frac12}.
\end{split}
\]
Hence, by interpolation  
\[
\||\nabla U|^2\|_{W^{1,3}(\Sigma_t)} \leq C\| |\nabla U|^2 \|_{H^2(\St)}^{\frac13} \||\nabla U|^2\|_{L^\infty(\St)}^{\frac23} \leq C E_1(t)^{\frac12 \cdot \frac13}
\]
and \eqref{eq:R-p-1-0} follows.

 The term $a_1(\nu, \nabla v) \star B $ is easy to bound and leave the details for the reader. Also the term $a_2(\nu, \nabla v) \, \nabla^2 v $ is not difficult and we merely point out that by interpolation 
 \[
 \|\nabla^2 v\|_{L^4(\Omega_t)} \leq C \|v\|_{H^3(\Omega_t)}^{1/2} \|\nabla v\|_{L^\infty(\Omega_t)}^{1/2} \leq C \|v\|_{H^3(\Omega_t)}^{1/2}. 
 \]
 Thus we have by \eqref{eq:B-bound2}
 \[
 \begin{split}
 \|a_1(\nu, \nabla v) \, \nabla^2 v\|_{H^{\frac12}(\Sigma_t)} &\leq \|a_1(\nu, \nabla v) \, \nabla^2 v\|_{H^{1}(\Omega_t)}\\ 
 &\leq C\|\nabla ^3 v\|_{H^3(\Omega_t)} + C \|\nabla^2 v\|_{L^4(\Omega_t)}^2 +C \|B\|_{C^\alpha(\Sigma_t)}  \|\nabla^2 v\|_{L^2(\Omega_t)}\\
 &\leq  C(1 + \|p \|_{H^2(\Omega_t)}) \|v \|_{H^3(\Omega_t)}.
  \end{split}
 \]

Before we  treat the last term in \eqref{eq:Rp1-again} we need to show that the coefficients $a_{\alpha,\gamma, Q}$ are bounded. To  this aim we need to show that $Q^{(1)}= Q'(t), Q^{(2)} = Q''(t)$, where $\Qt$ is defined in \eqref{def:constant-q},  are bounded since $a_{\alpha, \gamma ,Q}$ depend smoothly on them. It is clear that it is enough to show that the first and second derivative of  $\Cap (\Omega_t)$ are bounded. It is easy to see, and in fact we already calculated, that  
\[
 \frac{d}{dt} \Cap (\Omega_t) = -\frac12 \int_{\Sigma_t} |\nabla U|^2 v_n d\H^2.
\]
This is clearly bounded. We calculate further and obtain  
\beq \label{eq:Q-2-derivat}
 \frac{d^2}{dt^2} \Cap (\Omega_t) = -\frac12 \int_{\St} H_{\St}|\nabla U|^2 v_n d\H^2 -  \int_{\St} (\nabla \pa_t U \cdot \nabla U) v_n + |\nabla U|^2 \pa_t (v_n)\, d\H^2.
\eeq
The first term on RHS is clearly bounded. 
For the second term on RHS in \eqref{eq:Q-2-derivat} we note  that since $U$ is constant on $\Sigma_t$ we have $\nabla U= -|\nabla U| \nu$. Therefore 
\[
|\int_{\Sigma_t} \nabla \pa_t U \cdot \nabla U v_n d\H^2|
=|\int_{\Sigma_t}|\nabla U|  v_n \nabla \pa_t U \cdot   \nu  d\H^2| \leq \|\nabla U v_n \|_{H^\frac12(\Sigma_t)}\|\nabla \pa_t U \cdot \nu \|_{H^{-\frac12}(\Sigma)}.
\]
We note that we may use the Kato Ponce inequality (Proposition \ref{prop:kato-ponce}) and Lemma \ref{lem:reg-capa}  to deduce
\[
\|\nabla U v_n \|_{H^\frac12(\Sigma_t)} \leq \|\nabla U  \|_{H^\frac12(\Sigma_t)} \|v_n\|_{L^\infty(\Sigma_t)}+\|v_n  \|_{H^\frac12(\Sigma_t)}\|\nabla U \|_{L^\infty(\Sigma_t)} \leq C.
\]
Next we let $\tilde U_t$ the harmonic extension of $\pa_t U$  in $\Omega_t$ and note that for any $ \phi \in H^\frac12(\Sigma_t)$ it holds
\[
\begin{split}
\int_{\Sigma_t} \phi \nabla \pa_t U \cdot \nu\, d\H^2
= \int_{\Omega_t} \Div (\phi \nabla   \tilde U_t  ) \,dx
&\leq \|\nabla \phi \|_{L^2(\Omega_t)} \|\nabla \pa_t \tilde U\|_{L^2(\Omega_t)}\\
&\leq  \|\phi\|_{H^\frac12(\Sigma_t)} \|\pa_t U\|_{H^\frac12(\Sigma_t)}.
\end{split}
\]
This and  $\pa_t U = - \nabla U \cdot v$ imply
\[
\|\nabla \pa_t U \cdot \nu \|_{H^{-\frac12}(\Sigma_t)}
\leq
\|\pa_t U\|_{H^\frac12(\Sigma_t)}= \|\pa_\nu U  v_n\|_{H^\frac12(\Sigma_t)} \leq C.
\]
Let us  estimate the last term in \eqref{eq:Q-2-derivat}.  We note that 
\[
\pa_t(v_n) = \D_t v \cdot \nu + a(\nabla v)\star B_{\St}.
\]
By  \eqref{eq:pressure-1} it holds  $\Div \D_t v=- \text{Tr}((\nabla v)^2)$. Therefore we estimate 
\[
\begin{split}
 |\int_{\Sigma_t} |\nabla U|^2 \pa_t v_n\,  d\H^2|&\leq C +   \int_{\Sigma_t} |\nabla U|^2\D_t v\cdot \nu d\H^2\\
&\leq C(1 + \| \Div \D_t v\|_{L^2(\Omega_t)} + \|\nabla U\|_{H^{\frac12}(\St)} \|\D_t v\|_{L^2(\Omega_t)})
\leq C.
\end{split}
\]
Thus we have $|Q^{(2)}(t)| \leq C$  and the coefficients $a_{\alpha,\gamma, Q}$ are bounded. 

Let us treat the last term in \eqref{eq:Rp1-again}. We may assume that $\alpha_1+\gamma_1 \geq \alpha_2+\gamma_2$ and assume first that $\alpha_1 + \gamma_1=2$ (in which case $\alpha_2 = \gamma_2 = 0$). This means that either $\alpha_1 =2,  \gamma_1 = 0$ or $\alpha_1 =1,  \gamma_1 = 1$. Therefore we have by \eqref{eq:est-R-U-6} and \eqref{eq:est-R-U-7}
\begin{equation}
    \label{eq:R-p-before}
\|\nabla^{1+ \alpha_1}\pa_t^{\gamma_1} U \|_{H^{\frac12}(\St)}^2 \leq CE_1(t).
\end{equation}
We extend $v$ to the complement $\Omega_t^c$ such that it remains Lipschitz. Recall that $U$ (and $\pa_t U$) is harmonic in $\Omega_t^c$. Since $\Omega_t$ is bounded we may choose a large ball such that $\Omega_t \subset B_{R/2}$ and $\|U\|_{H^{\frac12}(\St)}\simeq \|\nabla U\|_{L^{2}(B_R \setminus \Omega_t)}$.    Then by \eqref{eq:est-R-U-5}, \eqref{eq:R-p-before} and by the Sobolev embedding it holds 
 \[
  \begin{split}
 \|a_{\alpha,\gamma}(v) &\nabla^{1+ \alpha_1}\pa_t^{\gamma_1} U \star \nabla U  \|_{H^{\frac12}(\St)} \leq C \|a_{\alpha,\gamma}(v) \nabla^{1+ \alpha_1}\pa_t^{\gamma_1} U \star \nabla U  \|_{H^{1}(B_R \setminus \Omega_t)} \\
 &\leq C(1+\|\nabla^{2+ \alpha_1}\pa_t^{\gamma_1} U \|_{L^{2}(B_R \setminus \Omega_t)} +\|\nabla^{1+ \alpha_1}\pa_t^{\gamma_1} U\|_{L^4(B_R \setminus \Omega_t)} \|\nabla^{2} U\|_{L^4(B_R \setminus \Omega_t)}) \\
 &\leq C(1+ \|\nabla^2 U\|_{H^{\frac12}(\St)})(1+ \|\nabla^{1+ \alpha_1}\pa_t^{\gamma_1} U \|_{H^{\frac12}(\St)}) \\
 &\leq (1+ \|p\|_{H^{1}(\St)})E_1(t)^{\frac12}.
 \end{split}
 \]
 We are left with the last term \eqref{eq:Rp1-again} in the case when $\alpha_i + \gamma_i \leq 1$ for $i = 1,2$.  We estimate this  by the Kato-Ponce inequality (Proposition \ref{prop:kato-ponce}) and the Sobolev embedding  as
 \[
  \begin{split}
\sum_{\alpha_i + \gamma_i\leq 1, \, i =1,2} &\|a_{\alpha,\gamma}(v)  \nabla^{1+\alpha_1} \pa_t^{\gamma_1} U \star \nabla^{1+\alpha_2} \pa_t^{\gamma_2} U\|_{H^{\frac12}(\St)}
\\
&\leq C \sum_{\alpha + \gamma \leq 1 } \|\nabla^{1+\alpha} \pa_t^{\gamma} U\|_{L^{\infty}(\St)}  \sum_{\alpha + \gamma \leq 1} \|\nabla^{1+\alpha} \pa_t^{\gamma} U\|_{H^{\frac12}(\St)} \\
&\leq C \sum_{\alpha + \gamma \leq 1 } \|\nabla^{1+\alpha} \pa_t^{\gamma} U\|_{H^{\frac32}(\St)}  \sum_{\alpha + \gamma \leq 1} \|\nabla^{1+\alpha} \pa_t^{\gamma} U\|_{H^{\frac12}(\St)}.
  \end{split}
 \]
 We bound the first term in the last row by \eqref{eq:R-p-before} 
 \[
 \|\nabla^{1+\alpha} \pa_t^{\gamma} U\|_{H^{\frac32}(\St)}^2  \leq CE_1(t).
 \]
We bound the last term in the last row when $\alpha=1$ and $\gamma= 0$ by \eqref{eq:est-R-U-5} as before $\|\nabla^{2} U\|_{H^{\frac12}(\St)} \leq C(1 + \|p\|_{H^1(\St)})$. We need yet to prove 
 \begin{equation}
     \label{eq:R-1-1-4}
   \|\nabla \pa_t U\|_{H^{\frac12}(\St)}  \leq C(1 + \|p\|_{H^1(\St)})
 \end{equation}
 to conclude the proof.   We use the fact that $\pa_t U$ is harmonic and $\pa_t U =  \pa_\nu U \, v_n$ and therefore by  Theorem \ref{teo:reg-capa} we deduce 
  \[
  \|\nabla \pa_t U\|_{H^{\frac12}(\St)} \leq C(1  + \|\pa_\nu U \, v_n\|_{H^{3/2}(\Sigma_t)}). 
  \]
We recall that   $\|B\|_{L^4(\Sigma_t)} \leq  C $  and $\|B\|_{H^1(\Sigma_t)} \leq C(1+ \|p\|_{H^1(\Sigma_t)})$. Therefore we deduce by by Proposition \ref{prop:kato-ponce}, the a priori bound $\|v_n\|_{H^2(\St)} \leq C$ in  \eqref{eq:apriori_est}  and by \eqref{eq:est-R-U-5} that 
\[
\begin{split}
\|\pa_\nu U \, v_n\|_{H^{3/2}(\Sigma_t)} &\leq  C(\|v_n\|_{H^{\frac32}(\Sigma_t)} + \| \nabla U\|_{H^{\frac32}(\Sigma_t)} + \|\nu\|_{H^{\frac32}(\Sigma_t)}) \\
&\leq C(1+ \|v_n\|_{H^{2}(\Sigma_t)} + \| \nabla^2 U\|_{H^{\frac12}(\Sigma_t)} + \|B\|_{H^{1}(\Sigma_t)})\\
&\leq C(1+ \|p\|_{H^1(\St)}).
\end{split}
\]
Hence, we have \eqref{eq:R-1-1-4} and the claim follows. 
    \end{proof}
    
    We conclude this section with the higher order version of Lemma \ref{lem:estimate-R-p-1}. 
    \begin{lemma}
    \label{lem:estimate-R-p-2}
    Let $l \geq 2$  and consider  $R_p^l$ defined in \eqref{eq:R-p-in-3}. Assume that \eqref{eq:apriori_est} holds and  $E_{l-1}(t) \leq M$. There exists $C = C(M,l)$ such that
    \begin{equation}
    \label{eq:R-p-l-1}
    \|R_p^l\|_{H^{\frac12}(\Sigma_t)}^2 \leq C E_l(t)
    \end{equation}
    and for integers $1 \leq k  \leq l-1$ and $\e>0$ it holds 
    \begin{equation}
    \label{eq:R-p-l-2}
    \|R_p^{l-k}\|_{H^{\frac32k-1}(\Sigma_t)}^2 \leq \e E_l(t) +C_\e
    \end{equation}
    for some constant $C_\e=C(M,l\e)$.
\end{lemma}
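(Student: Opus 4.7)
The plan is to estimate the three pieces $R_I^l$, $R_{II}^l$, $R_{III}^l$ making up $R_p^l$ separately, exploiting the inductive hypothesis $E_{l-1}(t)\leq C$ in two essential ways. First, via Lemma \ref{lem:press-curv} it yields the curvature bound $\|B\|_{H^{\frac{3l}{2}-1}(\St)}\leq C$, so $\St$ satisfies \eqref{eq:notationhp} for $m=\lfloor \tfrac{3l}{2}\rfloor+1$ and every result from Section 2 applies with uniform constants. Second, via Sobolev embedding it gives the pointwise bound \eqref{eq:induction}, namely $\|\nabla^{1+\alpha}\D_t^\beta v\|_{L^\infty(\Omega_t)}\leq C$ whenever $\alpha+\beta\leq l-2$. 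The unifying strategy is to apply Proposition \ref{prop:kato-ponce} to each multilinear product, arranging for exactly one factor to carry a top-order norm controlled by $E_l(t)^{1/2}$ while every remaining factor is uniformly $L^\infty$-bounded.

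The term $R_I^l$ contains the two genuine top-order contributions $(|B|^2-Q(t)H|\nabla U|^2)(\D_t^l v\cdot\nu)$ and $\nabla_\tau p\cdot \D_t^l v$. The coefficients $|B|^2-Q(t) H|\nabla U|^2$ and $\nabla_\tau p$ would be controlled uniformly in $H^1(\St)\cap L^\infty(\St)$ using Proposition \ref{prop:meancrv-bound}, Lemma \ref{lem:reg-capa1}, and the identity $-\nabla p=\D_t v$ combined with Lemma \ref{lem:CS-intermed}, which gives $\|p\|_{H^{\frac{3(l-1)}{2}+1}(\Omega_t)}\leq CE_{l-1}(t)^{1/2}\leq C$. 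The distinguished factor $\D_t^l v$ has trace norm $\|\D_t^l v\|_{H^1(\St)}\leq C\|\D_t^l v\|_{H^{3/2}(\Omega_t)}\leq CE_l(t)^{1/2}$ through the $k=1$ term in the definition of $E_l$, and Proposition \ref{prop:kato-ponce} (or \eqref{eq:kato-weak} if $\nabla_\tau p$ sits only in some $W^{1,q}$) will then produce the desired bound $\|R_I^l\|_{H^{1/2}(\St)}^2\leq CE_l(t)$.

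For $R_{II}^l$ the index constraint $|\beta|\leq l-1$ across $l+1$ slots forces at most one index $\beta_i$ to satisfy $\beta_i\geq l-1$. After relabelling, all remaining factors have $\alpha_j+\beta_j\leq l-2$ and are uniformly $L^\infty$-bounded by \eqref{eq:induction}, while the distinguished factor, at worst $\nabla^{1+\alpha}\D_t^{l-1}v$ with $\alpha\leq 1$, is placed in $H^{1/2}(\St)$ with norm bounded via the trace theorem by $\|\D_t^{l-1}v\|_{H^{3}(\Omega_t)}\leq CE_l(t)^{1/2}$ (the $k=2$ term in $E_l$). The coefficient $a_{\alpha,\beta,\gamma}(B)$ is absorbed using $\|B\|_{H^{\frac{3l}{2}-1}}\leq C$ in Proposition \ref{prop:kato-ponce}. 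The term $R_{III}^l$ is handled along the same lines: after reordering so that $\alpha_1+\gamma_1\geq \alpha_2+\gamma_2$, only the first capacity factor can be of top order, and its $H^{1/2}(\St)$ norm is controlled by $E_l(t)^{1/2}$ through \eqref{eq:regcapa0}; every other capacity factor is uniformly $L^\infty$-bounded via \eqref{eq:regcapa1} applied with strictly subcritical parameters, and every $\D_t^{\beta_i}v$ factor is uniformly $L^\infty$-bounded via \eqref{eq:induction}. I would also verify as a side computation that $|Q^{(k)}(t)|\leq C$ for $k\leq l+1$, which follows inductively from $\frac{d}{dt}\Cap(\Omega_t)=-\frac{1}{2}\int_{\St}|\nabla U|^2 v_n\,d\H^2$, Lemma \ref{lem:mat-capa}, and $E_{l-1}(t)\leq C$, ensuring that the coefficient $a_{\alpha,\beta,\gamma,Q}(v)$ is uniformly bounded.

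For \eqref{eq:R-p-l-2} the plan is to rerun the same scheme at level $l-k$ while asking for $\tfrac{3k}{2}-1$ extra spatial regularity. Since $l-k\leq l-1$, every previous top-order estimate is already bounded by $E_{l-1}(t)\leq C$, but the new $\tfrac{3k}{2}-1$ regularity demand on the single distinguished factor is strictly subcritical relative to $E_l$; Corollary \ref{coro:interpolation} will then let me interpolate that factor between its $E_{l-1}\leq C$ control and an $E_l(t)$ control, giving the absorbable form $\e E_l(t)+C_\e$. The main obstacle throughout is the intricate bookkeeping of the multi-index constraints in $R_{II}^l$ and $R_{III}^l$, in particular guaranteeing that the sums $|\alpha|+|\beta|+|\gamma|$ never force two factors to be top-order simultaneously; together with the auxiliary induction on time derivatives of $\Cap(\Omega_t)$ needed to control $a_{\alpha,\beta,\gamma,Q}(v)$, this bookkeeping is what makes the argument long even though each individual estimate reduces to a routine application of Kato-Ponce.
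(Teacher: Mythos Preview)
Your overall strategy---Kato--Ponce plus one distinguished top-order factor with all remaining factors in $L^\infty$---is exactly the paper's approach, and for $R_I^l$ your sketch is essentially correct. However, two of your combinatorial claims are false as stated and would cause the argument to fail.

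For $R_{II}^l$ you assert that after singling out one factor, ``all remaining factors have $\alpha_j+\beta_j\leq l-2$''. This fails at $l=2$: with $|\alpha|\leq 1$, $|\beta|\leq 1$ one gets terms such as $\nabla^2 v\star\nabla\D_t v\star\nabla v$, where \emph{both} of the first two factors have index sum $1>l-2=0$, and neither $\|\nabla^2 v\|_{L^\infty}$ nor $\|\nabla\D_t v\|_{L^\infty}$ is controlled by $E_1(t)\leq C$. The paper resolves this by first passing from $H^{1/2}(\St)$ to $H^1(\Omega_t)$ (extending $B$ harmonically so that the coefficient $a_{\alpha,\beta}(B)$ is harmless) and then using an $L^6{\times}L^3$ H\"older splitting together with the interpolation $\|\nabla^{1+\alpha}\D_t^\beta v\|_{L^6}\leq C\|\nabla^{1+\alpha}\D_t^\beta v\|_{H^2}^{1/3}\|\nabla^{1+\alpha}\D_t^\beta v\|_{L^3}^{2/3}$, which balances the three mid-order factors against a single $E_l(t)$. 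Your pure $L^\infty$ reduction cannot recover this case.

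For $R_{III}^l$ you claim ``every $\D_t^{\beta_i}v$ factor is uniformly $L^\infty$-bounded via \eqref{eq:induction}''. But $|\beta|\leq l-1$ allows one $\beta_i=l-1$, and $\|\D_t^{l-1}v\|_{L^\infty(\St)}$ is \emph{not} bounded by $E_{l-1}$; it costs $E_l(t)^{1/2}$. In that configuration the remaining capacity factors have $\alpha_j+\gamma_j\leq 2$, which for $l\geq 3$ are indeed uniformly bounded---so the distinguished factor must be the velocity one, not a capacity one. The paper handles this by explicitly splitting into the case where $\D_t^{l-1}v$ carries the $L^\infty(\St)$ norm (bounded by $\|\D_t^{l-1}v\|_{H^3(\Omega_t)}\leq CE_l(t)^{1/2}$) and the case where a capacity factor does. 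Your plan, by fixing the distinguished role on the capacity side, misses this.

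Both issues are precisely the ``intricate bookkeeping'' you flag at the end, but your concrete outline gets them wrong rather than merely eliding them. The fix in each case is the paper's: allow different factors to be distinguished depending on the index configuration, and for $R_{II}^2$ use $L^p$-H\"older with $p<\infty$ rather than $L^\infty$.
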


\begin{proof}
Let us first prove \eqref{eq:R-p-l-1}. We begin by showing 
\begin{equation}
    \label{eq:R-p-l-6}
    \|R_I^l\|_{H^{\frac12}(\Sigma_t)}^2 \leq CE_l(t),
\end{equation}
where 
\[
R_{I}^l = - (|B|^2 + Q(t)\, H\, |\nabla U|^2 ) (\D_t^{l}v \cdot \nu) + \langle \nabla_\tau p, \D_t^{l}v \rangle,
\]
here  $\Qt$ is defined in \eqref{def:constant-q}.  Let us first recall that $E_{1}(t)\leq C$ implies $\|B\|_{H^{2}(\Sigma_t)} \leq C$ and $\|p\|_{H^2(\Sigma_t)} \leq C$. By Lemma \ref{lem:reg-capa}  this implies $\|\nabla^3 U\|_{H^{\frac12}(\Sigma_t)} \leq C$. In particular, $\|B\|_{L^\infty} \leq C$ and  $\| \nabla^2 U \|_{L^\infty} \leq C$. Therefore we may bound  by Sobolev embedding and by Proposition \ref{prop:kato-ponce} 
\[
\begin{split}
\|(|B|^2 + Q(t)\,H\, |\nabla U|^2 ) &(\D_t^{l}v \cdot \nu)\|_{H^{\frac12}(\Sigma_t)}^2 \leq  \|(|B|^2 + Q(t)\,H\, |\nabla U|^2 ) (\D_t^{l}v \cdot \nu)\|_{H^{1}(\Sigma_t)}^2 \\
&\leq C(1+ \| B\|_{W^{1,4}(\Sigma_t)}^2 \|\D_t^{l}v \cdot \nu\|_{L^4(\Sigma_t)}^2 + \| \D_t^{l}v\cdot \nu\|_{H^1(\Sigma_t)}^2)\\
&\leq C(1+ \| B\|_{H^2(\Sigma_t)}^2) \| \D_t^{l}v \cdot \nu \|_{H^1(\Sigma_t)}^2 \\
&\leq C E_l(t).
\end{split}
\]
In order bound $\|\nabla_\tau p \cdot  \D_t^{l}v\|_{H^{\frac12}(\Sigma_t)}$ we observe that by the curvature bound $\|B\|_{L^\infty}\leq C$, by $-\nabla p = \D_t v$ and by the Sobolev embeddings $\|u\|_{L^3(\Omega_t)} \leq C\|u\|_{H^{\frac12}(\Omega_t)}$ and $\|u\|_{L^6(\Omega_t)} \leq C\|u\|_{H^{1}(\Omega_t)}$ we have 
\[
\begin{split}
\|\nabla_\tau p \cdot \D_t^{l}v\|_{H^{\frac12}(\Sigma_t)} &\leq C\|\D_t v \cdot  \D_t^{l}v\|_{H^{1}(\Omega_t)}\\
&\leq C(1+ \|\nabla \D_t v \cdot \D_t^{l}v\|_{L^{2}(\Omega_t)} + \| \D_t v \cdot \nabla \D_t^{l}v\|_{L^{2}(\Omega_t)})\\
&\leq C(1 +  \|\nabla \D_t v\|_{L^{3}(\Omega_t)}  \| \D_t^{l}v\|_{L^{6}(\Omega_t)} + \| \D_t v\|_{L^{6}(\Omega_t)}  \| \nabla \D_t^{l}v\|_{L^{3}(\Omega_t)})\\
&\leq C(1 +  \| \D_t v\|_{H^{\frac32}(\Omega_t)}  \| \D_t^{l}v\|_{H^{1}(\Omega_t)} + \| \D_t v\|_{H^{1}(\Omega_t)}  \|  \D_t^{l}v\|_{H^{\frac32}(\Omega_t)}).
\end{split}
\]
By definition of $E_l(t)$ it holds 
\[
\| \D_t v\|_{H^{\frac32}(\Omega_t)}^2 \leq E_1(t) \leq E_{l-1}(t) \leq C \quad \text{and} \quad  \|  \D_t^{l}v\|_{H^{\frac32}(\Omega_t)}^2\leq E_{l}(t).
\]
Therefore we have $\|\nabla_\tau p \cdot \D_t^{l}v\|_{H^{\frac12}(\Sigma_t)}^2 \leq C E_{l}(t) $ and \eqref{eq:R-p-l-6} follows. 

Let us next show 
\begin{equation}
    \label{eq:R-p-l-7}
    \|R_{II}^l\|_{H^{\frac12}(\Sigma_t)}^2 \leq CE_l(t),
\end{equation}
where 
\[
R_{II}^l =  \sum_{|\alpha|\leq 1, \, |\beta|\leq l-1}a_{\alpha, \beta}(B) \overbrace{\nabla^{1+\alpha_1} \D_t^{\beta_1} v \star \cdots \star \nabla^{1+\alpha_{l+1}} \D_t^{\beta_{l+1}} v}^{=: R_{\alpha, \beta}(v)}.
\]
We first observe that we may ignore the coefficients $a_{\alpha, \beta}(B) $. Indeed, we may extend $B$ to $\Omega_t$, call the extension $\tilde B$,  such that $\|\tilde B\|_{H^2(\Omega_t)} \leq C$.  Then by the above notation 
\begin{equation}
    \label{eq:Rplgeq3}
\begin{split}
 \|R_{II}^l\|_{H^{\frac12}(\Sigma_t)} &\leq C  \|R_{II}^l\|_{H^{1}(\Omega_t)} \leq  C(1+ \sum_{\alpha, \beta} \|\nabla (\nabla a_{\alpha, \beta}(\tilde B) \star R_{\alpha, \beta}(v))\|_{L^2(\Omega_t)}\\
 &\leq C(1+ \sum_{\alpha, \beta} \|\nabla \tilde B \star R_{\alpha, \beta}(v)\|_{L^2(\Omega_t)} + \| \nabla R_{\alpha, \beta}(v)\|_{L^2(\Omega_t)}) \\
 &\leq C(1+ \sum_{\alpha, \beta} \|\nabla \tilde B\|_{L^4(\Omega_t)} \|R_{\alpha, \beta}(v)\|_{L^4(\Omega_t)} + \| \nabla R_{\alpha, \beta}(v)\|_{L^2(\Omega_t)}) \\
 &\leq  C(1+ \sum_{\alpha, \beta} \| \nabla R_{\alpha, \beta}(v)\|_{L^2(\Omega_t)}) \\
 &\leq C\big(1 + \sum_{|\alpha|\leq 2, \, |\beta|\leq l-1} \|\nabla^{1+\alpha_1} \D_t^{\beta_1} v \star \cdots \star \nabla^{1+\alpha_{l+1}} \D_t^{\beta_{l+1}} v\|_{L^2(\Omega_t)}\big).
 \end{split}
\end{equation}

By the  assumption $E_{l-1}(t) \leq C$ and by \eqref{eq:induction} we deduce $\| \nabla^{1+ \alpha_i} \D_t^{\beta_i} v\|_{L^\infty} \leq C$ for $\alpha_i + \beta_i \leq l-2$. We note also that by $|\alpha| \leq 2$ and $|\beta|\leq l-1$ it follows that $|\alpha| + |\beta| \leq l+1$.  We ignore all the terms in the last row of \eqref{eq:Rplgeq3}  which indexes satisfy $\alpha_i + \beta_i \leq l-2$ as these are uniformly bounded. For the rest of the terms we  use  H\"older's inequality   and relabel the indexes (note that below we assume $\alpha \leq 2$ and $\beta\leq l-1$)
\begin{equation} \label{eq:Rp-holder}
\begin{split}
\sum_{|\alpha|\leq 2, \, |\beta|\leq l-1} &\|\nabla^{1+\alpha_1} \D_t^{\beta_1} v \star \cdots \star \nabla^{1+\alpha_{l+1}} \D_t^{\beta_{l+1}} v\|_{L^2(\Omega_t)} \\
&\leq C\sum_{\alpha\leq 2, \beta \leq l-1} \|\nabla^{1 + \alpha} \D_t^{\beta} v\|_{L^2}^2  +  \sum_{\alpha+ \beta = l} \|\nabla^{1 + \alpha} \D_t^{\beta} v\|_{L^6}^2 \cdot \sum_{\alpha+ \beta = 1} \|\nabla^{1 + \alpha} \D_t^{\beta} v\|_{L^3}^2 \\
&\,\,\,\,\,\,\,\,+ \sum_{\alpha+ \beta = l-1} \|\nabla^{1 + \alpha} \D_t^{\beta} v\|_{L^3}^2 \cdot \sum_{\alpha+ \beta = 2} \|\nabla^{1 + \alpha} \D_t^{\beta} v\|_{L^6}^2  + \sum_{\alpha+ \beta \leq l-1} \|\nabla^{1 + \alpha} \D_t^{\beta} v\|_{L^6}^6 .
\end{split}
\end{equation} 
To bound the first term on the RHS of \eqref{eq:Rp-holder} we simply note that for $\beta \leq l-1$ and $\alpha \leq 2$ it holds
\[
\|\nabla^{1+\alpha} \D_t^{\beta}  v\|_{L^2(\Omega_t)}^2 \leq C\| \D_t^{\beta}  v\|_{H^3(\Omega_t)}^2  \leq C E_{l}(t).
\]
For the second and the third terms we have first for $\alpha + \beta \leq l$ and $\beta \leq l-1$ (which include the case $\alpha + \beta = 2$ as $l \geq 2$)  that 
\[
\|\nabla^{1 + \alpha} \D_t^{\beta} v\|_{L^6(\Omega_t)}^2 \leq C\| \D_t^{\beta} v\|_{H^{l -\beta+2}(\Omega_t)}^2 \leq C \| \D_t^{l+1 -(l+1+\beta)} v\|_{H^{\frac32(l+1+\beta)}(\Omega_t)}^2  \leq C E_{l}(t) .
\]
For $\alpha + \beta \leq l-1$ (which includes the case  $\alpha + \beta = 1$) we deduce
\begin{equation}
\label{eq:Rp:intermediate}
\|\nabla^{1 + \alpha} \D_t^{\beta} v\|_{L^3(\Omega_t)}^2 \leq C \| \D_t^{\beta}v\|_{H^{\frac12 +(l- \beta)}(\Omega_t)}^2 \leq C E_{l-1}(t) \leq C.
\end{equation}
For the last term we interpolate in the fluid domain $\Omega_t \subset \R^3$ for $\alpha + \beta \leq l-1$ as
\[
\begin{split}
\|\nabla^{1 + \alpha} \D_t^{\beta} v\|_{L^6(\Omega_t)} &\leq \|\nabla^{1+\alpha} \D_t^{\beta} v\|_{H^2(\Omega_t)}^{1/3} \|\nabla^{1 + \alpha} \D_t^{\beta} v\|_{L^3(\Omega_t)}^{2/3}\leq C E_l(t)^{1/6}  \|\nabla^{1 + \alpha} \D_t^{\beta} v\|_{L^3}^{2/3}.
\end{split}
\]
By \eqref{eq:Rp:intermediate} we have $\|\nabla^{1 + \alpha} \D_t^{\beta} v\|_{L^3}\leq C$ and thus
\[
\|\nabla^{1 + \alpha} \D_t^{\beta} v\|_{L^6}^6 \leq C E_l(t).
\]
By combing the previous estimates with \eqref{eq:Rplgeq3} and \eqref{eq:Rp-holder} we obtain 
\[
\|R_{II}^l\|_{H^{1/2}(\Sigma_t)}^2 \leq C(1+\| \nabla  R_{II}^l\|_{L^{2}(\Omega_t)}^2) \leq C  E_l(t),
\]
and \eqref{eq:R-p-l-7} follows. 

We are left to prove 
\begin{equation}
    \label{eq:R-p-l-8}
    \|R_{III}^l\|_{H^{\frac12}(\Sigma_t)}^2 \leq CE_l(t),
\end{equation}
where 
\[
R_{III}^l = \sum_{\substack{|\alpha| +  |\beta| + |\gamma |\leq l+1\\ |\beta|\leq l-1, \gamma_i \leq l}} a_{\alpha, \beta,\gamma,Q}(v)  \D_t^{\beta_1} v \star \cdots \star \D_t^{\beta_{l-1}} v  \star \nabla^{1+\alpha_1} \pa_t^{\gamma_1} U \star \nabla^{1+\alpha_1} \pa_t^{\gamma_2} U 
\]
and the coefficients $a_{\alpha,\beta,\gamma,Q}$ depend on the time derivatives of $\Qt$ up to order $l+1$. Recall that  $\Qt$ is defined in \eqref{def:constant-q}

This time we will not give the argument which proves the  boundedness of  $Q(t)^{(l+1)} =  \frac{d^{l+1}}{dt^{l+1}} Q(t)$  as it simpler than the rest of the proof and is similar to the argument in \eqref{eq:Q-2-derivat}.

Recall that by \eqref{eq:induction} it holds $\|\D_t^{\beta_i} v\|_{L^\infty} \leq C$ for $\beta_i \leq l-2$. On the other hand for $\alpha + \gamma \leq l-1$ we have by Lemma \ref{lem:reg-capa1}
\begin{equation}
\label{eq:induction3}
\begin{split}
    \|\nabla^{1+\alpha}\pa_t^\gamma U\|_{L^{\infty}(\Sigma_t)}^2 \leq C\|\nabla^{1+\alpha}\pa_t^\gamma U\|_{H^{\frac32}(\Sigma_t)}^2\leq C(1+\|\nabla^{2+\alpha}\pa_t^\gamma U\|_{H^{\frac12}(\Sigma_t)}^2) \leq CE_{l-1}(t) \leq C.  
\end{split}
\end{equation}
Similarly we have 
\begin{equation}
\label{eq:R-p-l-a}
\|\D_t^{\beta} v\|_{H^{\frac12}(\Sigma_t)} \leq C \quad \text{for }\, \beta \leq l-1\quad \text{and} \quad \|\nabla^{1+\alpha}\pa_t^\gamma U\|_{H^{\frac12}(\Sigma_t)} \leq C \quad  \text{for} \,  \alpha+\gamma \leq l.
\end{equation}
 Therefore we may ignore all the terms with indexes which satisfy  $\beta_i \leq l-2$ and $\alpha_i + \gamma_i \leq l-1$. Recall that we assume $\beta_1 \geq \dots \geq \beta_{l-1}$. 
Therefore we may estimate by Proposition \ref{prop:kato-ponce}, \eqref{eq:induction3} and by  \eqref{eq:R-p-l-a} 
 \[
 \begin{split}
  &\|R_{III}^l\|_{H^{\frac12}(\Sigma_t)} \\
  &\leq \sum_{\alpha + \gamma \leq l}C\big(\|\D_t^{l-1} v\|_{L^{\infty}(\Sigma_t)}\|\nabla^{1+\alpha}\pa_t^\gamma U\|_{H^{\frac12}(\Sigma_t)} + \|\D_t^{l-1} v\|_{H^{\frac12}(\Sigma_t)}\|\nabla^{1+\alpha}\pa_t^\gamma U\|_{L^{\infty}(\Sigma_t)}\big)\\
  &\,\,\,\,\,\,\,\,\,\,\,+C \sum_{\alpha + \gamma \leq l+1, \gamma \leq l}\|\nabla^{1+\alpha}\pa_t^\gamma U\|_{H^{\frac12}(\Sigma_t)}\\
  &\leq C\big(\|\D_t^{l-1} v\|_{L^{\infty}(\Sigma_t)} + \sum_{\alpha + \gamma \leq l}\|\nabla^{1+\alpha}\pa_t^\gamma U\|_{L^{\infty}(\Sigma_t)} + \sum_{\substack{\alpha + \gamma \leq l+1 \\ \gamma \leq l}}\|\nabla^{1+\alpha}\pa_t^\gamma U\|_{H^{\frac12}(\Sigma_t)} \big).
   \end{split}
 \]
We estimate the first term in the last row by Sobolev embedding
\[
\|\D_t^{l-1} v\|_{L^{\infty}(\Sigma_t)}^2  \leq C \|\D_t^{l-1} v\|_{H^{3}(\Omega_t)}^2 \leq CE_{l}(t). 
\]
For the second we use Sobolev embedding and Lemma \ref{lem:reg-capa1} and obtain for $\alpha +\gamma \leq l$
\[
\|\nabla^{1+\alpha}\pa_t^\gamma U\|_{L^{\infty}(\Sigma_t)}^2 \leq C(1+\|\nabla^{2+\alpha}\pa_t^\gamma U\|_{H^{\frac12}(\Sigma_t)}^2  )\leq CE_l(t).
\]
The same argument also implies 
\[
\|\nabla^{1+\alpha}\pa_t^\gamma U\|_{H^{\frac12}(\Sigma_t)}^2 \leq CE_{l}(t)
\]
for $\alpha + \gamma \leq l+1$ and $\gamma \leq l$. Hence we obtain \eqref{eq:R-p-l-8} and this concludes the proof of \eqref{eq:R-p-l-1}.

Let us then prove \eqref{eq:R-p-l-2}. Let us first treat  the first term in the definition of $R_{p}^{l-k}$ and  bound $\|R_I^{l-k}\|_{H^{\frac32k-1}(\Sigma_t)}$. We first observe that the case $k=1$ follows from \eqref{eq:R-p-l-1}. Let us then assume $k \geq 2$. By the Sobolev embedding it holds $\|u\|_{L^\infty(\Sigma)} \leq C \|u\|_{L^{\frac32k-1}(\Sigma)}$.   We use this and  the Kato-Ponce inequality in  Proposition \ref{prop:kato-ponce} to deduce that 
\[
\begin{split}
&\|R_I^{l-k}\|_{H^{\frac32k-1}(\Sigma_t)} \\
&\leq C(1+  \| B\|_{H^{\frac32k-1}(\Sigma_t)}^{2} + \| \nabla U\|_{H^{\frac32k-1}(\Sigma_t)}^{2}+ \| \D_t^{l-k} v\|_{H^{\frac32k-1}(\Sigma_t)}^{2} + \|\nabla p \|_{H^{\frac32k-1}(\Sigma_t)}^{2}).
\end{split}
\]
Let us show that all the terms on RHS are bounded. 
 
 To this aim we first recall that the bound  $E_{l-1}(t) \leq C$ implies $\|B\|_{H^{\frac32 l -1}(\Sigma_t)} \leq C$. Since $k \leq l-1$ we deduce $\| B\|_{H^{\frac32k-1}(\Sigma_t)} \leq C$. Lemma \ref{lem:reg-capa} implies $\| \nabla U\|_{H^{\frac32k-1}(\Sigma_t)}^2 \leq C E_{l-1}(t) \leq C$. The condition  $k \leq l-1$ and the Trace Theorem also yields
 \[
 \| \D_t^{l-k} v\|_{H^{\frac32k-1}(\Sigma_t)}^2 \leq C\| \D_t^{l-k} v\|_{H^{\frac32k}(\Omega_t)}^2 \leq C E_{l-1}(t) \leq C. 
 \]
Similarly we deduce by $-\nabla p = \D_t v$ that  $ \|\nabla p \|_{H^{\frac32k-1}(\Sigma_t)}\leq C$. Hence, we have  
\[
\|R_I^{l-k}\|_{H^{\frac32k-1}(\Sigma_t)}\leq C.
\]

Let us then bound $\|R_{II}^{l-k}\|_{H^{\frac32k-1}(\Sigma_t)}$. As before, Proposition \ref{prop:kato-ponce} and the Sobolev embedding yield
\[
\|R_{II}^{l-k}\|_{H^{\frac32k-1}(\Sigma_t)} \leq \sum_{\alpha \leq 1, \beta \leq l-k-1}C( 1+ \|a_{\alpha,\beta}(B)\|_{H^{\frac32k-1}(\Sigma_t)}^2 + \|\nabla^{1+\alpha} \D_t^{\beta} v\|_{H^{\frac32k-1}(\Sigma_t)}^q)
\]
for $q \geq 1$. Recall that $\|B\|_{H^{\frac32k-1}(\Sigma_t)} \leq C$ and $k \geq 2$. Therefore $\|B\|_{L^\infty} \leq C $ and thus $\|a_{\alpha,\beta}(B)\|_{H^{\frac32k-1}(\Sigma_t)} \leq C$. On the other hand by $\alpha\leq 1$, $\beta \leq l - (k+1)$  and by Lemma \ref{lem:CS-intermed} we deduce
\begin{equation}
\label{eq:R-p-ineq2-1}
\|\nabla^{1+\alpha} \D_t^{\beta} v\|_{H^{\frac32k-1}(\Sigma_t)}^2 \leq C \|\D_t^{\beta} v\|_{H^{\frac32(k+1)}(\Omega_t)}^2 \leq C \sum_{i =0}^{l-1} \|\D_t^{l-i} v\|_{H^{\frac32i}(\Omega_t)}^2  \leq C E_{l-1}(t) \leq C. 
\end{equation}
Hence, we have $\|R_{II}^{l-k}\|_{H^{\frac32k-1}(\Sigma_t)} \leq C$.

Let us finally treat  $R_{III}^{l-k}$. We note that it holds $\|v\|_{H^{\frac32k-1}(\St)}^2 \leq CE_{l-1}(t) \leq C$ and therefore we may ignore the coefficients $a_{\alpha,\beta,\gamma, Q}(v)$.  Then  by Proposition \ref{prop:kato-ponce} and by the Sobolev embedding we have
\begin{equation}
    \label{eq:R-p-l-3}
    \begin{split}
\|R_{III}^{l-k}\|_{H^{\frac32k-1}(\Sigma_t)} &\leq   C(1+ \sum_{\beta \leq l-(k+1)} \| \D_t^{\beta} v\|_{H^{\frac32k-1}(\Sigma_t)}^{q} \cdot \\ &\cdot \sum_{\substack{|\alpha| +  |\gamma| \leq l-k+1, \\ |\gamma| \leq l-k}} \| \nabla^{1+\alpha_1}\pa_t^{\gamma_1} U\|_{H^{\frac32k-1}(\Sigma_t)}\| \nabla^{1+\alpha_2}\pa_t^{\gamma_2} U\|_{H^{\frac32k-1}(\Sigma_t)})
    \end{split}
\end{equation} 
for $q \geq 1$.  By \eqref{eq:R-p-ineq2-1} we have $\| \D_t^{\beta} v\|_{H^{\frac32k-1}(\Sigma_t)} \leq C$ for all $\beta \leq l-(k+1)$. To bound the last term we may assume that $\alpha_1 + \gamma_1 \geq \alpha_1 + \gamma_1 $. If $\alpha_1 + \gamma_1 = l-k +1$ then necessarily $\alpha_2 + \gamma_2 \leq l-k$. Therefore by Lemma \ref{lem:reg-capa1}
\[
\| \nabla^{1+\alpha_1}\pa_t^{\gamma_1} U\|_{H^{\frac32k-1}(\Sigma_t)}^2 \leq C(1+ \| \nabla^{1+(\alpha_1 +k-1)}\pa_t^{\gamma_1} U\|_{H^{\frac{k-1}{2} + \frac12}(\Sigma_t)}^2) \leq \e E_l(t) + C_\e
\]
and 
\[
\| \nabla^{1+\alpha_2}\pa_t^{\gamma_2} U\|_{H^{\frac32k-1}(\Sigma_t)}^2 \leq C(1+ \| \nabla^{1+(\alpha_2+k-1) }\pa_t^{\gamma_2} U\|_{H^{\frac{k-1}{2} + \frac12}(\Sigma_t)}^2) \leq  CE_{l-1}(t) \leq C.
\]
Therefore we deduce  by \eqref{eq:R-p-l-3} that 
\[
\|R_{III}^{l-k}\|_{H^{\frac32k-1}(\Sigma_t)}^2 \leq \e E_l(t) + C_\e.
\]
This concludes  the proof of \eqref{eq:R-p-l-2}.
\end{proof}

\section{First regularity estimates}

In this section we prove our first regularity estimates for the solution of \eqref{system}. We  assume that the solution satisfies the a priori estimates \eqref{eq:apriori_est}, i.e., $\Lambda_T <\infty$ and $\sigma_T>0$, where $\Lambda_T$ and $\sigma_T$ are defined in \eqref{def:apriori_est} and \eqref{eq:height_well}. We recall that 
\[
\Lambda_T := \sup_{t \in(0,T]} \left( \|h(\cdot ,t)\|_{C^{1,\alpha}(\Sigma_t)} + \|\nabla v(\cdot,t) \|_{L^\infty(\Omega_t)} + \|v_n(\cdot,t) \|_{H^2(\Sigma_t) }  \right).
\]
    In particular,  bound on $\Lambda_T$ does not imply curvature bounds, and thus we need to be careful as we may not use e.g. the interpolation results from Proposition \ref{prop:interpolation}. Our goal in this section is to show that the a priori estimates \eqref{eq:apriori_est} imply the following bounds for the pressure 
    \[
    \sup_{t \leq T}\|p\|_{H^1(\Omega_t)} \leq C \qquad \text{and} \qquad \int_0^T \|p\|_{H^2(\Omega_t)}^2 \, dt \leq C. 
    \]
    The first bound above is important as it implies $\|B_{\Sigma}\|_{L^4(\Sigma_t)} \leq C$, which is crucial  e.g. for the interpolation inequality in Proposition \ref{prop:interpolation} to hold. The second estimate is important for the first order energy estimate which we prove in the next section in Proposition \ref{prop:case-1}.  
    
    Let us begin by stating regularity estimates that we have by the a priori estimate. First, recall once again that by the uniform $C^{1,\alpha}(\Gamma)$-bound we have for the capacitary potential $U$ that $\|U\|_{C^{1,\alpha}(\bar \Omega_t^c)} \leq C$.   Let us prove the following estimates for the second fundamental form and for the capcacitary potential. 
\begin{lemma}
\label{lem:B^4}
Assume that  \eqref{eq:apriori_est} holds for $T>0$. Then for all $t < T$ we have  
\[
\|B\|_{L^4(\Sigma_t)}^4 \leq \e \|p\|_{H^1(\Sigma_t)}^2 + C_{\e}
\]
for $C= C(M, \e)$ and 
\[ 
\|B\|_{H^1(\Sigma_t)} + \|\nabla^2 U \|_{H^{\frac12}(\St)}\leq C(1+ \|p\|_{H^1(\Sigma_t)})
\]
for $C= C(M)$.
\end{lemma}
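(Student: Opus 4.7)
The strategy is to deduce both inequalities from the boundary condition $p = H - \frac{Q(t)}{2}|\nabla U|^2$ on $\St$ by combining the Calder\'on--Zygmund type estimates of Proposition~\ref{prop:meancrv-bound} with the boundary regularity for harmonic functions from Theorem~\ref{teo:reg-capa}, together with a $2$-dimensional Gagliardo--Nirenberg inequality on $\Sigma_t$. The only ingredients from the a priori estimate that I need are uniform $C^{1,\alpha}(\Gamma)$-regularity of $\Sigma_t$, energy conservation (giving $|\Sigma_t| \leq C$), and Schauder estimates in the $C^{1,\alpha}$-domain $\Omega_t^c$, which yield $\|U\|_{C^{1,\alpha}} \leq C$ and in particular $\|\nabla U\|_{L^\infty(\Sigma_t)} \leq C$.

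For the first inequality I would apply the first statement of Proposition~\ref{prop:meancrv-bound} (which requires only $C^{1,\alpha}$-regularity) to bound $\|B\|_{L^4}^4 \leq C(1+\|H\|_{L^4}^4)$, and then use the boundary equation to reduce this to $\|B\|_{L^4}^4 \leq C(1+\|p\|_{L^4(\Sigma_t)}^4)$. Next I would invoke the 2D Gagliardo--Nirenberg inequality $\|p\|_{L^4(\Sigma_t)}^4 \leq C \|p\|_{L^2(\Sigma_t)}^2 \|p\|_{H^1(\Sigma_t)}^2$, obtained by pulling back to the smooth reference surface $\Gamma$ via $\Psi_{\Sigma_t}(x) = x+h(x,t)\nu_\Gamma(x)$ (a bi-Lipschitz parametrization with constants controlled by $\Lambda_T$). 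To close the estimate I would use the boundary equation once more together with $|H|\leq \sqrt{2}|B|$ and H\"older's inequality (and the area bound $|\Sigma_t|\leq C$) to get $\|p\|_{L^2(\Sigma_t)}\leq C\|B\|_{L^4(\Sigma_t)}+C$, substitute back, and apply Young's inequality to absorb the nonlinear term $C\|B\|_{L^4}^2\|p\|_{H^1}^2$ into the left-hand side and produce the claimed $\e$-form.

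For the second inequality, once $\|B\|_{L^4(\Sigma_t)}$ is controlled by the first inequality (together with the bound on $\|p\|_{H^1(\Sigma_t)}$ available in the context where the lemma is applied), I would invoke the second statement of Proposition~\ref{prop:meancrv-bound} to get $\|B\|_{H^1(\Sigma_t)} \leq C(1+\|H\|_{H^1(\Sigma_t)})$. Writing $H = p - \frac{Q(t)}{2}|\nabla U|^2$, the Kato--Ponce inequality and $\|\nabla U\|_{L^\infty} \leq C$ give $\|H\|_{H^1(\Sigma_t)} \leq \|p\|_{H^1(\Sigma_t)} + C\|\nabla^2 U\|_{L^2(\Sigma_t)} + C$, and for the capacitary factor I would apply Theorem~\ref{teo:reg-capa} with $k=2$ and the constant boundary datum $g\equiv 1$ to obtain $\|\nabla^2 U\|_{H^{\frac12}(\Sigma_t)} \leq C(1+\|B\|_{H^1(\Sigma_t)})$. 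The resulting coupled estimate between $\|B\|_{H^1}$ and $\|\nabla^2 U\|_{H^{1/2}}$ can be closed by Young's inequality with sufficiently small parameter.

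The main obstacle is that we operate in precisely the low-regularity regime where the standing assumption $\|B\|_{L^4}\leq C$ used throughout Sections~2--5 is what we are trying to establish, so most of the interpolation machinery (Proposition~\ref{prop:interpolation}, the higher-order versions of Propositions~\ref{prop:laplace-bound} and~\ref{prop:meancrv-bound}) is not directly available. The argument must therefore be a coupled bootstrap, with the curvature bound and the capacitary potential estimate obtained simultaneously, balancing the self-referential terms via Young's inequality; the circular dependency induced by the boundary condition for $p$ is the delicate point.
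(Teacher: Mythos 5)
Your first-inequality chain does not deliver the claimed $\e$-form, and the gap is precisely where you invoke Young's inequality at the end. Following your plan, $\|B\|_{L^4}^4 \leq C(1+\|p\|_{L^4}^4) \leq C(1+\|p\|_{L^2}^2\|p\|_{H^1}^2)$, and then $\|p\|_{L^2}\leq C\|B\|_{L^4}+C$ gives $\|B\|_{L^4}^4 \leq C(1+\|B\|_{L^4}^2\|p\|_{H^1}^2+\|p\|_{H^1}^2)$. Writing $x=\|B\|_{L^4}^2$, $y=\|p\|_{H^1}^2$, this is $x^2\leq C(1+xy+y)$, and Young's inequality only converts this to $x^2\leq C(1+y^2)$, i.e.\ $\|B\|_{L^4}^4\leq C(1+\|p\|_{H^1}^4)$. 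That is strictly weaker than $\e\|p\|_{H^1}^2+C_\e$, and the stronger form is exactly what Lemma~\ref{lem:claim1} needs: there the paper must absorb $\|B\|_{L^4}^4$ (which controls $\|p\|_{L^2(\St)}^2$) into the dissipation $\|\bar\nabla p\|_{L^2(\St)}^2$, and a quartic bound would not absorb. The paper circumvents this by never interpolating $p$ at all: it works with the height function $h$ on the fixed smooth $\Gamma$, writes $B=a(h,\bar\nabla h)\bar\nabla^2 h$, and uses the Gagliardo--Nirenberg inequality $\|\bar\nabla^2 h\|_{L^4(\Gamma)}\leq C\|\bar\nabla^3 h\|_{L^2(\Gamma)}^{\theta}\|h\|_{C^{1,\alpha}(\Gamma)}^{1-\theta}$ with exponent $\theta<\tfrac12$; since $\|h\|_{C^{1,\alpha}(\Gamma)}$ is part of $\Lambda_T$, this gives $\|\bar\nabla^2 h\|_{L^4}^4\leq C\|\bar\nabla^3 h\|_{L^2}^{4\theta}$ with $4\theta<2$, and Young then yields the $\e$-form against $\|\bar\nabla^3 h\|_{L^2}^2\lesssim 1+\|\bar\nabla B\|_{L^2}^2\lesssim 1+\|H\|_{H^1}^2$ (via Simons' identity). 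Your pullback to $\Gamma$ is a sound idea, but it must be applied to the height function, not to $p$; only $h$ has the a priori $C^{1,\alpha}$ endpoint that buys the subcritical exponent.

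For the second inequality, the closing of the loop is also missing. Your chain gives $\|B\|_{H^1}\leq C(1+\|H\|_{H^1})$, $\|H\|_{H^1}\leq \|p\|_{H^1}+C\|\nabla^2 U\|_{L^2(\St)}+C$, and $\|\nabla^2 U\|_{L^2(\St)}\leq \|\nabla^2 U\|_{H^{1/2}(\St)}\leq C(1+\|B\|_{H^1})$ from Theorem~\ref{teo:reg-capa}. Substituting, $\|B\|_{H^1}\leq C(1+\|p\|_{H^1})+C\|B\|_{H^1}$ with $C$ not small — there is simply no small parameter anywhere in this chain, and Young's inequality does not apply to a linear coupling. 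The paper's essential extra ingredient is the weak interpolation inequality \eqref{eq:B^4-4}, $\|\nabla U\|_{H^1(\St)}\leq \e\|\nabla^2 U\|_{H^{1/2}(\St)}+C_\e$, which is \emph{not} obtained from the general machinery (that would need $\|B\|_{L^4}\leq C$) but from a bespoke argument in the $C^{1,\alpha}$ category: one interpolates $\|u\|_{L^2(\St)}\leq\e\|u\|_{H^{1/2}(\St)}+C_\e\|u\|_{L^p(\St)}$ for some $p\in(1,2)$ and then controls $\|\nabla^2 U\|_{L^p(\St)}$ by the Fabes--Jodeit--Rivi\`ere estimate $\|\pa_\nu\varphi\|_{L^p(\St)}\leq C\|\nabla_\tau\varphi\|_{L^p(\St)}$ for harmonic $\varphi$ in $C^1$ domains, applied componentwise to $\nabla U$. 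This is where the genuine small parameter enters and lets the self-referential inequality close as $\|H\|_{H^1}\leq C(1+\|p\|_{H^1})$. Your appeal to Schauder ($\|U\|_{C^{1,\alpha}}\leq C$) gives you $\|\nabla U\|_{L^\infty}$ but says nothing about $\nabla^2 U$ near the boundary, so it cannot substitute for the FJR input.
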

  \begin{proof}
    We denote the height-function by  $h = h(\cdot,t)$. Then by standard calculations (see e.g. \cite{FJM3D, MantegazzaBook}) we  may write the second fundamental form on $\Sigma$ as $B = a(h,\bar \nabla h)  \bar \nabla^2 h$. Therefore we may bound 
  \[
\|B\|_{L^4(\Sigma_t)}^4 \leq C(1+\|\bar \nabla^2 h\|_{L^4(\Gamma)}^4) \quad \text{and} \quad  \|\bar \nabla^3 h\|_{L^2(\Gamma)}^2 \leq C(1+ \|\bar  \nabla B \|_{L^2(\Sigma_t)}^2 +  \|\bar \nabla^2 h\|_{L^4(\Gamma)}^4). 
  \]
  We use interpolation on $\Gamma$ as
   \[
  \|\bar \nabla^2 h\|_{L^4(\Gamma)} \leq C \|\bar \nabla^3 h\|_{L^2(\Gamma)}^{\theta} \| h\|_{C^{1,\alpha}(\Gamma)}^{1-\theta} \leq C  \|\bar \nabla^3 h\|_{L^2(\Gamma)}^{\theta}
  \]
  for $\theta< 1/2$. This implies by Young's inequality $\|\bar \nabla^2 h\|_{L^4(\Gamma)}^4 \leq  \e  \|\bar \nabla^3 h\|_{L^2(\Gamma)}^2 + C_\e$.
  Thus  by choosing $\e$ small we obtain
  \[
  \|\bar \nabla^3 h\|_{L^2(\Gamma)}^2 \leq C(1+ \|\bar  \nabla B \|_{L^2(\Sigma_t)}^2) 
   \]
  and
  \[
  \|B\|_{L^4(\Sigma_t)}^4 \leq  \e  \|\bar \nabla^3 h\|_{L^2(\Gamma)}^2 + C_\e.
   \]
  By the Simon's identity \eqref{eq:Simon}  we deduce
  \[
  \|\bar  \nabla B \|_{L^2(\Sigma_t)}^2 \leq  \|\bar \nabla H \|_{L^2(\Sigma_t)}^2 + C \|B\|_{L^4(\Sigma_t)}^4. 
  \]
  Therefore we have  
  \begin{equation} \label{eq:B^4-3}
  \|B\|_{L^4(\Sigma_t)}^4 \leq  \e  \| H \|_{H^1(\Sigma_t)}^2 + C_\e
   \end{equation}
  and
  \begin{equation} \label{eq:B^4-2}
  \| B \|_{H^1(\Sigma_t)} \leq  C(1+\|H \|_{H^1(\Sigma_t)}). 
  \end{equation}
  
 Let us consider the capacitary potential $U$. Let us show that even though we may not use Proposition \ref{prop:interpolation}, the $C^{1,\alpha}(\Gamma)$-regularity still implies the following weak interpolation inequality  
   \beq \label{eq:B^4-4}
   \|\nabla U \|_{H^1(\Sigma_t)} \leq \e  \| \nabla^2 U \|_{H^{\frac12}(\Sigma_t)} + C_\e \| \nabla U \|_{L^{2}(\Sigma_t)}\leq \e  \| \nabla^2 U \|_{H^{\frac12}(\Sigma_t)} + C_\e.
   \eeq
In order to  prove \eqref{eq:B^4-4} we first observe that the $C^{1,\alpha}(\Gamma)$-regularity of $\St$ implies the following inequalities  for $p \in (1,2)$ and $u,v \in C^\infty(\St)$
 \[
 \|u\|_{L^2(\St)} \leq \e  \|u\|_{H^{\frac12}(\St)} + C_\e \|u\|_{L^p(\St)}  \quad \text{and} \quad \|\nabla_\tau v\|_{L^p(\St)} \leq  \delta   \|v\|_{H^1(\St)}  + C_\delta  \|v\|_{L^2(\St)}.
 \]
We apply these for $u = \nabla^2 U$ and $v = \nabla U$ and have
\[
\begin{split}
&\|\nabla ^2 U\|_{L^2(\St)} \leq \e  \|\nabla^2 U\|_{H^{\frac12}(\St)} + C_\e \|\nabla^2 U\|_{L^p(\St)}  \quad \text{and} \\
&\|\nabla_\tau \nabla U\|_{L^p(\St)} \leq  \delta   \|\nabla U\|_{H^1(\St)}  + C_\delta  \|\nabla U\|_{L^2(\St)}.
\end{split}
\]
Since $\pa_{x_i} U$ is harmonic and $\St$ is $C^{1,\alpha}(\Gamma)$-regular  we have by  \cite{FJR} that \[
 \|\nabla^2 U\|_{L^p(\St)}  \leq C( \|\nabla_\tau \nabla U\|_{L^p(\St)} + \|\nabla U\|_{L^2(\Omega_t)}). 
\]
Therefore by first choosing $\e$ small and then $\delta$ even smaller we obtain \eqref{eq:B^4-4}.

  By $p = H - \frac{Q(t)}{2} |\nabla U|^2$, where $\Qt$ is defined in \eqref{def:constant-q},  we may estimate 
   \[
   \|H\|_{H^{1}(\Sigma_t)} \leq   C( \|p\|_{H^1(\Sigma_t)} + \|\nabla U \|_{H^1(\Sigma_t)}). 
   \] 
 Then by \eqref{eq:B^4-4} we have 
\[
 \|H\|_{H^{1}(\Sigma_t)} \leq   C_\e(1+ \|p\|_{H^1(\Sigma_t)}) + \e \| \nabla^2 U \|_{H^{\frac12}(\Sigma_t)}.
\]
We use Theorem \ref{teo:reg-capa} and \eqref{eq:B^4-2} and have 
\[
\| \nabla^2 U \|_{H^{\frac12}(\Sigma_t)} \leq C( 1 + \|B\|_{H^1(\Sigma_t)}) \leq C(1+\|H \|_{H^1(\Sigma_t)}).
\]
Therefore we deduce from the two above inequalities that
\[
 \|H\|_{H^{1}(\Sigma_t)} \leq   C(1+ \|p\|_{H^1(\Sigma_t)}).
\]
The claim follows from this together with \eqref{eq:B^4-3} and \eqref{eq:B^4-2}.
  \end{proof}

 Let us proceed to the following regularity estimate. 
 \begin{lemma}
     \label{lem:claim1}
     Assume that the a priori estimates \eqref{eq:apriori_est} hold for $T>0$.  Then 
     \[
     \sup_{t \in [0,T]} \| p\|_{L^2(\Sigma_t)}^2 +  \int_0^T \| p\|_{H^1(\Sigma_t)}^2 \, dt \leq C(1+ T),
     \]
      for $C = C(M)$. 
 \end{lemma}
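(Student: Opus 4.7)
The plan is to derive a differential inequality of the form
\[
\frac{d}{dt}\|p\|_{L^2(\Sigma_t)}^2 + c\,\|p\|_{H^1(\Sigma_t)}^2 \;\leq\; C\bigl(1 + \|p\|_{L^2(\Sigma_t)}^2\bigr)
\]
with $c>0$ and $C,c$ depending on $\Lambda_T,\sigma_T$, from which both conclusions of the lemma follow by integration in $t$ over $[0,T]$ and a direct Gronwall argument; the $T$-dependence allowed in the bound $C(1+T)$ absorbs the exponential factor produced by Gronwall.

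The starting point is the formula \eqref{eq:D_tp-4} for the material derivative of the pressure on the moving boundary. I would test it against $p$ and apply the moving-surface transport identity $\frac{d}{dt}\int_{\Sigma_t} p^2\,d\H^2 = \int_{\Sigma_t}\bigl(2p\,\D_tp + p^2\Div_{\Sigma_t}v\bigr)\,d\H^2$. Writing $\Div_{\Sigma_t}v = \Div_{\Sigma_t}v_\tau + Hv_n$, the convective contribution $2\int p\,\nabla_\tau p\cdot v$ cancels with the tangential part $\int p^2\Div_{\Sigma_t}v_\tau = -2\int p\,\nabla_\tau p\cdot v_\tau$ after integration by parts on the closed surface $\Sigma_t$. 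The Laplace--Beltrami term $-\int p\,\Delta_{\Sigma_t}v_n$ becomes $\int \nabla_\tau p\cdot\nabla_\tau v_n$, leaving
\[
\tfrac12 \frac{d}{dt}\|p\|_{L^2(\Sigma_t)}^2 = \int_{\Sigma_t}\nabla_\tau p\cdot\nabla_\tau v_n\,d\H^2 + \int_{\Sigma_t}(\text{error terms})\,d\H^2.
\]

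The error terms consist of $-\int p|B|^2 v_n$, the capacitary contributions $-Q(t)\int p\,\pa_\nu U\,\pa_\nu\pa_t U+Q(t)\int pH|\nabla U|^2 v_n$, the bounded contribution from $Q'(t)|\nabla U|^2$, and the geometric quadratic $\tfrac12\int p^2 Hv_n$. Each is estimated using: (i) the a priori $\|v_n\|_{H^2(\Sigma_t)}\leq\Lambda_T$; (ii) Lemma \ref{lem:B^4}, which supplies the absorption $\|B\|_{L^4}^4\leq\e\|p\|_{H^1}^2+C_\e$ and the bound $\|B\|_{H^1}+\|\nabla^2 U\|_{H^{1/2}(\Sigma_t)}\leq C(1+\|p\|_{H^1})$; and (iii) Theorem \ref{teo:reg-capa} together with Lemma \ref{lem:reg-capa} and the boundary identity $\pa_t U = -\pa_\nu U\,v_n$, combined with the $C^{1,\alpha}$ harmonic-conjugate estimate $\|\pa_\nu u\|_{L^p}\leq C\|\nabla_\tau u\|_{L^p}$ from \cite{FJR}, to control $\pa_\nu\pa_t U$ in $L^2(\Sigma_t)$ by $\|p\|_{H^1}+C$. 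The tangential gradient term $\int\nabla_\tau p\cdot\nabla_\tau v_n$ is controlled by $\delta\|p\|_{H^1}^2+C_\delta$ via Cauchy--Schwarz and Young. The dissipation on the left-hand side is then extracted through the identity $p = H-(Q(t)/2)|\nabla U|^2$, Proposition \ref{prop:meancrv-bound} yielding $\|H\|_{H^1}\leq C(1+\|B\|_{H^1})$, and a careful choice of small $\e,\delta$ so that all absorbing terms are dominated by a fixed fraction of $\|p\|_{H^1}^2$.

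Main obstacle: the principal technical difficulty is producing the genuine dissipative term $-c\|p\|_{H^1(\Sigma_t)}^2$. Naive integration by parts only yields the indefinite expression $\int \nabla_\tau p\cdot\nabla_\tau v_n$, which by Young gives at most $\delta\|p\|_{H^1}^2+C_\delta\|\nabla_\tau v_n\|_{L^2}^2$, i.e.~a term one wishes to absorb rather than a source of coercivity. True dissipation must be recovered through the nonlinear coupling $p=H-(Q(t)/2)|\nabla U|^2$ together with Proposition \ref{prop:meancrv-bound}, Lemma \ref{lem:B^4}, and the sharp boundary regularity of Theorem \ref{teo:reg-capa}: combining these gives essentially a reverse inequality $\|B\|_{H^1}+\|\nabla^2 U\|_{H^{1/2}(\Sigma_t)}\geq c\|p\|_{H^1}-C$, so that a portion of the $|B|^2 v_n$ and $H|\nabla U|^2 v_n$ error terms can be exploited to provide the missing coercivity. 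The very low $C^{1,\alpha}$-regularity of $\Sigma_t$ precludes the use of Proposition \ref{prop:interpolation} and of classical elliptic regularity in smoother domains, which is precisely why the sharp boundary regularity estimates developed in Section 3 are essential here.
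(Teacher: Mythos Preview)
Your identification of the main obstacle is correct, but your proposed resolution does not work. Differentiating $\|p\|_{L^2(\Sigma_t)}^2$ and testing \eqref{eq:D_tp-4} against $p$ produces, as you say, only the cross term $\int_{\Sigma_t}\nabla_\tau p\cdot\nabla_\tau v_n$, which carries no sign. The remaining boundary terms $-\int p|B|^2 v_n$, $Q(t)\int pH|\nabla U|^2 v_n$, $\tfrac12\int p^2 Hv_n$ all contain the factor $v_n$, which has no prescribed sign; no portion of them can be converted into a negative multiple of $\|\bar\nabla p\|_{L^2(\Sigma_t)}^2$. The ``reverse inequality'' $\|B\|_{H^1}+\|\nabla^2 U\|_{H^{1/2}}\geq c\|p\|_{H^1}-C$ is true but irrelevant here, because none of the error terms is a positive quantity comparable to $\|B\|_{H^1}^2$ or $\|\nabla^2 U\|_{H^{1/2}}^2$. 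So the differential inequality you are aiming for cannot be obtained from $\tfrac{d}{dt}\|p\|_{L^2(\Sigma_t)}^2$ alone.

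The paper creates the missing dissipation by adjoining an auxiliary, a priori bounded, term to the Lyapunov functional: it works with
\[
\Psi(t)=\int_{\Sigma_t} p\,(\nabla v\,\nu\cdot\nu)\,d\H^2 \;+\; \e\int_{\Sigma_t} p^2\,d\H^2 .
\]
Differentiating the first integral yields $\int_{\Sigma_t} p\,\D_t(\nabla v\,\nu\cdot\nu)$, and since $\D_t v=-\nabla p$ this contains $-\int_{\Sigma_t} p\,(\nabla^2 p\,\nu\cdot\nu)$. Using the decomposition $(\nabla^2 p\,\nu\cdot\nu)=\Delta p-\Delta_{\Sigma_t}p-H\pa_\nu p$ from \eqref{eq:tang-laplace} and integrating by parts on $\Sigma_t$ gives
\[
-\int_{\Sigma_t} p\,(\nabla^2 p\,\nu\cdot\nu)\,d\H^2 \;\leq\; -\int_{\Sigma_t}|\bar\nabla p|^2\,d\H^2 \;+\; (\text{lower order}),
\]
which is the genuine coercive term. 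The other contributions, including the $\e\int p^2$ part (handled via \eqref{eq:D_tp-4} exactly as you computed), are then absorbed using Lemma~\ref{lem:B^4}, Lemma~\ref{lem:divcurl}, and the estimates $\|\D_t p\|_{L^2(\Sigma_t)}^2\leq C(1+\|p\|_{H^1(\Sigma_t)}^2)$ and $\|B\|_{L^4}^4\leq \e\|p\|_{H^1}^2+C_\e$. The key structural idea you are missing is this auxiliary coupling with $(\nabla v\,\nu\cdot\nu)$, which is what converts the evolution equation $\D_t v=-\nabla p$ into a second spatial derivative of $p$ on the boundary.
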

 
 \begin{proof}
The idea is to consider the following function
\[
\Psi(t) :=  \int_{\Sigma_t} p\,  ( \nabla v \, \nu )\cdot \nu + \e p^2 \, d \H^2 ,
\] 
where the choice of $\e$ will be clear later. First, we observe that under the a priori estimates \eqref{eq:apriori_est} $v$ is uniformly Lipschitz  and therefore $\Psi$ is bounded from below by
\begin{equation} \label{eq:claim1-11}
\Psi(t) \geq - C\|p(\cdot, t)\|_{L^1(\St)} + \e \|p(\cdot, t)\|_{L^2(\St)}^2 \geq -C_\e + \frac{\e}{2}\|p(\cdot, t)\|_{L^2(\St)}^2,
\end{equation}
where the last inequality follows from the Young's inequality, i.e.,  $\|p(\cdot, t)\|_{L^1(\St)} \leq  \frac{\e}{2} \|p(\cdot, t)\|_{L^2(\St)}^2 + C_\e$ and $C_\e$ is a large constant that depends on $\e$.  By differentiating and using the a priori estimates \eqref{eq:apriori_est} we obtain 
\begin{equation} \label{eq:claim1-1}
\begin{split}
\frac{d}{dt}  &\int_{\Sigma_t} p\,  ( \nabla v \, \nu) \cdot \nu  \, d \H^2 \\
&=  \int_{\Sigma_t} p \,  (\nabla v \, \nu  )\cdot \nu  \Div_\tau v \, d \H^2  + \int_{\Sigma_t} \D_t p  \, (\nabla v \, \nu ) \cdot \nu  \, d \H^2 +  \int_{\Sigma_t} p\, \D_t ((\nabla v \, \nu)  \cdot \nu ) \, d \H^2\\
&\leq C_\e  + \e\int_{\Sigma_t} |\D_t p |^2 \, d \H^2 +  \int_{\Sigma_t} p\, \D_t  ((\nabla v \, \nu)  \cdot \nu ) \, d \H^2.
\end{split}
\end{equation}
We estimate the second last term in \eqref{eq:claim1-1} by \eqref{eq:D_tp-4} and  \eqref{eq:apriori_est} and have
\[
\int_{\Sigma_t} |\D_t p |^2 \, d \H^2 \leq C (1+ \| p\|_{H^1(\Sigma_t)}^2  + \|B\|_{L^4(\Sigma_t)}^4 + \| \nabla  \pa_t U \|_{L^2(\Sigma_t)}^2).
\]
Lemma \ref{lem:B^4} yields $\|B\|_{L^4(\Sigma_t)}^4 \leq  C(1+ \|p\|_{H^1(\Sigma_t)}^2)$. On the other hand, by   Lemma \ref{lem:divcurl}  it holds  $\| \nabla  \pa_t U \|_{\Sigma_t}^2 \leq C\| \pa_t U \|_{H^1(\Sigma_t)}^2$. Since $\pa_t U = - \nabla U \cdot v$, we  may estimate  by Theorem \ref{teo:reg-capa} and by Lemma \ref{lem:B^4}
\[
\begin{split}
\| \nabla  \pa_t U \|_{L^2(\Sigma_t)}^2 &\leq C(1+\| \bar \nabla  \pa_t U \|_{L^2(\Sigma_t)}^2) \leq C(1+ \| \bar \nabla  (\nabla U \cdot v) \|_{L^2(\Sigma_t)}^2) \leq C(1+ \|\nabla^2 U\|_{L^2(\Sigma_t)}^2)\\ &\leq C(1+ \|\nabla^2 U\|_{H^{1/2}(\Sigma_t)}^2) \leq C(1+ \|p\|_{H^1(\Sigma_t)}^2).
\end{split}
\]
Therefore we may bound
\begin{equation} \label{eq:claim1-2}
\int_{\Sigma_t} |\D_t p |^2 \, d \H^2 \leq C (1+ \| p\|_{H^1(\Sigma_t)}^2 ).
\end{equation}

Let us treat the last in term in \eqref{eq:claim1-1}. First, we have by \eqref{eq:comm1}, \eqref{eq:normal1} and \eqref{eq:apriori_est} that 
\[
\begin{split}
\int_{\Sigma_t} p\, \D_t ( (\nabla v \, \nu) \cdot \nu ) \, d \H^2 &\leq  \int_{\Sigma_t} p\,  ((\nabla  \D_t v \, \nu) \cdot \nu) \, d \H^2 + \e \|p\|_{L^2(\St)}^2 + C_\e \\
&= - \int_{\Sigma_t} p\,  ( \nabla ^2 p \, \nu )\cdot \nu ) \, d \H^2 + \e \|p\|_{L^2(\St)}^2 + C_\e. 
\end{split}
\]
We use \eqref{eq:pressure-1} to estimate $|\Delta p| \leq C\|\nabla v\|_{L^\infty}^2 \leq C $  and recall that  by \eqref{eq:tang-laplace}  it holds $(\nabla^2 p \, \nu )\cdot \nu  =   \Delta p  - \Delta_\Sigma p - H \pa_\nu p$ to deduce
\[
\begin{split}
-\int_{\Sigma_t} p\,  ( \nabla^2 p \, \nu )\cdot \nu \, d \H^2 &\leq  C + \int_{\Sigma_t} p\,  \Delta_\Sigma p \, d \H^2 + \int_{\Sigma_t} H p\,  \pa_\nu p \, d \H^2\\
&\leq C - \int_{\Sigma_t} |\bar \nabla p |^2 \, d \H^2 + \int_{\Sigma_t} (\e |\pa_\nu p|^2 + C_\e(1+ |H|^4)) \, d \H^2,
\end{split}
\]
where in the last inequality we have used $p = H - \frac{Q(t)}{2}|\nabla U|^2$, where $\Qt$ is defined in \eqref{def:constant-q}. Lemma \ref{lem:divcurl} yields $\|\pa_\nu p\|_{L^2(\Sigma_t)} \leq C(1 + \| p\|_{H^1(\Sigma_t)})$ while Lemma \ref{lem:B^4} implies $\|H\|_{L^4}^4 \leq \delta \| p\|_{H^1(\Sigma_t)}^2 + C_{\delta}$. Therefore by first choosing $\e$ small and then $\delta$ even smaller,  we obtain
\[
-\int_{\Sigma_t} p\,  (\nabla^2 p \, \nu )\cdot \nu \, d \H^2 \leq - \|\bar \nabla p \|_{L^2(\Sigma_t)}^2 + \e \| p\|_{H^1(\Sigma_t)}^2 + C_\e. 
\]
By direct calculation and by using \eqref{eq:claim1-2} we have 
\[
\frac{d}{dt} \int_{\St}  p^2 \, d \H^2 \leq  C (1+ \|p\|_{H^1(\St)}^2 ).
\]
Combining the two above inequalities with \eqref{eq:claim1-1} and \eqref{eq:claim1-2} we conclude
\begin{equation} \label{eq:claim1-3}
\frac{d}{dt} \Psi(t) \leq - \frac12\|\bar \nabla p \|_{L^2(\Sigma_t)}^2 + \e \| p\|_{L^2(\Sigma_t)}^2  + C_\e,
\end{equation} 
when $\e$ is small. Finally we use Lemma \ref{lem:B^4} to estimate 
\[
\| p\|_{L^2(\Sigma_t)}^2  \leq C(1+\|H \|_{L^2(\Sigma_t)}^2) \leq  \|B\|_{L^4(\St)}^4 + C \leq \e \| p\|_{H^1(\Sigma_t)}^2 + C_\e.
\]
This yields $\| p\|_{L^2(\Sigma_t)}^2 \leq2 \|\bar \nabla p \|_{L^2(\Sigma_t)}^2 + C_\e$ when $\e$ is small. Thus we may estimate \eqref{eq:claim1-3} 
\[
\frac{d}{dt} \Psi(t) \leq - \frac14\|\bar \nabla p \|_{L^2(\Sigma_t)}^2  + C_\e,
\]
when $\e$ is small. The conclusion  follows by integrating the above over  $[0,T]$ and using \eqref{eq:claim1-11}. 
 \end{proof}

We proceed to higher order regularity estimate which is uniform in time. 
\begin{proposition}
\label{prop:claim-reg1}
Assume that the a priori estimates \eqref{eq:apriori_est} hold for $T>0$.  Then 
     \[
     \sup_{t \in (0,T]} \|\nabla p\|_{L^2(\Omega_t)}^2  \leq e^{C(1+T)}(1+ \|\nabla p\|_{L^2(\Omega_0)}^2) 
     \]
     for $C = C(M)$.
\end{proposition}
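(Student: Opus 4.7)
The plan is to differentiate $\Phi(t) := \|\nabla p(\cdot,t)\|_{L^2(\Omega_t)}^2$ in time, produce an inequality of the form
\[
\frac{d}{dt}\Phi(t) \leq C\bigl(1+\|p\|_{H^1(\Sigma_t)}^2\bigr)\bigl(1+\Phi(t)\bigr),
\]
and then apply Gr\"onwall's lemma together with the already-established time integrability $\int_0^T\|p\|_{H^1(\Sigma_t)}^2\,dt\leq C(1+T)$ coming from Lemma \ref{lem:claim1}.

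First I would use Reynolds' transport together with $\Div v=0$ to obtain
\[
\frac{d}{dt}\int_{\Omega_t}|\nabla p|^2\,dx = 2\int_{\Omega_t} \D_t\nabla p\cdot\nabla p\,dx,
\]
and rewrite $\D_t\nabla p = \nabla\D_t p - (\nabla v)^T\nabla p$ using \eqref{eq:comm1}. The term containing $(\nabla v)^T\nabla p$ is harmless: the a priori bound \eqref{eq:apriori_est} gives $\|\nabla v\|_{L^\infty}\leq C$, so it contributes $C\Phi(t)$. Integration by parts on the remaining term produces
\[
2\int_{\Omega_t}\nabla \D_t p\cdot \nabla p\,dx = 2\int_{\Sigma_t}\D_t p\,\partial_\nu p\,d\H^2 - 2\int_{\Omega_t}\D_t p\,\Delta p\,dx.
\]
The bulk integral is controlled since $-\Delta p=\mathrm{Tr}((\nabla v)^2)\in L^\infty$ by \eqref{eq:pressure-1} and the a priori Lipschitz bound, so after writing $\D_t p = \partial_t p + v\cdot\nabla p$ and noting $\|v\cdot \nabla p\|_{L^2(\Omega_t)}\leq C\|\nabla p\|_{L^2(\Omega_t)}$, the only really dangerous piece is the boundary term $\int_{\Sigma_t}\D_t p\,\partial_\nu p$. (The $\int_{\Omega_t}\partial_t p\,\Delta p$ contribution can be absorbed by rewriting $-\int_{\Omega_t}\D_t p\,\Delta p\,dx$ directly, using boundedness of $\Delta p$ and Cauchy--Schwarz to bound by $C\|\D_t p\|_{L^2(\Omega_t)}$, which after splitting $\D_t p = \partial_t p + v\cdot\nabla p$ is estimated analogously; alternatively one works throughout with the equivalent quantity $\|\D_t v\|_{L^2(\Omega_t)}^2=\|\nabla p\|_{L^2(\Omega_t)}^2$, where the boundary term becomes $-2\int_{\Sigma_t}\D_t p\,\partial_\nu p$ and the bulk integrand becomes $2\D_t p\,\mathrm{Tr}((\nabla v)^2)$, which is controlled exactly as above.)

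The main task is thus to bound the boundary integral. For $\partial_\nu p$ I apply the div--curl estimate Lemma \ref{lem:divcurl} to $F=\nabla p$, which, together with $\Delta p\in L^\infty$ and the bulk energy, yields
\[
\|\partial_\nu p\|_{L^2(\Sigma_t)}^2 \leq C\bigl(1+\|p\|_{H^1(\Sigma_t)}^2 + \Phi(t)\bigr).
\]
For $\D_t p$ I use the boundary formula \eqref{eq:D_tp-4}. The term $\Delta_\Sigma v_n$ is bounded in $L^2(\Sigma_t)$ by the a priori assumption $\|v_n\|_{H^2(\Sigma_t)}\leq C$; the term $|B|^2 v_n$ is bounded by $\|B\|_{L^4(\Sigma_t)}^2\|v_n\|_{L^\infty}\leq C(1+\|p\|_{H^1(\Sigma_t)})$ thanks to Lemma \ref{lem:B^4}; the term $\langle\nabla_\tau p,v\rangle$ is bounded by $C\|p\|_{H^1(\Sigma_t)}$; and the capacitary contributions $\partial_\nu U\,\partial_\nu\partial_t U$ and $H|\nabla U|^2 v_n$ are handled exactly as in the proof of Lemma \ref{lem:claim1}, via $\partial_t U=-v_n\partial_\nu U$ on $\Sigma_t$, Theorem \ref{teo:reg-capa} applied to the harmonic function $\partial_t U$, and the bound $\|\nabla^2 U\|_{H^{1/2}(\Sigma_t)}\leq C(1+\|p\|_{H^1(\Sigma_t)})$ from Lemma \ref{lem:B^4}. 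Combining these gives
\[
\|\D_t p\|_{L^2(\Sigma_t)} \leq C\bigl(1+\|p\|_{H^1(\Sigma_t)}\bigr),
\]
and the a priori boundedness of $Q'(t)$ (established in the proof of Lemma \ref{lem:estimate-R-p-1}) takes care of the remaining $Q'|\nabla U|^2$ term.

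Putting everything together, Cauchy--Schwarz yields
\[
\Big|\int_{\Sigma_t}\D_t p\,\partial_\nu p\,d\H^2\Big| \leq C\bigl(1+\|p\|_{H^1(\Sigma_t)}^2\bigr)\bigl(1+\Phi(t)\bigr),
\]
which is exactly the desired differential inequality. I expect the step controlling $\D_t p$ on the boundary, specifically the capacitary piece $\partial_\nu\partial_t U$, to be the main obstacle, since this forces a Theorem \ref{teo:reg-capa}--based estimate where only the very weak curvature bound $\|B\|_{L^2(\Sigma_t)}\leq C(1+\|p\|_{H^1(\Sigma_t)})$ is available. Once the differential inequality is secured, Gr\"onwall gives
\[
1+\Phi(t)\leq \bigl(1+\Phi(0)\bigr)\exp\!\Bigl(C\!\int_0^t(1+\|p\|_{H^1(\Sigma_s)}^2)\,ds\Bigr),
\]
and Lemma \ref{lem:claim1} bounds the exponent by $C(1+T)$, yielding the stated estimate.
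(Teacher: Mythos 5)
Your overall strategy — differentiate $\|\nabla p\|_{L^2(\Omega_t)}^2$, use the commutator \eqref{eq:comm1}, integrate by parts to produce a boundary term $\int_{\Sigma_t}\D_t p\,\partial_\nu p$, estimate $\D_t p$ and $\partial_\nu p$ on the boundary via Lemmas \ref{lem:divcurl} and \ref{lem:B^4} and the formula \eqref{eq:D_tp-4}, then close with Gr\"onwall using $\int_0^T\|p\|_{H^1(\Sigma_t)}^2\,dt\leq C(1+T)$ from Lemma \ref{lem:claim1} — is exactly the strategy of the paper, and your treatment of the boundary term is correct and faithful.

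However, there is a genuine gap in your handling of the bulk term $-\int_{\Omega_t}\D_t p\,\Delta p\,dx$. You claim it can be absorbed via Cauchy--Schwarz and boundedness of $\Delta p$, reducing to a bound on $\|\D_t p\|_{L^2(\Omega_t)}$, and then say the $\partial_t p$ piece is ``estimated analogously''. This is circular: $\|\partial_t p\|_{L^2(\Omega_t)}$ (equivalently $\|\D_t p\|_{L^2(\Omega_t)}$) is precisely what you have not bounded, and there is no a priori control on it — recall that $\nabla\D_t p = -\D_t^2 v + [\D_t,\nabla]p$, and $\D_t^2 v$ is a genuinely higher-order quantity not controlled by the a priori estimates \eqref{eq:apriori_est} or by $\|\nabla p\|_{L^2(\Omega_t)}$. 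Your alternative (working with $\|\D_t v\|_{L^2}^2$ instead) produces the same bulk integral $\int\D_t p\,\mathrm{Tr}((\nabla v)^2)$ with the same problem. The paper handles this term by introducing an auxiliary function $u$ solving $-\Delta u=\Delta p$, $u=0$ on $\Sigma_t$, integrating by parts twice so that $-\int\D_t p\,\Delta p = \int\Delta\D_t p\,u + \int_{\Sigma_t}\D_t p\,\partial_\nu u$, then using Remark \ref{rmk:laplpreassure} to express $\Delta\D_t p = \Div\Div(v\otimes\nabla p)+\Div(R_{bulk}^0)$ and integrating by parts once more to land on $\nabla^2 u$ and $\nabla u$ only; the elliptic estimates for $u$ are then tracked carefully so that the dependence on the (not yet uniformly bounded) curvature produces only the integrable factor $\|p\|_{H^1(\Sigma_t)}^2$. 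This auxiliary-problem device is the real content of the proposition beyond Lemma \ref{lem:claim1}, and your proposal does not supply a substitute. Relatedly, you suggest that the capacitary piece $\partial_\nu\partial_t U$ is the main obstacle — but that part is already done in \eqref{eq:claim1-2}; the new difficulty here is the bulk integral.
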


\begin{proof}
We differentiate
\[
\begin{split}
\frac{d}{dt} \frac12 \int_{\Omega_t} |\nabla p|^2 \, dx &=\frac12 \int_{\Omega_t} |\nabla p|^2  \overbrace{\Div(v)}^{=0}\, dx + \int_{\Omega_t} ( \D_t \nabla p \cdot  \nabla p)  \, dx \\
&= \int_{\Omega_t} (\nabla \D_t p \cdot \nabla p)  \, dx + \int_{\Omega_t}  ([\D_t,\nabla] p \cdot  \nabla p)  \, dx.
\end{split}
\]
By \eqref{eq:comm1} and  $\|\nabla v\|_{L^\infty}\leq C$ we have a pointwise estimate $|[\D_t,\nabla] p| \leq C |\nabla v| |\nabla p| \leq C|\nabla p|$. Therefore we deduce 
\begin{equation} \label{eq:claim_reg1-1}
\begin{split}
\frac{d}{dt} \frac12 \int_{\Omega_t} |\nabla p|^2 \, dx &\leq \int_{\Omega_t} ( \nabla \D_t p \cdot  \nabla p )  \, dx + C \|\nabla p\|_{L^2(\Omega_t)}^2 \\
&= \int_{\Omega_t} \Div ( \D_t p \, \nabla p)  \, dx - \int_{\Omega_t}  \D_t p \, \Delta p   \, dx  +  C \|\nabla p\|_{L^2(\Omega_t)}^2 \\
&= \int_{\Sigma_t}  \D_t p \, \pa_\nu p  \, dx - \int_{\Omega_t}  \D_t p \, \Delta p   \, dx  +  C \|\nabla p\|_{L^2(\Omega_t)}^2\\
&\leq \| \D_t p\|_{L^2(\Sigma_t)}^2 + \| \pa_\nu p\|_{L^2(\Sigma_t)}^2 - \int_{\Omega_t}  \D_t p \, \Delta p   \, dx  +  C \|\nabla p\|_{L^2(\Omega_t)}^2.
\end{split}
\end{equation}

We have by \eqref{eq:claim1-2} that $\| \D_t p\|_{L^2(\Sigma_t)}^2 \leq C(1 + \|p\|_{H^1(\Sigma_t)}^2)$ and by Lemma \ref{lem:divcurl} \[
\| \pa_\nu  p\|_{L^2(\Sigma_t)}^2 \leq  C(\|p\|_{H^1(\Sigma_t)}^2 + \|\Delta p\|_{L^2(\Omega_t)}^2) \leq  C(1 + \|p\|_{H^1(\Sigma_t)}^2).
\]
We are left with the second last term in \eqref{eq:claim_reg1-1}. 

To that aim let   $u : \Omega_t \to \R$ be the solution of  
\[
\begin{cases}
-\Delta u = \Delta  p \quad \text{in }\, \Omega_t\\
u= 0  \quad \text{on }\, \Sigma_t .
\end{cases}
\]
Then it holds 
\[
-\int_{\Omega_t}  \D_t p \, \Delta p   \, dx = \int_{\Omega_t}  \D_t p \, \Delta u   \, dx = \int_{\Omega_t} \Delta \D_t p \,  u   \, dx  + \int_{\Sigma_t} \D_t p \, \pa_\nu u   \, d \H^2 .
\]
Since  $|\Delta u| = |\Delta p| \leq C$ and $u =0 $ on $\St$ it holds $\|u\|_{H^1(\Omega_t)} \leq C$ and by Lemma \ref{lem:divcurl} we deduce $\|\nabla u\|_{L^2(\St)} \leq C$.  We  may bound the last term on the RHS by \eqref{eq:claim1-2} 
\[
\int_{\Sigma_t} \D_t p \, \pa_\nu u   \, d \H^2 \leq \|  \D_t p\|_{L^2(\Sigma_t)}^2 + \| \pa_\nu u\|_{L^2(\Sigma_t)}^2 \leq C(1 + \|p\|_{H^1(\Sigma_t)}^2).
\]
We are thus left with the second last term.

We have   by   \eqref{eq:divcurl2}, by   Lemma \ref{lem:B^4}, by Sobolev embedding  and by interpolation  that 
\[
\begin{split}
    \|\nabla^2 u\|_{L^2(\Omega_t)}^2 &\leq C +  \int_{\St}|H_{\St}| |\nabla u|^2 \, d \H^2 \leq C + C_\e\|H_{\St}\|_{L^3(\St)}^3  + \e \|\nabla u \|_{L^{3}(\St)}^{3}\\
    &\leq C_\e(1+  \|p\|_{L^2(\St)}^2) +  \e\|\nabla u \|_{L^{4}(\St)}^{2} \|\nabla u\|_{L^2(\St)}\\
    &\leq C_\e(1+  \|p\|_{L^2(\St)}^2) +  C \e\|\nabla^2 u \|_{L^{2}(\Omega_t)}^{2}.
\end{split}
\]
Therefore it holds 
\[
\| u\|_{H^2(\Omega_t)}^2 \leq C(1+\|p\|_{H^1(\St)}^2).
\]
By Remark \ref{rmk:laplpreassure} we have
\[
\Delta \D_t p = \Div \Div (v \otimes \nabla p) + \Div( R_{bulk}^0).
\]
Therefore  by integrating   by parts
\[
\begin{split}
\int_{\Omega_t} \Delta \D_t p \,  u   \, dx &= \int_{\Omega_t} ( v \otimes \nabla p) : \nabla^2 u   \, dx  + \int_{\Omega_t} R_{bulk}^0 \star \nabla u \,  dx - \int_{\St} (\nabla p \cdot \nu) (\nabla u \cdot v) \, d \H^2 \\
&\leq C(1+\|p\|_{H^1(\St)}^2 + \|\nabla p\|_{L^2(\Omega_t)}^2).
\end{split}
\]

We deduce by \eqref{eq:claim_reg1-1} and by the above estimates that 
\[
\frac{d}{dt} \frac12 \| \nabla p \|_{L^2(\Omega_t)}^2  \leq C(1+ \|p\|_{H^1(\Sigma_t)}^2 + \| \nabla p \|_{L^2(\Omega_t)}^2 ). 
\]
This implies
\[
\frac{d}{dt} \log(1 + \| \nabla p \|_{L^2(\Omega_t)}^2)\leq C(1+ \|p\|_{H^1(\Sigma_t)}^2) 
\]
and the claim follows from Lemma \ref{lem:claim1}.
\end{proof}

An important consequence  of Proposition \ref{prop:claim-reg1} is that by Lemma \ref{lem:press-curv} we have the following  bound for the curvature  
\begin{equation}
    \label{eq:B-4-bound}
\| B\|_{L^4(\Sigma_t)} +  \| B\|_{H^{\frac12}(\Sigma_t)}\leq C.
\end{equation}
This means that  from now on we may use the general interpolation inequality from Proposition \ref{prop:interpolation}. 

At the end of this section  we improve  Lemma \ref{lem:claim1}. We recall the definition of the energy quantity $E_1(t)$ from \eqref{def:E_l}
\[
E_1(t) = \|\D_t^2 v\|_{L^{2}(\Omega_t)}^2 + \|\nabla  p\|_{H^{\frac32}(\Omega_t)}^2 + \|v\|_{H^{3}(\Omega_t)}^2 + \|\pa_\nu p\|_{H^1(\St)}^2 +1.
\]
In particular, $E_1(0)$ denotes the above quantity at time $t = 0$. It is clear that 
\[
\|p\|_{H^1(\Omega_t)}^2 \leq  E_1(t).
\]

\begin{lemma}
    \label{lem:claim2}
    Assume that the a priori estimates \eqref{eq:apriori_est} hold for $T>0$.  Then 
     \[
     \int_0^T \| p\|_{H^2(\Omega_t)}^2 \, dt \leq C,
     \]
     where the constant $C$ depends on $M, T$ and on $E_1(0)$. 
\end{lemma}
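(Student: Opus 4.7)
The plan is to follow the ad-hoc strategy sketched in the Introduction: introduce the auxiliary functional
\[
\Phi(t) := -\int_{\Sigma_t} p \, \Delta_{\Sigma_t} v_n \, d\mathcal{H}^2,
\]
show that it is uniformly bounded on $[0,T]$, and prove a differential inequality of the form
\[
\frac{d}{dt}\Phi(t) \leq -\tfrac{1}{3}\|p\|_{H^2(\Omega_t)}^2 + C\big(1 + \|p\|_{H^1(\Sigma_t)}^2\big).
\]
Integrating in time, since the remainder is $L^1$ on $[0,T]$ by Lemma \ref{lem:claim1} and $|\Phi|$ is controlled at both endpoints, this yields the desired bound. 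The boundedness $|\Phi(t)| \leq C$ is easy: by Cauchy--Schwarz, $|\Phi(t)| \leq \|p\|_{L^2(\Sigma_t)} \|\Delta_{\Sigma_t} v_n\|_{L^2(\Sigma_t)}$, and the first factor is bounded by Lemma \ref{lem:claim1} while the second is bounded by the a priori estimate $\|v_n\|_{H^2(\Sigma_t)} \leq \Lambda_T$.

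For the differential inequality, I would differentiate $\Phi$ using the Reynolds transport theorem on the moving boundary. The surface stretching term is harmless (it involves only products of already-bounded quantities by $\|\nabla v\|_{L^\infty} \leq \Lambda_T$), leaving as main contributions
\[
-\int_{\Sigma_t} \D_t p \cdot \Delta_{\Sigma_t} v_n \, d\mathcal{H}^2 \quad \text{and} \quad -\int_{\Sigma_t} p \cdot \D_t(\Delta_{\Sigma_t} v_n) \, d\mathcal{H}^2.
\]
Using the formula \eqref{eq:D_tp-4}, which reads $\D_t p = -\Delta_{\Sigma_t} v_n + R_1$ with $R_1$ collecting curvature, capacitary and transport terms bounded in $L^2(\Sigma_t)$ via Lemma \ref{lem:B^4} and Lemma \ref{lem:estimate-R-U}, the first integral produces a coercive quadratic piece $\|\Delta_{\Sigma_t} v_n\|_{L^2(\Sigma_t)}^2$ plus controlled errors. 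For the second integral, the commutation formula (Remark \ref{rem:comm-vn}) together with $\D_t v_n = \D_t v\cdot \nu + v\cdot \D_t\nu = -\pa_\nu p + \text{l.o.t.}$ gives $\D_t(\Delta_{\Sigma_t} v_n) = -\Delta_{\Sigma_t}(\pa_\nu p) + \text{l.o.t.}$, which after a surface integration by parts yields $\int_{\Sigma_t} \nabla_\tau p \cdot \nabla_\tau(\pa_\nu p) \, d\mathcal{H}^2$ modulo lower order.

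The hard part is converting this boundary information into an interior $H^2$-bound on $p$. The key is Lemma \ref{lem:poisson1}: since $\|\Delta p\|_{L^2(\Omega_t)} \leq C\|\nabla v\|_{L^\infty}^2$ and $\|p\|_{L^2(\Omega_t)}$ is bounded, $\|p\|_{H^2(\Omega_t)}$ is controlled, up to a constant, by $\|\pa_\nu p\|_{H^{1/2}(\Sigma_t)}$. One then uses the commutator $\nabla_\tau \pa_\nu p = \pa_\nu \nabla_\tau p + B \star \nabla_\tau p$ and the Reilly-type identity \eqref{eq:divcurl2} applied to $p$ in $\Omega_t$ to rewrite the two boundary integrals from the previous step as a bulk quantity that, after absorption of the $B\star\nabla p$ terms using $\|B\|_{L^4(\Sigma_t)} \leq C$ (Lemma \ref{lem:B^4}) and interpolation of $\|\nabla p\|_{L^4(\Sigma_t)}$, bounds $\|\nabla^2 p\|_{L^2(\Omega_t)}^2$ from below. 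All remaining error terms are either uniformly bounded by $\Lambda_T$, absorbable via Young's inequality into the coercive $\|p\|_{H^2(\Omega_t)}^2$ piece, or bounded by $C(1+\|p\|_{H^1(\Sigma_t)}^2)$, whose time integral is finite by Lemma \ref{lem:claim1}.

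The main obstacle is precisely this last step: matching the boundary quadratic form $\|\Delta_{\Sigma_t} v_n\|_{L^2}^2 + \int_{\Sigma_t} \nabla_\tau p \cdot \nabla_\tau(\pa_\nu p) \, d\mathcal{H}^2$, obtained from $\frac{d}{dt}\Phi$, to the interior norm $\|p\|_{H^2(\Omega_t)}^2$, while carefully tracking that every error term can be placed into the integrable remainder $C(1+\|p\|_{H^1(\Sigma_t)}^2)$. This requires working with low boundary regularity (only $C^{1,\alpha}$ and $\|B\|_{L^4}\leq C$), so Proposition \ref{prop:interpolation} becomes available only after Proposition \ref{prop:claim-reg1}, and careful use of Lemma \ref{lem:reg-capa} and Lemma \ref{lem:B^4} is needed to estimate the capacitary contributions appearing in $R_1$ and in $\D_t(\Delta_{\Sigma_t} v_n)$.
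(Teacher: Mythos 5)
Your proposal is correct and follows essentially the same route as the paper: define $\Phi(t)=-\int_{\Sigma_t}p\,\Delta_{\Sigma_t}v_n\,d\H^2$, note $|\Phi|$ is uniformly bounded by the a priori estimates together with Proposition \ref{prop:claim-reg1}, differentiate, isolate the term $-\int_{\Sigma_t}p\,\Delta_{\Sigma_t}(\D_t v\cdot\nu)\,d\H^2=\int_{\Sigma_t}\Delta_{\Sigma_t}p\,\pa_\nu p\,d\H^2$, and apply the Reilly identity \eqref{eq:divcurl2} to extract $-\tfrac12\|\nabla^2p\|_{L^2(\Omega_t)}^2$ modulo curvature boundary terms, which are absorbed using $\|B\|_{L^4(\Sigma_t)}\le C$ and the Sobolev embedding $\|\nabla p\|_{L^{8/3}(\Sigma_t)}\lesssim\|\nabla p\|_{H^{1/2}(\Sigma_t)}$. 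All remaining pieces are placed into the integrable remainder $C(1+\|p\|_{H^1(\Sigma_t)}^2)$ and the conclusion follows from Lemma \ref{lem:claim1}.

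One small imprecision: you describe the first integral $-\int_{\Sigma_t}\D_t p\,\Delta_{\Sigma_t}v_n\,d\H^2$ as producing a ``coercive quadratic piece $\|\Delta_{\Sigma_t}v_n\|_{L^2}^2$.'' After substituting $\D_t p=-\Delta_{\Sigma_t}v_n+R_1$ this term appears with a $+$ sign, so it is not coercive; it is simply harmless because $\|\Delta_{\Sigma_t}v_n\|_{L^2(\Sigma_t)}\le\|v_n\|_{H^2(\Sigma_t)}\le\Lambda_T$ is bounded a priori. In fact the paper does not bother splitting $\D_t p$ here at all --- it estimates $\|\D_t p\|_{L^2(\Sigma_t)}^2\le C(1+\|p\|_{H^1(\Sigma_t)}^2)$ outright via \eqref{eq:claim1-2}. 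All the negativity comes from the second integral through Reilly, exactly as you then go on to describe, so the overall argument is sound.
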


\begin{proof}
The proof is similar to Lemma \ref{lem:claim1}. 
This time we differentiate
\[
\Phi(t) := -\int_{\Sigma_t} p \Delta_{\St} v_n\, d \H^2.
\]
Note that  by the a priori estimates \eqref{eq:apriori_est} and by Proposition \ref{prop:claim-reg1}, $\Phi$  is uniformly bounded on $[0,T]$. Note also that by Proposition \ref{prop:claim-reg1} it holds 
\[
\sup_{t < T} \|p\|_{H^1(\Omega_t)}^2 \leq C, 
\]
where the  constant depends on $T, \Lambda_T$ and on $E_1(0)$. 

We calculate as in \eqref{eq:claim1-1} by using \eqref{eq:claim1-2} and $\|v_n\|_{H^2(\Sigma_t)} \leq C$ that  
\[
\begin{split}
\frac{d}{dt} \Phi(t) &\leq C + \|\D_t p\|_{L^2(\Sigma_t)}^2   -\int_{\Sigma_t} p\, (\D_t \Delta_{\St} v_n)\, d \H^2\\
&\leq C(1+ \|p\|_{H^1(\Sigma_t)}^2) -\int_{\Sigma_t} p\, (\D_t \Delta_{\St} v_n)\, d \H^2.
\end{split}
\]
To bound the last term we recall that by \eqref{eq:comm3} it holds 
\[
(\D_t \Delta_{\St} v_n) =  \Delta_{\St} (\D_t v_n) +  \nabla_\tau^2 v_n \star  \nabla v -  \nabla_\tau v_n \cdot \Delta_{\St} v + B \star \nabla v\star \nabla_\tau v_n. 
\]
Therefore by $\|\nabla v\|_{L^\infty} +\|v_n\|_{H^2(\Sigma_t)} \leq C$, by $\|\nabla_\tau v_n\|_{L^4(\Sigma_t)} \leq \|v_n\|_{H^2(\Sigma_t)} $  and by \eqref{eq:B-4-bound} it holds 
\begin{equation}
    \label{eq:claim2-1}
\begin{split}
&-\int_{\Sigma_t} p\, (\D_t \Delta_{\St} v_n)\, d \H^2 \leq - \int_{\Sigma_t} p\, \Delta_{\St} (\D_t v_n)\, d \H^2 +\int_{\Sigma_t} p\, \nabla_\tau v_n \cdot \Delta_{\St} v\, d \H^2 \\
&\,\,\,\,\,\,\,\, + C(1+ \|p\|_{L^2(\Sigma_t)}\|B\|_{L^4(\Sigma_t)}\|\bar \nabla v_n\|_{L^4(\Sigma_t)}) \\
&\leq-\int_{\Sigma_t} p\, \Delta_{\St} (\D_t v \cdot \nu)\, d \H^2 - \int_{\Sigma_t} p\, \Delta_{\St} (\D_t \nu \cdot v)\, d \H^2  - \int_{\Sigma_t}  (\nabla_\tau ( p\,  \nabla_\tau v_n) \cdot \nabla_\tau v) \, d \H^2 + C .
\end{split}
\end{equation}

We may write the first term on RHS in \eqref{eq:claim2-1}  by the formula \eqref{eq:divcurl2},  $\D_t v = - \nabla p$   and by the estimate \eqref{eq:B-4-bound} 
\[
\begin{split}
-\int_{\Sigma_t} p\, \Delta_{\St} (\D_t v \cdot \nu)\, d \H^2 &= \int_{\Sigma_t} \Delta_{\St} p\, \pa_\nu p\, d \H^2\\
&\leq -\frac12 \int_{\Omega_t} (|\nabla^2 p|^2-|\Delta p|^2) \, dx + C\int_{\Sigma_t}|B||\nabla p|^2\, d \H^2\\
&\leq -\frac12 \int_{\Omega_t} |\nabla^2 p|^2 \, dx + C+ C_\e \|B\|_{L^4}+ \e \|\nabla p\|_{L^{\frac83}(\Sigma_t)}^2\\
&\leq  -\frac12 \int_{\Omega_t} |\nabla^2 p|^2 \, dx + C_\e  + \e \|\nabla p\|_{L^{\frac83}(\Sigma_t)}^2. 
\end{split}
\]
By the Sobolev embedding it holds
$\|\nabla p\|_{L^{\frac83}(\Sigma_t)}^2 \leq C\|\nabla p\|_{H^{1/2}(\Sigma_t)}^2 \leq C(1+ \|\nabla^2 p\|^2_{L^2(\Omega_t)}).$
Therefore by choosing $\e>0$ small we deduce
\[
-\int_{\Sigma_t} p\, \Delta_{\St} (\D_t v \cdot \nu)\, d \H^2 \leq -\frac13 \int_{\Omega_t} |\nabla^2 p|^2 \, dx  + C_\e.
\]
We bound the third term on RHS in \eqref{eq:claim2-1} simply by   $\|\nabla v\|_{L^\infty} +\|v_n\|_{H^2(\Sigma_t)} \leq C$ as 
\[
-\int_{\Sigma_t}  \nabla_\tau ( p\,  \nabla_\tau v_n)  \cdot \nabla_\tau v\, d \H^2 \leq C(1+ \| p\|_{H^1(\Sigma_t)}^2 ).
\]
For the remaining  term in \eqref{eq:claim2-1}  we recall \eqref{eq:normal2} which states $\D_t \nu = -  \nabla_\tau v_n + B v_\tau$. Therefore we obtain by Lemma \ref{lem:B^4} and by $\|\nabla v\|_{L^\infty} +\|v_n\|_{H^2(\Sigma_t)} \leq C$ that 
\[
\begin{split}
-\int_{\Sigma_t} p\, \Delta_{\St} (\D_t \nu \cdot v)\, d \H^2  &= \int_{\Sigma_t} \langle  \bar \nabla p ,  \bar \nabla(\D_t \nu \cdot v)\rangle \, d \H^2 \\
&\leq C(1+ \| p\|_{H^1(\Sigma_t)}^2 + \| B \|_{H^1(\Sigma_t)}^2 + \|B\|_{L^4(\Sigma_t)}^4)\\
&\leq C(1+ \| p\|_{H^1(\Sigma_t)}^2).
\end{split}
\]
Hence, we have 
\[
\frac{d}{dt} \Phi(t) \leq - \frac13 \int_{\Omega_t} |\nabla^2 p|^2 \, dx + C(1+ \| p\|_{H^1(\Sigma_t)}^2)
\]
and the claim follows from Lemma \ref{lem:claim1}.
\end{proof}

\section{Energy estimates}

As we mentioned before, the fundamental property of the solution of \eqref{system} is the conservation of the  energy \eqref{energy}. In this section we define high order energy functions and show that their derivatives are controlled by the  quantity   \eqref{def:E_l} of the same order. This will be the first step in proving that the high order energy quantities remain bounded along the flow. 

We define the energy of order $l \geq 1$ as
\begin{equation}
\label{eq:high-energy}
\begin{split}
\mathcal{E}_l(t) =\frac12 \int_{\Omega_t} &|\D_t^{l+1} v|^2 \, dx +  \frac12\int_{\Sigma_t} |\nabla_\tau (\D_t^l v \cdot \nu)|^2 \, d \H^2 \\
&-  \frac{Q(t)}{2} \int_{\Omega_t^c} |\nabla (\pa_t^{l+1} U) |^2 \, dx  +  \int_{\Omega_t} |\nabla^{\lfloor \frac12(3l+1) \rfloor}\omega|^2 \, dx,
\end{split}
\end{equation}
where $\lfloor \frac12(3l+1) \rfloor$ is the integer part of  $\frac12(3l+1)$,  $\omega$ is the curl of $v$ defined as 
\[
\omega = \curl v= \nabla v - \nabla v^T
\]
and  $\Qt$ is defined in \eqref{def:constant-q}.

In this section we  calculate $\frac{d}{dt}\mathcal{E}_l$ and estimate it in terms of the $E_l(t)$, which we recall is  defined in \eqref{def:E_l} as
\[
E_l(t) =  \sum_{k=0}^{l}  \|\D_t^{l+1-k}v\|_{H^{\frac32 k}(\Omega_t)}^2 + \|v\|_{H^{\lfloor \frac32(1+1)\rfloor}(\Omega_t)}^2+ 
 \|\D_t^l v \cdot \nu \|_{H^1(\Sigma_t)}^2+1.
\]
In particular, it  holds $\En_l(t) \leq C E_l(t)$. We state the main results of this section below  and prove them later. 
\begin{proposition}
    \label{prop:case-1}
    Assume that  the a priori estimates \eqref{eq:apriori_est} hold for $T>0$. Then for all $t <T$ it holds
    \[
\frac{d}{dt} \mathcal{E}_1(t) \leq C(1 + \|p\|_{H^2(\Omega_t)}^2)E_1(t),
\]
where the constant depends on $M, T$ and  $E_1(0)$, i.e.,  $E_1(t)$ at time $t=0$.
\end{proposition}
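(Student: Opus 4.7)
I would prove Proposition \ref{prop:case-1} by differentiating each of the four pieces of $\En_1(t)$ in turn, identifying the highest-order cancellations, and then bounding what remains by means of the error estimates of Section 5. The key inputs are the formula of Lemma \ref{formula:Dtp} for $\D_t^2 p$ on $\St$, the formula of Lemma \ref{lem:mat-capa} for $\pa_t^3 U$ on $\St$, the commutator identities of Section 4, and Lemmas \ref{lem:R-div-est}, \ref{lem:bound-R_B}, \ref{lem:estimate-R-U} and \ref{lem:estimate-R-p-1}.

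For the kinetic term, Reynolds' theorem together with $\Div v=0$ gives $\tfrac{d}{dt}\tfrac12\int_{\Omega_t}|\D_t^2 v|^2\,dx=\int_{\Omega_t}\D_t^2 v\cdot \D_t^3 v\,dx$. Using $\D_t v=-\nabla p$ and the commutator identity \eqref{eq:comm-bulk}, I would rewrite $\D_t^3 v=-\nabla \D_t^2 p-R_{bulk}^1$. Integrating the gradient part by parts and invoking $\Div \D_t^2 v=R_{\Div}^1$ from Lemma \ref{lem:curl}, the kinetic contribution becomes
\[
\int_{\Omega_t} R_{\Div}^1\,\D_t^2 p\,dx-\int_{\St}(\D_t^2 v\cdot\nu)\,\D_t^2 p\,d\H^2-\int_{\Omega_t}\D_t^2 v\cdot R_{bulk}^1\,dx.
\]
Substituting Lemma \ref{formula:Dtp} into the boundary integral produces a $\Delta_\Sigma(\D_t v\cdot \nu)$-contribution and a capacitary $\pa_\nu U\,\pa_\nu \pa_t^2 U$-contribution, plus an $R_p^1$-remainder. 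The derivative of the surface-tension term $\tfrac12\int_{\St}|\nabla_\tau(\D_t v\cdot \nu)|^2\,d\H^2$ I would compute via the surface transport formula and the commutator \eqref{eq:comm2}: after an integration by parts on $\St$ and the identity $\D_t(\D_t v\cdot \nu)=\D_t^2 v\cdot \nu+\D_t v\cdot \D_t\nu$ (using \eqref{eq:normal1}), its leading contribution is $-\int_{\St}\Delta_\Sigma(\D_t v\cdot \nu)(\D_t^2 v\cdot \nu)\,d\H^2$, which exactly cancels the $\Delta_\Sigma$-term from the kinetic piece.

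Differentiating the capacitary term $-\tfrac{Q(t)}{2}\int_{\Omega_t^c}|\nabla \pa_t^2 U|^2\,dx$ by the transport formula (taking into account that the outer normal of $\Omega_t^c$ along $\St$ is $-\nu$) and integrating by parts using that $\pa_t^3 U$ is harmonic in $\Omega_t^c$, I arrive at the boundary integral $Q(t)\int_{\St}\pa_t^3 U\,\pa_\nu \pa_t^2 U\,d\H^2$. Substituting Lemma \ref{lem:mat-capa} in the form $\pa_t^3 U=-\pa_\nu U(\D_t^2 v\cdot \nu)+R_U^1$ produces exactly the opposite of the capacitary boundary contribution identified from the kinetic piece, giving the second cancellation and leaving only an $R_U^1$-remainder together with the harmless terms $Q'(t)\int_{\Omega_t^c}|\nabla \pa_t^2 U|^2$ and $Q(t)\int_{\St}|\nabla \pa_t^2 U|^2 v_n$. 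The curl term $\int_{\Omega_t}|\nabla^2\omega|^2\,dx$ is handled routinely: Reynolds plus Lemma \ref{lem:curl} for $\D_t\nabla^2\omega$ and Proposition \ref{prop:interpolation} yield a bound of $CE_1(t)$.

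It then remains to estimate the collected error terms $\int_{\Omega_t}R_{\Div}^1\,\D_t^2 p$, $\int_{\Omega_t}\D_t^2 v\cdot R_{bulk}^1$, $\int_{\St}R_p^1(\D_t^2 v\cdot \nu)$ and $Q(t)\int_{\St}R_U^1\,\pa_\nu \pa_t^2 U$, together with the lower-order pieces coming from $\D_t\nu$, $\Div_\Sigma v$ and the commutator $(\nabla_\tau v)^T\nabla_\tau$. Lemmas \ref{lem:R-div-est}, \ref{lem:bound-R_B}, \ref{lem:estimate-R-U} and \ref{lem:estimate-R-p-1} already produce the required factor $(1+\|p\|_{H^2(\Omega_t)}^2)E_1(t)$ for these quantities. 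To close the $R_p^1$-pairing I would invoke the standard normal-trace inequality
\[
\|\D_t^2 v\cdot \nu\|_{H^{-\frac12}(\St)}\le C\bigl(\|\D_t^2 v\|_{L^2(\Omega_t)}+\|R_{\Div}^1\|_{L^2(\Omega_t)}\bigr),
\]
together with Lemma \ref{lem:R-div-est}, to get a bound of $C\sqrt{(1+\|p\|_{H^2(\Omega_t)}^2)E_1(t)}$. The main obstacle is the bulk pairing $\int_{\Omega_t}R_{\Div}^1\,\D_t^2 p$: since $\D_t^2 p$ has no direct energy bound at this level, I would solve the Dirichlet problem $-\Delta \D_t^2 p=\Div\Div(v\otimes \D_t v)+\Div R_{bulk}^0$ from Remark \ref{rmk:laplpreassure} with boundary datum given by Lemma \ref{formula:Dtp}, and invoke Lemma \ref{lem:CS-intermed} together with Theorem \ref{teo:reg-capa} to bound $\|\D_t^2 p\|_{L^2(\Omega_t)}$ by $C(1+\|p\|_{H^2(\Omega_t)})\sqrt{E_1(t)}$. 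This reliance on the $H^2(\Omega_t)$-norm of $p$ in controlling $\D_t^2 p$ is precisely the obstruction preventing a clean $CE_1(t)$ bound at the level $l=1$ and accounts for the factor $1+\|p\|_{H^2(\Omega_t)}^2$ in the statement.
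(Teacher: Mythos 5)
Your overall skeleton matches the paper's proof: you differentiate the four pieces of $\mathcal{E}_1(t)$, obtain the two cancellations (the $\Delta_\Sigma(\D_t v\cdot\nu)$ contribution from the kinetic boundary integral against the derivative of the surface-tension term, and the $\pa_\nu U\,\pa_\nu\pa_t^2 U$ contribution against the derivative of the capacitary term via Lemma \ref{lem:mat-capa}), control the $R_p^1$-pairing by the normal-trace duality bound, handle the curl term routinely, and correctly single out $\int_{\Omega_t}R_{\Div}^1\,\D_t^2 p\,dx$ as the genuine obstacle. All of this is exactly what the paper does.

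The gap is in how you propose to close that last integral. You suggest bounding $\|\D_t^2 p\|_{L^2(\Omega_t)}$ directly by treating $\D_t^2 p$ as the solution of a Dirichlet problem with boundary datum given by Lemma \ref{formula:Dtp}. But that boundary datum contains $-\Delta_\Sigma(\D_t v\cdot\nu)$, and to get an $L^2(\Omega_t)$ estimate for $\D_t^2 p$ by elliptic theory one would need $\Delta_\Sigma(\D_t v\cdot\nu)\in H^{-1/2}(\Sigma_t)$, i.e. $\D_t v\cdot\nu\in H^{3/2}(\Sigma_t)$. The quantity $E_1(t)$ only controls $\|\D_t v\cdot\nu\|_{H^1(\Sigma_t)}^2$ and $\|\D_t v\|_{H^{3/2}(\Omega_t)}^2$, and neither yields $H^{3/2}(\Sigma_t)$ regularity of the normal trace; so the estimate you write for $\|\D_t^2 p\|_{L^2(\Omega_t)}$ does not close. (Also, Lemma \ref{lem:CS-intermed} is geared toward div--curl and $F=\nabla u$ quantities and Theorem \ref{teo:reg-capa} toward harmonic functions, so neither applies directly to $\D_t^2 p$, which is neither.) The paper sidesteps this by introducing an auxiliary function $u$ solving $-\Delta u = \Div(\D_t^2 v)$ in $\Omega_t$ with $u=0$ on $\Sigma_t$, so that
\[
\int_{\Omega_t}\D_t^2 p\,\Div(\D_t^2 v)\,dx = -\int_{\Omega_t}\Delta\D_t^2 p\;u\,dx-\int_{\Sigma_t}\D_t^2 p\;\pa_\nu u\,d\H^2 .
\]
The bulk term is then handled via Remark \ref{rmk:laplpreassure} together with further integration by parts and the $H^2$ bound on $u$; for the boundary term the decisive step is the tangential integration by parts
\[
-\int_{\Sigma_t}\Delta_\Sigma(\D_t v\cdot\nu)\,\pa_\nu u\,d\H^2 = \int_{\Sigma_t}\bigl\langle\bar\nabla(\D_t v\cdot\nu),\bar\nabla\pa_\nu u\bigr\rangle\,d\H^2,
\]
which uses only $\|\D_t v\cdot\nu\|_{H^1(\Sigma_t)}$ (contained in $E_1$) and $\|\pa_\nu u\|_{H^1(\Sigma_t)}$, the latter being controlled by Proposition \ref{prop:dirichlet1} together with Lemma \ref{lem:R-div-est}. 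This is the idea your argument is missing. A smaller point: the factor $1+\|p\|_{H^2(\Omega_t)}^2$ is forced already by the error lemmas for $l=1$ (e.g. through $\|\nabla\D_t v\|_{L^2(\Omega_t)}=\|p\|_{H^2(\Omega_t)}$ entering the $R_{\Div}^1$ and $R_{bulk}^1$ bounds), not by an $L^2$-estimate on $\D_t^2 p$ as your last sentence suggests.
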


\begin{proposition}
    \label{prop:case-l}
    Let $l \geq 2$ and assume that  \eqref{eq:apriori_est} and $E_{l-1}(t) \leq M$ hold for all $t \in [0,T)$. Then  all $t <T$ it holds  
    \[
\frac{d}{dt} \mathcal{E}_l(t) \leq CE_l(t)
\]
where the constant depends on $M, l, T$ and on $\sup_{t < T} E_{l-1}(t)$.
\end{proposition}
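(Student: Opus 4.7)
The plan is to differentiate the four terms of $\mathcal{E}_l(t)$ separately, perform the key integrations by parts so that the top‑order terms cancel, and then invoke the error estimates from Section~5 on what remains. Since $\Div v=0$, the bulk transport theorem gives $\frac{d}{dt}\int_{\Omega_t} f\,dx=\int_{\Omega_t}\D_t f\,dx$, so
\[
\frac{d}{dt}\frac12\int_{\Omega_t}|\D_t^{l+1}v|^2\,dx=\int_{\Omega_t}\D_t^{l+1}v\cdot \D_t^{l+2}v\,dx.
\]
Using $\D_t v=-\nabla p$ and the commutator \eqref{eq:comm-bulk} we write $\D_t^{l+2}v=-\nabla\D_t^{l+1}p+R_{bulk}^{l}$. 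Integration by parts together with $\Div\D_t^{l+1}v=R_{\Div}^{l}$ (Lemma~\ref{lem:curl}) produces the crucial boundary contribution
\[
-\int_{\Sigma_t}(\D_t^{l+1}v\cdot\nu)\,\D_t^{l+1}p\,d\H^2,
\]
plus bulk terms that are products of $\D_t^{l+1}v$ with $R_{bulk}^{l}$ or $\D_t^{l+1}p\cdot R_{\Div}^{l}$; both are controlled by Lemmas~\ref{lem:R-div-est} and \ref{lem:bound-R_B} (noting $\|\D_t^{l+1}p\|_{L^2}\leq \|\nabla \D_t^l v\|_{L^2}+\|R_{bulk}^{l-1}\|_{L^2}\leq CE_l(t)^{1/2}$).

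Next I would substitute Lemma~\ref{formula:Dtp},
\[
\D_t^{l+1}p=-\Delta_\Sigma(\D_t^{l}v\cdot\nu)-Q(t)\,\pa_\nu U\,\pa_\nu\pa_t^{l+1}U+R_p^{l},
\]
so that the boundary term splits into three pieces. The surface‑tension piece $\int_{\Sigma_t}(\D_t^{l+1}v\cdot\nu)\Delta_\Sigma(\D_t^{l}v\cdot\nu)\,d\H^2$ is to be matched with
\[
\frac{d}{dt}\frac12\int_{\Sigma_t}|\nabla_\tau(\D_t^l v\cdot\nu)|^2\,d\H^2,
\]
which via the surface transport formula, Lemma~\ref{lem:comm2} and integration by parts on $\Sigma_t$ becomes $-\int_{\Sigma_t}\Delta_\Sigma(\D_t^{l}v\cdot\nu)(\D_t^{l+1}v\cdot\nu)\,d\H^2$ plus terms containing at most $(\Delta_\Sigma\D_t^l v\cdot \nu)\cdot (v\cdot \D_t\nu)$ or factors of $\Div_\tau v,\,\D_t\nu$, all of which are controlled after one more integration by parts by $CE_l(t)$ using the uniform bounds on $\nabla v,B,H$ implied by $E_{l-1}(t)\le \tilde C$ and Lemma~\ref{lem:press-curv}. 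The cancellation is the main point.

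The capacitary piece $Q(t)\int_{\Sigma_t}(\D_t^{l+1}v\cdot\nu)\pa_\nu U\pa_\nu\pa_t^{l+1}U\,d\H^2$ must be paired against $-\frac{d}{dt}\frac{Q(t)}{2}\int_{\Omega_t^c}|\nabla\pa_t^{l+1}U|^2\,dx$. Applying the Reynolds formula on the exterior domain (whose boundary moves with normal velocity $-v_n$) and using that $\pa_t^{l+1}U$ is harmonic in $\Omega_t^c$, integration by parts yields
\[
-Q(t)\int_{\Sigma_t}\pa_\nu\pa_t^{l+1}U\cdot\pa_t^{l+2}U\,d\H^2
\]
plus an $L^\infty$‑bounded Reynolds term and a $Q'(t)$‑term. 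Substituting $\pa_t^{l+2}U=-\pa_\nu U(\D_t^{l+1}v\cdot\nu)+R_U^{l}$ from Lemma~\ref{lem:mat-capa} produces precisely the cancelling term, up to an error $\int_{\Sigma_t}\pa_\nu\pa_t^{l+1}U\cdot R_U^{l}\,d\H^2$ controlled by duality $\|\pa_\nu\pa_t^{l+1}U\|_{H^{-1/2}(\Sigma_t)}\|R_U^l\|_{H^{1/2}(\Sigma_t)}\leq CE_l(t)^{1/2}\cdot CE_l(t)^{1/2}$ via Theorem~\ref{teo:reg-capa} and Lemma~\ref{lem:estimate-R-U}. The $R_p^l$ boundary contribution is handled directly by Lemma~\ref{lem:estimate-R-p-2}, and the curl term
\[
\frac{d}{dt}\int_{\Omega_t}|\nabla^{\lfloor(3l+1)/2\rfloor}\omega|^2\,dx
\]
is estimated with Lemma~\ref{lem:curl}: the top‑order contribution is of the form $\int\nabla^k\omega\star(\nabla v\star\nabla^k\omega)$ or $\int\nabla^k\omega\star\nabla^{k+1}v\star\nabla v$, both bounded by $CE_l(t)$ since $k+1=\lfloor 3(l+1)/2\rfloor$ and $\|v\|_{H^{k+1}}^2\leq E_l(t)$.

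The main obstacle is the bookkeeping: showing that every cancellation of the top‑order boundary terms holds up to commutator errors that are genuinely of order $E_l(t)$ rather than higher. In particular, matching $\D_t\nabla_\tau(\D_t^lv\cdot\nu)$ with $\nabla_\tau(\D_t^{l+1}v\cdot\nu)$ (Lemma~\ref{lem:comm2}), and matching $\pa_t^{l+2}U|_{\Sigma_t}$ with the Neumann trace $\pa_\nu U(\D_t^{l+1}v\cdot\nu)$ (Lemma~\ref{lem:mat-capa}), both leave residuals in $R_{II}^l$, $R_U^l$ that must be estimated in the correct dual spaces. Once these pairings are verified, the full inequality $\frac{d}{dt}\mathcal{E}_l(t)\le CE_l(t)$ follows by collecting the bounds on $R_{\Div}^l,R_{bulk}^l,R_U^l,R_p^l$ and the lower‑order transport contributions.
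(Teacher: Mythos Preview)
Your overall plan matches the paper's: differentiate each of the four pieces of $\mathcal{E}_l$, integrate by parts to isolate the common boundary integral $-\int_{\Sigma_t}(\D_t^{l+1}v\cdot\nu)\,\D_t^{l+1}p\,d\H^2$, substitute the formula from Lemma~\ref{formula:Dtp}, and cancel the three leading contributions against the surface, capacity, and curl terms. The error bounds from Section~5 that you cite are the right ones.

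There is, however, a genuine gap in how you dispose of the bulk remainder
\[
\int_{\Omega_t}\D_t^{l+1}p\,\Div(\D_t^{l+1}v)\,dx.
\]
You propose to use Cauchy--Schwarz together with the claimed inequality $\|\D_t^{l+1}p\|_{L^2(\Omega_t)}\leq \|\nabla\D_t^{l}v\|_{L^2}+\|R_{bulk}^{l-1}\|_{L^2}\leq CE_l(t)^{1/2}$. That bound is not available: the identity $\nabla\D_t^{l}p=-\D_t^{l+1}v-R_{bulk}^{l-1}$ controls $\nabla\D_t^{l}p$, not $\D_t^{l+1}p$, and trying to pass to $\D_t^{l+1}p$ via $\nabla\D_t^{l+1}p=-\D_t^{l+2}v-R_{bulk}^{l}$ brings in $\D_t^{l+2}v$, which is one order beyond $E_l(t)$. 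On the boundary $\D_t^{l+1}p$ contains $-\Delta_{\Sigma}(\D_t^{l}v\cdot\nu)$, which from $\|\D_t^{l}v\cdot\nu\|_{H^1(\Sigma_t)}$ is only in $H^{-1}(\Sigma_t)$, so there is no direct elliptic route to an $L^2(\Omega_t)$ bound either.

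The paper treats this term by an auxiliary construction that you should add: let $u$ solve $-\Delta u=\Div(\D_t^{l+1}v)$ in $\Omega_t$ with $u=0$ on $\Sigma_t$. Integrating by parts gives
\[
\int_{\Omega_t}\D_t^{l+1}p\,\Div(\D_t^{l+1}v)\,dx=-\int_{\Omega_t}\Delta\D_t^{l+1}p\,u\,dx-\int_{\Sigma_t}\D_t^{l+1}p\,\pa_\nu u\,d\H^2.
\]
For the bulk piece one uses Remark~\ref{rmk:laplpreassure}, $-\Delta\D_t^{l+1}p=\Div\Div(v\otimes\D_t^{l+1}v)+\Div R_{bulk}^{l}$, integrates by parts onto $\nabla^2 u$ and $\nabla u$, and invokes Lemmas~\ref{lem:R-div-est} and \ref{lem:bound-R_B}. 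For the boundary piece one inserts Lemma~\ref{formula:Dtp} again, integrates $\Delta_\Sigma(\D_t^{l}v\cdot\nu)$ by parts on $\Sigma_t$, and then the key estimate is $\|\pa_\nu u\|_{H^1(\Sigma_t)}\leq C\|\Div(\D_t^{l+1}v)\|_{H^{1/2}(\Omega_t)}=C\|R_{\Div}^{l}\|_{H^{1/2}(\Omega_t)}\leq CE_l(t)^{1/2}$ from Proposition~\ref{prop:dirichlet1} and Lemma~\ref{lem:R-div-est}. This is precisely where the $H^{1/2}$ control of $R_{\Div}^{l}$ (rather than just $L^2$) is used. Without this device the term cannot be closed at the level of $E_l(t)$.

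A minor point: for the capacitary remainder $\int_{\Sigma_t}\pa_\nu\pa_t^{l+1}U\cdot R_U^{l}\,d\H^2$ you invoke an $H^{1/2}$ bound on $R_U^l$, but Lemma~\ref{lem:estimate-R-U} only gives $\|R_U^{l}\|_{L^2(\Sigma_t)}^2\leq CE_l(t)$. This is harmless, since the same lemma also gives $\|\nabla\pa_t^{l+1}U\|_{L^2(\Sigma_t)}^2\leq CE_l(t)$, so plain Cauchy--Schwarz in $L^2(\Sigma_t)$ suffices.
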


The proof of the both above energy estimates is based on the calculations of the differential of $\mathcal{E}_l(t)$, which we state first for all $l \geq 1$. In the proof of Proposition \ref{prop:case-1} and Proposition \ref{prop:case-l} we then need to estimate the remainder terms by the quantity $E_l(t)$.  

We begin by differentiating the first term of $\En_l(t)$ in \eqref{eq:high-energy} and obtain by $\Div v = 0$, by \eqref{eq:comm-bulk} and by the definition of $E_l(t)$ that 
\begin{equation} \label{eq:diff1}
\begin{split}
&\frac{d}{dt} \frac12 \int_{\Omega_t}|\D_t^{l+1} v|^2 \, dx = \int_{\Omega_t} ( \D_t^{l+2} v \cdot  \D_t^{l+1} v)  \, dx\\
&= -  \int_{\Omega_t} (\nabla \D_t^{l+1}  p \cdot \D_t^{l+1} v )  \, dx - \int_{\Omega_t} ([\D_t^{l+1},\nabla]  p \cdot  \D_t^{l+1} v )  \, dx \\
&\leq   -  \int_{\Omega_t} (\nabla \D_t^{l+1}  p \cdot \D_t^{l+1} v)  \, dx + \|\D_t^{l+1} v \|_{L^2(\Omega_t)}^2 + \|[\D_t^{l+1},\nabla]  p \|_{L^2(\Omega_t)}^2 \\
&= -  \int_{\Omega_t} \Div (\D_t^{l+1}  p \,  \D_t^{l+1} v)  \, dx + \int_{\Omega_t} \D_t^{l+1}  p \, \Div (\D_t^{l+1} v)  \, dx  + E_l(t) + \|R_{bulk}^l \|_{L^{2}(\Omega_t)}^2\\
&= -  \int_{\Sigma_t} \D_t^{l+1}  p \, (\D_t^{l+1} v \cdot \nu)  \, d\H^2   + E_l(t) + \|R_{bulk}^l \|_{L^{2}(\Omega_t)}^2 + \int_{\Omega_t} \D_t^{l+1}  p \, \Div (\D_t^{l+1} v)  \, dx.
\end{split}
\end{equation}

Next we differentiate the second term in the energy and obtain by $\|\nabla v\|_{L^\infty} \leq C$ and \eqref{eq:comm2}
\begin{equation} \label{eq:diff2}
\begin{split}
&\frac{d}{dt} \frac12 \int_{\Sigma_t} |\nabla_\tau (\D_t^{l} v \cdot \nu)|^2\, d\H^2 \\
&=\int_{\Sigma_t} ( \D_t \nabla_\tau (\D_t^{l} v \cdot \nu) \cdot \nabla_\tau (\D_t^{l} v \cdot \nu))  \, d\H^2 +   \frac12 \int_{\Sigma_t} |\nabla_\tau (\D_t^{l} v \cdot \nu)|^2 (\Div_\tau v) \, d\H^2 \\
&\leq   \int_{\Sigma_t} (\nabla_\tau \D_t (\D_t^{l} v \cdot \nu) \cdot \nabla_\tau (\D_t^{l} v \cdot \nu)) \, d\H^2 + C  \|\D_t^{l} v \cdot \nu\|_{H^1(\Sigma_t)}^2\\
&=  -\int_{\Sigma_t} (\Delta_{\Sigma_t} (\D_t^{l} v \cdot \nu)) ( \D_t (\D_t^{l} v \cdot \nu))\, d\H^2  +C E_l(t) \\
&= -\int_{\Sigma_t} (\Delta_{\Sigma_t} (\D_t^{l} v \cdot \nu)) ( \D_t^{l+1} v \cdot \nu)\, d\H^2 \\
&\,\,\,\,\,\,\,\,\,\,\,\,\,\,\,+ \int_{\Sigma_t} ( \nabla_{\tau} (\D_t^{l} v \cdot \nu) \cdot \nabla_\tau ( \D_t^{l} v \cdot \D_t \nu)) \rangle\, d\H^2 +C E_l(t)\\
&\leq -\int_{\Sigma_t} (\Delta_{\Sigma_t} (\D_t^{l} v \cdot \nu)) ( \D_t^{l+1} v \cdot \nu)\, d\H^2 +C E_l(t) + \|\D_t^{l} v \cdot \D_t \nu\|_{H^1(\Sigma_t)}^2. 
\end{split}
\end{equation}

We differentiate the third  term, use the fact that $\pa_t^{l+1} U$ is harmonic and Lemma \ref{lem:mat-capa} and have (recall that it holds  $\|\nabla \pa_t^{l+1} U\|_{L^2(\Omega_t^c)} \leq C\| \nabla \pa_t^{l+1} U\|_{L^2(\Sigma_t)}$)
\begin{equation} \label{eq:diff3}
\begin{split}
\frac{d}{dt} &-\frac{Q(t)}{2} \int_{\Omega_t^c} |\nabla \pa_t^{l+1} U|^2\, dx = - Q(t)\int_{\Omega_t^c} \langle \nabla \pa_t^{l+2} U,\nabla \pa_t^{l+1} U \rangle\, dx \\
&\,\,\,\,\,\,\,\,+ \frac{Q(t)}{2} \int_{\Sigma_t} |\nabla \pa_t^{l+1} U|^2 v_n \, d\H^2  -\frac{Q'(t)}{2} \int_{\Omega_t^c} |\nabla \pa_t^{l+1} U|^2\, dx \\
&\leq   Q(t)\int_{\Sigma_t}  \pa_t^{l+2} U \, (\pa_{\nu}  \pa_t^{l+1} U)\, d\H^2 + C\|\nabla \pa_t^{l+1} U\|_{L^2(\Sigma_t)}^2 \\
&=  - Q(t)\int_{\Sigma_t} (\pa_{\nu} U (\D_t^{l+1} v \cdot \nu) +R_U^{l})\, (\pa_{\nu}  \pa_t^{l+1} U)\, d\H^2 + C\|\nabla \pa_t^{l+1} U\|_{L^2(\Sigma_t)}^2\\
&\leq  -Q(t)\int_{\Sigma_t} (\pa_{\nu} U \,  \pa_{\nu}  \pa_t^{l+1} U) (\D_t^{l+1} v \cdot \nu) \, d\H^2 + \|R_U^{l}\|_{L^2(\Sigma_t)}^2 +C\| \nabla \pa_t^{l+1} U\|_{L^2(\Sigma_t)}^2,  
\end{split}
\end{equation}
where $R_U^l$ in the remainder term defined in \eqref{def:R-U} and $Q'(t) = \frac{d}{dt }\Qt$, where $\Qt$ is defined in \eqref{def:constant-q}. Recall that in the proof of Lemma \ref{lem:estimate-R-p-1} we proved that $Q'(t)$ and $Q''(t)$ are bounded, see \eqref{eq:Q-2-derivat}.

Finally, we differentiate the fourth term involving the curl. To that aim we denote $\lambda_l := \lfloor \frac12(3l+1)\rfloor$.
We have by Lemma \ref{lem:curl} and by \eqref{eq:est-product}
\beq\label{eq:diff4}
\begin{split}
    \frac{d}{dt}\int_{\Omega_t} |\nabla^{\lambda_l}\omega|^2\, dx
    &= \int_{\Omega_t} \nabla v \star \nabla^{\lambda_l} \omega \star \nabla^{\lambda_l}\omega\, dx    +  C\|\nabla^{\lambda_l}\omega\|_{L^2(\Omega_t)}^2 \\
    &\,\,\,\,\,\,\,\,\,\,\,\,\,\,\,+ \sum_{|\alpha|\leq \lambda_l}\|\nabla^{1+\alpha_1} v \star \nabla^{1+\alpha_2} v \|_{L^2(\Omega_t)}^2
    \\
    &\leq C\|\nabla^{\lambda_l}\omega\|_{L^2(\Omega_t)}^2  + C\|\nabla v\|_{L^\infty(\Omega_t)}^2 \|\nabla v\|_{H^{\lambda_l}(\Omega_t)}^2 \\
    &\leq C\|\nabla^{\lambda_l}\omega\|_{L^2(\Omega_t)}^2 + C\| v\|_{H^{\lfloor \frac32(l+1)\rfloor}(\Omega_t)}^2 \leq C E_{l}(t).
\end{split}
\eeq

Let us next show that the highest order terms in \eqref{eq:diff1},  \eqref{eq:diff2}  and \eqref{eq:diff3} cancel out. Indeed, this follows from  Lemma \ref{formula:Dtp} which states that 
\[
\D_t^{l+1}p = -\Delta_{\St} (\D_t^l v \cdot \nu) - Q(t) \, \pa_{\nu} U \,  (\pa_{\nu} \pa_t^{l+1} U) + R_p^{l},
\]
where $R_p^l$ denotes the error term defined in \eqref{eq:R-p-in-3} and $\Qt$ is defined in \eqref{def:constant-q}. Note that we may estimate  
\[
\big|\int_{\St} R_p^{l} (\D_t^{l+1} v \cdot \nu ) \, d \H^2 \big| \leq  \|R_p^l\|_{H^{\frac12}(\St)}^2 + \|\D_t^{l+1} v \cdot \nu\|_{H^{-\frac12}(\St)}^2.
\]
By divergence theorem and by Lemma \ref{lem:curl} we have 
\[
\|\D_t^{l+1} v \cdot \nu\|_{H^{-\frac12}(\St)}^2 \leq C\|\D_t^{l+1} v\|_{L^2(\Omega_t)}^2 + \|R_{\Div}^l\|_{L^2(\Omega_t)}^2 \leq CE_l(t) + \|R_{\Div}^l\|_{L^2(\Omega_t)}^2. 
\]
Therefore we have by \eqref{eq:diff1}, \eqref{eq:diff2}, \eqref{eq:diff3} and \eqref{eq:diff4}  that
\begin{equation} \label{eq:all-errors}
\begin{split}
\frac{d}{dt}  \En_l(t) &\leq C E_l(t) + \|R_{bulk}^l\|_{L^2(\Omega_t)}^2 + \|R_{U}^l\|_{L^2(\St)}^2 +\|R_{\Div}^l\|_{L^2(\Omega_t)}^2+ \|R_p^l\|_{H^{\frac12}(\St)}^2\\
&\,\,\,\,+ C \|\nabla \pa_t^{l+1} U\|_{L^2(\St)}^2 +\| \D_t^l v \cdot \D_t \nu\|_{H^1(\St)}^2 + \int_{\Omega_t} \D_t^{l+1}  p \, \Div (\D_t^{l+1} v)  \, dx.
\end{split}
\end{equation}

We need thus to bound the remainder terms in \eqref{eq:all-errors}. As we mentioned before, we prove the energy bounds by induction such that we bound $E_l(t)$ when  we know that $E_{l-1}(t)$ is already bounded. The difficulty is to bound the first order quantity $E_1(t)$ and  we do this separately in Proposition \ref{prop:case-1}.

\begin{proof}[\textbf{Proof of Proposition \ref{prop:case-1}}]
First we recall that Proposition \ref{prop:claim-reg1} implies
\[
 \sup_{t \in (0,T)} \|\nabla p\|_{L^2(\Omega_t)}^2  \leq e^{CT}(1+\|\nabla p\|_{L^2(\Omega_0)}^2) \leq C,
\]
where the constant $C$ on the RHS depends on $T, \Lambda_T$  defined in \eqref{eq:apriori_est} and on $E_1(0)$, which is the energy quantity $E_1(t)$ at time $t=0$. Then we have the bound \eqref{eq:B-4-bound} which we recall is 
\[
\|B\|_{L^4(\St)} + \|B\|_{H^{\frac12}(\St)} \leq C
\]
for all $t <T$.

We recall the estimate \eqref{eq:all-errors} for $l=1$ and use Lemma \ref{lem:R-div-est}, Lemma \ref{lem:bound-R_B}, estimate \eqref{eq:est-R-U-1} from Lemma \ref{lem:estimate-R-U} and Lemma \ref{lem:estimate-R-p-1} to deduce
\[
\frac{d}{dt}   \En_1(t) \leq C(1+ \|p\|_{H^2(\Omega_t)}^2) E_1(t) + \| \D_t v \cdot \D_t \nu\|_{H^1(\St)}^2 + \int_{\Omega_t} \D_t^{2}  p \, \Div (\D_t^{2} v)  \, dx.
\]
Hence, we need to bound the two last terms. The second last term is easy to treat and we merely claim that it holds 
\[
\| \D_t v \cdot \D_t \nu\|_{H^1(\St)}^2 \leq C(1+ \|p\|_{H^2(\Omega_t)}^2) E_1(t).
\]
Indeed, this follows from $\D_t v = -\nabla p$, $\D_t \nu = -(\nabla_\tau v)^T \nu$ from \eqref{eq:normal1} and from Proposition \ref{prop:kato-ponce}. We leave the details for the reader. The last term is challenging and we will prove that  
\beq
\label{eq:case-1-1}
\int_{\Omega_t} \D_t^{2}  p \, \Div (\D_t^{2} v)  \, dx \leq C(1+ \|p\|_{H^2(\Omega_t)}^2) E_1(t).
\eeq

The argument is similar than in the proof of Proposition \ref{prop:claim-reg1}. The  idea is to use the fact that the term $\Div (\D_t^{2} v)$ is  lower order due to the fact that $\Div v = 0$. Indeed, we have by Lemma \ref{lem:curl} that $\Div (\D_t^{2} v) =  R^{1}_{\Div}$, where $R^{1}_{\Div}$ is defined in \eqref{eq:def-R-div} and is lower order than $\D_t^{2} v$. To this aim let $u$ be a solution of
\[
\begin{cases}
&-\Delta u = \Div (\D_t^{2} v), \quad  \text{in } \, \Omega_t\\
&\,\,\,\,\,\,\,u = 0 \qquad \text{on }\, \St.
\end{cases}
\]
We have by integration by parts 
\[
\int_{\Omega_t} \D_t^{2}  p \, \Div (\D_t^{2} v)  \, dx = -\int_{\Omega_t} \D_t^{2}  p \, \Delta u  \, dx =   -\int_{\Omega_t} \Delta \D_t^{2}  p \,  u  \, dx - \int_{\St} \D_t^2 p\, \pa_\nu u\, d \H^2.
\]
We use Remark \ref{rmk:laplpreassure} and write 
\[
-\Delta \D_t^{2}  p = \Div \Div (v \otimes \D_t^2 v) + \Div (R_{bulk}^1).
\]
By integration by parts we deduce
\beq \label{eq:div-term-1}
\begin{split}
-\int_{\Omega_t} \Delta \D_t^{2}  p \,  u  \, dx = &\int_{\Omega_t} (v \otimes \D_t^2 v): \nabla^2 u  \, dx + \int_{\Omega_t} R_{bulk}^1 \star \nabla u  \, dx \\
&- \int_{\St} (\D_t^2 v \cdot \nu)  (\nabla u \cdot v) \, d \H^2.
\end{split}
\eeq
We use divergence theorem, the definition of $E_1(t)$, $\Div (\D_t^{2} v) =  R^{1}_{\Div}$  and Lemma \ref{lem:R-div-est} for  the last term 
\[
\begin{split}
- \int_{\St} (\D_t^2 v \cdot \nu)  (\nabla u \cdot v) \, d \H^2 &= - \int_{\Omega_t} \Div\big( (\nabla u \cdot v)\D_t^2 v\big)\\
&\leq CE_1(t) + \|u\|_{H^2(\Omega_t)}^2 +C\|R^{1}_{\Div}\|_{L^2(\Omega_t)}^2\\
&\leq  C(1+\|p\|_{H^2(\Omega_t)}^2)E_1(t)   + \|u\|_{H^2(\Omega_t)}^2.
\end{split}
\]
Since $-\Delta u = \Div (D_t^2 v)$,  we may  use the inequality  \eqref{eq:dirichlet-2} and  Lemma \ref{lem:R-div-est}  to obtain
\[
 \|u\|_{H^2(\Omega_t)}^2 \leq C\|R^{1}_{\Div}\|_{L^2(\Omega_t)}^2 \leq C(1+\|p\|_{H^2(\Omega_t)}^2)E_1(t).
\]
Therefore we have by \eqref{eq:div-term-1}, by the definition of $E_1(t)$ and by Lemma \ref{lem:bound-R_B} that 
\beq \label{eq:div-term-2}
\begin{split}
\int_{\Omega_t} \D_t^{2}  p \, \Div (\D_t^{2} v)  \, dx \leq   C(1+\|p\|_{H^2(\Omega_t)}^2)E_1(t) - \int_{\St} \D_t^2 p\, \pa_\nu u\, d \H^2. 
\end{split}
\eeq

We proceed by using Lemma \ref{formula:Dtp} to write
\[
\D_t^2 p = - \Delta_{\St} (\D_t v \cdot \nu) - Q(t) \nabla U \cdot \nabla \pa_t^2 U + R_p^1,
\]
where $\Qt$ is defined in \eqref{def:constant-q}. We integrate by parts on $\St$  the term 
\[
- \int_{\St}\Delta_{\St} (\D_t v \cdot \nu) \pa_\nu u\, d \H^2 = \int_{\St}\langle \bar \nabla (\D_t v \cdot \nu), \bar \nabla  \pa_\nu u \rangle \, d \H^2
\]
and deduce
\[
\begin{split}
-\int_{\St} \D_t^2 p\, \pa_\nu u\, d \H^2 \leq &\|\D_t v \cdot \nu\|_{H^1(\St)}^2 + \|\pa_\nu u\|_{H^1(\St)}^2 \\
&+ C\|\nabla \pa_t^2 U\|_{L^{2}(\St)}^2 + C\|R_p^1\|_{L^{2}(\St)}^2.
\end{split}
\]
Lemma \ref{lem:estimate-R-p-1}  and  \eqref{eq:est-R-U-8} imply 
\[
\|\nabla \pa_t^2 U\|_{L^{2}(\St)}^2 + \|R_p^1\|_{H^{\frac12}(\St)}^2 \leq C(1+\|p\|_{H^2(\Omega_t)}^2)E_1(t).
\]
We use   Proposition \ref{prop:dirichlet1}, Lemma \ref{lem:curl} and   Lemma \ref{lem:R-div-est}   to deduce
\[
\begin{split}
\|\pa_\nu u\|_{H^1(\St)}^2 &\leq C\|\Div(\D_t^2 v)\|_{H^{\frac12}(\Omega_t)}^2 \leq C\|R_{\Div}\|_{H^{\frac12}(\Omega_t)}^2 \\
&\leq  C(1+\|p\|_{H^2(\Omega_t)}^2)E_1(t).
\end{split}
\]
Therefore since $\|\D_t v \cdot \nu\|_{H^1(\St)}^2  \leq E_1(t)$ we obtain by combining the previous estimates 
\[
\int_{\St} \D_t^2 p\, \pa_\nu u\, d \H^2 \leq C(1+\|p\|_{H^2(\Omega_t)}^2)E_1(t). 
\]
Hence, \eqref{eq:div-term-1} implies  \eqref{eq:case-1-1}  which concludes the proof. 
\end{proof}

By a similar argument we prove the higher order case. We begin again with   \eqref{eq:all-errors} and we need to bound the remainder terms. 

\begin{proof}[\textbf{Proof of Proposition \ref{prop:case-l}}]
The assumption $E_{l-1}(t) \leq C$ and  Lemma \ref{lem:press-curv} imply the curvature bound $\|B\|_{H^{\frac32l-1}(\St)} \leq C$. Thus we may use   the estimate \eqref{eq:all-errors}, Lemma   \ref{lem:R-div-est}, Lemma \ref{lem:bound-R_B}, estimate \eqref{eq:est-R-U-2} from Lemma \ref{lem:estimate-R-U} and Lemma \ref{lem:estimate-R-p-1} to deduce
\[
\frac{d}{dt} \En_l(t) \leq C E_l(t) + \| \D_t^l v \cdot \D_t \nu\|_{H^1(\St)}^2 + \int_{\Omega_t} \D_t^{l+1}  p \, \Div (\D_t^{l+1} v)  \, dx.
\]
Hence, we need to bound the two last terms. Again the second last term is easy to treat and we merely sketch it. We have by \eqref{eq:normal1} $\D_t \nu = - (\nabla_\tau v)^T \nu$ and have by Proposition \ref{prop:kato-ponce} 
\[
\| \D_t^l v \cdot \D_t \nu\|_{H^1(\St)}^2 \leq C \| \D_t \nu\|_{L^\infty(\St)}^2\| \D_t^l v\|_{H^1(\St)}^2 + C \| \D_t \nu\|_{W^{1,4}(\St)}^2\| \D_t^l v\|_{L^4(\St)}^2 \leq C E_l(t).
\]

We treat the last term similarly as in the previous proof and claim that it holds 
\beq
\label{eqcase-l-1}
\int_{\Omega_t} \D_t^{l+1}  p \, \Div (\D_t^{l+1} v)  \, dx \leq C E_l(t).
\eeq
Since the argument is almost the same as with  \eqref{eq:case-1-1} we only sketch it. We let $u$ be the solution of 
\[
\begin{cases}
&-\Delta u = \Div (\D_t^{l+1} v) \quad  \text{in } \, \Omega_t,\\
&\,\,\,\,\,\,\,u = 0 \qquad \text{on }\, \St.
\end{cases}
\]
Again by integration by parts and by using Remark \ref{rmk:laplpreassure}, Lemma \ref{lem:R-div-est} and Lemma \ref{lem:bound-R_B} we obtain the higher order version of \eqref{eq:div-term-2} which reads as 
\[
\int_{\Omega_t} \D_t^{l+1}  p \, \Div (\D_t^{l+1} v)  \, dx \leq C E_l(t) - \int_{\St} \D_t^{l+1}p \, \pa_\nu u \, d \H^2.
\]
Lemma \ref{formula:Dtp} yields
\[
\D_t^{l+1}p = -\Delta_{\St}(\D_t^l v \cdot \nu) - Q(t)\nabla U \cdot \nabla \pa_t^{l+1} U + R_p^l,
\]
where $\Qt$ is defined in \eqref{def:constant-q}. By integration by parts on $\St$ we have 
\[
\begin{split}
- \int_{\St} \D_t^{l+1}p \, \pa_\nu u \, d \H^2 \leq &\|\D_t^l v \cdot \nu\|_{H^1(\St)}^2 +  \|\pa_\nu u\|_{H^1(\St)}^2 \\
&+ C\|\nabla \pa_t^{l+1} U\|_{L^{2}(\St)}^2 + C\|R_p^l\|_{L^{2}(\St)}^2.
\end{split}
\]
Note that  $\|\D_t^l v \cdot \nu\|_{H^1(\St)}^2 \leq E_l(t)$.
By Lemma \ref{lem:estimate-R-U} and Lemma \ref{lem:estimate-R-p-2} we have 
\[
\|\nabla \pa_t^{l+1} U\|_{H^{\frac12}(\St)}^2 + \|R_p^l\|_{H^{\frac12}(\St)}^2 \leq CE_l(t).
\] 
Finally by Proposition \ref{prop:dirichlet1}, by the curvature bound $\|B\|_{H^2(\St)} \leq \|B\|_{H^{\frac32l-1}(\St)} \leq  C$, by Lemma \ref{lem:curl} and by Lemma \ref{lem:R-div-est} we have
\[
\|\pa_\nu u\|_{H^{1}(\St)}^2 \leq  C\|\Div (\D_t^{l+1} v)\|_{H^{\frac12}(\Omega_t)}^2 \leq C \|R_{\Div}^l\|_{H^{\frac12}(\Omega_t)}^2 \leq C E_l(t).
\]
This proves \eqref{eqcase-l-1} and concludes the proof.
\end{proof}

\section{Higher regularity estimates}

Let us recall the definition of the energies  $E_l(t)$ and $\En_l(t)$ for $l \geq 1$ in \eqref{def:E_l} and \eqref{eq:high-energy} respectively. In the previous section we proved energy estimates where we control the derivative of $\En_l(t)$ by $E_l(t)$.  In this section we complete the estimate and prove that the energy  $\En_l(t)$ in fact controls $E_l(t)$. This,  together with the results in the previous section, will give us control for $\En_l(t)$ and implies   the regularity of the flow.

Note that the energy $\En_l(t)$ is  defined in \eqref{eq:high-energy} as
\[
\begin{split}
\mathcal{E}_l(t) =\frac12 \int_{\Omega_t} &|\D_t^{l+1} v|^2 \, dx +  \frac12\int_{\Sigma_t} |\nabla_\tau (\D_t^l v \cdot \nu)|^2 \, d \H^2 \\
&-  \frac{Q(t)}{2} \int_{\Omega_t^c} |\nabla (\pa_t^{l+1} U) |^2 \, dx  +  \int_{\Omega_t} |\nabla^{\lfloor \frac12(3l+1) \rfloor}\curl v |^2 \, dx,
\end{split}
\]
where $\Qt$ is defined in \eqref{def:constant-q}. Since $Qt \geq 0$,  the energy  has one negative term and we define its positive part as
\begin{equation}
\label{def:Eplus}
\mathcal{E}_l^+(t) := 1+ \frac12\|\D_t^{l+1}v\|_{L^{2}(\Omega_t)}^2 + \|\curl v\|_{H^{\lfloor \frac32l+\frac12\rfloor}(\Omega_t)}^2+
\frac12 \|\D_t^l v \cdot \nu \|_{H^1(\Sigma_t)}^2.
\end{equation}
Then it holds 
\[
c \,  \En_l^+(t) \leq \En_l(t) + \frac{Q(t)}{2} \int_{\Omega_t^c} |\nabla (\pa_t^{l+1} U)|^2 \, dx+ 1,  
\]
for $c>0$. 

The first main result of this section states that the energy $ \mathcal{E}_1(t)$ controls $E_1(t)$. For later purpose we need this bound when the boundary $\St$ is $C^{1,\alpha}$-regular but the velocity field is only bounded in $W^{1,4}$.   This makes the statement slightly heavy. 
\begin{proposition}
    \label{prop:reg-est-1}
    Assume that $\St$ is uniformly $C^{1,\alpha}(\Gamma)$-regular and the pressure and the velocity satisfies
    \[
    \|p\|_{H^1(\Omega_t)} + \|v\|_{W^{1,4}(\St)} + \|v\|_{W^{1,4}(\Omega_t)} \leq M. 
    \]
    Then there are constants $C$ and $C_0$   such that 
    \[
    E_1(t) \leq C(C_0 + \mathcal{E}_1(t)). 
\]
The constants depend on  $\sigma_t$ defined in \eqref{eq:height_well}, the  $C^{1,\alpha}$-norm of the heightfunction and on  $M$.
\end{proposition}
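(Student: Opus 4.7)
The plan is to bound each of the four positive terms in $E_1(t) = \|\D_t^2 v\|_{L^2(\Omega_t)}^2 + \|\D_t v\|_{H^{3/2}(\Omega_t)}^2 + \|v\|_{H^3(\Omega_t)}^2 + \|\D_t v \cdot \nu\|_{H^1(\St)}^2 + 1$ by $C(\mathcal{E}_1(t) + C_0)$, where $C_0 = C_0(\sigma_t, \|h(\cdot,t)\|_{C^{1,\alpha}(\Gamma)}, \tilde C)$. The structural obstacle is the negative capacity contribution $-\tfrac{Q(t)}{2}\int_{\Omega_t^c} |\nabla\pa_t^2 U|^2\,dx$ sitting inside $\mathcal{E}_1$, which has to be absorbed into the positive terms. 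Note that Lemma \ref{lem:press-curv} applied with $l=1$ to the hypothesis $\|p\|_{H^1(\Omega_t)}\le\tilde C$ yields $\|B\|_{H^{1/2}(\St)}+\|B\|_{L^4(\St)}\le C$, which makes available the interpolation, Kato--Ponce, capacitary and div--curl estimates of Sections 2--3 and Lemma \ref{lem:reg-capa} as background tools throughout.

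First I would bound the capacity term. Since $\pa_t^2 U$ is harmonic in $\Omega_t^c$ with appropriate decay at infinity, the Dirichlet energy $\int_{\Omega_t^c}|\nabla\pa_t^2 U|^2\,dx$ is comparable to $\|\pa_t^2 U\|_{H^{1/2}(\St)}^2$. Lemma \ref{lem:mat-capa} identifies the boundary trace as $\pa_t^2 U = -\pa_\nu U\,(\D_t v\cdot\nu) + R_U^0$, and the remainder $R_U^0$ is estimated by $C_0$ through Theorem \ref{teo:reg-capa} with constant Dirichlet datum and Lemma \ref{lem:reg-capa}. Combining the half-integer Kato--Ponce inequality \eqref{eq:kato-weak} with $\|\pa_\nu U\|_{W^{1,4}(\St)}\le C$ (which follows from $U\in C^{1,\alpha}$, the identity $\pa_\nu^2 U = H\pa_\nu U$ on $\St$ from \eqref{eq:D_tp-3}, and $\|B\|_{L^4}\le C$) yields
\[
\int_{\Omega_t^c}|\nabla\pa_t^2 U|^2\,dx \le C\|\D_t v\cdot\nu\|_{H^{1/2}(\St)}^2 + C_0.
\]
Interpolating $\|f\|_{H^{1/2}(\St)}^2 \le \e\|f\|_{H^1(\St)}^2 + C_\e\|f\|_{L^2(\St)}^2$, together with a divergence-trace argument giving $\|\D_t v\cdot\nu\|_{H^{-1/2}(\St)}\le C$ (since $\D_t v = -\nabla p\in L^2(\Omega_t)$ and $\Div\D_t v = \mathrm{Tr}((\nabla v)^2)\in L^2(\Omega_t)$), followed by interpolation between $H^{-1/2}(\St)$ and $H^1(\St)$ to obtain $\|\D_t v\cdot\nu\|_{L^2(\St)}^2 \le \e\|\D_t v\cdot\nu\|_{H^1(\St)}^2 + C_\e$, and feeding the two estimates back into the trivial inequality $\|\D_t v\cdot\nu\|_{H^1(\St)}^2 \le 2\mathcal{E}_1 + Q(t)\int|\nabla\pa_t^2 U|^2 + \|\D_t v\cdot\nu\|_{L^2(\St)}^2$, I close the loop for $\e$ small enough and obtain $\|\D_t v\cdot\nu\|_{H^1(\St)}^2 + \int_{\Omega_t^c}|\nabla\pa_t^2 U|^2\,dx \le C(\mathcal{E}_1 + C_0)$. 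The bound $\|\D_t^2 v\|_{L^2(\Omega_t)}^2 \le C(\mathcal{E}_1 + C_0)$ then follows directly from the definition of $\mathcal{E}_1$.

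The bulk pieces are handled through the div--curl estimates of Section 3. Since $\D_t v = -\nabla p$ is curl-free, the third inequality of Lemma \ref{lem:CS-intermed} gives
\[
\|\D_t v\|_{H^{3/2}(\Omega_t)}^2 \le C(\|\D_t v\cdot\nu\|_{H^1(\St)}^2 + \|p\|_{L^2(\Omega_t)}^2 + \|\Delta p\|_{H^1(\Omega_t)}^2),
\]
and the last term is controlled via $\Delta p = -\mathrm{Tr}((\nabla v)^2)$ together with the Sobolev embedding $H^3(\Omega_t)\hookrightarrow W^{2,4}(\Omega_t)$ applied to the $v$-bound being built. For $v$ itself, the second inequality of Proposition \ref{prop:vector-high} with $l=1$ reduces the $H^3$ estimate to $\|\Delta_\St v_n\|_{H^{1/2}(\St)}$, $\|B\|_{H^{3/2}(\St)}$ and $\|\curl v\|_{H^2(\Omega_t)}^2\le\mathcal{E}_1$. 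The tangential Laplacian is read off \eqref{eq:D_tp-4} as $\Delta_\St v_n = -\D_t p + (\text{lower order})$, and $\|\D_t p\|_{H^{1/2}(\St)}$ is controlled through the bulk identity $\nabla\D_t p = -\D_t^2 v + \nabla v^T\nabla p\in L^2(\Omega_t)$ combined with a trace inequality, while $\|B\|_{H^{3/2}(\St)}$ is obtained from Proposition \ref{prop:meancrv-bound} applied to $H = p + \tfrac{Q}{2}|\nabla U|^2$ together with Lemma \ref{lem:reg-capa}. The main obstacle throughout is the tight coupling between these estimates: under the weak hypotheses every Sobolev upgrade must be extracted from $\mathcal{E}_1$ through the coupled pressure--capacity--curl system, and the argument works only because the negative capacity term is \emph{subcritical} relative to the positive tangential-gradient term $\tfrac12\|\nabla_\tau(\D_t v\cdot\nu)\|_{L^2(\St)}^2$ in $\mathcal{E}_1$, which is precisely what permits the absorption argument in the second paragraph.
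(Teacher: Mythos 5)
Your overall two-step strategy (control the negative capacity term by absorption, then close the $H^3$ and $H^{3/2}$ bounds via div--curl estimates) matches the structure of the paper's proof, which shows $\En_1^+ \leq \En_1 + \e E_1 + C_\e$ and $E_1 \leq C\En_1^+$ and then combines. However, there is a genuine gap in your treatment of the capacity term.

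The claim that $\|\pa_\nu U\|_{W^{1,4}(\St)}\leq C$ follows from $U\in C^{1,\alpha}$, the identity $\pa_\nu^2 U = H\pa_\nu U$, and $\|B\|_{L^4}\leq C$ does not hold. The tangential gradient of $\pa_\nu U$ on $\St$ is the tangent--normal block of $\nabla^2 U$: for a tangent vector $X$, $\bar\nabla_X(\pa_\nu U) = (\nabla^2 U\, X)\cdot\nu$, while $\pa_\nu^2 U = (\nabla^2 U\,\nu)\cdot\nu$ controls only the normal--normal block. These are independent quantities; the equation $\Delta U=0$, the constancy of $U$ on $\St$, and the normal--normal identity leave the tangent--normal block unconstrained. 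The correct bound, via Lemma \ref{lem:reg-capa} and the Sobolev embedding $H^{1/2}(\St)\hookrightarrow L^4(\St)$, is $\|\nabla^2 U\|_{L^4(\St)}\leq C(1+\|p\|_{H^1(\St)})$, and $\|p\|_{H^1(\St)}$ is \emph{not} bounded by the hypothesis $\|p\|_{H^1(\Omega_t)}\leq\tilde C$ alone.

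This breaks the absorption in your first paragraph. With the corrected constant, your estimate becomes $\int_{\Omega_t^c}|\nabla\pa_t^2 U|^2\,dx\leq C(1+\|p\|_{H^1(\St)})^2\|\D_t v\cdot\nu\|_{H^{1/2}(\St)}^2 + \dots$, and even after interpolating $\|p\|_{H^1(\St)}^2\lesssim\|p\|_{H^2(\St)}^{2/3}\lesssim E_1^{1/3}$ and $\|\D_t v\cdot\nu\|_{H^{1/2}}^2\lesssim\|\D_t v\cdot\nu\|_{H^1}^{4/3}\|\D_t v\cdot\nu\|_{H^{-1/2}}^{2/3}$, the product scales like $E_1^{1/3}\cdot\big(\|\D_t v\cdot\nu\|_{H^1}^2\big)^{2/3}$, which by Young is \emph{linear} in $E_1$ with a fixed (not small) constant. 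You cannot then absorb it into $\frac12\|\nabla_\tau(\D_t v\cdot\nu)\|_{L^2}^2$ inside $\En_1$. The paper sidesteps this by interpolating $\pa_t^2 U$ itself, i.e.\ $\|\pa_t^2 U\|_{H^{1/2}(\St)}^2\leq\|\pa_t^2 U\|_{H^1(\St)}\|\pa_t^2 U\|_{L^2(\St)}$ with $\|\pa_t^2 U\|_{H^1}\lesssim(1+\|p\|_{H^1(\St)})E_1^{1/2}$ and $\|\pa_t^2 U\|_{L^2}\lesssim 1+\|p\|_{H^1(\St)}$; after interpolating $\|p\|_{H^1(\St)}$ this gives $\lesssim E_1^{5/6}$, which is \emph{sub}linear, and Young then produces $\e E_1 + C_\e$. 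Your more direct route trades the helpful extra half-power of $E_1$ for a quantity already of size $E_1$, so it does not close.

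Two smaller points: Proposition \ref{prop:vector-high} is stated only for $l\geq 2$; for the $\|v\|_{H^3}$ estimate under $\|B\|_{H^{1/2}}\leq C$ you need the first inequality of Lemma \ref{lem:CS-intermed}, which is what the paper uses. Your $H^{-1/2}$ bound on $\D_t v\cdot\nu$ via the divergence theorem is correct and is a nice observation, but it is used to bound $\|\D_t v\cdot\nu\|_{L^2(\St)}$, which is the easy part; the difficulty lies in the $(1+\|p\|_{H^1(\St)})$ prefactors, and these are exactly what your argument does not track.
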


Before the proof we remark that  if the a priori estimates \eqref{eq:apriori_est} hold for $T>0$, then the above assumptions hold for constants $C, C_0$, which depend on $T, \Lambda_T, \sigma_T$ and  $E_1(0)$. Indeed, then by Proposition \ref{prop:claim-reg1} it holds 
\[
\|p\|_{H^1(\Omega_t)}  \leq C.
\]

\begin{proof}
We first recall that by Lemma \ref{lem:press-curv}  we have 
\[
 \|B\|_{L^4(\St)} + \|B\|_{H^{\frac12}(\St)} \leq C\|p\|_{H^1(\Omega_t)}\leq  C.
\]
The claim follows once we prove that for any $\e>0$ it holds 
\beq \label{eq:reg-est-1-1}
 \En_1^+(t) \leq  \En_1(t) + \e E_1(t) +C_\e
\eeq
and
\beq \label{eq:reg-est-1-2}
 E_1(t) \leq C \En_1^+(t).
\eeq

Let us first prove \eqref{eq:reg-est-1-1}. Since $\pa_t^2 U$ is harmonic in $\Omega_t^c$ it holds 
\[
\int_{\Omega_t^c} |\nabla \pa_t^2 U|^2 \, dx \leq C \| \pa_t^2 U\|_{H^{\frac12}(\St)}^2. 
\]
We use  interpolation  (Corollary \ref{coro:interpolation}) to deduce
\beq \label{eq:reg-est-1-3}
\| \pa_t^2 U\|_{H^{\frac12}(\St)} \leq C\| \pa_t^2 U\|_{H^1(\St)}^{\frac12}\| \pa_t^2 U\|_{L^{2}(\St)}^{\frac12}.
\eeq
By \eqref{eq:est-R-U-8} it holds
\beq \label{eq:reg-est-1-4}
\|\pa_t^2 U\|_{H^1(\St)} \leq C(1 + \|p\|_{H^1(\St)})E_1(t)^{\frac12}. 
\eeq
In order to estimate $\| \pa_t^2 U\|_{L^{2}(\St)}$ we use Lemma \ref{lem:mat-capa} and $\|\nabla U\|_{L^\infty}, \|v\|_{L^\infty} \leq C$ and have 
\[
\begin{split}
\| \pa_t^2 U\|_{L^{2}(\St)} &\leq C \|\D_t v\|_{L^{2}(\St)} + C\sum_{|\alpha| \leq 1}\| \nabla^{1+\alpha_1}\pa_t^{\alpha_2} U\|_{L^{2}(\St)}\\
&\leq C\|p\|_{H^1(\St)} + C(1+ \| \nabla^{2}U\|_{L^{2}(\St)} + \| \nabla\pa_t U\|_{L^{2}(\St)}) .
\end{split}
\]
We have by \eqref{eq:est-R-U-5} $ \| \nabla^{2}U\|_{L^{2}(\St)} \leq C(1+ \|p\|_{H^1(\St)})$. 
We use Lemma \ref{lem:divcurl} and $\pa_t U = - \nabla U \cdot v$  to deduce
\[
\begin{split}
\| \nabla\pa_t U\|_{L^{2}(\St)}\leq C(1+\| \nabla_\tau \pa_t U\|_{L^{2}(\St)}) &\leq C\|v\|_{L^\infty} \|\nabla^2  U\|_{L^{2}(\St)} + C \|\nabla U\|_{L^\infty} \|v\|_{H^1(\St)} \\
&\leq C(1+ \|\nabla^2  U\|_{L^{2}(\St)}) \leq C(1+  \|p\|_{H^1(\St)}).
\end{split}
\]
Therefore by  \eqref{eq:reg-est-1-3},  \eqref{eq:reg-est-1-4}, $\|p\|_{L^4(\St)}\leq \|p\|_{H^1(\Omega_t)} \leq C$ and by interpolation we obtain
\[
\begin{split}
\| \pa_t^2 U\|_{H^{\frac12}(\St)}^2 &\leq C +  C\|p\|_{H^1(\St)}^2 E_1(t)^{\frac12} \\
&\leq C + C\|p\|_{H^2(\St)}^{\frac23}\|p\|_{L^4(\St)}^{\frac43} E_1(t)^{\frac12}\\
&\leq \e(\|p\|_{H^2(\St)}^2 + E_1(t)) + C_\e.
\end{split}
\]
Lemma \ref{lem:CS-intermed} yields $\|p\|_{H^2(\St)}^2 \leq C\|\nabla p\|_{H^{\frac32}(\Omega_t)}^2 \leq C E_1(t)$ and \eqref{eq:reg-est-1-1} follows. 

Let us then prove \eqref{eq:reg-est-1-2}. Recall that it holds 
\beq \label{eq:reg-est-1-55}
E_1(t) \leq 2 \En_{1}^+(t) + \|v\|_{H^3(\Omega_t)}^2 + \|\D_t v\|_{H^{\frac32}(\Omega_t)}^2. 
\eeq
By \eqref{eq:pressure-1} we have   $ -\Delta p  = \text{Tr}((\nabla v)^2)$. We use the third inequality in Lemma \ref{lem:CS-intermed} and $\|\bar \nabla \pa_\nu p\|_{L^{2}(\St)}^2 \leq 2 \En_{1}^+(t)$ and have by interpolation 
\beq \label{eq:reg-est-1-5}
\begin{split}
\|\D_t v\|_{H^{\frac32}(\Omega_t)}^2 &= \|\nabla p\|_{H^{\frac32}(\Omega_t)}^2 \leq C (\|\pa_\nu p\|_{H^{1}(\St)}^2 + \|p\|_{L^2(\Omega_t)}^2 + \|\Delta p\|_{H^1(\Omega_t)}^2) \\
&\leq  C (\|\pa_\nu p\|_{H^{1}(\St)}^2 + \|p\|_{L^2(\Omega_t)}^2 + \|\nabla^2 v \|_{L^4(\Omega_t)}^2 \|\nabla v\|_{L^4(\Omega_t)}^2 +1 ) \\
&\leq C_\e \En_1^+(t) +  \e \|v\|_{H^3(\Omega_t)}^2.
\end{split}
\eeq

We proceed  estimating  $\|v\|_{H^3(\Omega_t)}$. By the first inequality in  Lemma \ref{lem:CS-intermed}, by $\| v\|_{L^\infty(\Omega_t)} \leq \| v\|_{W^{1,4}(\Omega_t)}\leq C$ and  by Lemma \ref{lem:press-curv} we have 
\[
\begin{split}
\|v\|_{H^3(\Omega_t)}^2 &\leq C\big(\|\Delta_{\St} v_n\|_{H^{\frac12}(\St)}^2 +  (1+ \|H_{\St}\|_{H^2(\St)}^2)\| v\|_{L^\infty}^2 +  \|\curl v \|_{H^1(\Omega_t)}^2\big) \\
&\leq C\big(\|\Delta_{\St} v_n\|_{H^{\frac12}(\St)}^2  + \|p\|_{H^2(\St)}^2 + \En_1^+(t)\big).
\end{split}
\]
By the fourth inequality in Lemma  \ref{lem:CS-intermed} and by \eqref{eq:reg-est-1-5} we have 
\[
\|p\|_{H^2(\St)}^2 \leq C(\|p\|^2_{L^2(\St)} +\|\nabla p\|_{H^{\frac32}(\Omega_t)}^2) \leq    C_\e \En_1^+(t) +\e\|v\|_{H^3(\Omega_t)}^2.
\]
Therefore by choosing $\e$ small enough we deduce
\beq \label{eq:reg-est-1-7}
\|v\|_{H^3(\Omega_t)}^2 \leq C\|\Delta_{\St} v_n\|_{H^{\frac12}(\St)}^2 + C\En_1^+(t)
\eeq
In order to control $\|\Delta_{\St} v_n\|_{H^{\frac12}(\St)}$ we use  \eqref{eq:D_tp-4} which states
\beq \label{eq:reg-est-1-77}
\D_t p = -\Delta_{\Sigma_t} v_n - Q(t) \nabla U \cdot \nabla \pa_t U + R_p^0,
\eeq
where  $\Qt$ is defined in \eqref{def:constant-q} and
\[
R_p^0 = -(|B|^2  -Q(t)\,  H|\nabla U|^2)v_n + (\nabla p \cdot v_\tau ) -\frac{Q'(t)}{2} |\nabla U|^2.
\]
Therefore we have 
\[
\|\Delta_{\St} v_n\|_{H^{\frac12}(\St)} \leq \|\D_t p\|_{H^{\frac12}(\St)} + \|\nabla U \cdot \nabla \pa_t U\|_{H^{\frac12}(\St)} + \|R_p^0\|_{H^{\frac12}(\St)}.
\]
We estimate the first term on RHS as
\[
\begin{split}
 \|\D_t p\|_{H^{\frac12}(\St)}^2 &\leq C(1+  \|\nabla \D_t p\|_{L^{2}(\Omega_t)}^2) \\
 &\leq  C(1+  \| \D_t \nabla p\|_{L^{2}(\Omega_t)}^2 + \|  [\D_t, \nabla] p\|_{L^{2}(\Omega_t)}^2)\\
 &\leq C(1+  \| \D_t^2 v\|_{L^{2}(\Omega_t)}^2 + \|  \nabla v\|_{L^{4}(\Omega_t)}^2\|  \nabla p\|_{L^{4}(\Omega_t)}^2) \leq C \En_1^+(t).
 \end{split}
\]
By  an already familiar argument we get
\[
\|\nabla U \cdot \nabla \pa_t U\|_{H^{\frac12}(\St)}^2 + \|R_p^0\|_{H^{\frac12}(\St)}^2 \leq \e E_1(t) + C_{\e}.
\]
We leave the details for the reader. Combing the previous three inequalities yield 
\[
\|\Delta_{\St} v_n\|_{H^{\frac12}(\St)}^2 \leq C_{\e} \En_1^+(t) + \e E_1(t).
\]
By combining  \eqref{eq:reg-est-1-55}, \eqref{eq:reg-est-1-5}, \eqref{eq:reg-est-1-7}  with the above inequality and by choosing $\e$ small enough imply    
\[
E_1(t) \leq C\En_1^+(t)
\]
and the claim \eqref{eq:reg-est-1-2} follows. 
\end{proof}

Proposition \ref{prop:reg-est-1} implies that the bound on $\curl v$ and $ \D_t^2 v$ in the fluid domain and on $\D_t v$ on the boundary imply the bound on $v $ and $\D_t v$ in the domain. In the next lemma we show the converse for the initial set $t=0$, i.e., the bound on $v $ in the domain and on the mean curvature $H_{\Sigma_0}$  imply that $E_1(0)$ is bounded. 

\begin{lemma}
    \label{lem:initial-reg}
    Assume that $\Omega_0$ is a smooth set such that $\|h_0\|_{L^\infty(\Sigma)} < \eta$. Then it holds 
    \[
    E_1(0) \leq C_0,
    \]
    for a constant $C_0$ which depends  on $\sigma_0 = \eta - \|h_0\|_{L^\infty(\Sigma)}$,  $\|v\|_{H^3(\Omega_0)}$ on $\|H_{\Sigma_0}\|_{H^2(\Sigma_0)}$ and on $\|h_0\|_{C^{1,\alpha}}$.
\end{lemma}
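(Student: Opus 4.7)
The plan is to bound each of the four terms in $E_1(0)$ by a purely elliptic argument, since at $t=0$ every quantity is determined by the given data $v_0,\Omega_0$ via the defining identities $\D_t v = -\nabla p$ and the equation $-\Delta p = \operatorname{Tr}((\nabla v)^2)$.

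First I would upgrade the geometric assumption: from $\|H_{\Sigma_0}\|_{H^2(\Sigma_0)} \leq C$ together with Proposition \ref{prop:meancrv-bound} and the Sobolev embedding I obtain $\|B_{\Sigma_0}\|_{H^2(\Sigma_0)} + \|B_{\Sigma_0}\|_{L^\infty(\Sigma_0)} \leq C$; combined with the standing hypothesis $\|h_0\|_{L^\infty(\Gamma)} \leq \eta - \sigma_0$, this guarantees that $\Sigma_0$ is uniformly $C^{1,\alpha}(\Gamma)$-regular and satisfies $(\mathrm{H}_m)$ for $m=4$, so every result of Sections 2--3 applies quantitatively. In particular Theorem \ref{teo:reg-capa} and Lemma \ref{lem:reg-capa} give $\|\nabla^{k+1} U\|_{H^{1/2}(\Sigma_0)} \leq C$ for $k \leq 3$, so the boundary datum $H - \tfrac12 Q(0)|\nabla U|^2$ for $p$ lies in $H^2(\Sigma_0)$ with the required quantitative bound.

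Next I would bound the pressure. By \eqref{eq:pressure-1}, $p$ solves the Dirichlet problem with source $\operatorname{Tr}((\nabla v_0)^2) \in H^{3/2}(\Omega_0)$ (controlled by $\|v_0\|_{H^3}$ via Proposition \ref{prop:kato-ponce}) and boundary datum in $H^2(\Sigma_0)$; elliptic regularity then yields $\|p\|_{H^{5/2}(\Omega_0)} \leq C$. This immediately controls $\|\D_t v\|_{H^{3/2}(\Omega_0)}^2 = \|\nabla p\|_{H^{3/2}(\Omega_0)}^2$ and, via the trace inequality in Lemma \ref{lem:CS-intermed}, also $\|\D_t v \cdot \nu\|_{H^1(\Sigma_0)}^2 = \|\pa_\nu p\|_{H^1(\Sigma_0)}^2$.

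It remains to bound $\|\D_t^2 v\|_{L^2(\Omega_0)}$. Using \eqref{eq:comm1} I write $\D_t^2 v = -\nabla \D_t p - (\nabla v)^T \nabla p$; the second summand is already controlled by $\|\nabla v\|_{L^\infty}\|\nabla p\|_{L^2}$, so it suffices to bound $\|\D_t p\|_{H^1(\Omega_0)}$ by solving yet another Dirichlet problem. In the bulk Remark \ref{rmk:laplpreassure} (with $l=0$) gives $-\Delta \D_t p = \Div\Div(v \otimes \D_t v) + \Div R_{bulk}^0 \in H^{-1}(\Omega_0)$, with norm controlled by $\|v_0\|_{H^3}$ and $\|\D_t v\|_{H^{3/2}(\Omega_0)}$ (already bounded). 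On the boundary, \eqref{eq:reg-est-1-77} gives $\D_t p = -\Delta_{\Sigma_0} v_n - Q(0)\,\nabla U \cdot \nabla \pa_t U + R_p^0$ in $H^{1/2}(\Sigma_0)$: the first term is controlled since $v_n \in H^{5/2}(\Sigma_0)$; for the second, $\pa_t U$ is harmonic in $\Omega_0^c$ with Dirichlet datum $-\nabla U \cdot v_0 \in H^{5/2}(\Sigma_0)$, and a further application of Theorem \ref{teo:reg-capa} gives $\|\nabla \pa_t U\|_{H^{3/2}(\Sigma_0)} \leq C$; the remainder $R_p^0$ is of lower order and bounded directly. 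Elliptic Dirichlet regularity then produces $\|\D_t p\|_{H^1(\Omega_0)} \leq C$, closing the estimate.

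The main obstacle is bookkeeping rather than analysis: one must verify that the constant in every elliptic step depends only on the allowed quantities $\sigma_0$, $\|H_{\Sigma_0}\|_{H^2(\Sigma_0)}$ and $\|v_0\|_{H^3(\Omega_0)}$. In particular one should confirm that $|Q'(0)|$ (appearing inside $R_p^0$) is controlled, which follows from the identity $\tfrac{d}{dt}\operatorname{Cap}(\Omega_t)\big|_{t=0} = -\tfrac12 \int_{\Sigma_0} |\nabla U|^2 (v_0 \cdot \nu)\, d\H^2$ and the already established bounds on $\nabla U$ on $\Sigma_0$.
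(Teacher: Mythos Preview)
Your overall strategy is sound and the treatment of $\|\D_t^2 v\|_{L^2(\Omega_0)}$ via a Dirichlet problem for $\D_t p$ is essentially the paper's argument. The genuine difference is in how you obtain $\|\nabla p\|_{H^{3/2}(\Omega_0)}$ and $\|\pa_\nu p\|_{H^1(\Sigma_0)}$.

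You invoke a full $H^{5/2}$ Dirichlet estimate (source in $H^{1/2}(\Omega_0)$, boundary datum in $H^2(\Sigma_0)$) and then recover $\|\pa_\nu p\|_{H^1(\Sigma_0)}$ by a trace. This is correct, but note two things. First, a quantitative $H^{5/2}$ Dirichlet theorem with nonzero boundary data is not among the tools developed in Sections~2--3; you would have to supply it (for instance by extending the boundary datum and applying Proposition~\ref{prop:dirichlet1} to the remainder), and the constant must be tracked in terms of $\|B\|_{H^2(\Sigma_0)}$. Second, ``the trace inequality in Lemma~\ref{lem:CS-intermed}'' applies to scalar functions, not to $\pa_\nu p=\nabla p\cdot\nu$; passing from $\|\nabla p\|_{H^{3/2}(\Omega_0)}$ to $\|\pa_\nu p\|_{H^1(\Sigma_0)}$ requires the product estimate with the extended normal carried out at the end of the proof of Proposition~\ref{prop:dirichlet1}, not the bare trace.

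The paper bypasses both issues: it first establishes $\|p\|_{H^2(\Sigma_0)}\le C$ (as you do), then bounds $\|\pa_\nu p\|_{H^1(\Sigma_0)}$ directly, without ever proving $p\in H^{5/2}(\Omega_0)$. The key identity is
\[
\Delta_{\Sigma_0} p \;=\; \Delta p - (\nabla^2 p\,\nu\cdot\nu) - H_{\Sigma_0}\,\pa_\nu p,
\]
which expresses the normal--normal Hessian of $p$ in terms of $\|p\|_{H^2(\Sigma_0)}$ (already known), $\Delta p=\mathrm{Tr}((\nabla v)^2)\in L^\infty$, and the lower-order $H\,\pa_\nu p$. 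Combined with the Neumann-type bound from Lemma~\ref{lem:poisson1} and Lemma~\ref{lem:divcurl} applied to $\nabla p\cdot\tilde\nu$, this closes an inequality for $\|\pa_\nu p\|_{H^1(\Sigma_0)}$ after absorbing an $\e$-term. Only then does the paper use the third inequality in Lemma~\ref{lem:CS-intermed} to get $\|\nabla p\|_{H^{3/2}(\Omega_0)}$. Your route is more conceptual; the paper's route stays strictly within the elliptic toolkit already built and avoids the extra Dirichlet/extension lemma you would need.
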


\begin{proof}
The bound $\|H_{\Sigma_0}\|_{H^2(\Sigma_0)} \leq C$ and  Proposition \ref{prop:meancrv-bound}  imply  $\|B_{\Sigma_0}\|_{H^2(\Sigma_0)} \leq C$. Then we obtain by Theorem \ref{teo:reg-capa} that  $\|\nabla^3 U\|_{H^{\frac12}(\Sigma_0)} \leq C$. Hence, we have 
\beq \label{eq:initial-reg1}
\|p\|_{H^2(\Sigma_0)} \leq C. 
\eeq

Let us show that 
\beq \label{eq:initial-reg2}
\|\pa_\nu p\|_{H^1(\Sigma_0)} \leq C. 
\eeq
Let $\tilde \nu$ be the harmonic extension of the normal field. Note that since 
\[
\|B\|_{C^{\alpha}(\Sigma_0)} \leq C\|B_{\Sigma_0}\|_{H^2(\Sigma_0)} \leq C,
\]
then by standard elliptic regularity theory \cite{GT} we deduce that $\|\nabla \tilde \nu \|_{C^{\alpha}(\Sigma_0)} \leq C$. Then    \eqref{eq:pressure-1}, $\nabla \Delta p = \nabla^2 v \star \nabla v$ and  $\|v\|_{H^3(\Omega_0)}\leq C$ imply that 
\[
\begin{split}
\|\Delta (\nabla p \cdot \tilde \nu)\|_{L^2(\Omega_0)} &\leq C\|\nabla^2 v \star \nabla v\|_{L^2(\Omega_0)} + C\|\nabla^2 p \star \nabla \tilde \nu\|_{L^2(\Omega_0)} \\
&\leq C(1+ \| p\|_{H^2(\Omega_0)}). 
\end{split}
\]
Lemma \ref{lem:poisson1} together with interpolation yields 
\[
\| p\|_{H^2(\Omega_0)} \leq C(1+ \|\pa_\nu p\|_{H^{\frac12}(\Sigma_0)}) \leq \e \|\pa_\nu p\|_{H^1(\Sigma_0)} + C_\e.
\]
Therefore by combing the two estimates with Lemma \ref{lem:divcurl} we obtain
\[
\|\pa_\nu p\|_{H^1(\Sigma_0)} = \| \nabla  p \cdot \tilde \nu\|_{H^1(\Sigma_0)} \leq C_\e(1+ \| \pa_\nu (\nabla  p \cdot \tilde \nu)\|_{L^2(\Sigma_0)})  + \e  \|\pa_\nu p\|_{H^1(\Sigma_0)}. 
\]
Choosing $\e$ small yields
\[
\|\pa_\nu p\|_{H^1(\Sigma_0)}  \leq C(1+ \| \pa_\nu (\nabla  p \cdot \tilde \nu)\|_{L^2(\Sigma_0)}).
\]
Note that by  $\| \nabla \tilde \nu \|_{C^{\alpha}(\Sigma_0)} \leq C$,  Lemma \ref{lem:divcurl} and  by \eqref{eq:initial-reg1}  we have 
\[
\begin{split}
\| \pa_\nu (\nabla  p \cdot \tilde \nu)\|_{L^2(\Sigma_0)} &\leq C( \| (\nabla^2  p \,  \nu \cdot \nu) \|_{L^2(\Sigma_0)} + C\|\nabla  p\|_{L^2(\Sigma_0)})\\ 
&\leq C \| (\nabla^2  p \,  \nu \cdot \nu)  \|_{L^2(\Sigma_0)} + C(1 + \|p\|_{H^1(\Sigma_0)})\\
&\leq  C(1+ \| (\nabla^2  p \,  \nu \cdot \nu) \|_{L^2(\Sigma_0)}).
\end{split}
\]
Therefore since 
\[
\Delta_{\Sigma_0} p = \Delta p -  (\nabla^2  p \,  \nu \cdot \nu)  - H_{\Sigma_0} \pa_\nu p
\]
we obtain by  \eqref{eq:initial-reg1} and by $\|\pa_\nu p\|_{L^2(\St)} \leq C(1+  \| p\|_{H^1(\St)}) \leq C$ that 
\[
\|  (\nabla^2  p \,  \nu \cdot \nu)\|_{L^2(\Sigma_0)} \leq C( 1+  \|p\|_{H^2(\Sigma_0)})\leq C. 
\]
Thus we have \eqref{eq:initial-reg2} by the three  inequalities above. 

We estimate $\|\nabla p\|_{H^{\frac32}(\Omega_0)}$ similarly.  We use  \eqref{eq:initial-reg2} and Lemma \ref{lem:CS-intermed}  to estimate 
\beq \label{eq:initial-reg3} 
\|\nabla p\|_{H^{\frac32}(\Omega_0)} \leq C(\|\pa_\nu p\|_{H^1(\Sigma_0)} + \|p \|_{L^2(\Omega_0)} + \|\Delta p \|_{H^1(\Omega_0)}) \leq C.  
\eeq

In order to show that $\|\D_t^2 v\|_{L^2(\Omega_0)}$ is bounded we first observe that by \eqref{eq:comm1} and by \eqref{eq:initial-reg3}  we have 
\[
\|\D_t^2 v\|_{L^2(\Omega_0)} \leq \|\nabla \D_t p\|_{L^2(\Omega_0)} + \|\nabla v \star \nabla p\|_{L^2(\Omega_0)} \leq \|\nabla \D_t p\|_{L^2(\Omega_0)} + C. 
\]
Recall that we define the $H^{\frac12}(\Sigma_0)$-norm using harmonic extension.  Then it holds 
\[
\|\nabla \D_t p\|_{L^2(\Omega_0)} \leq C(\|\D_t p\|_{H^{\frac12}(\Sigma_0)} +\| \D_t p\|_{L^2(\Omega_0)} +  \|\Delta \D_t p\|_{L^2(\Omega_0)}). 
\]
Note that it holds $\| \D_t p\|_{L^2(\Omega_0)}  \leq C \|\D_t p\|_{H^{\frac12}(\Sigma_0)}$. By Remark \ref{rmk:laplpreassure} and  Lemma \ref{lem:curl}  we have   
\[
\begin{split}
 \|\Delta \D_t p\|_{L^2(\Omega_0)} &\leq C\|R_{\Div}^1\|_{L^2(\Omega_0)} + C\|R_{bulk}^0\|_{H^1(\Omega_0)}  \\
 &\leq C(1+ \|p\|_{H^2(\Omega_0)}+ \|v\|_{H^2(\Omega_0)} )\leq C. 
 \end{split}
\]
We proceed by using \eqref{eq:form-Dtp} to write 
\[
\D_t p = - \Delta_{\Sigma_0} v \cdot \nu -2 B :\nabla_\tau v - Q(0)(D_t \nabla U \cdot \nabla U) - \frac{Q'(0)}{2} |\nabla U|^2.
\]
We only bound the first term on RHS as the others are lower order. By the $C^{1,\alpha}$-regularity of $\nu$  we immediately  estimate 
\[
\|\Delta_{\Sigma_0} v \cdot \nu \|_{H^{\frac12}(\Sigma_0)} \leq C \|\Delta_{\Sigma_0} v \|_{H^{\frac12}(\Sigma_0)} \leq C\|v\|_{H^3(\Omega_0)}.
\]
This concludes the proof. 
\end{proof}

Let us next prove the higher order version of Proposition \ref{prop:reg-est-1}. 

\begin{proposition}
    \label{prop:reg-est-l}
    Let $l \geq 2$ and assume that  \eqref{eq:apriori_est} and $E_{l-1}(t) \leq M$ hold for all $t \in [0,T)$.
     Then there are constants $C$ and $C_0$ such that
    \[
    E_l(t) \leq C( C_0+ \En_l(t)), 
\]
where the constants $C$ and $C_0$ depend on $M, l$ and  $T$.  
\end{proposition}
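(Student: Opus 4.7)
The plan is to mirror the two-step strategy used for Proposition \ref{prop:reg-est-1}, but at the higher regularity level: first absorb the negative capacitary term, reducing to the positive energy $\En_l^+(t)$, and then show $E_l(t) \leq C\En_l^+(t) + C$ by applying the sharp div-curl estimate of Proposition \ref{prop:vector-high} to each vector field $\D_t^{l+1-k}v$ and to $v$ itself. Note first that the assumption $E_{l-1}(t) \leq \tilde C$ combined with Lemma \ref{lem:press-curv} gives $\|B_{\Sigma_t}\|_{H^{\frac32 l -1}(\Sigma_t)} \leq C$, so the hypotheses of Proposition \ref{prop:vector-high} are met, and the extension / interpolation machinery of Section 2 is available up to order $m = \lfloor \tfrac32 l\rfloor+1$.

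For the first reduction, since $\pa_t^{l+1} U$ is harmonic in $\Omega_t^c$ one has $\|\nabla \pa_t^{l+1} U\|_{L^2(\Omega_t^c)}^2 \leq C\|\pa_t^{l+1} U\|_{H^{1/2}(\Sigma_t)}^2$. By interpolation this is controlled by $\|\pa_t^{l+1} U\|_{H^1(\Sigma_t)}\|\pa_t^{l+1} U\|_{L^2(\Sigma_t)}$. The boundary formula $\pa_t^{l+1} U = -\pa_\nu U\,(\D_t^l v\cdot \nu) + R_U^{l-1}$ from Lemma \ref{lem:mat-capa}, together with Lemma \ref{lem:estimate-R-U} applied at order $l-1$ and the trace of $\D_t^l v$, shows that the $L^2$-norm is uniformly bounded by $E_{l-1}(t) \leq \tilde C$, while the $H^1$-norm is bounded by $CE_l(t)^{1/2}$ (using that $\|\D_t^l v \cdot \nu\|_{H^1(\Sigma_t)}^2$ and $\|\nabla \pa_t^{l+1}U\|_{L^2(\Sigma_t)}^2$ both appear in $E_l$ and in Lemma \ref{lem:estimate-R-U}). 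Young's inequality then yields $\frac{Q(t)}{2}\int_{\Omega_t^c}|\nabla \pa_t^{l+1}U|^2\,dx \leq \e E_l(t) + C_\e$, so that $\En_l^+(t) \leq \En_l(t) + \e E_l(t) + C_\e$.

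For the second reduction, it suffices to bound, for each $k = 1,\dots,l$, the quantities $\|\D_t^{l+1-k}v\|_{H^{3k/2}(\Omega_t)}$ (the case $k=0$ is already in $\En_l^+$), and the velocity term $\|v\|_{H^{\lfloor 3(l+1)/2\rfloor}(\Omega_t)}$. For $k\geq 2$ we apply the first inequality of Proposition \ref{prop:vector-high} to $F = \D_t^{l+1-k}v$: its divergence and curl coincide with $R_{\Div}^{l-k}$ by Lemma \ref{lem:curl}, and these are bounded in $H^{3k/2-1}$ by $\e E_l(t)+C_\e$ thanks to Lemma \ref{lem:R-div-est}; its $L^2$-norm is controlled by $E_{l-1}(t)$; and the boundary term requires $\|\Delta_{\Sigma_t}(\D_t^{l+1-k}v\cdot \nu)\|_{H^{3k/2-5/2}(\Sigma_t)}$. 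Here we invoke Lemma \ref{formula:Dtp} at order $l+2-k$ to rewrite
\[
\Delta_{\Sigma_t}(\D_t^{l+1-k}v\cdot \nu) = -\D_t^{l+2-k}p \;-\; Q(t)\,\pa_\nu U\,\pa_\nu \pa_t^{l+2-k}U \;+\; R_p^{l+1-k},
\]
and the three terms are bounded by $\|\nabla p\|_{H^{3k/2 - 3/2 + 1}(\Omega_t)} = \|\D_t v\|_{H^{3k/2-1/2}}$ via a Trace argument using Kato--Ponce (Proposition \ref{prop:kato-ponce}), by Lemma \ref{lem:reg-capa1}, and by the refined error estimate \eqref{eq:R-p-l-2} of Lemma \ref{lem:estimate-R-p-2}, each giving $\e E_l(t) + C_\e$. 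For the case $k=1$, i.e.\ $\|\D_t^l v\|_{H^{3/2}(\Omega_t)}$, the same argument runs with $l+2-k = l+1$, and here the bound $\|R_p^l\|_{H^{1/2}(\Sigma_t)}^2 \leq CE_l(t)$ together with $\|\D_t^l v\cdot \nu\|_{H^1(\Sigma_t)}^2 \leq 2\En_l^+(t)$ closes the loop. Finally, for $v$ we apply the second inequality of Proposition \ref{prop:vector-high}; the curl term is exactly the one in $\En_l^+(t)$, while $\|B\|_{H^{3l/2}(\Sigma_t)}$ is dominated by $C(1+\|p\|_{H^{3l/2}(\Sigma_t)}) \leq C(1+\|\D_t v\|_{H^{3l/2-1/2}(\Omega_t)})$ (Lemma \ref{lem:press-curv} and Lemma \ref{lem:CS-intermed}), which is itself absorbed into $\e E_l(t)+C_\e$ by the preceding bound for $k = l-1$.

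The main obstacle is the delicate bookkeeping: every application of Proposition \ref{prop:vector-high} trades a gain of $3/2$ spatial derivatives for a loss of one material derivative, so the whole induction closes only because the pressure formula in Lemma \ref{formula:Dtp} exactly converts the boundary Laplacian of a material derivative of $v$ into the next-order material derivative of $p$, which by $\D_t v = -\nabla p$ feeds back into one of the $\D_t^{l+1-k}v$ quantities already handled. Keeping the nonlinear error terms $R_{\Div}^{l-k}, R_U^{l-1}, R_p^{l-k}$ strictly below the top level in all relevant Sobolev norms is precisely what the estimates of Section 5 (and the induction hypothesis $E_{l-1}(t) \leq \tilde C$) are designed to guarantee, so once each term is routed to the correct lemma the proof proceeds by a short Young's-inequality argument to absorb the $\e E_l(t)$ contributions.
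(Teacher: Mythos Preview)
Your overall two-step strategy matches the paper's: absorb the negative capacitary term to reduce to $\En_l^+(t)$, then bound $E_l(t)$ by $\En_l^+(t)$ via Proposition \ref{prop:vector-high} combined with Lemma \ref{formula:Dtp}. The first reduction is essentially correct, though note that the $L^2(\Sigma_t)$-norm of $\pa_t^{l+1}U$ is not bounded purely by $E_{l-1}$: $\|\D_t^l v\|_{L^2(\Sigma_t)} \leq C\|\D_t^l v\|_{H^1(\Omega_t)}$ sits strictly between $E_{l-1}$ and $E_l$, and the paper interpolates against $\|\D_t^l v\|_{L^2(\Omega_t)}$ to obtain $\e E_l(t)+C_\e$. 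The second reduction, however, has three genuine gaps.

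The main one concerns the term $\D_t^{l+2-k}p$ for $k\geq 2$. You assert that it contributes $\|\D_t v\|_{H^{3k/2-1/2}(\Omega_t)}$ and hence $\e E_l(t)+C_\e$, but trace plus the commutator \eqref{eq:comm-bulk} give instead
\[
\|\D_t^{l+2-k}p\|_{H^{\frac32k-\frac52}(\Sigma_t)}^2 \leq C\|\D_t^{l+3-k}v\|_{H^{\frac32(k-2)}(\Omega_t)}^2 + C\|R_{bulk}^{l+1-k}\|_{H^{\frac32(k-2)}(\Omega_t)}^2,
\]
and the leading term on the right is exactly the $E_l$-summand at index $k-2$, with a \emph{full} constant rather than an $\e$. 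The paper therefore proves the explicit two-step recursion $\|\D_t^{l+1-k}v\|_{H^{3k/2}}^2 \leq C\|\D_t^{l+3-k}v\|_{H^{3(k-2)/2}}^2 + \e E_l(t)+C_\e$ and iterates it down to $k\in\{0,1\}$. Your final paragraph describes this feedback correctly in words, but the body of the argument asserts the wrong bound and, as written, does not close. Second, for $k=1$ Proposition \ref{prop:vector-high} does not apply (its first inequality requires Sobolev index $\geq \tfrac52$, while here the index is $\tfrac32$); the paper uses Theorem \ref{thm:chen-shkoller} directly, whose boundary term is $\|\D_t^l v\cdot\nu\|_{H^1(\Sigma_t)}$, already present in $\En_l^+(t)$, so no appeal to Lemma \ref{formula:Dtp} or $R_p^l$ is needed at this step. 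Third, for $\|v\|_{H^{\lfloor 3(l+1)/2\rfloor}}$ you address only the curvature factor $\|B\|_{H^{3l/2}}$ in the second inequality of Proposition \ref{prop:vector-high}, but that inequality also carries the principal boundary term $\|\Delta_{\Sigma_t}v_n\|_{H^{k-5/2}}$; the paper reduces this via the order-one identity \eqref{eq:reg-est-1-77} to $\|\D_t^2 v\|_{H^{3(l-1)/2}}$ and $\|\D_t v\|_{H^{3l/2}}$, which again feed into the same cascade.
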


\begin{proof}
We recall that by the definition of $E_l(t)$ in \eqref{def:E_l}, $\En_l(t)$ in \eqref{eq:high-energy} and of $\En_l^+(t)$ it holds 
\[
 c\, \En_l^+(t)\leq \En_l(t) +  \frac{Q(t)}{2} \int_{\Omega_t^c}|\nabla (\pa_t^{l+1} U)|^2 \, dx + 1
\]
for $c>0$,  $\Qt$ defined in \eqref{def:constant-q}, and
\begin{equation}
    \label{eq:ger-est-22}     
 E_{l}(t) \leq 2 \mathcal{E}_{l}^+(t) + \sum_{k=1}^{l} \|\D_t^{l+1-k} v\|_{H^{\frac32 k}(\Omega_t)}^2 + \| v \|_{H^{\lfloor \frac32(l +1)\rfloor}(\Omega_t)}^2.
 \end{equation}
The claim follows once we prove that for any $\e>0$ it holds 
\beq \label{eq:reg-est-l-1}
 \En_l^+(t) \leq  \En_l(t) + \e E_l(t) +C_\e
\eeq
and
\beq \label{eq:reg-est-l-2}
 E_l(t) \leq C \En_l^+(t).
\eeq

In order to prove \eqref{eq:reg-est-l-1} we use the fact that $\pa^{l+1} U$ is  harmonic in $\Omega_t^c$, interpolation (Corollary \ref{coro:interpolation}) and Lemma \ref{lem:estimate-R-U} and have 
\[
\begin{split}
\int_{\Omega_t^c} |\nabla \pa_t^{l+1} U|^2 \, dx &\leq C \| \pa_t^{l+1} U\|_{H^{\frac12}(\St)}^2 \leq C \| \pa_t^{l+1} U\|_{H^{1}(\St)}\| \pa_t^{l+1} U\|_{L^{2}(\St)}\\
&\leq C E_l(t)^{\frac12} \| \pa_t^{l+1} U\|_{L^{2}(\St)} \leq \e_1 E_l(t) + C_{\e_1} \| \pa_t^{l+1} U\|_{L^{2}(\St)}^2.
\end{split}
\]
We use Lemma \ref{lem:mat-capa},  $\|\nabla U\|_{L^\infty} \leq C$,  Lemma \ref{lem:estimate-R-U} and  the assumption $E_{l-1}(t) \leq \tilde C$ to deduce
\[
\| \pa_t^{l+1} U\|_{L^{2}(\St)} \leq \|\nabla U \cdot \D_t^l v\|_{L^{2}(\St)} + \|R_U^{l-1}\|_{L^{2}(\St)} \leq C\|\D_t^l v\|_{L^{2}(\St)} + C.
\]
By the Trace Theorem, by interpolation  (Corollary \ref{coro:interpolation}) and by the definition of $E_l(t)$ it holds
\[
\begin{split}
\|\D_t^l v\|_{L^{2}(\St)}^2 &\leq C \|\D_t^l v\|_{H^{1}(\Omega_t)}^2\leq C\|\D_t^l v\|_{H^{\frac32}(\Omega_t)}^{\frac43} \|\D_t^l v\|_{L^{2}(\Omega_t)}^{\frac23} \\
&\leq  C E_l(t)^{\frac23} E_{l-1}(t)^{\frac13} \leq  \e_2 E_l(t) + C_{\e_2}.
\end{split}
\]
By choosing first $\e_1$ and then $\e_2$ small implies  \eqref{eq:reg-est-l-1}. 

Let us then prove \eqref{eq:reg-est-l-2}. By \eqref{eq:ger-est-22} we have to bound $\|\D_t^{l+1-k} v\|_{H^{\frac32 k} (\Omega_t)}$ for all $k = 1, \dots, l$ and $\|v\|_{H^{\lfloor\frac32(l+1)\rfloor}(\Omega_t)}$. We  claim first that it holds
 \begin{equation}
     \label{eq:reg-est-l-3}
     \|\D_t^{l} v\|_{H^{\frac32}(\Omega_t)}^2 \leq C \En_{l}^+(t).
 \end{equation}
 Indeed, by Theorem \ref{thm:chen-shkoller}, Lemma \ref{lem:curl} and Lemma \ref{lem:R-div-est} it holds 
 \[
 \begin{split}
 \|&\D_t^{l} v\|_{H^{\frac32}(\Omega_t)}^2 \\
 &\leq C(\|(\D_t^{l} v\cdot \nu)\|_{H^{1}(\Sigma_t)}^2 + \|\D_t^l v\|_{L^{2}(\Omega_t)}^2 + \|\Div \D_t^l v\|_{H^{\frac12}(\Omega_t)}^2 +   \|\curl \D_t^l v\|_{H^{\frac12}(\Omega_t)}^2)\\
 &\leq C(\En_l^+(t) + E_{l-1}(t) +  \|R_{\Div}^{l-1} \|_{H^{\frac12}(\Omega_t)}^2 )\\
 &\leq C(\En_l^+(t) + E_{l-1}(t)) \leq C\En_l^+(t) 
  \end{split}
 \]
 and \eqref{eq:reg-est-l-3}  follows. 
 
Next we claim that for $2 \leq k \leq l$ it holds
 \begin{equation}
     \label{eq:reg-est-l-4}
    \|\D_t^{l+1-k} v\|_{H^{\frac32k}(\Omega_t)}^2 \leq C  \| \D_t^{l+3 -k} v \|_{H^{\frac32k-3}(\Omega_t)}^2  + \e E_l(t) + C_{\e}.
 \end{equation}
This inequality means that two derivatives in time implies regularity for three derivatives in space. We first  use Proposition \ref{prop:vector-high}, Lemma \ref{lem:curl} and  Lemma \ref{lem:R-div-est} to deduce 
 \begin{equation}
    \label{eq:reg-est-l-5}
\begin{split}
   \|\D_t^{l+1-k} &v\|_{H^{\frac32k}(\Omega_t)}^2 \\
   &\leq C \big(\|\Delta_\Sigma (\D_t^{l+1-k} v \cdot \nu)\|_{H^{\frac32k-\frac52}(\Sigma_t)}^2 + \|\D_t^{l+1-k} v\|_{L^2(\Omega_t)}^2 \\
   &\,\,\,\,\,\,\,+ \|\Div (\D_t^{l+1-k} v)\|_{H^{\frac32k -1}(\Omega_t)}^2 + \|\curl (\D_t^{l+1-k} v)\|_{H^{\frac32k -1}(\Omega_t)}^2\big)\\
   &\leq C\big( \|\Delta_\Sigma (\D_t^{l+1-k} v \cdot \nu)\|_{H^{\frac32k-\frac52}(\Sigma_t)}^2 + E_{l-1}(t) +  \|R_{\Div}^{l-k}\|_{H^{\frac32k -1}(\Omega_t)}^2\big)\\
   &\leq  C \|\Delta_\Sigma (\D_t^{l+1-k} v \cdot \nu)\|_{H^{\frac32k-\frac52}(\Sigma_t)}^2 + \e E_{l} + C_\e.
 \end{split}
 \end{equation}
 We proceed by using  Lemma \ref{formula:Dtp} to write
 \begin{equation}
    \label{eq:reg-est-l-6}
 \D_t^{l+2 -k} p = - \Delta_\Sigma  (\D_t^{l+1-k} v \cdot \nu) - Q(t)(\nabla U \cdot \nabla \pa_t^{l+2-k} U) + R_p^{l+1-k}.
 \end{equation}
Lemma \ref{lem:estimate-R-p-2} yields
\begin{equation}
    \label{eq:reg-est-l-7}
  \|R_p^{l+1-k}\|_{H^{\frac32 k -\frac52}(\Sigma_t)}^2 = \|R_p^{l-(k-1)}\|_{H^{\frac32 (k-1) -1}(\Sigma_t)}^2 \leq  \e E_l(t) +  C_{\e}.
 \end{equation}

Next we claim that 
\begin{equation}
    \label{eq:reg-est-l-8}
    \|(\nabla U \cdot \nabla \pa_t^{l+2-k} U)\|_{H^{\frac32k-\frac52}(\Sigma_t)}^2 \leq \e E_l(t) + C_\e.
\end{equation}
If $k =2$ then we use the fact that by the assumption $\|B\|_{H^2(\St)} \leq \|B\|_{H^{\frac32l-1}(\St)} \leq C$ and by Theorem \ref{teo:reg-capa}  the function $U$ is uniformly $C^{2,\alpha}$-regular. Therefore we have by Lemma \ref{lem:reg-capa1} 
\[
\|(\nabla U \cdot \nabla \pa_t^{l} U)\|_{H^{\frac12}(\Sigma_t)}^2 \leq C\| \nabla \pa_t^{l} U\|_{H^{\frac12}(\Sigma_t)}^2\leq  \e E_{l}(t) + C_{\e}.
\]
If $k \geq 3$ then $2 \leq \frac32k-\frac52 \leq \lfloor \frac32 l \rfloor -2$. We have by Proposition \ref{prop:kato-ponce}, by the Sobolev embedding, by Lemma \ref{lem:reg-capa} and  by  Lemma \ref{lem:reg-capa1} that 
\[
\begin{split}
\|(\nabla U \cdot \nabla \pa_t^{l+2-k} &U)\|_{H^{\frac32k-\frac52}(\Sigma_t)}^2 \leq  C\|\nabla U\|_{L^\infty(\St)}^2 \|\nabla \pa_t^{l+2-k} U\|_{H^{\frac32k-\frac52}(\Sigma_t)}^2 \\
&\,\,\,\,\,\,\,\,\,\,\,\,+\|\nabla U\|_{H^{\frac32k-\frac52}(\Sigma_t)}^2 \|\nabla \pa_t^{l+2-k} U\|_{L^\infty(\St)}^2\\
&\leq C(1+ \|\nabla U\|_{H^{\frac32k-\frac52}(\Sigma_t)}^2 )\|\nabla \pa_t^{l+2-k} U\|_{H^{\frac32k-\frac52}(\Sigma_t)}^2 \\
&\leq C(1+ \|p\|_{H^{\lfloor \frac32 l \rfloor -2}(\St)}^2) (\e E_{l}(t) + C_\e).
\end{split}
\]
Hence, \eqref{eq:reg-est-l-8} follows from the Trace Theorem as 
\[
\|p\|_{H^{\lfloor \frac32 l \rfloor -2}(\St)}^2 \leq C(1+\|\nabla p\|_{H^{\frac32 l -2}(\Omega_t)}^2) \leq CE_{l-1}(t) \leq C.  
\]

We deduce by  \eqref{eq:reg-est-l-5},  \eqref{eq:reg-est-l-6}, \eqref{eq:reg-est-l-7}, \eqref{eq:reg-est-l-8}, Lemma \ref{lem:CS-intermed}   and \eqref{eq:comm-bulk}  that 
\[
\begin{split}
&\|\D_t^{l+1-k} v\|_{H^{\frac32k}(\Omega_t)}^2 \\
&\leq C\|\D_t^{l+2 -k} p\|_{H^{\frac32k-\frac52}(\Sigma_t)}^2 + \e E_l(t) +  C_{\e}\\
&\leq C\|\nabla \D_t^{l+2 -k} p\|_{H^{\frac32k-3}(\Omega_t)}^2 + \e E_l(t) +  C_{\e}\\
&\leq C\| \D_t^{l+2 -k} \nabla p\|_{H^{\frac32k-3}(\Omega_t)}^2 +C\| [\D_t^{l+2 -k},\nabla] p\|_{H^{\frac32k-3}(\Omega_t)}^2 +  \e E_l(t) +  C_{\e}\\
&\leq C\| \D_t^{l+3 -k} v \|_{H^{\frac32k-3}(\Omega_t)}^2 +C\|R_{bulk}^{l+1-k}
\|_{H^{\frac32k-3}(\Omega_t)}^2 +  \e E_l(t) +  C_{\e}.
\end{split}
\]
Lemma \ref{lem:R-div-est} implies 
 \[
 \| R_{bulk}^{l+1-k}
\|_{H^{\frac32k-3}(\Omega_t)}^2 \leq \| R_{bulk}^{l-(k-1)}
\|_{H^{\frac32(k-1)-1}(\Omega_t)}^2 \leq \e E_l(t) +  C_{\e}
 \]
 and the estimate \eqref{eq:reg-est-l-4} follows.

Let us then prove 
\begin{equation}
 \label{eq:vctrh-final}
     \| v \|_{H^{\lfloor \frac32(l +1)\rfloor}(\Omega_t)}^2 \leq  C\|\D_t^2 v\|_{H^{\frac32(l-1)}(\Omega_t)}^2+ C \|\D_t v\|_{H^{\frac32l}(\Omega_t)}^2 + \e E_{l}(t) + C_\e \En_l^+(t).
 \end{equation}
 We denote $\lambda_l = \lfloor \frac32(l +1)\rfloor -1$ and  use the second inequality in Proposition \ref{prop:vector-high} and have 
 \[
 \| v \|_{H^{\lfloor \frac32(l +1)\rfloor}(\Omega_t)}^2 \leq C(1+ \|\Delta_{\St} v_n\|_{H^{\lambda_l -\frac32}(\St)}^2 +\|B\|_{H^{\frac32l}(\St)}^2+ \|\curl v \|_{H^{\lambda_l}(\Omega_t)}^2).
 \]
 By the definition of $\En_l^+(t)$ in \eqref{def:Eplus} it holds $\|\curl v \|_{H^{\lambda_l}(\Omega_t)}^2 \leq \En_l^+(t)$. Lemma \ref{lem:press-curv}  and Trace Theorem yield
 \[
 \begin{split}
 \|B\|_{H^{\frac32l}(\St)}^2 &\leq  C(1+ \| p\|_{H^{\lfloor \frac32l +\frac12\rfloor}(\St)}^2) \\
 &\leq C(1+ \|\nabla p\|_{H^{\frac32l}(\Omega_t)}^2) = C (1+\|\D_t v\|_{H^{\frac32l}(\Omega_t)}^2) .
  \end{split}
 \]
 We treat the term $\|\Delta_{\St} v_n\|_{H^{\lambda_l -\frac32}(\St)}$  by using \eqref{eq:reg-est-1-77} and have 
 \[
 \|\Delta_{\St} v_n\|_{H^{\lambda_l -\frac32}(\St)} \leq C\|\D_t p\|_{H^{\lambda_l -\frac32}(\St)} + C\|\nabla U \cdot \nabla \pa_t U\|_{H^{\frac32l -1}(\St)}  + \|R_p^0\|_{H^{\frac32l -1}(\St)}. 
 \]
 By Lemma \ref{lem:reg-capa1} we have 
 \[
  \|\nabla \pa_t U\|_{H^{\frac32l -1}(\St)}^2 \leq \e E_l + C_\e
 \]
 and 
 \[
   \|\nabla U\|_{H^{\frac32l -1}(\St)}^2 \leq CE_{l-1}(t) \leq C.
 \]
 Therefore we have by Proposition \ref{prop:kato-ponce} and by the Sobolev embedding 
 \[
 \|\nabla U \cdot \nabla \pa_t U\|_{H^{\frac32l -1}(\St)}^2 \leq C\|\nabla U\|_{H^{\frac32l -1}(\St)}^2   \|\nabla \pa_t U\|_{H^{\frac32l -1}(\St)}^2  \leq  \e E_l + C_\e.
 \]
 Similarly we obtain 
 \[
 \|R_p^0\|_{H^{\frac32l -1}(\St)}^2 \leq \e E_l + C_\e.
 \]
 We leave the details for the reader. Therefore we have by arguing as before 
 \[
 \begin{split}
   \| v \|_{H^{\lfloor \frac32(l +1)\rfloor}(\Omega_t)}^2 &\leq C\|\D_t p\|_{H^{\lambda_l -\frac32}(\St)} +    \e E_l + C_\e\En_l^+(t)\\
   &\leq C\|\nabla \D_t p\|_{H^{\lambda_l -2}(\Omega_t)}^2 +    \e E_l + C_\e\En_l^+(t)\\
   &\leq C\|\D_t \nabla  p\|_{H^{\lambda_l -2}(\Omega_t)}^2 + \|[\D_t, \nabla]  p \|_{H^{\lambda_l -2}(\Omega_t)}^2 +    \e E_l + C_\e\En_l^+(t)\\
   &\leq C\|\D_t^2 v \|_{H^{\lambda_l -2}(\Omega_t)}^2 + \|\nabla v \star \nabla p\|_{H^{\lambda_l -2}(\Omega_t)}^2 +    \e E_l + C_\e\En_l^+(t).
 \end{split}
 \]
Note that  $\lambda_l -2  \leq  \frac32(l-1)$ and $\lambda_l -1 \leq \lfloor \frac32 l\rfloor$. Thus  by the definition of $E_{l-1}(t)$  it holds  
\[
\| \nabla p\|_{H^{\lambda_l -2}(\Omega_t)}^2 +  \|\nabla v\|_{H^{\lambda_l -2}(\Omega_t)}^2 \leq  \| \D_t v \|_{H^{\frac32(l-1)}(\Omega_t)}^2 + \|v\|_{H^{\lfloor \frac32 l\rfloor}(\Omega_t)}^2 \leq C E_{l-1}(t)\leq C.
\]
Proposition \ref{prop:kato-ponce}, the assumption  $ \| \nabla v \|_{L^\infty(\Omega_t)} \leq C$, and  the Sobolev embedding  then  imply
 \[
 \|\nabla v \star \nabla p\|_{H^{\lambda_l -2}(\Omega_t)}^2 \leq C E_{l-1}^2(t)\leq C   
 \]
and the inequality \eqref{eq:vctrh-final} follows.

We deduce by \eqref{eq:reg-est-l-3}, \eqref{eq:vctrh-final}    and by using  \eqref{eq:reg-est-l-4} an iterative way that  
\[     
 \sum_{k=1}^{l} \|\D_t^{l+1-k} v\|_{H^{\frac32 k}(\Omega_t)}^2 + \| v \|_{H^{\lfloor \frac32(l +1)\rfloor}(\Omega_t)}^2 \leq C_\e \En_{l}^+(t) + \e E_{l}(t). 
 \]
Thus we obtain   \eqref{eq:reg-est-l-2} by using  the above inequality and  \eqref{eq:ger-est-22}. 
\end{proof}

\section{Proof of the Main Theorem}

In this short section we collect the results from Section 6, Section 7 and Section 8 and prove the Main Theorem. The proof is fairly straightforward, and the only delicate part is to show that the a priori estimates \eqref{eq:apriori_est} hold for a short time. 
\begin{proof}[\textbf{Proof of the Main Theorem}]
 Let us assume that the quantities $\Lambda_T$ and $\sigma_T$, which are defined in \eqref{def:apriori_est} and \eqref{eq:height_well} respectively,  satisfy   $\Lambda_T \leq M $ and $\sigma_T\geq \frac{1}{M}$ for $T>0$. We show that this implies the bound
 \beq \label{eq:mainthm-1}
E_l(t) \leq C_l \qquad \text{for all }\, t \leq T 
 \eeq
for every positive integer $l$, where the constant $C_l$ depends on $l, T, M$ and on $E_l(0)$. Here the dependence on $T$ means that if $T<1$, then the constant $C_l$ may be chosen to be independent of $T$.   The estimate \eqref{eq:mainthm-1} is crucial as it quantifies the smoothness of the flow under the assumption that  the a priori estimates are bounded.

 We obtain first by Lemma \ref{lem:claim2} that
 \beq \label{eq:mainthm-11}
 \int_0^T \|p\|_{H^2(\Omega_t)}^2 \, dt \leq \tilde C,
 \eeq
 where $\tilde C$ depends on $T,M$ and on $E_1(0)$.  Proposition \ref{prop:case-1} and Proposition \ref{prop:reg-est-1} in turn  imply 
  \beq \label{eq:mainthm-22}
 \begin{split}
 \frac{d}{dt} \En_1(t) &\leq C(1+ \|p\|_{H^2(\Omega_t)}^2)E_1(t)\\
 &\leq C(1+ \|p\|_{H^2(\Omega_t)}^2)(C_0 + \En_1(t))
  \end{split}
 \eeq
 for all $t \leq T$. In particular, the quantity $C_0 + \En_1(t)$ is positive. Therefore we obtain by integrating over $(0,T)$ and using \eqref{eq:mainthm-11} that
\[
C_0 + \En_1(t) \leq \hat C(C_0 + \En_1(0))
\]
 for all $t \leq T$. By using  Proposition \ref{prop:reg-est-1} again we have 
\[
 E_1(t) \leq C(C_0 + \En_1(t)) \leq C \hat C(C_0 + \En_1(0)) \leq C_1,
 \]
 where the constant $C_1 $ depends on $M, T$ and  $E_1(0)$. 
 
 We may then use Proposition \ref{prop:case-l} and Proposition \ref{prop:reg-est-l} in an  inductive way and deduce that if $E_{l-1}(t) \leq C_{l-1}$ for $t \leq T$ then  it holds 
 \[
 \frac{d}{dt} \En_l(t) \leq CE_l(t) \leq C(C_0 + \En_l(t)).
 \]
  By integrating we deduce
 \[
 C_0 + \En_l(t) \leq (C_0 + \En_l(0))e^{C T}
 \]
 and using Proposition \ref{prop:reg-est-l} again we have
  \beq \label{eq:mainthm-3}
 E_l(t) \leq C(C_0 + \En_l(0))e^{C T} \leq C_l,
 \eeq
 where the constant $C_l$ depends on $l, T, M$ and on $E_{l}(0)$. Note that we obtain \eqref{eq:mainthm-3} under the assumption $E_{l-1}(t) \leq C_{l-1}$ for $t \leq T$ and thus an induction argument implies that \eqref{eq:mainthm-3} holds for all $l$ for a constant which depends on $T, l, M$ and on $E_l(0)$. Therefore we have  \eqref{eq:mainthm-1}.

 Let us then prove the last  claim, i.e., that the  a priori estimates \eqref{eq:apriori_est} hold for $M$  for a short time 
 \beq \label{eq:mainthm-4}
 T_0 \geq c_0
 \eeq
 for a positive constant $c_0$ which depends on $\|H_{\Sigma_0}\|_{H^2(\Sigma)}$, $\|v\|_{H^3(\Omega_0)}$ and on $\sigma_0$.  
 
 To this aim we define the quantity 
 \[
 \lambda_t := \|\nabla p\|_{L^2(\Omega_t)}^2 + \|B\|_{L^4(\St)}^4 + \|\nabla v\|_{L^4(\St)}^4 + \|\nabla v\|_{L^4(\Omega_t)}^4 +1, 
 \]
 where $p$ is the pressure and $v$ the velocity field. Let us also denote by 
\[
\delta(t) : = d_{\mathcal{H}}(\Omega_t, \Omega_0)
\]
the Hausdorff distance between the sets  $\Omega_t$ and $\Omega_0$. The point is that if we would know that it holds $\lambda_t \leq 2\lambda_0$ and $\delta(t) \leq \e_0$, where   $\lambda_0$ is the value at time $t=0$ and $\e_0$ is a small number, then we have by the curvature bound and by standard argument from regularity theory (e.g. by Allard regularity theory)  that $\St$ is uniformly $C^{1,\alpha}(\Gamma)$-regular. We choose the number $\e_0$ such that it depends also on $\sigma_0$ so that  $\delta(t) \leq \e_0$  implies $\sigma_t \geq \frac{\sigma_0}{2}$. Moreover,  by Proposition \ref{prop:reg-est-1} we deduce that there are constants $C$ and $C_0$ such that 
 \beq\label{eq:mainthm-7}
 E_1(t) \leq C(C_0 + \En_1(t)).
 \eeq
 
 Let us then define  $T_0 \in (0, T]$ to be the largest number such that  
 \[
 \begin{split}
 \sup_{t \leq T_0} \lambda_t \leq 2 \lambda_0 , \quad 
\sup_{t \leq T_0 } \delta(t) \leq \e_0  \quad \text{and} \quad  \sup_{t \leq T_0 }  \En_1(t) \leq C_0 + \En_1(0), 
 \end{split}
 \]
 where $C_0$ is the constant in \eqref{eq:mainthm-7}. We note that the last condition together with  \eqref{eq:mainthm-7} implies that 
 \beq\label{eq:mainthm-8}
 E_1(t) \leq C(C_0 + \En_1(t))\leq C(2C_0 + \En_1(0)) \leq \tilde C E_1(0), 
 \eeq
for $t \leq T_0$. It is also easy to see that for $\Lambda_{T}$ defined in \eqref{def:apriori_est} it holds $\Lambda_{T}^2 \leq C \sup_{t \leq T} E_1(t)$. This means that  \eqref{eq:mainthm-8} ensures that the a priori estimates \eqref{eq:apriori_est} hold for the time interval $[0,T_0]$.  Therefore it is enough to show that $T_0 \geq c_0$. We may assume that $T_0 < \min \{T,1\}$ since otherwise the claim is trivially true. 

If $T_0 < \min \{T,1\}$ then at least in one of the three conditions in the definition of $T_0$ we have an equality. Assume that $\lambda_{T_0} = 2 \lambda_0$. Note that by \eqref{eq:mainthm-8} it holds $E_1(t) \leq  \tilde C E_1(0)$ for all $t \leq T_0$. We remark that it holds 
\[
\|B\|_{L^\infty(\St)}^2 + \|\nabla v\|_{L^\infty(\Omega_t)}^2 \leq CE_1(t).
\]
Moreover, by using the formula \eqref{lem:comm:eq3} in Lemma \ref{lem:comm2} we obtain   
\[
\|\D_t \nabla p\|_{L^2(\Omega_t)} +  \|\D_t B\|_{L^2(\St)}  + \|\D_t \nabla v\|_{L^2(\St)} \leq CE_1(t).
\]
We leave the details for the reader. Therefore by a straightforward calculation we deduce that  for some $q \geq 1$ it holds 
\[
\frac{d}{dt} \lambda_t \leq C E_1(t)^q \leq C E_1(0)^q
\]
where the last inequality follows from \eqref{eq:mainthm-8}. By integrating the above over $(0,T_0)$ and using $\lambda_{T_0} = 2 \lambda_0$ we obtain 
\[
\lambda_0 \leq C E_1(0)^q T_0. 
\]
Since $\lambda_0 \geq 1$ we have $T_0 \geq c_0$, for a constant that depends on $E_1(0)$ and $\sigma_0$. 

We argue similarly if we have an equality in the third condition in the definition of $T_0$, i.e.,   $ \En_1(T_0) = C_0 + \En_1(0)$. Indeed, then by the definition of $E_1(t)$ we have that 
\[
 \| p\|_{H^2(\Omega_t)}^2 \leq   E_1(t).
\]
Therefore we obtain by \eqref{eq:mainthm-22} and \eqref{eq:mainthm-8}  that 
\[
\frac{d}{dt} \En_1(t) \leq C(1+ \|p\|_{H^2(\Omega_t)}^2)E_1(t) \leq C,
\]
 where the constant $C$ depends on $E_1(0)$ and on $\sigma_0$. We integrate the above over $(0,T_0)$ and
 obtain 
 \[
C_0 =  \En_1(T_0) -  \En_1(0) \leq CT_0. 
 \]
Thus we have again $T_0 \geq c_0$.

Finally assume that it holds  $ \delta(T_0) = \e_0$. By definition the flow gives a  diffeomorphism  $\Phi_{T_0} : \Sigma_0 \to \Sigma_{T_0}$. We note that the velocity is uniformly bounded by the Sobolev embedding and by \eqref{eq:mainthm-8}
  \[
  \|v\|_{L^\infty(\Omega_t)}^2 \leq CE_1(t) \leq C E_1(0).
  \]
 Therefore we have by the fundamental Theorem of Calculus  that for every $x \in  \Sigma_0$ it holds
  \[
 |\Phi_{T_0}(x) - x| \leq   \int_0^{T_0} \|v\|_{L^\infty}\, dt \leq   C  T_0. 
  \]
Since $\sup_{x \in \Sigma_0}|\Phi_{T_0}(x) - x|\geq \delta(T_0) \geq \e_0$,  we again have $T_0 \geq c_0$. 
  
  We have thus obtained \eqref{eq:mainthm-4} for a constant $c_0$ which depends on $\sigma_0$ and $E_1(0)$. By Lemma \ref{lem:initial-reg} we deduce that $c_0$ in fact depends on $\sigma_0, \|v\|_{H^3(\Omega_0)}$,  $\|H_{\Sigma_0}\|_{H^2(\Sigma_0)}$ and on $\|h_0\|_{C^{1,\alpha}(\Gamma)}$. This concludes the proof of the second claim. 
  
\end{proof}

\section{Statements and Declarations}

\subsection*{Funding and competing interests}
The research was supported by the Academy of Finland grant 314227. The authors have no non-financial competing interests to declare that are relevant to the consent of this article.

\subsection*{Data availability statement}
Data sharing not applicable to this article as no datasets were generated or analysed during the current study.

\end{document}